\documentclass[10pt,a4paper]{amsart}

\usepackage[english]{babel}

\usepackage[utf8]{inputenc}
\usepackage[T1]{fontenc}
\usepackage{lmodern}

\usepackage{amsthm}
\usepackage{amsmath}
\usepackage{amsfonts}
\usepackage{amssymb}

\usepackage{xcolor}
\usepackage{color}
\usepackage{hyperref}

\usepackage[headings]{fullpage}


\newtheorem{theo}{Theorem}[subsection]
\newtheorem*{theo*}{Theorem}
\newtheorem{prop}[theo]{Proposition}
\newtheorem{lemm}[theo]{Lemma}
\newtheorem{coro}[theo]{Corollary}

\newtheorem{defi}[theo]{Definition}

\theoremstyle{remark}
\newtheorem{rema}[theo]{Remark}
\newtheorem{fact}[theo]{Fact}


\newcommand{\Z}{\mathbb{Z}}
\newcommand{\Q}{\mathbb{Q}}
\newcommand{\F}{\mathbb{F}}
\newcommand{\eps}{\varepsilon}

\newcommand{\M}{\mathcal{M}}
\newcommand{\T}{\mathcal{T}}
\newcommand{\V}{\mathcal{V}}
\newcommand{\I}{\mathcal{I}}
\newcommand{\B}{\mathcal{B}}

\newcommand{\s}{\mathfrak{S}}

\renewcommand{\bar}[1]{\overline{#1}}

\newcommand{\Hom}{\operatorname{Hom}}
\newcommand{\im}{\operatorname{im}}
\newcommand{\GL}{\operatorname{GL}}
\newcommand{\SL}{\operatorname{SL}}
\newcommand{\St}{\operatorname{St}}
\newcommand{\unr}{\operatorname{unr}}
\newcommand{\ind}{\operatorname{ind}}
\newcommand{\soc}{\operatorname{soc}}

\newcommand{\Sym}{\operatorname{Symm}}
\newcommand{\JH}{\operatorname{JH}}
\newcommand{\jh}{\mathcal{J}\mathcal{H}}

\newcommand{\K}{\mathcal{K}}

\newcommand{\matr}[4]{\begin{pmatrix}#1 & #2 \\ #3 & #4\end{pmatrix}}
\newcommand{\smatr}[4]{\left(\begin{smallmatrix}#1 & #2 \\ #3 & #4\end{smallmatrix}\right)}


\title[An algorithm]{An algorithm for computing the reduction of $2$-dimensional crystalline
representations of $\text{Gal}(\bar\Q_p/\Q_p)$}
\author{Sandra Rozensztajn}
\address{UMPA, \'ENS de Lyon\\
UMR 5669 du CNRS\\
46, all\'ee d'Italie\\
69364 Lyon Cedex 07\\
France}
\email{sandra.rozensztajn@ens-lyon.fr}

\setcounter{tocdepth}{1}


\begin{document}

\begin{abstract}
We describe an algorithm to compute the reduction modulo $p$ of a
crystalline Galois representation of dimension $2$ of
$\text{Gal}(\bar\Q_p/\Q_p)$ with distinct Hodge-Tate weights via the
semi-simple modulo $p$ Langlands correspondence. We give
some examples computed with an implementation of this algorithm in SAGE.
\end{abstract}
  
\maketitle
\tableofcontents

\section{Introduction and notation}

Let $V$ be an irreducible crystalline representation of 
$\text{Gal}(\bar\Q_p/\Q_p)$, with distinct Hodge-Tate weights. 
Our goal is to give an algorithm to compute the semi-simplification
$\bar{V}^{ss}$ of the reduction modulo $p$ of $V$.
There are several ways to approach this problem. The first is to use
global methods, using congruences between modular forms. This method has
already been used extensively by Savitt-Stein and Buzzard, see for
example \cite[Paragraph 6.2]{Br03b}.  Another way is to use local methods coming from
$p$-adic Hodge theory. One such algorithm is described in \cite{Ber12}
and uses $(\phi,\Gamma)$ modules, another one is described in \cite{CL13}
and makes use of Breuil-Kisin modules. Both these methods have the
drawback that they require to work with complicated objects, and they
have not yet been implemented.

The algorithm we describe requires only to do linear algebra in simple
rings such as $\Z/p^n\Z$.
For this we use the modulo $p$ Langlands correspondence, and its
compatibility with the $p$-adic Langlands correspondence. 
The idea of using this correspondence as a tool to compute the reduction
modulo $p$ of Galois representations appears first in \cite{BG09}, and
has since been used several times (see \cite{BG13,GG,BG15,BGR,Ars}) to do
computations in small slope.  The correspondence can in fact be used to
compute the reduction in any slope. We describe the general method, and
how it can be implemented on a computer.

The crystalline representation $V$ we are interested in can be described (up to
twist by a crystalline character) by an integer $k \geq 2$, which is such that $V$ has Hodge-Tate
weights $0$ and $k-1$, and an element $a_p$ of $\bar\Q_p$ with positive
valuation. We denote by $V_{k,a_p}$ this irreducible crystalline
representation of $\text{Gal}(\bar\Q_p/\Q_p)$.
By the modulo $p$ Langlands correspondence, the reduction
$\bar{V}_{k,a_p}^{ss}$ we are looking for is determined by a smooth
representation $\bar\Theta_{k,a_p}$ of $\GL_2(\Q_p)$ over
$\bar\F_p$, and we can give an explicit formula for
$\bar\Theta_{k,a_p}$ in terms of $k$ and $a_p$.  So the problem is
to understand $\bar\Theta_{k,a_p}$ as a
representation of $\GL_2(\Q_p)$ from this formula. We explain how this
problem can be reduced to linear algebra, and more precisely to a finite 
number of questions about some explicit vectors belonging to some
explicit subspace of a fixed vector space. This is the object of Sections
\ref{computefromrel} and \ref{filtration}, after a few reminders on the
$p$-adic and modulo $p$ Langlands correspondences in Sections
\ref{irreduciblerepr} and \ref{correspondences}.

One difficulty is that the representation 
$\bar\Theta_{k,a_p}$ is of infinite dimension, so we have to
understand how to compute it from finite-dimensional data. This gives
rise to the main limitation of this algorithm, which is that only very
small values of the prime $p$, and not too large values of the weight $k$,
allow for manageable dimensions for the vector spaces we manipulate.  
This still allows however for the computation of non-trivial examples.
We give some examples computed thanks to an implementation of the
algorithm using SAGE (\cite{sage}) in Section \ref{examples}.

We show moreover that the result of the computations of the algorithm
give additional information about the reduction. Indeed, we know that
$\bar{V}^{ss}_{k,a_p}$ is locally constant with respect to $a_p$
(\cite{Ber12} and \cite{BLZ}), and with respect to the weight $k$ when $a_p
\neq 0$ (\cite{Ber12}). The output of the computation gives us a radius
of constancy with respect to $a_p$, and, under an additional condition that
is often satisfied in the examples, with respect to the weight when $a_p
\neq 0$. 

\begin{theo*}
Suppose that the algorithm described has allowed us to compute 
$\bar{V}^{ss}_{k,a_p}$ by using denominators of valuation at most
$\delta$. Then:
\begin{enumerate}
\item
For all $a_p'$ such that $v_p(a_p-a_p') > \delta$, 
$\bar{V}^{ss}_{k,a_p}$ and $\bar{V}^{ss}_{k,a_p'}$ are isomorphic.

\item
Let $c$ be the smallest integer strictly larger than 
$v_p(a_p) + \delta$. If $c \leq
(k-2)/(p+1)$, then for all $k' > k$ such that $k' \equiv k \mod
(p-1)p^{1+\lfloor \delta \rfloor + \lfloor \log_p(c) \rfloor}$, 
$\bar{V}^{ss}_{k,a_p}$ and $\bar{V}^{ss}_{k',a_p}$ are isomorphic.
\end{enumerate}
\end{theo*}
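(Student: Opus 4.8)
The plan is to trace exactly what the word ``denominator'' controls in the algorithm. From Sections \ref{computefromrel} and \ref{filtration}, the determination of $\bar\Theta_{k,a_p}$, and hence of $\bar{V}^{ss}_{k,a_p}$, has been reduced to a finite list of questions of the form ``does the reduction $\bar v$ of an explicit vector $v=v(k,a_p)$ lie in the reduction $\bar W$ of an explicit subspace $W=W(k,a_p)$ of a fixed finite-dimensional space?''. Each such question is answered by solving a linear system over $\O$, and saying that the algorithm succeeds ``using denominators of valuation at most $\delta$'' means precisely that it exhibits a solution whose pivots have valuation $\le\delta$. So the first step is to isolate the elementary lemma: if $Mx=b$ over $\O$ admits a solution with denominators of valuation $\le\delta$, then both its solvability and the reduction modulo $\mathfrak m$ of a solution are unchanged when $M$ and $b$ are perturbed by elements of valuation $>\delta$; in other words the outcome of each membership question depends only on $v$ and a generating family of $W$ known to $p$-adic precision exceeding $\delta$, i.e.\ modulo $p^{\lfloor\delta\rfloor+1}$.

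For part (1) this is then immediate: in the explicit formula for $\bar\Theta_{k,a_p}$ the coordinates of the vectors $v(k,a_p)$ and of the generators of $W(k,a_p)$ are polynomial in $a_p$ with coefficients in $\O$ (any division by $a_p$ occurring in the construction already forces $\delta\ge v_p(a_p)$, so it causes no trouble). Hence $v_p(a_p-a_p')>\delta$ gives $v(k,a_p)\equiv v(k,a_p')$ and $W(k,a_p)\equiv W(k,a_p')$ modulo $p^{\lfloor\delta\rfloor+1}$, so every membership question has the same answer for $a_p$ and for $a_p'$, and therefore $\bar{V}^{ss}_{k,a_p}\cong\bar{V}^{ss}_{k,a_p'}$.

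Part (2) is more delicate because the ambient space, built from $\Sym^{k-2}$ of the standard representation of $\GL_2(\Z_p)$, grows with $k$. Here I would construct a precision-controlled comparison between the weight-$k$ and weight-$k'$ data: when $k\equiv k'\pmod{(p-1)p^{N}}$ the relevant structure constants entering the construction — the monomials $X^iY^{k-2-i}$ and the binomial coefficients $\binom{k-2}{i}$ attached to the action of the Hecke operator $T$, transported along an identification of $\Sym^{k-2}$ with $\Sym^{k'-2}$ on the span of the monomials with $i\le c$ — agree modulo $p^{N+1}$. The hypothesis $c\le(k-2)/(p+1)$ is exactly what confines every vector $v$ and every generator of $W$ produced by the algorithm to that ``low-degree'' range where the comparison holds, the factor $p+1$ being the expansion rate of $T$ (equivalently of the relevant restriction map) on symmetric powers. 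Combining this with the lemma of the first paragraph, the weight-$k$ and weight-$k'$ membership questions coincide modulo $p^{N+1}$, hence have the same answers, as soon as $N+1$ exceeds the sum of the $\lfloor\delta\rfloor+1$ levels of precision lost in solving the linear systems and the extra $\lfloor\log_p c\rfloor$ levels lost to the $p$-adic valuations of the binomial coefficients $\binom{k-2}{i}$ for $i\le c$ (Kummer's theorem). This yields exactly $N=1+\lfloor\delta\rfloor+\lfloor\log_p c\rfloor$, and therefore $\bar{V}^{ss}_{k,a_p}\cong\bar{V}^{ss}_{k',a_p}$.

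The main obstacle is part (2): making the comparison between $\Sym^{k-2}$ and $\Sym^{k'-2}$ precise on the relevant degrees and checking that $c\le(k-2)/(p+1)$ is genuinely the condition that keeps the algorithm's vectors inside the range where that comparison is valid modulo $p^{N+1}$. Pinning down the constants $p+1$ and $\lfloor\log_p c\rfloor$ requires going carefully back through the construction of the relations in Section \ref{computefromrel} — the action of $T$ and the combinatorics of symmetric powers — whereas part (1) and the underlying linear-algebra lemma are routine by comparison.
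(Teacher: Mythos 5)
Your treatment of part (1) is correct and essentially coincides with the paper's: once one observes that the finite subspace $M_{d,S}(a_p)$ of relations actually used depends on $a_p$ only through its image in $R/p^{d+1}R$, the conclusion is immediate (Theorem \ref{locconstap} and Corollary \ref{locconstapbis}); your ``precision-controlled linear-algebra lemma'' is the same observation packaged slightly differently.

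For part (2), however, the mechanism you propose would not go through as stated, and the paper's argument is substantively different in a way that matters. You suggest identifying $\Sym^{k-2}$ with $\Sym^{k'-2}$ ``on the span of the monomials with $i\le c$'' and arguing that the structure constants of $T$ agree on that span. But the relevant filtration is by powers of $\theta=X^pY-XY^p$, not by monomial degree, and the elements that the algorithm needs to compare (the $e_i,v_i,w_{i,1},w_{i,2}$ of \S\ref{explicitbasis}) are not low-degree monomials — they involve things like $\theta^n X^{r-n(p+1)}$ and $\theta^n Y^{r-n(p+1)}$. The paper therefore introduces a $\theta$-adapted basis $\B_\theta(r)$ and a transport map $\psi_{r,r',c}:M_{r,c}\to M_{r',c}$ acting on $\theta$-level $<c$, and the whole content is in showing that the change of basis $\B_0\to\B_\theta$ behaves coherently under $r\mapsto r'$ modulo $p^d$ (Lemmas \ref{bthetasmall}, \ref{bthetamiddle}, \ref{bthetaend}, via the combinatorial Lemma \ref{sumbinom}, which is considerably subtler than Kummer's theorem or Lemma \ref{binomial} alone). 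With a naive monomial matching you would not capture the $b_{n,\infty}$ basis vectors at all, and the $T^-$ part of the Hecke operator, which carries the factor $p^{r-i}$ and produces sums $\sum_j\binom{r-s-j}{\ell}\binom{r-s}{j}{\mu'}^j$, is exactly where this fails.

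There is a second, independent gap: even granting a good transport map, $(T-a_p)$ applied to the transported element $\psi_{r,r',c}\pi_{r,c}(\phi_i)$ is only integral \emph{modulo the high-$\theta$ part} $I(M^c_{r'})$. One must add a correction $y_i\in I(M^c_{r'}(\K))$ so that $(T-a_p)\phi_i'$ is genuinely integral; this is done by dividing by $a_p$, and it is precisely here that the hypothesis $c>v_p(a_p)+\delta$ (via $v_p(p^\delta a_p)<c$) and Lemma \ref{Ttheta} are needed. Your reading of $c\le(k-2)/(p+1)$ as ``$p+1$ is the expansion rate of $T$'' is also off: the $p+1$ is simply $\deg\theta$, and the bound guarantees $c\le m(r)=\lfloor r/(p+1)\rfloor$ so that the $\theta^c$-decomposition $M_r=M_r^c\oplus M_{r,c}$ exists. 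Your high-level picture — transport the explicit relations to weight $k'$, control precision by $1+\lfloor\delta\rfloor+\lfloor\log_p c\rfloor$ — is the right shape of argument, and the exponent breaks down roughly as you guess, but the realization requires the $\theta$-filtered basis, the correction term, and Lemma \ref{sumbinom}, none of which appear in your sketch.
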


This result is the object of Section \ref{locconst}, the first part being
much easier than the second one.
In the case of the local constancy with respect to $a_p$, bounds were
already known thanks to the results of
\cite{Ber12} and \cite{BLZ}, and our examples show that
these bounds are generally not optimal. In the case of local constancy
with respect to the weight, the result of \cite{Ber12} was not effective
at all, so this is the first time we have some effective results in this
direction except for the values of $a_p$ for which $\bar{V}^{ss}_{k,a_p}$
has been computed for all weights $k$.

\subsection{Notation}

Denote by $G$ the group $\GL_2(\Q_p)$, $Z$ its center, and $K$ the subgroup
$\GL_2(\Z_p)$.

We write $G_{\Q_p}$ for the group $\text{Gal}(\bar\Q_p/\Q_p)$.
Let $a_p$ be in $\bar\Q_p$ with positive valuation, and $k \geq 2$ be an
integer. We denote by $V_{k,a_p}$ be the irreducible crystalline representation of
$G_{\Q_p}$ of dimension $2$ with Hodge-Tate weights $\{0,k-1\}$ and such
that the trace of $\phi$ on the associated filtered $\phi$-module is
$a_p$.

Let $\omega$ be the mod $p$ cyclotomic character, or the character of
$\Q_p^\times$ corresponding to it via local class field theory. For any
$\lambda \in \bar\F_p^\times$, let $\unr(\lambda)$ be the unramified character
of $\Q_p^\times$ such that $\unr(\lambda)(p) = \lambda$.
We also denote by $\omega_2$ a Serre fundamental character of level $2$ on
the inertia subgroup.

Let $v$ be the valuation
on $\bar\Q_p$ normalized by $v(p) = 1$.

\medskip

Let $\K$ be a finite extension of $\Q_p$, with ring of integers $R$ and
residue field $E$. When we have fixed a value of $a_p$, we will assume
that $\K$ contains $a_p$.

\medskip

Let $A$ by any $\Z_p$-algebra.
For any non-negative integer $s$, let $\Sym^s A^2$ be the
representation of $K$ coming from the standard action of $K$ on $A^2$.
For any representation $V$ of $\GL_2(\Z_p)$, let $V(i) = V\otimes\det^i$.
For $a \geq 0$ and $b\in \Z/(p-1)\Z$, let 
$\sigma_a(b) = \Sym^a\F_p^2 \otimes\det^b$. 
Then the $\sigma_a(b)$ for $0 \leq a \leq p-1$ are exactly the
irreducible representations of $K$ with coefficients in $\F_p$,
and any irreducible representation of $K$ on a field of characteristic
$p$ comes from these representations by base change.
We will also denote by $\sigma_a$ and $\sigma_a(b)$ the base change of this
representation to another field of characteristic $p$, or 
$\sigma_{a,E}$ and $\sigma_a(b)_E$
if it is necessary to indicate the field $E$.

\medskip

If $n$ is an integer, $[n]$ denotes the unique integer in
$\{1,\dots,p-1\}$ that is congruent to $n$ modulo $p-1$.

\section{Irreducible representations of $G$ in characteristic $p$}
\label{irreduciblerepr}

\subsection{The tree}

Let $\T$ be the tree of $\SL_2(\Q_p)$. Its vertices are indexed by the
homothety classes of lattices in $\Q_p^2$, and there is an edge between
$[L]$ and $[L']$ if and only if there are representatives with $L' \subset L$
and $[L:L'] = p$. The vertices are also indexed by $G/KZ$, and $G$ acts
on the tree by left translation.

We give a system of representatives of $G/KZ$, following \cite[Paragraph
2.2]{Br03a} for $F = \Q_p$. 
Let $\mu_{p-1}$ the set of
$(p-1)$-th roots of unity in $\Z_p$.
Let $I_0 = \{0\}$, $I_1 = \{0\} \cup \mu_{p-1}$, and for any $n \geq 1$, 
$I_n = \{\sum_{i=0}^{n-1}\lambda_ip^i, \lambda_i \in I_1\}$. Then $I_n$ has
cardinality $p^n$.

If $\mu \in I_n$, we let $g^0_{n,\mu} = \matr{p^n}{\mu}01$, and
$g^1_{n,\mu} = \matr 10{p\mu}{p^{n+1}}$. Then the set of the $g^{\eps}_{n,\mu}$ for
$\eps\in \{0,1\}$, $n \geq 0$, $\mu \in I_n$ form a system of
representatives of $G/KZ$ and so is naturally in bijection with the set
of vertices of $\T$.

Moreover, $g^0_{n,\mu}$ is at distance $n$ from $1 = g^0_{0,0}$, and
$g^1_{n,\mu}$ is at distance $n+1$ from $1$. 
We denote by $C_n$ the circle of radius $n$ with center in $1$, and $B_n$
the ball of radius $n$ with center in $1$.

\subsection{Compact induction}
\label{compactind}

Let $(\rho,V)$ be a finite-dimensional representation of $KZ$ with
coefficients in some ring. We denote
by $I(V)$ the compact induction $\ind_{KZ}^GV$. We denote by $[g,v]$,
$g\in G$ and $v \in V$ elements of this module, where
$[g,v]$ is the function that sends $\gamma$ to $\rho(\gamma g)v$ if
$\gamma g \in KZ$, and sends $\gamma$ to $0$ if $\gamma g \not\in KZ$.
The action of $G$ on these elements is given by
$\gamma[g,v] = [\gamma g,v]$, and moreover for all $\kappa \in KZ$ we
have that $[g\kappa,v] = [g,\rho(\kappa)v]$.

Let $X$ be a subset of the set of vertices of $\T$. We say that $f \in
I(V)$ has support in $X$ if it can be written as a (finite) sum
$\sum [g_i,v_i]$ with the class of $g_i$ in $X$.

Let $V$ be a representation of $K$ of the form $\sigma_r(s)$. 
We let $\matr p00p$ act trivially on $V$, so that it becomes a
representation of $KZ$.
There is a $G$-equivariant Hecke operator $T$ acting on $I(V)$, defined in
\cite{BL94} (see Paragraph \ref{descrT} for an explicit description of
this operator).

\subsection{Irreducible representations of $G$ over $\bar\F_p$}

In this Paragraph all representations have coefficients in $\bar\F_p$.

If $r$ is an integer in $\{0,\dots,p-1\}$, $\chi$ a smooth character of
$\Q_p^\times$, and $\lambda\in \bar\F_p^\times$, we denote by
$\pi(r,\lambda,\chi)$ the representation
$\left(I(\sigma_r)/(T-\lambda)\right)\otimes\left(\chi\circ\det\right)$.
Note that $I(\sigma_a(b))/(T-\lambda) = \pi(a,\lambda,\omega^b)$.

We can make the list of  irreducible smooth admissible representations of $G$ with
coefficients in $\bar\F_p$, following \cite{BL94}, \cite{BL95} and \cite{Br03a}.
The irreducible representations are:
\begin{enumerate}
\item
$\pi(r,0,\chi)$

\item
$\pi(r,\lambda,\chi)$ for $\lambda\in \bar\F_p^\times$ and $(r,\lambda)\not\in
\{(0,\pm 1),(p-1,\pm 1)\}$.

\item
$\St \otimes \left(\chi\circ\det\right)$

\item
$\chi\circ\det$
\end{enumerate}
Moreover, 
$\pi(r,0,\chi) = \pi(r,0,\chi\unr(-1)) = \pi(p-1-r,0,\chi\omega^r)$ ;
$\pi(r,\lambda,\chi) = \pi(r,-\lambda,\chi\unr(-1))$ for any $r$ and
$\lambda$ ;  
$\pi(0,\lambda,\chi) = \pi(p-1,\lambda,\chi)$ 
for any $\lambda\neq \pm 1$ ;
and 
$\pi(0,\lambda,\chi)^{ss} = \pi(p-1,\lambda,\chi)^{ss}
= \St\otimes\left(\chi\unr(\lambda)\circ\det\right)
\oplus\left(\chi\unr(\lambda)\circ\det\right)$
if $\lambda = \pm 1$. 
There are no other isomorphims between the
irreducible representations than the ones coming from the previous list.
In particular, any irreducible representation is a quotient of an
$I(\sigma_r)\otimes\left(\chi\circ\det\right)$ for some $r$ and $\chi$, and $r$ and
$\chi$ are almost entirely determined by the quotient itself.

We recall the following result:

\begin{prop}
\label{quotientI}
Let $Q$ be a finite length quotient of $I(\sigma_a(b))$. Then the
Jordan-Hoelder factors of $Q$ are of the form $\pi(a,\lambda,\omega^b)$
for some $\lambda\in \bar\F_p$ if $a \not\in \{0,p-1\}$. If $a \in \{0,p-1\}$, the
Jordan-Hoelder factors of $Q$ are of the form $\pi(a,\lambda,\omega^b)$
for $\lambda \not\in \{\pm 1\}$, or $\St\otimes\left(\omega^b\unr(\pm 1)\circ
\det\right)$ or $\left(\omega^b\unr(\pm 1)\circ \det\right)$.
\end{prop}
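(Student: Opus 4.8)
The plan is to reduce everything to the known classification of quotients of $I(\sigma_a)$ via the list of irreducible representations of $G$ over $\bar\F_p$ recalled just above. First I would observe that it suffices to treat the case $b = 0$, since twisting a finite length quotient of $I(\sigma_a(b))$ by $\det^{-b}$ gives a finite length quotient of $I(\sigma_a)$ with the same Jordan--Hölder factors up to twist by $\omega^b$, and twisting commutes with taking Jordan--Hölder factors. So from now on I assume $Q$ is a finite length quotient of $I(\sigma_a) = I(\sigma_a(0))$.

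Next I would analyze one Jordan--Hölder factor $\pi$ of $Q$ at a time. Since $Q$ is a quotient of $I(\sigma_a)$ and $\pi$ is a subquotient of $Q$, $\pi$ is in particular a subquotient of $I(\sigma_a)$; but more is true: choosing a submodule $Q' \subseteq Q$ with $Q/Q' \cong \pi$ and pulling back along the surjection $I(\sigma_a) \twoheadrightarrow Q$, I get a surjection $I(\sigma_a) \twoheadrightarrow \pi$, so $\pi$ is an irreducible \emph{quotient} of $I(\sigma_a)$. Now I invoke the classification: an irreducible smooth admissible representation of $G$ over $\bar\F_p$ is one of $\pi(r,0,\chi)$, $\pi(r,\lambda,\chi)$ with $(r,\lambda) \notin \{(0,\pm1),(p-1,\pm1)\}$, $\St\otimes(\chi\circ\det)$, or $\chi\circ\det$, and each such is a quotient of some $I(\sigma_r)\otimes(\chi\circ\det)$ with $r$ and $\chi$ "almost entirely determined by the quotient itself," as stated in the excerpt. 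The key point is to match up which $(r,\chi)$ can produce a quotient of $I(\sigma_a)$ (i.e.\ $I(\sigma_a)\otimes(1\circ\det)$): since $I(\sigma_r)\otimes(\chi\circ\det) \cong I(\sigma_r(b'))\otimes(\chi'\circ\det)$ whenever $\chi = \chi'\omega^{b'}$, and using the identities $\pi(r,0,\chi) = \pi(p-1-r,0,\chi\omega^r)$ and $\pi(r,\lambda,\chi) = \pi(r,-\lambda,\chi\unr(-1))$, I pin down exactly when $\pi$ arises as a quotient of $I(\sigma_a)$.

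Concretely, for $\pi = \pi(a,\lambda,1) = I(\sigma_a)/(T-\lambda)$ with $\lambda\neq 0$ this is immediate; if $a\notin\{0,p-1\}$ these and the $\lambda=0$ case exhaust the possibilities (using that $\pi(a,0,1) = \pi(p-1-a,0,\omega^a)$ is still a quotient of $I(\sigma_a)$), and no Steinberg or character factor can appear because such a factor being a quotient of $I(\sigma_a)$ would, by the classification statement that $r$ is almost determined, force $a\in\{0,p-1\}$ — indeed $\St\otimes(\chi\circ\det)$ and $\chi\circ\det$ are quotients of $I(\sigma_0)\otimes(\chi\circ\det)$ and $I(\sigma_{p-1})\otimes(\chi\circ\det)$ only. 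When $a\in\{0,p-1\}$, I have to include the extra factors: $\pi(a,\lambda,1)$ for $\lambda\neq\pm1$, and for $\lambda = \pm1$ the semisimplification $\pi(a,\pm1,1)^{ss} = \St\otimes(\unr(\pm1)\circ\det) \oplus (\unr(\pm1)\circ\det)$, which is where the Steinberg and one-dimensional constituents enter. Reinstating the twist by $\omega^b$ then gives the statement as written, with $\chi = \omega^b$, $\chi\unr(\pm1) = \omega^b\unr(\pm1)$.

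The main obstacle is the bookkeeping in the second paragraph: making rigorous the phrase "$r$ and $\chi$ are almost entirely determined by the quotient" and checking that there is no "leakage" — e.g.\ that $\pi(p-1-a,0,\omega^a)$, which is \emph{a priori} attached to $\sigma_{p-1-a}$, really does occur as a quotient of $I(\sigma_a)$ and carries the right central character, and conversely that genuinely new irreducibles (attached to unrelated $r$) cannot sneak in as Jordan--Hölder factors. This amounts to carefully tracking the action of $Z$ (the center acts on $I(\sigma_a)$ through a fixed character since $\smatr p00p$ acts trivially on $\sigma_a$), which constrains $\chi^2$, together with the action on the socle/$K$-types to constrain $[r]$ modulo the symmetry $r\leftrightarrow p-1-r$ and $\lambda\leftrightarrow-\lambda$. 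Once the central character and a single $K$-type computation are in hand, the list of admissible $(r,\lambda,\chi)$ collapses to exactly those in the statement, and nothing else is routine.
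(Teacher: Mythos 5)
The paper does not itself prove this proposition; it is ``recalled'' from \cite{BL94,BL95,Br03a}, so there is no in-paper argument to compare against, and your proposal must be judged on its own. It has a genuine gap at the start of the second paragraph. You assert that for each Jordan--H\"older factor $\pi$ of $Q$ one can choose $Q'\subseteq Q$ with $Q/Q'\cong\pi$, and hence that $\pi$ is an irreducible \emph{quotient} of $I(\sigma_a)$. But an arbitrary Jordan--H\"older factor is only a \emph{subquotient}; only those appearing in the cosocle of $Q$ are quotients. And the conclusion you want to derive is in fact false: for $p>2$, take $a=0$ and $Q=I(\sigma_0)/(T-1)$, whose Jordan--H\"older factors are $\St$ and the trivial character. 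By Proposition \ref{quotientIirr} of the paper, the only irreducible quotient of $I(\sigma_0)$ killed by $T-1$ is the trivial character, so $\St$ is the \emph{sub}object of $Q$; and indeed $\Hom_G(I(\sigma_0),\St)=\Hom_K(\sigma_0,\St|_K)=0$ since the $K$-socle of $\St$ is $\sigma_{p-1}$, not $\sigma_0$. So $\St$ is a Jordan--H\"older factor of a finite-length quotient of $I(\sigma_0)$ yet is not itself a quotient of $I(\sigma_0)$, and your reduction collapses.

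What is missing is the mechanism that controls subquotients rather than quotients, and the standard route is the Hecke-algebra structure. Since $\End_G(I(\sigma_a))\cong\bar\F_p[T]$ and $\Hom_G(I(\sigma_a),Q)=\Hom_K(\sigma_a,Q|_K)$ is finite-dimensional for $Q$ admissible of finite length, the given surjection $\alpha:I(\sigma_a)\twoheadrightarrow Q$ satisfies $P(T)\alpha=0$ for some nonzero polynomial $P$; thus $Q$ is a quotient of $I(\sigma_a)/P(T)I(\sigma_a)$. Over $\bar\F_p$ one then factors $P$ into linear factors and reduces to knowing the Jordan--H\"older factors of each $I(\sigma_a)/(T-\lambda)$, which is precisely what \cite{BL94,BL95,Br03a} provide. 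Without first establishing that $\ker\alpha\supseteq P(T)I(\sigma_a)$ for some $P$, the classification of irreducible quotients (Proposition \ref{quotientIirr}, which you essentially reprove) does not suffice to control the subquotients, and the rest of your bookkeeping in the third and fourth paragraphs is built on a false premise.
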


Here is a slightly more precise statement, which follows immediatly the
results of \cite{BL94,BL95,Br03a} although it is not stated
explicitely there:

\begin{prop}
\label{quotientIirr}
Let $Q$ be an irreducible representation of $G$, and $\pi$ a nonzero map 
$I(\sigma_a(b)) \to Q$. Then there exists a unique $\lambda$ such that
$\pi$ is zero on $(T-\lambda)I(\sigma_a(b))$, and then $Q$ is isomorphic to
$\pi(a,\lambda,\omega^b)$ if $(a,\lambda)\not\in \{(0,\pm 1),(p-1,\pm
1)\}$, or $Q$ is isomorphic to $\St \otimes
\left(\omega^b\unr(\lambda)\circ\det\right)$ if $a = p-1$ and $\lambda = \pm 1$, or
$Q$ is isomorphic to 
$\left(\omega^b\unr(\lambda)\circ\det\right)$ if $a = 0$ and $\lambda = \pm 1$.
\end{prop}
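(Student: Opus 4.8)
The statement (Proposition \ref{quotientIirr}) is essentially a bookkeeping refinement of Proposition \ref{quotientI} together with the classification of irreducible representations recalled above, so the strategy is to reduce everything to the structure theory of $I(\sigma_a(b))$ as a module over the Hecke algebra $\bar\F_p[T]$. The first step is to fix a nonzero $G$-equivariant map $\pi \colon I(\sigma_a(b)) \to Q$ with $Q$ irreducible. Since $T$ is a $G$-equivariant endomorphism of $I(\sigma_a(b))$ and $Q$ is irreducible, $T$ induces a $G$-equivariant endomorphism $\bar T$ of $Q$ through $\pi$ (on the image, which is all of $Q$ by irreducibility); one must check $\bar T$ is well-defined, i.e. that $\pi$ factors the $T$-action, which follows from $\pi \circ T = \bar T \circ \pi$ being forced once we know $T$ preserves $\ker\pi$ — and $\ker\pi$ is $G$-stable hence $T$-stable. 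By a Schur-type argument (the irreducible smooth admissible representations of $G$ over $\bar\F_p$ have scalar endomorphism ring, which is part of the classification in \cite{BL94,BL95,Br03a}), $\bar T$ acts as a scalar $\lambda \in \bar\F_p$. Hence $\pi$ kills $(T-\lambda)I(\sigma_a(b))$, and $\lambda$ is unique because two distinct scalars $\lambda, \lambda'$ would give $(\lambda - \lambda')\pi = 0$, contradicting $\pi \neq 0$.

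**Identifying $Q$.** Once we know $\pi$ factors through $I(\sigma_a(b))/(T-\lambda) = \pi(a,\lambda,\omega^b)$, the representation $Q$ is an irreducible quotient of $\pi(a,\lambda,\omega^b)$. Now I would invoke the explicit description of $I(\sigma_a(b))/(T-\lambda)$ from \cite{BL94,BL95,Br03a}: for $(a,\lambda) \notin \{(0,\pm1),(p-1,\pm1)\}$ this quotient is already irreducible and equal to $\pi(a,\lambda,\omega^b)$, so $Q \cong \pi(a,\lambda,\omega^b)$. In the exceptional cases $\lambda = \pm1$ with $a \in \{0,p-1\}$, the quotient $\pi(a,\lambda,\omega^b)$ is no longer irreducible: by the semisimplification formula recalled in the excerpt, $\pi(0,\lambda,\omega^b)^{ss} = \pi(p-1,\lambda,\omega^b)^{ss} = \bigl(\St\otimes(\omega^b\unr(\lambda)\circ\det)\bigr) \oplus \bigl(\omega^b\unr(\lambda)\circ\det\bigr)$. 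One then reads off which constituent appears as a \emph{quotient}: for $a = p-1$ the non-split extension has $\St\otimes(\omega^b\unr(\lambda)\circ\det)$ as its quotient (the character being the sub), while for $a = 0$ the roles are reversed and $\omega^b\unr(\lambda)\circ\det$ is the quotient. This matches exactly the three cases in the statement.

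**The main obstacle.** The genuinely delicate point is the last one: knowing not just the Jordan–Hölder factors of $\pi(a,\lambda,\omega^b)$ in the exceptional cases but the precise socle/cosocle structure of the length-two representation, so that one can say \emph{which} factor is the irreducible quotient $Q$. This is where I would need to cite the precise results of \cite{BL94,BL95} (the papers of Barthel–Livné) and \cite{Br03a} (Breuil) rather than reprove them; the excerpt itself flags that this refinement "follows immediately" from those references "although it is not stated explicitly there." So the write-up will mostly consist of (i) the Schur-lemma reduction to a scalar $\lambda$, carefully justifying well-definedness and uniqueness, and (ii) a case analysis quoting the known extension structure of $I(\sigma_a(b))/(T-\lambda)$ in the four boundary pairs $(0,1),(0,-1),(p-1,1),(p-1,-1)$. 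A secondary subtlety worth a line is the harmless redundancy in the exceptional cases — e.g. $\pi(0,\lambda,\chi)$ and $\pi(p-1,\lambda,\chi)$ have the same semisimplification but the quotient depends on which of $a=0$ or $a=p-1$ we started from — which is precisely why the statement distinguishes "$a = p-1$" from "$a = 0$" rather than lumping them together.
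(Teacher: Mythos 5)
Your overall architecture — reduce to a scalar $\lambda$ via $\pi\circ T=\lambda\pi$, then read $Q$ off the known cosocle of $I(\sigma_a(b))/(T-\lambda)$, with the extension structure in the four boundary cases $(a,\lambda)\in\{(0,\pm1),(p-1,\pm1)\}$ supplied by \cite{BL94,BL95,Br03a} — is the right one, and the second and third paragraphs of your write-up are fine. But there is a genuine gap in the first paragraph at the step ``$\ker\pi$ is $G$-stable hence $T$-stable.'' A $G$-equivariant endomorphism of a $G$-module does not in general preserve an arbitrary $G$-stable subspace: the swap on $W\oplus W$ is $G$-equivariant but does not preserve $W\oplus 0$. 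So $G$-equivariance of $T$ alone gives you no control over $T(\ker\pi)$, and the well-definedness of $\bar T$ on $Q$ is exactly the nontrivial point you are sliding past. What you need is that $\pi\circ T$ is proportional to $\pi$, and that is not free.

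The paper closes precisely this gap by a different route: by Frobenius reciprocity,
$$\Hom_G(I(\sigma_a(b)),Q) \cong \Hom_K(\sigma_a(b),Q|_K) \cong \Hom_K(\sigma_a(b),\soc_K Q),$$
and one then invokes the multiplicity-one property of the $K$-socle of an irreducible smooth $Q$ (an explicit computation for principal series and Steinberg, and \cite[Th\'eor\`eme 3.2.4]{Br03a} for supersingulars) to conclude that $\dim\Hom_G(I(\sigma_a(b)),Q)\le 1$. Once you have this, $\pi\circ T$, which is another element of that Hom-space, is automatically $\lambda\pi$ for a unique $\lambda$, and the rest of your argument — the uniqueness check, the Schur-lemma remark, and the case analysis identifying $Q$ from the cosocle of $I(\sigma_a(b))/(T-\lambda)$ — goes through unchanged. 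In short: replace the claimed $T$-stability of $\ker\pi$ by the multiplicity-one input on $\soc_K Q$. The subtlety you flag as the ``main obstacle'' (knowing the socle/cosocle order in the length-two quotients) is indeed needed, but there is a second, earlier citation-level input you are missing, and it is the one that makes $\lambda$ exist in the first place.
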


The point is that any non-zero map $I(\sigma_a(b)) \to Q$ where $Q$ is
irreducible is (a scalar multiple of) the obvious map, and not something else.

\begin{proof}[Proof of Proposition \ref{quotientIirr}]
By Frobenius reciprocity we have: 
$$\Hom_G(I(\sigma_a(b)),Q) =
\Hom_K(\sigma_a(b),Q|_K) = \Hom_K(\sigma_a(b),\soc_K(Q))$$ 
But we know that for any irreducible $Q$, every representation that
appears in $\soc_K(Q)$ appears with multiplicity one (this is a standard
computation for the principal series case, and 
\cite[Théorème 3.2.4]{Br03a} for the supersingular case). So the
dimension of $\Hom_G(I(\sigma_a(b)),Q)$ is at most one.  
\end{proof}

\subsection{Definition over a subfield}

Let $E$ be a finite extension of $\F_p$. 
Let $\pi$ be a representation of $G$ with coefficients in
$\bar\F_p$. We say that $\pi$ has a model over $E$ if there is a 
representation $\pi'$ of $G$ with coefficients in $E$ such that $\pi$ is
isomorphic to $\pi'\otimes_E\bar\F_p$.
It follows from the definition
of $\pi(r,\lambda,1)$ in \cite{BL94} that:

\begin{prop}
\label{lambdainE}
The representation $\pi(r,\lambda,1)$ has a model over $E$ if and only if $\lambda\in E$.
\end{prop}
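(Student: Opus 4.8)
The plan is to trace through the explicit construction of $\pi(r,\lambda,1)$ as given in \cite{BL94}. Recall that $\pi(r,\lambda,1) = I(\sigma_r)/(T-\lambda)I(\sigma_r)$, where $\sigma_r = \Sym^r\F_p^2$ is already defined over $\F_p$ (indeed over $\F_p$ it has a natural integral model $\Sym^r\Z_p^2$), and the Hecke operator $T$ on $I(\sigma_r)$ is defined over $\F_p$ as well, since the formulas for $T$ in \cite{BL94} (recalled in Paragraph \ref{descrT}) have coefficients in $\F_p$. Therefore $I(\sigma_{r,E})$ is a representation of $G$ over $E$ with a $G$-equivariant operator $T_E$, and $I(\sigma_{r,E}) \otimes_E \bar\F_p \cong I(\sigma_r)$ compatibly with $T$.

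For the "if" direction, suppose $\lambda \in E$. Then $(T_E - \lambda)$ makes sense as an operator on $I(\sigma_{r,E})$, and we may form the quotient $\pi' := I(\sigma_{r,E})/(T_E-\lambda)I(\sigma_{r,E})$, a smooth representation of $G$ over $E$. Since tensoring with $\bar\F_p$ is exact (it is base change along a field extension), we get $\pi'\otimes_E\bar\F_p \cong I(\sigma_r)/(T-\lambda)I(\sigma_r) = \pi(r,\lambda,1)$. Hence $\pi(r,\lambda,1)$ has a model over $E$.

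For the "only if" direction, suppose $\pi(r,\lambda,1)$ has a model over $E$, say $\pi(r,\lambda,1) \cong \pi'\otimes_E\bar\F_p$ for some representation $\pi'$ over $E$. We want to conclude $\lambda \in E$. The key point is that $\lambda$ can be recovered from $\pi(r,\lambda,1)$ as an eigenvalue of a canonically-defined operator. Concretely, $\soc_K \pi(r,\lambda,1)$ contains $\sigma_r$ (for generic $(r,\lambda)$, exactly once; one must be slightly careful at the edge cases $r\in\{0,p-1\}$, $\lambda=\pm1$, where one uses Proposition \ref{quotientIirr} and the explicit structure instead), and by Frobenius reciprocity $\Hom_K(\sigma_r, \pi(r,\lambda,1)|_K) = \Hom_G(I(\sigma_r),\pi(r,\lambda,1))$ is one-dimensional, generated by the quotient map. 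The operator "$T$" descends to a $G$-endomorphism of... — more precisely, consider the image of $[1, e]$ (for $e$ a highest weight vector of $\sigma_r$) and its translate under $T$; in $\pi(r,\lambda,1)$ the relation $T[1,e] = \lambda[1,e]$ holds modulo the lower terms, so $\lambda$ appears as the ratio extracting the "diagonal" coefficient of $T$ acting on the distinguished line $\Hom_K(\sigma_r,\pi)^* $. Since this whole setup (the line, the action) is defined over $E$ when $\pi$ is, the scalar $\lambda$ must lie in $E$.

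The main obstacle is making the "only if" direction clean, i.e. genuinely producing $\lambda$ as a Galois-invariant (hence $E$-rational) quantity attached intrinsically to the representation. The cleanest route is a Galois-descent argument: $\mathrm{Gal}(\bar\F_p/E)$ acts on $\pi(r,\lambda,1) = \pi'\otimes_E\bar\F_p$ semilinearly, fixing $\pi'$; since $\pi(r,\lambda,1)^\tau \cong \pi(r,\tau(\lambda),1)$ for $\tau$ in this Galois group (the twist by $\tau$ replaces the structure constant $\lambda$ by $\tau(\lambda)$ and leaves $\sigma_r$, $T$ unchanged as these are defined over $\F_p$), the existence of an $E$-model forces $\pi(r,\tau(\lambda),1)\cong\pi(r,\lambda,1)$ for all $\tau\in\mathrm{Gal}(\bar\F_p/E)$; by the classification of isomorphisms among the $\pi(r,\mu,\chi)$ recalled above (the only identifications in the relevant range are $\pi(r,\mu,1)=\pi(r,-\mu,\unr(-1))$, which changes the central character unless $\mu=-\mu$), this forces $\tau(\lambda)=\lambda$ for all such $\tau$, whence $\lambda\in E$. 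One should handle separately the degenerate cases $(r,\lambda)\in\{(0,\pm1),(p-1,\pm1)\}$, where $\lambda=\pm1\in\F_p\subseteq E$ automatically, so the statement is trivially true there.
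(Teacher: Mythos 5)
The paper gives no proof of this proposition, merely noting it ``follows from the definition of $\pi(r,\lambda,1)$ in \cite{BL94}.'' Your argument is correct and supplies the details the paper elides. The ``if'' direction by base change along $E \to \bar\F_p$ is exactly as expected, since $\sigma_r$ and $T$ are defined over $\F_p$. For the ``only if'' direction your first paragraph is too vague to count as a proof (``the scalar $\lambda$ appears as the ratio extracting the diagonal coefficient\dots'' would need to be carried through to an actual argument, e.g.\ by identifying $\lambda$ with the spherical Hecke eigenvalue of $T$ acting on $\Hom_K(\sigma_r, \pi)$, which is one-dimensional by Proposition~\ref{quotientIirr}), but your Galois-descent argument is clean and succeeds: since $\sigma_r$ and $T$ are $\F_p$-rational, one does have $\pi(r,\lambda,1)^\tau \cong \pi(r,\tau(\lambda),1)$ for $\tau\in\mathrm{Gal}(\bar\F_p/E)$, and an $E$-model forces this to be isomorphic to $\pi(r,\lambda,1)$.

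One small inaccuracy in the final step: you claim the identification $\pi(r,\mu,1) = \pi(r,-\mu,\unr(-1))$ ``changes the central character unless $\mu=-\mu$.'' It does not --- the twist by $\unr(-1)\circ\det$ multiplies the central character by $\unr(-1)^2 = 1$, so the two sides have the \emph{same} central character (as they must, being equal). The correct reason $\pi(r,\tau(\lambda),1)\cong\pi(r,\lambda,1)$ forces $\tau(\lambda)=\lambda$ is simply that, by the classification of isomorphisms recalled in Section~\ref{irreduciblerepr}, the only relation with a fixed $r$ is $\pi(r,\lambda,\chi)=\pi(r,-\lambda,\chi\unr(-1))$, and recovering $\chi = 1$ on both sides would require $\unr(-1)=1$, i.e.\ $p=2$; but in characteristic $2$ one has $-\lambda=\lambda$ anyway. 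So $\tau(\lambda)=\lambda$ in all cases. You correctly dispatch the degenerate cases $(r,\lambda)\in\{(0,\pm1),(p-1,\pm1)\}$ where $\pi(r,\lambda,1)$ is not irreducible and the classification argument does not directly apply, by observing that $\lambda\in\F_p\subseteq E$ automatically.
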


\section{Modulo $p$ and $p$-adic Langlands correspondence}
\label{correspondences}

\subsection{The correspondence modulo $p$}

The semi-simple modulo $p$ Langlands correspondence, defined in
\cite{Br03a}, relates semi-simple
representations of dimension $2$ of $G_{\Q_p}$ over $\bar\F_p$ with
some finite length smooth admissible semi-simple representations of $G$
over $\bar\F_p$. 
Note that semi-simple representations of dimension $2$ of $G_{\Q_p}$ are
of two kinds: either they are reducible, or they are isomorphic to some
$(\ind\omega_2^a)\otimes \chi$ with $1 \leq a \leq p$, 
where $\ind\omega_2^a$ is the unique
representation with determinant $\omega^a$ and restriction to inertia
isomorphic to $\omega_2^a\oplus \omega_2^{pa}$.

More precisely, if $V$ is a representation of
dimension $2$ of $G_{\Q_p}$ with coefficients in $\bar\F_p$, we denote by
$\Pi(V)$ the attached representation of $G$:

\begin{theo}[\cite{Br03a}, Définition 4.2.4]
\label{descrcorresp}
The semi-simple modulo $p$ Langlands correspondence is as follows:
\begin{itemize}
\item
If $V = (\ind\omega_2^{r+1})\otimes \chi$ for $0 \leq r \leq p-1$ then
$\Pi(V) = \pi(r,0,\chi)$.
\item
If $V = (\unr(\lambda)\omega^{r+1}\oplus\unr(\lambda^{-1}))\otimes\chi$  
for $0 \leq r \leq p-1$ then 
$$\Pi(V) =
\pi(r,\lambda,\chi)^{ss}\oplus\pi([p-3-r],\lambda^{-1},\omega^{r+1}\chi)^{ss}$$
\end{itemize}
\end{theo}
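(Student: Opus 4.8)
The plan is to recognise, first, that strictly speaking there is nothing to prove: the displayed assignment \emph{is} the semi-simple modulo $p$ Langlands correspondence, recalled up to a translation of conventions from \cite[D\'efinition 4.2.4]{Br03a}. What I would verify, in order to be entitled to use it in exactly the form stated, are the following consistency points.

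\emph{Exhaustiveness and well-posedness of the notation.} By the classification recalled at the beginning of Section \ref{correspondences}, a semi-simple $2$-dimensional representation of $G_{\Q_p}$ over $\bar\F_p$ is, up to twist by a character, either some $\ind\omega_2^{a}$ with $1\le a\le p$ or a sum of two characters, and in the reducible case one may always normalise the ``diagonal part'' so that it reads $\unr(\lambda)\omega^{r+1}\oplus\unr(\lambda^{-1})$ with $0\le r\le p-1$ and $\lambda\in\bar\F_p^\times$, the residual freedom being absorbed into $\chi$; so the two cases do cover everything. On the right-hand side, the only thing to note is that when $\lambda=\pm1$ and $r\in\{0,p-1\}$ the factor $\pi(r,\lambda,\chi)$ is reducible, so that $\pi(r,\lambda,\chi)^{ss}$ has to be read as $\St\otimes(\chi\unr(\lambda)\circ\det)\oplus(\chi\unr(\lambda)\circ\det)$ exactly as recalled before Proposition \ref{quotientI}, and similarly for the second factor.

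\emph{Independence of the right-hand side from the presentation of $V$.} This is the only step asking for a genuine, if routine, computation. In the irreducible case the presentation is unique up to the relations $\ind\omega_2^{p}\otimes\chi\cong\ind\omega_2^{1}\otimes\chi$ and $\ind\omega_2^{r+1}\otimes\chi\cong\ind\omega_2^{r+1}\otimes\chi\unr(-1)$, and one checks that $\pi(r,0,\chi)$ respects these by means of the isomorphisms $\pi(r,0,\chi)=\pi(p-1-r,0,\chi\omega^r)=\pi(r,0,\chi\unr(-1))$ recalled in Section \ref{irreduciblerepr}. In the reducible case the genuine ambiguity is the exchange of the two diagonal characters, and one must check that
$$\pi(r,\lambda,\chi)^{ss}\oplus\pi([p-3-r],\lambda^{-1},\omega^{r+1}\chi)^{ss}$$
is unchanged under it. Working out which normalised triple describes $V$ with its two characters swapped produces precisely the shift $r\mapsto[p-3-r]$ and the twist by $\omega^{r+1}$, and the symmetry then follows because $[p-3-[p-3-r]]\equiv r$ and $[p-3-r]+r+2\equiv 0$ modulo $p-1$, together with $\pi(r,\lambda,\chi)=\pi(r,-\lambda,\chi\unr(-1))$ and $\pi(0,\lambda,\chi)=\pi(p-1,\lambda,\chi)$. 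This bookkeeping with $\omega$, the operator $[\,\cdot\,]$ and the $\det$-twists is the main obstacle, such as there is one.

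Finally I would record compatibility with twisting, $\Pi(V\otimes\eta)=\Pi(V)\otimes(\eta\circ\det)$, which is immediate from the two formulas and is what, in practice, reduces the computations to the case $\chi=1$. Beyond these formal verifications the statement has no content past being a definition: the substance of the correspondence --- that it is the reduction of the $p$-adic Langlands correspondence --- is a separate compatibility statement, used in the sequel, not this theorem.
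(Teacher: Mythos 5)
The paper gives no proof of Theorem \ref{descrcorresp} at all: it is stated with the attribution \cite[D\'efinition 4.2.4]{Br03a} and treated as a recalled definition, exactly as you observe. Your identification of what this leaves to be checked — exhaustiveness of the case division, and invariance of the right-hand side under the ambiguities in the presentation $(r,\lambda,\chi)$ of $V$ — is correct, and the bookkeeping you carry out ($r\mapsto[p-3-r]$, twist by $\omega^{r+1}$, involutivity via $[p-3-[p-3-r]]\equiv r$ and $[p-3-r]+r+2\equiv 0\pmod{p-1}$, and the role of $\pi(r,\lambda,\chi)=\pi(r,-\lambda,\chi\unr(-1))$ in absorbing the sign ambiguity of $\lambda$ into the unramified part of $\chi$) is accurate. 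One small clarification worth adding: the swap of the two diagonal characters by itself already yields $\lambda'=\lambda^{-1}$ directly, and the relation $\pi(r,\lambda,\chi)=\pi(r,-\lambda,\chi\unr(-1))$ is needed for the separate ambiguity $(\lambda,\chi)\mapsto(-\lambda,\chi\unr(-1))$ in the normalisation of a single summand, not for the swap itself — but both checks are indeed required and you have all the right isomorphisms in play. In short, your reading matches the paper's: this theorem is a citation, and the only substance is the sanity checking you describe.
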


\subsection{Rationality questions}
\label{rationality}

In fact, the semi-simple modulo $p$ Langlands correspondences attaches
representation of $G$ with coefficients in $E$ to representations of
$G_{\Q_p}$ defined over $E$, but their irreducible components are not
necessarily defined over $E$. In order to do explicit computations, we
need to determine over which field to work.

Let $\bar\rho$ a $2$-dimensional semi-simple 
representation of $G_{\Q_p}$ over $\bar\F_p$,
with determinant equal to a power of
the cyclotomic character $\omega$.
If $\bar\rho$ is irreducible, it has a model over $\F_p$. Otherwise,
it is isomorphic to some $\omega^a\unr(\lambda) \oplus
\omega^b\unr(\lambda^{-1})$.

\begin{prop}
\label{ratGalois}
Let $\lambda\in \bar\F_p^\times$.
Suppose that 
$\omega^a\unr(\lambda) \oplus \omega^b\unr(\lambda^{-1})$
has a model over $E$. 
Then $\lambda + \lambda^{-1} \in E$.
If $a \not\equiv b$ modulo $p-1$, then $\lambda \in E$.
\end{prop}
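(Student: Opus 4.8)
The plan is to exploit Galois descent for the representation $\pi = \omega^a\unr(\lambda) \oplus \omega^b\unr(\lambda^{-1})$ of $G_{\Q_p}$, by comparing it with its conjugates under $\operatorname{Gal}(\bar\F_p/E)$. Concretely, suppose $\pi$ has a model over $E$, so there is a representation $\pi'$ over $E$ with $\pi' \otimes_E \bar\F_p \cong \pi$. For any $\tau \in \operatorname{Gal}(\bar\F_p/E)$, applying $\tau$ to the coefficients gives $\pi^\tau \cong \pi'\otimes_E\bar\F_p \cong \pi$; but on the other hand $\pi^\tau = \omega^a\unr(\tau(\lambda)) \oplus \omega^b\unr(\tau(\lambda)^{-1})$, since $\omega$ takes values in $\F_p^\times \subseteq E$ and is unaffected by $\tau$. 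So the isomorphism class of the unordered pair of characters $\{\omega^a\unr(\lambda),\ \omega^b\unr(\lambda^{-1})\}$ is $\operatorname{Gal}(\bar\F_p/E)$-stable.

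First I would handle the general case: the multiset $\{\omega^a\unr(\lambda),\ \omega^b\unr(\lambda^{-1})\}$ equals $\{\omega^a\unr(\tau(\lambda)),\ \omega^b\unr(\tau(\lambda)^{-1})\}$ for all $\tau$. Matching up characters, either $\omega^a\unr(\lambda) = \omega^a\unr(\tau(\lambda))$ (forcing $\tau(\lambda) = \lambda$) or $\omega^a\unr(\lambda) = \omega^b\unr(\tau(\lambda)^{-1})$ (forcing $a \equiv b \bmod p-1$ and $\tau(\lambda) = \lambda^{-1}$); in the second case the remaining characters also match. So for every $\tau$ we get $\tau(\lambda) \in \{\lambda, \lambda^{-1}\}$, whence $\tau(\lambda + \lambda^{-1}) = \lambda + \lambda^{-1}$ for all $\tau \in \operatorname{Gal}(\bar\F_p/E)$. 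Since $\bar\F_p/E$ is Galois (finite fields), this gives $\lambda + \lambda^{-1} \in E$.

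Next, under the hypothesis $a \not\equiv b \bmod p-1$, the second matching above is impossible, so necessarily $\tau(\lambda) = \lambda$ for \emph{every} $\tau \in \operatorname{Gal}(\bar\F_p/E)$, and hence $\lambda \in E$.

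The only mildly delicate point is making sure that ``model over $E$'' really does imply $\pi^\tau \cong \pi$ for $\tau$ fixing $E$, and that an isomorphism of the direct sums forces a matching of the individual characters. The latter is automatic because the summands are one-dimensional (hence irreducible and pairwise non-isomorphic unless $a\equiv b$ and $\lambda = \lambda^{-1}$, the degenerate case which causes no trouble), so Krull–Schmidt applies; the former is the standard fact that base-changing a model of $\pi$ to $\bar\F_p$ and then twisting the coefficients by $\tau \in \operatorname{Gal}(\bar\F_p/E)$ returns a representation isomorphic to $\pi$. Neither step presents a real obstacle; the whole proof is a short descent argument once these observations are in place.
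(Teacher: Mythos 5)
Your argument is correct, and it takes a different route from the paper's. The paper's proof is the one-line remark ``consider the traces of the image by $\bar\rho$ of some elements of $G_{\Q_p}$'': if $\bar\rho$ has a model over $E$ then all traces lie in $E$, and the trace $\operatorname{tr}\bar\rho(g) = \omega^a(g)\unr(\lambda)(g) + \omega^b(g)\unr(\lambda^{-1})(g)$ evaluated at a Frobenius lift gives an $\F_p^\times$-linear combination $c_1\lambda + c_2\lambda^{-1} \in E$; multiplying by inertia elements produces further such combinations, and when $a\not\equiv b \pmod{p-1}$ one can choose inertia elements separating $\omega^a$ and $\omega^b$, so a second independent combination lets one solve linearly for $\lambda \in E$, while in general one only extracts $\lambda + \lambda^{-1}$. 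You instead run a Galois-descent argument: a model over $E$ forces $\bar\rho^\tau \cong \bar\rho$ for every $\tau\in\operatorname{Gal}(\bar\F_p/E)$, and then uniqueness of the decomposition into characters forces $\tau(\lambda)\in\{\lambda,\lambda^{-1}\}$, with the second option ruled out when $a\not\equiv b$ because the two summands already differ on inertia. Both arguments are short and complete; the trace argument is more elementary (no appeal to uniqueness of irreducible constituents) and matches what the author actually invokes, while your descent argument is arguably more conceptual and explains \emph{why} the trace is the natural invariant here. One small thing worth making explicit in your write-up: when you conclude $a\equiv b \pmod{p-1}$ in the second matching case, you should note that $\omega^{p-1}=1$ (as $\omega$ takes values in $\F_p^\times$), so $\omega^a=\omega^b$ as characters on all of $G_{\Q_p}$, not merely on inertia; this is what lets you read off $\tau(\lambda)=\lambda^{-1}$ exactly.
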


\begin{proof}
Consider the traces of the image by $\bar\rho$ of some elements of $G_{\Q_p}$.
\end{proof}

The analogue of Proposition \ref{ratGalois} on the side of
representations of $G$ is:

\begin{prop}
\label{ratGL2}
Let $\lambda\in \bar\F_p^\times$, and $\chi$ a power of $\omega$.
Suppose that
$\pi(r,\lambda,\chi)^{ss}\oplus\pi([p-3-r],\lambda^{-1},\omega^{r+1}\chi)^{ss}$
is defined over $E$.
Then $\lambda + \lambda^{-1} \in E$.

If $p > 2$ and $r \neq p-2$, then $\lambda \in E$, and in particular each
Jordan-Hoelder factor of 
$\pi(r,\lambda,\chi)^{ss}\oplus\pi([p-3-r],\lambda^{-1},\omega^{r+1}\chi)^{ss}$
is defined over $E$.

Moreover, if $\lambda\not\in E$ then 
$\pi(r,\lambda,\chi)\oplus\pi([p-3-r],\lambda^{-1},\omega^{r+1}\chi)$
is irreducible over $E$.
\end{prop}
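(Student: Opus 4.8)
The plan is to prove the three assertions in order, each reducing to a concrete question about traces or about the structure of the induced representations. For the first assertion, I would argue exactly as in Proposition~\ref{ratGalois} but transported to the $\GL_2(\Q_p)$ side. The cleanest route is to invoke the semi-simple modulo $p$ Langlands correspondence (Theorem~\ref{descrcorresp}): the representation $\pi(r,\lambda,\chi)^{ss}\oplus\pi([p-3-r],\lambda^{-1},\omega^{r+1}\chi)^{ss}$ is, up to twist, the image $\Pi(V)$ of $V = (\unr(\lambda)\omega^{r+1}\oplus\unr(\lambda^{-1}))\otimes\chi$, and the correspondence is compatible with the field of definition (this is the content of the discussion in Paragraph~\ref{rationality}: a representation of $G_{\Q_p}$ over $E$ goes to a representation of $G$ over $E$, and conversely). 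So if the $\GL_2$-side is defined over $E$, then $V$ is defined over $E$, and Proposition~\ref{ratGalois} gives $\lambda + \lambda^{-1}\in E$. Alternatively, and more self-containedly, one can observe that $\lambda$ appears as the eigenvalue of the Hecke operator $T$ on $\pi(r,\lambda,\chi)$, so $\{\lambda,\lambda^{-1}\}$ is determined by the $E$-structure as the multiset of $T$-eigenvalues on $\Pi\otimes_E\bar\F_p$, whence the elementary symmetric function $\lambda+\lambda^{-1}$ (and also $\lambda\cdot\lambda^{-1}=1$) lies in $E$.

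For the second assertion, suppose $p>2$ and $r\neq p-2$. Here I would use the classification of irreducible representations recalled before Proposition~\ref{quotientI}, together with the ambiguity relations listed there. The key point is that when $r\neq p-2$ the two factors $\pi(r,\lambda,\chi)$ and $\pi([p-3-r],\lambda^{-1},\omega^{r+1}\chi)$ are \emph{distinguishable from each other intrinsically} (their "$r$"-invariants $r$ and $[p-3-r]$ are different, using $r\neq p-2$ and $p>2$ to ensure $r\neq[p-3-r]$ and to avoid the exceptional $\St$/character degeneration), so a map of $G$ over $E$ must preserve the decomposition into the two summands (after extending scalars). Then $\pi(r,\lambda,\chi)$ itself — or rather $\pi(r,\lambda,\chi)^{ss}$ — is defined over $E$, and since $\chi$ is a power of $\omega$ hence already defined over $\F_p$, Proposition~\ref{lambdainE} (in its twisted form) gives $\lambda\in E$. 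One has to handle the degenerate cases where $\pi(r,\lambda,\chi)$ is itself reducible, i.e.\ $(r,\lambda)\in\{(0,\pm1),(p-1,\pm1)\}$: there $\lambda=\pm1\in\F_p\subseteq E$ automatically, so the conclusion holds trivially. With $\lambda\in E$ it is immediate that each Jordan--Hölder factor, being of the form $\pi(\ast,\lambda^{\pm1},\omega^\ast)$ or $\St\otimes(\ldots)$ or a character, is defined over $E$.

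For the third assertion, assume $\lambda\notin E$; I want to show $\pi(r,\lambda,\chi)\oplus\pi([p-3-r],\lambda^{-1},\omega^{r+1}\chi)$ is irreducible as a representation of $G$ over $E$. The strategy is Galois descent: let $W$ be an $E$-form of this $\bar\F_p$-representation (the first assertion, applied with the observation that $\lambda+\lambda^{-1}$ is the trace of a Frobenius-type element, guarantees $E(\lambda)/E$ is the relevant extension — necessarily quadratic since $\lambda,\lambda^{-1}$ are the roots of $X^2-(\lambda+\lambda^{-1})X+1$ over $E$). Base-changing $W$ to $E(\lambda)=E(\lambda,\lambda^{-1})$ and then to $\bar\F_p$ recovers the sum of the two factors; the nontrivial element of $\mathrm{Gal}(E(\lambda)/E)$ swaps $\lambda\leftrightarrow\lambda^{-1}$, hence swaps the two summands (here again $r\neq p-2$, so the swap is genuinely nontrivial and not an inner automorphism fixing each factor). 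A nonzero $G$-stable $E$-subspace of $W$ would, after base change to $E(\lambda)$, be a Galois-stable nonzero proper subrepresentation of the sum of two non-isomorphic irreducibles — but the only Galois-stable subspaces are $0$, each individual summand, and the whole thing, and the two nonzero proper ones are interchanged by Galois, hence not $E$-rational. Therefore $W$ is irreducible over $E$.

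The main obstacle I anticipate is the careful bookkeeping of the exceptional cases in the second and third parts: one must systematically check, using the displayed list of isomorphisms $\pi(r,\lambda,\chi)=\pi(r,-\lambda,\chi\unr(-1))$, $\pi(0,\lambda,\chi)=\pi(p-1,\lambda,\chi)$, and the $\St$-plus-character degeneration at $\lambda=\pm1$, that when $r\neq p-2$ (and $p>2$) the pair $\{\pi(r,\lambda,\chi),\pi([p-3-r],\lambda^{-1},\omega^{r+1}\chi)\}$ consists of two \emph{non-isomorphic} constituents that are \emph{not} swapped by any issue other than the genuine Galois action $\lambda\leftrightarrow\lambda^{-1}$ — the rest of the argument is formal descent. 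I expect the hypothesis $r\neq p-2$ to be exactly what rules out the symmetric situation $r=[p-3-r]$, and $p>2$ to be what prevents small-prime coincidences like $p-1=0$.
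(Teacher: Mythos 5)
Your overall strategy mirrors the paper's (Proposition~\ref{ratGalois} transported through the correspondence for the first two parts, a short descent argument for the third), but two of your justifications are wrong as stated. In the second part you assert that $p>2$ and $r\neq p-2$ force $r\neq[p-3-r]$; this is false, since $r\equiv p-3-r\pmod{p-1}$ amounts to $2(r+1)\equiv 0\pmod{p-1}$, which for odd $p$ is satisfied not only at $r=p-2$ but also at $r=(p-3)/2$ (for instance $p=5$, $r=1$ gives $[p-3-r]=1=r$). The two factors are still non-isomorphic in that case, but the right discriminant is the twist, not the $r$-invariant: $r\neq p-2$ gives $\omega^{r+1}\chi\neq\chi$, and for $p>2$ this cannot be absorbed by the $\unr(-1)$-ambiguity in the classification because $\unr(-1)$ is unramified while a nontrivial power of $\omega$ is ramified. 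The paper avoids this pitfall entirely by passing to the Galois side, where the hypothesis $a\not\equiv b\pmod{p-1}$ of Proposition~\ref{ratGalois} is literally $r\neq p-2$.

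In the third part the parenthetical ``here again $r\neq p-2$'' is spurious: $r\neq p-2$ is not a hypothesis of the third claim, and in fact that claim is invoked in Proposition~\ref{JHTheta}(3) precisely when $r=p-2$. Your descent argument does not need it: $\lambda\notin E$ forces $\lambda\neq\pm1$, and for $r=p-2$ (where $\omega^{r+1}\chi=\chi$) an isomorphism $\pi(p-2,\lambda,\chi)\cong\pi(p-2,\lambda^{-1},\chi)$ would require $\lambda^{-1}\in\{\lambda,-\lambda\}$ together with the matching $\unr(-1)$-twist on $\chi$, which again forces $\lambda=\pm1$. So the two summands are non-isomorphic, the nontrivial element of $\text{Gal}(E(\lambda)/E)$ genuinely swaps them, and the descent goes through once you drop the appeal to $r\neq p-2$.
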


\begin{proof}
We can deduce the first two parts from Proposition \ref{ratGalois} via
the semi-simple Langlands
correspondence modulo $p$ (or we could prove it directly).

Suppose now that
$\lambda \neq \pm 1$, 
so that $\pi(r,\lambda,\chi)$ and $\pi([p-3-r],\lambda^{-1},\omega^{r+1}\chi)$
are both irreducible. Suppose that 
$\pi(r,\lambda,\chi)\oplus\pi([p-3-r],\lambda^{-1},\omega^{r+1}\chi)$
is reducible over $E$, then 
$\pi(r,\lambda,\chi)$ and $\pi([p-3-r],\lambda^{-1},\omega^{r+1}\chi)$
are both defined over $E$, so by Proposition \ref{lambdainE} we have that
$\lambda\in E$.
\end{proof}

\subsection{Jordan-Hoelder factors}

If $\Pi$ is a finite length representation of $G$ defined over $E$, we denote by
$\JH(\Pi)$ the set of the irreducible representations that appear as
its Jordan-Hoelder factors (taking into account multiplicities) as a
representation with coefficients in $E$.

Consider a $2$-dimensional Galois representation $V$ defined over $E$
such that $\det V$ is a power of the cyclotomic character, and $\Pi(V)$ the smooth
representation of $\GL_2(\Q_p)$ attached to it by the semi-simple
Langlands correspondence modulo $p$. Then $\JH(\Pi(V))$ is a finite set
of irreducible representations of $\GL_2(\Q_p)$ with coefficients in $E$.
It follows from Theorem \ref{descrcorresp} that not all finite set of
irreducible representations can appear in this way.
Denote by $\jh$ the set of sets of irreducible representations 
that can appear in this way.
%
%
We deduce from Propositions \ref{descrcorresp} and \ref{ratGL2} the
following description of $\jh$:

\begin{prop}
\label{JHTheta}
In what follows, $\chi$ always denotes a power of the cyclomotic
character.
$\jh$ contains exactly the following sets:
\begin{enumerate}
\item
The singleton $\{\pi(r,0,\chi)\}$ for any $r$ and any $\chi$

\item
The set
$\{\pi(r,\lambda,\chi),\pi([p-3-r],\lambda^{-1},\chi\omega^{r+1})\}$
for any $\chi$, any $\lambda\in E^\times$ and any $r$ such that
either $r$ is different from $0$, $p-3$, $p-1$ or $\lambda\not\in \{\pm 1\}$.

\item
The singleton
$\{\pi(p-2,\lambda,\chi)\oplus\pi(p-2,\lambda^{-1},\chi)\}$ for 
any $\chi$, and any $\lambda\in \bar\F_p\setminus E$ such that
$\lambda+\lambda^{-1} \in E$.

\item
For $p > 3$ and $\lambda = \pm 1$ and any $\chi$, 
the set $\{\St\otimes \left(\unr(\lambda)\chi \circ \det\right)$, 
$\left(\unr(\lambda)\chi\circ\det\right)$, $\pi(p-3,\lambda^{-1},\chi\omega)\}$.

\item
For $p=3$ and $\lambda = \pm 1$ and any $\chi$, the set
$\{\St\otimes \left(\unr(\lambda)\chi \circ \det\right)$,
$\left(\unr(\lambda)\chi\circ\det\right)$,
$\St\otimes \left(\unr(\lambda)\chi\omega \circ \det\right)$,
$\left(\unr(\lambda)\chi\omega\circ\det\right)\}$.

\item
For $p=2$ and any $\chi$, the set
$\{\St\otimes \left(\chi \circ \det\right)\text{ twice},
\left(\chi\circ\det\right)\text{ twice}\}$
\end{enumerate}
\end{prop}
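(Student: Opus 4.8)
The plan is to read off $\jh$ directly from the explicit description of the semi-simple mod $p$ Langlands correspondence in Theorem~\ref{descrcorresp}, and to handle rationality over $E$ at the end via Propositions~\ref{ratGalois} and~\ref{ratGL2}. I would first record a normalization: since $\det V$ is a power of $\omega$, in either branch of Theorem~\ref{descrcorresp} the character $\chi$ is forced to be of the form $\omega^j\unr(\pm1)$, and using $\pi(r,\lambda,\chi)=\pi(r,-\lambda,\chi\unr(-1))$ together with the analogous relations for the Steinberg and one-dimensional constituents, I may assume $\chi$ is an honest power of $\omega$, as in the statement.

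Next I would split into the two cases for $V$. If $V=(\ind\omega_2^{r+1})\otimes\chi$ then $\Pi(V)=\pi(r,0,\chi)$ is irreducible, which accounts for the singletons of item~(1) (and, conversely, produces all of them). If $V=(\unr(\lambda)\omega^{r+1}\oplus\unr(\lambda^{-1}))\otimes\chi$, then
$$\Pi(V)=\pi(r,\lambda,\chi)^{ss}\oplus\pi([p-3-r],\lambda^{-1},\omega^{r+1}\chi)^{ss},$$
and I would run a case analysis on which of the two induced quotients is reducible, using that $\pi(s,\mu,\psi)$ is irreducible unless $(s,\mu)\in\{(0,\pm1),(p-1,\pm1)\}$, in which case $\pi(s,\mu,\psi)^{ss}=\St\otimes(\psi\unr(\mu)\circ\det)\oplus(\psi\unr(\mu)\circ\det)$, together with the relation $\pi(0,\mu,\psi)=\pi(p-1,\mu,\psi)$ for $\mu\neq\pm1$. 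When both factors are irreducible one gets the two-element sets of item~(2); when exactly one is reducible, this forces $\lambda=\pm1$ and, up to the symmetry $r\leftrightarrow[p-3-r]$, $r\in\{0,p-1\}$ with $[p-3-r]$ represented by $p-3$, which gives item~(4) for $p>3$; and the case where both factors are reducible occurs exactly for $p\in\{2,3\}$ and yields items~(5) and~(6). Throughout I would use the isomorphism relations recalled before Proposition~\ref{quotientI} to check that distinct Galois data defining isomorphic $V$ produce the same set, and that the listed factors are genuinely distinct (resp. genuinely repeated) where the statement asserts this.

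It remains to feed in rationality. For reducible $V$ defined over $E$, Proposition~\ref{ratGalois} always gives $\lambda+\lambda^{-1}\in E$, and gives $\lambda\in E$ as soon as the two unramified-twisted powers of $\omega$ occurring in $V$ differ, that is as soon as $r\neq p-2$. In that case Proposition~\ref{ratGL2} shows that each of $\pi(r,\lambda,\chi)$ and $\pi([p-3-r],\lambda^{-1},\omega^{r+1}\chi)$ is itself defined over $E$, so $\JH$ over $E$ agrees with $\JH$ over $\bar\F_p$ and one lands in items~(2), (4), (5) or~(6) with $\lambda\in E^\times$. If instead $r=p-2$ (so $[p-3-r]$ is again $p-2$ and $\lambda$ may fail to lie in $E$), then by Proposition~\ref{ratGL2} the sum $\pi(p-2,\lambda,\chi)\oplus\pi(p-2,\lambda^{-1},\chi)$ is irreducible over $E$ precisely when $\lambda\notin E$, with $\lambda+\lambda^{-1}\in E$ forced, and its unique Jordan--H\"older factor over $E$ is the object of item~(3); conversely every such singleton arises this way.

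I expect the only genuine obstacle to be the bookkeeping at the small primes $p=2,3$ and at the boundary weights $r\in\{0,p-3,p-1\}$: there the two induced quotients degenerate, sometimes simultaneously, and one must match the resulting lists of constituents against items~(4)--(6) term by term, while keeping careful track of the collapses $\pi(0,\mu,\psi)=\pi(p-1,\mu,\psi)$. Everything else is substitution into Theorem~\ref{descrcorresp} and the structural facts recalled in Section~\ref{irreduciblerepr}.
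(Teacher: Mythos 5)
Your proposal is correct and follows exactly the route the paper intends: the paper gives no explicit proof, only the remark that the description of $\jh$ is deduced from Theorem~\ref{descrcorresp} and Proposition~\ref{ratGL2}, and what you have written is a careful unpacking of that deduction. The case analysis on which of the two induced quotients degenerates, the treatment of $p=2,3$ and of the boundary weights $r\in\{0,p-3,p-1\}$, and the identification of item~(3) as the $r=p-2$, $\lambda\notin E$ case via Proposition~\ref{ratGL2} are precisely the bookkeeping the paper leaves implicit, and your handling of them is sound.
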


We see that $r = p-2$ plays a special role, as in this case $r =
[p-3-r]$ and $\omega^{r+1} = 1$.
Moreover, for $p=2$, $r = 1 = p-1$ also plays a special role as
for all $\lambda$, $\JH(\pi(0,\lambda,\chi)) =
\JH(\pi(p-1,\lambda,\chi))$ so $0$ and $p-1$ play similar roles.

Note that in case (2) the set can be twice the same representation,
when 
$r = p-2$ and $\lambda = \pm 1$. Except in this case and case (6),
the sets are in fact made of distinct representations.

We note that two distinct elements of $\jh$ are disjoint sets.
We deduce immediatly the following important consequence:
\begin{prop}
\label{uniquejh}
$\JH(\Pi(V))$ is uniquely determined by any of the Jordan-Hoelder factors
that is contains.
\end{prop}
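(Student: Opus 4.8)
The plan is to deduce Proposition \ref{uniquejh} directly from the observation, recorded just above its statement, that two distinct elements of $\jh$ are disjoint subsets of the set of isomorphism classes of irreducible smooth $E$-representations of $G$. Granting that, the argument is a single line: by construction $\JH(\Pi(V))$ is an element of $\jh$, and if $\pi$ is any of its Jordan--Hölder factors, then $\JH(\Pi(V))$ is the \emph{unique} element of $\jh$ to which $\pi$ belongs; hence $\pi$ determines $\JH(\Pi(V))$. So the real content is the disjointness assertion, which I would establish by inspecting the list in Proposition \ref{JHTheta}.

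First I would note that the ``type'' of an irreducible $E$-representation of $G$ — whether it is a twist of $\St$, a character, a genuine principal series $\pi(r,\lambda,\chi)$ with $\lambda\ne 0$, a supersingular $\pi(r,0,\chi)$, or an ``induced type'' $\pi(p-2,\lambda,\chi)\oplus\pi(p-2,\lambda^{-1},\chi)$ with $\lambda\notin E$ (irreducible over $E$ by Proposition \ref{ratGL2}) — is an isomorphism invariant. This already shows that the members of a singleton of the form (1), the members of a pair of the form (2), the induced-type singletons of the form (3), and the Steinberg/character constituents occurring in (4)--(6) cannot be confused with one another, so it suffices to check disjointness \emph{within} each of these families. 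There, the explicit list of isomorphisms among the $\pi(r,\lambda,\chi)$ recalled in Section \ref{irreduciblerepr} — in particular $\pi(r,\lambda,\chi)=\pi(r,-\lambda,\chi\unr(-1))$, $\pi(0,\lambda,\chi)=\pi(p-1,\lambda,\chi)$ for $\lambda\ne\pm1$, and $\pi(r,0,\chi)=\pi(p-1-r,0,\chi\omega^r)$ — together with Proposition \ref{quotientIirr}, lets one recover from any single member the parameters $(r,\lambda,\chi)$ up to exactly these symmetries, and hence recover the whole set attached to it in Proposition \ref{JHTheta}.

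The step I expect to require the most care is checking that the ``partner'' operation $\pi(r,\lambda,\chi)\mapsto\pi([p-3-r],\lambda^{-1},\chi\omega^{r+1})$ appearing in case (2) is compatible with all these symmetries — that is, that it is a well-defined involution on isomorphism classes — and that it behaves correctly at the degenerate values $r=p-2$ (where $r=[p-3-r]$ and $\omega^{r+1}=1$, so the two factors may coincide and one falls into case (3)) and $r\in\{0,p-1\}$ (where $\pi(0,-)=\pi(p-1,-)$, so the two admissible choices of $r$ yield the same set, which is why these values of $r$ are excluded from (2) unless $\lambda\ne\pm1$). The upshot of that bookkeeping is that each set is listed in Proposition \ref{JHTheta} exactly once and that no two of the listed sets overlap; the small primes $p=2,3$ of cases (5)--(6) are then dispatched by hand by the same reasoning. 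Once all of this is in place, Proposition \ref{uniquejh} follows.
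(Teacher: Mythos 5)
Your reduction to the disjointness of distinct elements of $\jh$ is exactly the paper's argument — the paper records this disjointness in the sentence immediately preceding Proposition \ref{uniquejh} and deduces the proposition from it at once, with no further proof given. Your additional sketch of why the disjointness holds (which the paper omits) is sound, though note that the initial separation by ``type'' does not by itself rule out overlap between family (2) and the principal-series constituent $\pi(p-3,\lambda^{-1},\chi\omega)$ of family (4); your later bookkeeping about the excluded values of $r$ and $\lambda$ in case (2) is what actually settles that point.
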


We can also extract from Proposition \ref{JHTheta} the following remarks:

\begin{fact}
\label{factsjh}
\begin{enumerate}
\item
For any $(r,\lambda,\chi)$ there exists a unique $S \in \jh$
containing one Jordan-Hoelder factor of $\pi(r,\lambda,\chi)$, and then
it contains all of them.

\item
For any $\mu\in E$, all the Jordan-Hoelder factors of
$I(\sigma_{p-2}(b)_E)/(T^2-\mu T+1)$ are in the same $S \in \jh$.

\item
If $p = 2$  and any $\mu\in E$ and $i \in \{ 0,1 \}$, all the Jordan-Hoelder factors of 
$I(\sigma_{i,E})/(T^2-\mu T+1)$ are in the same $S \in \jh$.
\end{enumerate}
\end{fact}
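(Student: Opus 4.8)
The plan is to deduce all three parts directly from Proposition \ref{JHTheta} together with Proposition \ref{uniquejh}. For part (1), I would argue as follows. If $\lambda = 0$, then $\pi(r,0,\chi)$ is irreducible by the classification, so the singleton $\{\pi(r,0,\chi)\}$ of case (1) of Proposition \ref{JHTheta} is the unique $S \in \jh$ meeting it. If $\lambda \neq 0$, then $\pi(r,\lambda,\chi)^{ss}$ is either irreducible (when $(r,\lambda) \notin \{(0,\pm1),(p-1,\pm1)\}$) or splits into $\St \otimes (\chi\unr(\lambda)\circ\det) \oplus (\chi\unr(\lambda)\circ\det)$; in every case its Jordan--Hölder factors all appear together in one of the sets listed in cases (2)--(6) of Proposition \ref{JHTheta} — this is exactly how that list was assembled from Theorem \ref{descrcorresp}. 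Since by Proposition \ref{uniquejh} (equivalently, by the observation that distinct elements of $\jh$ are disjoint) a Jordan--Hölder factor lies in at most one $S \in \jh$, uniqueness follows, and the phrase ``then it contains all of them'' is just the statement that the relevant $S$ already contains the whole of $\JH(\pi(r,\lambda,\chi)^{ss})$.

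For part (2), I would first recall the explicit description of the Hecke operator $T$ referenced after Proposition \ref{lambdainE} (the description in Paragraph \ref{descrT}) to see that over $E$ the factorization $T^2 - \mu T + 1 = (T-\lambda)(T-\lambda^{-1})$ holds with $\lambda \in \bar\F_p$ satisfying $\lambda + \lambda^{-1} = \mu \in E$. Hence $I(\sigma_{p-2}(b)_E)/(T^2 - \mu T + 1) \otimes_E \bar\F_p$ has the same Jordan--Hölder factors as $\pi(p-2,\lambda,\omega^b) \oplus \pi(p-2,\lambda^{-1},\omega^b)$. Now the special feature of $r = p-2$, noted just before Proposition \ref{uniquejh}, is that $[p-3-r] = p-2 = r$ and $\omega^{r+1} = 1$, so $\pi([p-3-r],\lambda^{-1},\omega^{r+1}\chi) = \pi(p-2,\lambda^{-1},\chi)$; thus the two summands are precisely the pair that appears together in case (2) (or, when $\lambda \notin E$, in case (3)) of Proposition \ref{JHTheta}. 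Either way all Jordan--Hölder factors lie in a single $S \in \jh$. Part (3) is handled the same way: when $p = 2$ one has $p - 1 = 1$ and $\sigma_{i,E}$ for $i \in \{0,1\}$ are the only weights, and the remark before Proposition \ref{uniquejh} records that $\JH(\pi(0,\lambda,\chi)) = \JH(\pi(p-1,\lambda,\chi))$; combined with case (6) of Proposition \ref{JHTheta}, which exhibits a single $S$ containing all the relevant $\St$ and character factors, the claim follows.

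I do not expect a genuine obstacle here, since the statement is explicitly flagged in the excerpt as an immediate consequence of Proposition \ref{JHTheta}. The only point requiring a little care is the bookkeeping in part (2)/(3): one must match the quotient by the quadratic polynomial $T^2 - \mu T + 1$ with the pair $\pi(r,\lambda,\chi), \pi([p-3-r],\lambda^{-1},\chi\omega^{r+1})$, and verify that the normalization of $T$ in \cite{BL94} is such that the constant term $1$ corresponds to the product $\lambda \cdot \lambda^{-1}$ of the two Satake-type parameters — i.e.\ that the quadratic is the characteristic-polynomial-like object whose roots are the allowed eigenvalues of $T$. Once that identification is in place, the disjointness of the elements of $\jh$ does all the remaining work.
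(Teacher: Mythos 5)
Your proposal is correct and is exactly the route the paper intends: it states these as a ``Fact'' with no proof beyond the prelude ``We can also extract from Proposition \ref{JHTheta} the following remarks,'' and your argument simply carries out that extraction, using the disjointness of distinct elements of $\jh$ (Proposition \ref{uniquejh}) for uniqueness, the shape of case (1) for $\lambda=0$ and cases (2)--(6) for $\lambda\neq 0$ in part (1), the factorization of $T^2-\mu T+1$ together with the identity $[p-3-(p-2)]=p-2$, $\omega^{(p-2)+1}=1$ (cases (2) or (3) of Proposition \ref{JHTheta}) for part (2), and the $p=2$ specialization ($0$ and $p-1$ interchangeable, case (6)) for part (3). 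The only point worth tightening is the passage between $\bar\F_p$-factors and $E$-factors in part (2)/(3): when $\lambda\notin E$ there is only one Jordan--H\"older factor over $E$, which is the single irreducible $E$-representation $\pi(p-2,\lambda,\chi)\oplus\pi(p-2,\lambda^{-1},\chi)$ of case (3); you do acknowledge this, but it should be stated as the claim over $E$ rather than obtained by base-changing to $\bar\F_p$ first.
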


\subsection{The $p$-adic correspondence in the crystalline case}

\subsubsection{The correspondence}
\label{correspondence}

The $p$-adic Langlands correspondence attaches to a representation $V$ of
dimension $2$ of $G_{\Q_p}$ over some finite extension of $\Q_p$ a
unitary Banach representation $\Pi(V)$ of $G$ over the same field. In the case
where the Galois representation is irreducible and crystalline with
distinct Hodge-Tate weights, this unitary Banach
representation has an explicit description, as was conjectured in
\cite{Br03b} and proved in \cite{BB10}.
Let $V_{k,a_p}$ be such a crystalline representation. We denote
$\Pi(V_{k,a_p})$ by $\Pi_{k,a_p}$. 
We also denote by $\bar\Pi_{k,a_p}$ the semi-simplification of the
reduction modulo $p$ with respect
to any $G$-invariant, finite-type lattice in $\Pi_{k,a_p}$
Recall that a lattice in a locally algebraic representation $V$ of $G$ over $\K$ is
a sub-$R[G]$-module of $V$ that generates $V$ over $\K$, and
that does not contain a $\K$-line. 

Moreover, this correspondence is compatible with reduction modulo $p$ by
\cite{Ber10}. This means that $\bar\Pi_{k,a_p}$ is the
semi-simple representation of $G$ attached to $\bar{V}_{k,a_p}^{ss}$ by
the semi-simple Langlands correspondence modulo $p$. So the computation
of $\bar\Pi_{k,a_p}$ is equivalent to the computation of
$\bar{V}_{k,a_p}^{ss}$.

\subsubsection{Description of $\Pi_{k,a_p}$}

We can describe $\Pi_{k,a_p}$ as follows. 
Let $r = k-2$.

We take here $\K$ any finite extension of $\Q_p$ containing $a_p$, with ring of
integers $R$ and residue field $E$, so that the Galois
representation $V_{k,a_p}$ has a model over $\K$. 
We denote by $\Theta_{k,a_p}$ the image of $I(\Sym^r R^2) \subset
I(\Sym^r \K^2)$ in the quotient $I(\Sym^r \K^2)/(T-a_p)I(\Sym^r \K^2)$.
As $I(\Sym^r R^2)$ is a sub-$R[G]$-module of $I(\Sym^r \K^2)$, 
$\Theta_{k,a_p}$ is a sub-$R[G]$-module of 
$I(\Sym^r \K^2)/(T-a_p)I(\Sym^r \K^2)$.

We now recall the following result:

\begin{prop}
\label{descrcorresp0}
$\Theta_{k,a_p}$ is a lattice in $I(\Sym^r \K^2)/(T-a_p)I(\Sym^r \K^2)$,
and $\Pi_{k,a_p}$ is the completion of $I(\Sym^r \K^2)/(T-a_p)I(\Sym^r \K^2)$
with respect to this lattice. 
\end{prop}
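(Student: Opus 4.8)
The plan is to reduce everything to the known description of $\Pi_{k,a_p}$ from \cite{BB10} (as conjectured in \cite{Br03b}), together with the characterization of what it means to be a lattice. First I would recall that $I(\Sym^r\K^2)/(T-a_p)$ is, by Proposition \ref{quotientI} applied after reduction — or rather by the Breuil--Berger--Colmez theory in characteristic $0$ — precisely the locally algebraic vector $\Pi_{k,a_p}^{\mathrm{l.alg.}}$ sitting densely inside the Banach space $\Pi_{k,a_p}$; this identification is part of the statement of the $p$-adic local Langlands correspondence in the crystalline case, using that $a_p$ has positive valuation so $V_{k,a_p}$ is irreducible and the locally algebraic representation is $\Sym^r\K^2\otimes(\text{an irreducible principal series})$, which is exactly $I(\Sym^r\K^2)/(T-a_p)$ by the classical theory of the Hecke operator $T$ (Barthel--Livné, \cite{BL94}). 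So the content to check is just that $\Theta_{k,a_p}$, defined as the image of the integral compact induction, is a lattice in the sense recalled just before the proposition, and that the completion it defines is the correct unitary Banach space $\Pi_{k,a_p}$.

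Next I would verify the two defining properties of a lattice for $\Theta_{k,a_p}$. That it is a sub-$R[G]$-module generating $I(\Sym^r\K^2)/(T-a_p)$ over $\K$ is immediate: $I(\Sym^r R^2)$ generates $I(\Sym^r\K^2)$ over $\K$ (clear from the basis $[g,v]$), and taking images through a $\K$-linear surjection preserves this. The real point is that $\Theta_{k,a_p}$ contains no $\K$-line, equivalently that it is separated for the $p$-adic topology, equivalently that $I(\Sym^r R^2)\cap (T-a_p)I(\Sym^r\K^2)$ is "saturated enough" — more precisely that the image $\Theta_{k,a_p}$ is $p$-adically separated inside the quotient. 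This is where I expect the main obstacle to lie. One clean way: the Banach completion $\Pi_{k,a_p}$ is known to be nonzero, admissible and unitary (this is the BB10 theorem), hence its unit ball is a genuine lattice $\Theta$ in the locally algebraic vectors; and $\Theta_{k,a_p}$ is, up to commensurability, forced to coincide with $\Theta$ because the $G$-action together with $K$-admissibility pins down the lattice up to bounded index — any $G$-stable, finitely generated-over-$R$-in-each-$K$-isotypic-piece lattice in an irreducible unitary Banach representation is commensurable to the unit ball. Alternatively, one cites directly \cite[§]{Br03b} or the integral refinement in \cite{BB10}/\cite{Ber10}, where $\Theta_{k,a_p}$ is exactly the lattice used to define the reduction $\bar\Pi_{k,a_p}$; in that case the proposition is essentially a restatement.

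Finally, granting that $\Theta_{k,a_p}$ is a lattice, the completion of $I(\Sym^r\K^2)/(T-a_p)$ with respect to it is by construction a unitary Banach representation of $G$ whose locally algebraic vectors are $I(\Sym^r\K^2)/(T-a_p)$ with the correct $G$-action; by the uniqueness part of the $p$-adic Langlands correspondence (the Banach space representation attached to the irreducible $V_{k,a_p}$ is characterized by its locally algebraic vectors together with the property of being the universal unitary completion, or simply by \cite{BB10}), this completion must be $\Pi_{k,a_p}$. I would phrase the write-up so that the hard analytic input — nonvanishing, admissibility, and the identification of locally algebraic vectors — is quoted from \cite{BB10} and \cite{Br03b}, and the only thing checked by hand is the elementary fact that the image of an integral compact induction is a sub-$R[G]$-module and the commensurability argument showing it is a lattice. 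The main obstacle, to be explicit, is establishing $p$-adic separatedness of $\Theta_{k,a_p}$: proving it directly would require controlling the intersection $I(\Sym^r R^2)\cap(T-a_p)I(\Sym^r\K^2)$, which is delicate because $v(a_p)>0$ makes $T-a_p$ far from unimodular on the integral lattice; it is cleaner to deduce separatedness from the known existence of the admissible unitary Banach $\Pi_{k,a_p}$.
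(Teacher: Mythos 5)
Your overall architecture matches the paper's — cite \cite{BB10} for the existence and uniqueness of the commensurability class of lattices, identify $I(\Sym^r\K^2)/(T-a_p)$ with the locally algebraic representation $\pi(\alpha)$, and observe that the only non-formal part of ``$\Theta_{k,a_p}$ is a lattice'' is that it contains no $\K$-line. But there is one genuine gap and one place where the paper's argument is cleaner than yours.

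The gap: you assert that the locally algebraic representation is $\Sym^r\K^2\otimes(\text{an irreducible principal series})$ and that this is $I(\Sym^r\K^2)/(T-a_p)$ by the classical theory of $T$. That identification is exactly what \cite[Prop.\ 3.2.1]{Br03b} gives, \emph{except} when $a_p\in\{\pm(1+p)p^{k/2-1}\}$. In that exceptional case the smooth part is not an irreducible principal series: both $\pi(\alpha)$ of \cite{BB10} and $I(\Sym^r\K^2)/(T-a_p)$ are of the form $\Sym^{k-2}\K^2\otimes(\unr((p+1)/a_p)\circ\det)\otimes W$ with $W$ a \emph{non-split} extension of the trivial representation by Steinberg, and one must invoke \cite{SS91} to conclude there is a unique isomorphism class of such extensions. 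The paper explicitly states that it writes out this proof precisely because no reference covers this case; your plan would silently fail there.

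On the lattice property, your appeal to ``any $G$-stable, $K$-admissible lattice in an irreducible unitary Banach representation is commensurable to the unit ball'' is both imprecise as stated and heavier than needed. The paper's argument is more elementary: $\Theta_{k,a_p}$ is of the form $R[G]x$ for a single $x\in\pi(\alpha)$ (because $\Sym^r R^2$ is a cyclic $R[K]$-module), and any $R[G]x$ is contained in a lattice as soon as at least one lattice $\Lambda$ exists — simply scale so that $p^nx\in\Lambda$, and then $R[G]x\subset p^{-n}\Lambda$. This uses only the \emph{existence} of one lattice from \cite[Cor.\ 5.3.4]{BB10}, not admissibility of the Banach completion nor any commensurability theorem. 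I would encourage you to replace your commensurability heuristic with this cyclicity observation, and to split the identification with $\pi(\alpha)$ into the generic case (cite \cite{Br03b}) and the boundary case (cite \cite{SS91}).
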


We sketch a proof of Proposition \ref{descrcorresp0}, as we could not
find a reference that explains what happens in the case of 
$a_p \in \{\pm (1+p)p^{k/2-1}\}$.

\begin{proof}
This is essentially the main result of \cite{BB10}. In this article, the
authors attach to $k$ and $a_p$ a locally algebraic representation of $G$
over $\K$, which they denote by $\pi(\alpha)$ and they show
that there exists a unique commensurability class of lattices in
$\pi(\alpha)$ (\cite[Corollaire 5.3.4]{BB10}). Then $\Pi_{k,a_p}$ is
defined to be the completion of $\pi(\alpha)$ with respect to any lattice
of this class.

So we must show that the representation $\pi(\alpha)$ of \cite{BB10} is
isomorphic to $I(\Sym^r \K^2)/(T-a_p)I(\Sym^r \K^2)$,
and that $\Theta_{k,a_p}$ is a lattice in this representation. This will
show that both definitions of $\Pi_{k,a_p}$ coincide and prove
Proposition \ref{descrcorresp0}.

The fact that $\pi(\alpha)$ is isomorphic to 
$I(\Sym^r \K^2)/(T-a_p)I(\Sym^r \K^2)$ is the consequence of \cite[Proposition 3.2.1]{Br03b}.
It follows directly from the proposition when $a_p \not\in \{\pm
(1+p)p^{k/2-1}\}$. When $a_p \in \{\pm (1+p)p^{k/2-1}\}$, then we notice
that both $\pi(\alpha)$ and $I(\Sym^r \K^2)/(T-a_p)I(\Sym^r \K^2)$ are of
the form $\Sym^{k-2}\K^2 \otimes \left(\unr((p+1)/a_p)\circ\det\right) \otimes W$
where $W$ is a smooth representation of $G$ which is a non-split
extension of the trivial representation by the Steinberg representation.
By the results of \cite{SS91} there is only one isomorphism class of such
representations.

Finally, we need to see that $\Theta_{k,a_p}$ is a lattice. 
The only part that is not trivial is the fact that it does not contain a
$\K$-line.
This is proved directly in \cite{Br03b} for small values of $k$. In
general this a consequence of \cite[Corollaire 5.3.4]{BB10}: it is enough
to see that $\Theta_{k,a_p}$ is contained in a lattice, but as
$\Theta_{k,a_p}$ is of the form $R[G]x$ for an $x\in \pi(\alpha)$, this
is true as soon as we know that there exists at least one lattice in
$\pi(\alpha)$.
\end{proof}

\begin{coro}
$\bar\Pi_{k,a_p}$ is equal to $\bar\Theta_{k,a_p}^{ss}$. 
\end{coro}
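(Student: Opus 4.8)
The plan is to deduce this directly from Proposition \ref{descrcorresp0} together with the definition of $\bar\Pi_{k,a_p}$ given in Paragraph \ref{correspondence}. Recall that $\bar\Pi_{k,a_p}$ is defined as the semi-simplification of the reduction modulo $p$ of $\Pi_{k,a_p}$ with respect to \emph{any} $G$-invariant finite-type lattice, so the first thing to pin down is that this is well-defined, i.e.\ independent of the choice of lattice. By Proposition \ref{descrcorresp0}, $\Pi_{k,a_p}$ is the completion of the locally algebraic representation $\pi := I(\Sym^r\K^2)/(T-a_p)I(\Sym^r\K^2)$ with respect to the lattice $\Theta_{k,a_p}$, and the proof of that Proposition shows (via \cite[Corollaire 5.3.4]{BB10}) that all lattices in $\pi$ are commensurable. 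Two commensurable lattices $\Theta, \Theta'$ satisfy $p^N\Theta \subset \Theta' \subset p^{-N}\Theta$ for some $N$, and this forces their reductions to have the same semi-simplification; the same argument applies to lattices in the Banach completion. Hence $\bar\Pi_{k,a_p}$ is well-defined and may be computed using the specific lattice $\Theta_{k,a_p}$.

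The second step is then simply to identify $\Theta_{k,a_p}/\mathfrak{m}\Theta_{k,a_p}$, where $\mathfrak{m}$ is the maximal ideal of $R$. By definition $\Theta_{k,a_p}$ is the image of $I(\Sym^r R^2)$ in $\pi$, so it is a quotient of $I(\Sym^r R^2)$, and one checks that reduction modulo $\mathfrak{m}$ is exact enough here that $\bar\Theta_{k,a_p} := \Theta_{k,a_p}\otimes_R E$ is a quotient of $I(\Sym^r E^2) = I(\Sym^r R^2)\otimes_R E$ — indeed it is the image of $I(\Sym^r E^2)$ in $(\pi\text{-completion})\otimes_R E$. Taking semi-simplifications, $\bar\Theta_{k,a_p}^{ss}$ is precisely the semi-simplification of the reduction of $\Pi_{k,a_p}$ with respect to the lattice $\Theta_{k,a_p}$, which by the first step equals $\bar\Pi_{k,a_p}$.

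I expect the main obstacle to be the bookkeeping around exactness: one must make sure that passing to the reduction modulo $\mathfrak{m}$ commutes appropriately with the formation of $\Theta_{k,a_p}$ as an image, and that taking the quotient by $(T-a_p)$ then reducing gives the same thing as reducing then quotienting by $(T-\bar a_p)$ — this is where the hypothesis $v(a_p)>0$, hence $\bar a_p = 0$, and the fact that $I(\Sym^r R^2)$ is $R$-free, get used. Everything else (commensurability of lattices, behaviour of semi-simplification under commensurability) is standard once Proposition \ref{descrcorresp0} is in hand, so the Corollary is genuinely a formal consequence of the results already recalled.
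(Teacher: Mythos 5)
Your core argument is correct and is exactly the (implicit) reasoning behind the paper's corollary, which is stated with no proof: Proposition~\ref{descrcorresp0} shows that $\Theta_{k,a_p}$ is a lattice in $\pi(\alpha) = I(\Sym^r\K^2)/(T-a_p)I(\Sym^r\K^2)$; by \cite[Corollaire 5.3.4]{BB10} any two lattices are commensurable, and commensurable lattices have reductions with the same semi-simplification; since $\bar\Pi_{k,a_p}$ is defined as the semi-simplification of the reduction with respect to an arbitrary lattice, one may compute it with $\Theta_{k,a_p}$, giving $\bar\Theta_{k,a_p}^{ss}$. The passage from lattices in $\pi(\alpha)$ to lattices in its completion $\Pi_{k,a_p}$, which you note, is indeed the one remaining glue step, and it is standard.

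One side remark in your last paragraph should be dropped, because as written it is false and would mislead you elsewhere in the paper: it is \emph{not} the case that ``taking the quotient by $(T-a_p)$ then reducing gives the same thing as reducing then quotienting by $(T-\bar a_p)$.'' The reduction $\bar\Theta_{k,a_p}$ is $I(\Sym^r E^2)/M_{k,a_p}$ where $M_{k,a_p}$ is the image of $(T-a_p)I(\Sym^r\K^2)\cap I(\Sym^r R^2)$, and this subspace is in general strictly larger than $T\cdot I(\Sym^r E^2)$: it also contains the reductions of elements $(T-a_p)f$ with $f$ having denominators. Detecting these extra relations is the entire content of the algorithm in Sections~\ref{computefromrel}--\ref{computerel}, so the commutation you propose would collapse the problem to something trivial and is not used (nor needed) for this corollary. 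The corollary only uses that $\Theta_{k,a_p}$ is a lattice; no identification of its reduction with a naive quotient by $T$ is required.
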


\subsubsection{Notation}

Let $\M_{k,a_p}$ be the submodule
$(T-a_p)I(\Sym^r \K^2) \cap I(\Sym^r R^2)$ of $I(\Sym^r \K^2)$. It is
$G$-invariant and is a free submodule of 
$I(\Sym^r R^2)$, 
and $\Theta_{k,a_p} = I(\Sym^r R^2) / \M_{k,a_p}$.
We denote by $M_{k,a_p}$ the image in $I(\Sym^r E^2)$ of $\M_{k,a_p}$
so that $\bar\Theta_{k,a_p} = I(\Sym^r E^2) /M_{k,a_p}$.

\subsection{Identifying $\bar{V}_{k,a_p}$ from Jordan-Hoelder factors
of $\bar\Theta_{k,a_p}$}

\begin{prop}
\label{fromjh}
Let $V$ be a representation of $G_{\Q_p}$ of dimension $2$. Let
$\bar\Pi(V)$ be the reduction modulo $p$ of $\Pi(V)$ with respect to some
$G$-invariant lattice. Suppose that
we have a surjection $\Pi' \to \bar\Pi(V)$ where $\Pi'$ is a finite length
representation of $G$ over $E$, and suppose that
there exists a unique $S \in \jh$ such that $S \subset \JH(\Pi')$. Then
$\JH(\bar\Pi(V)) = S$.
\end{prop}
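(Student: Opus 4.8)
The plan is to exploit the rigidity encoded in Proposition \ref{uniquejh}: once we know that $\bar\Pi(V)$ contains a single Jordan--Hoelder factor lying in $S$, the set $\JH(\bar\Pi(V))$ is forced to be exactly $S$. First I would recall that $\bar\Pi(V) = \Pi(\bar V^{ss})$ by the compatibility of the $p$-adic and mod $p$ Langlands correspondences with reduction (Section \ref{correspondence}), so $\JH(\bar\Pi(V))$ is of the shape classified in Proposition \ref{JHTheta}; in particular $\JH(\bar\Pi(V)) \in \jh$, or more precisely it is a union of elements of $\jh$ if $\bar V^{ss}$ is not ``of one type''. Actually, since $V$ has fixed dimension $2$, $\JH(\Pi(\bar V^{ss}))$ is either a single element of $\jh$ or — in the reducible split case where the two Galois characters give two different principal-series-type pieces — possibly a disjoint union of (at most two) elements of $\jh$. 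The key structural input is the last sentence before Proposition \ref{uniquejh}: \emph{two distinct elements of $\jh$ are disjoint}.

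The core of the argument is then: the surjection $\Pi' \twoheadrightarrow \bar\Pi(V)$ gives $\JH(\bar\Pi(V)) \subseteq \JH(\Pi')$ (as multisets, counting multiplicity). By hypothesis $S$ is the unique element of $\jh$ contained in $\JH(\Pi')$. Now $\JH(\bar\Pi(V))$ is nonempty and is itself a union of elements of $\jh$; pick any such constituent element $S' \in \jh$ with $S' \subseteq \JH(\bar\Pi(V)) \subseteq \JH(\Pi')$. By uniqueness, $S' = S$. So $S \subseteq \JH(\bar\Pi(V))$. For the reverse inclusion, suppose $\JH(\bar\Pi(V))$ strictly contained another element $S'' \in \jh$, $S'' \neq S$; then $S'' \subseteq \JH(\Pi')$ as well, contradicting uniqueness of $S$ in $\JH(\Pi')$. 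Since $\JH(\bar\Pi(V))$ is exhausted by elements of $\jh$ and the only one available is $S$, we conclude $\JH(\bar\Pi(V)) = S$.

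The one point that deserves care — and which I expect to be the main obstacle — is justifying that $\JH(\bar\Pi(V))$ is literally a \emph{single} element of $\jh$ rather than a disjoint union of two, so that ``$S \subseteq \JH(\bar\Pi(V))$ and no other element of $\jh$ fits inside $\JH(\Pi')$'' really pins it down. Here one uses that $\bar V^{ss}$ has dimension $2$: inspecting Proposition \ref{JHTheta} and Theorem \ref{descrcorresp}, each $2$-dimensional semi-simple $\bar V$ is attached to a $\Pi(\bar V)$ whose $\JH$ set is exactly one of the listed sets (the reducible non-split-into-two-principal-series subtlety is already folded into the cases (4)--(6) of Proposition \ref{JHTheta}, and the split generic case yields a two-element set that is nonetheless a single element of $\jh$ by construction of $\jh$). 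Thus $\JH(\Pi(\bar V^{ss})) \in \jh$ directly, and Proposition \ref{uniquejh} applies verbatim: it is determined by any one of its factors, hence equals $S$. I would phrase the write-up so that this appeal to Proposition \ref{uniquejh} is explicit, keeping the multiplicity bookkeeping light since all we need is the set-level statement.
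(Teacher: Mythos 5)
Your argument is correct and matches the reasoning the paper leaves implicit (the paper states Proposition \ref{fromjh} without a proof environment, as it is meant to follow directly from the definitions and Proposition \ref{uniquejh}). The essential chain is the one you write down: compatibility of the correspondences gives $\bar\Pi(V) = \Pi(\bar V^{ss})$, so by the very definition of $\jh$ the multiset $\JH(\bar\Pi(V))$ \emph{is} an element of $\jh$; the surjection $\Pi' \twoheadrightarrow \bar\Pi(V)$ gives $\JH(\bar\Pi(V)) \subseteq \JH(\Pi')$; and then the assumed uniqueness of $S$ among elements of $\jh$ contained in $\JH(\Pi')$ forces $\JH(\bar\Pi(V)) = S$.

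One remark: the ``main obstacle'' you flag — whether $\JH(\bar\Pi(V))$ could be a disjoint union of several elements of $\jh$ rather than a single one — is not actually an obstacle. The set $\jh$ is by construction the collection of all sets of the form $\JH(\Pi(\bar V))$ as $\bar V$ ranges over two-dimensional semi-simple Galois representations over $E$ with determinant a power of $\omega$, so $\JH(\Pi(\bar V^{ss}))$ lands in $\jh$ tautologically; there is no case distinction to make, and the reducible split case is already one of the items in Proposition \ref{JHTheta}. You do arrive at this conclusion in your last paragraph, so the proof is sound; but the detour through ``possibly a disjoint union of at most two elements of $\jh$'' and the separate ``reverse inclusion'' step could be removed entirely, since once you know $\JH(\bar\Pi(V)) \in \jh$ the uniqueness of $S$ gives equality in a single step. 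The only genuine hypothesis to spell out is that $\bar V^{ss}$ has determinant a power of $\omega$, which holds in the intended situation $V = V_{k,a_p}$.
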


In the situation that interests us, $\Pi(V) = \bar\Theta_{k,a_p} = 
I(\sigma_{k-2,E})/M_{k,a_p}$, and we will try to find a
subspace $M'$ of $M_{k,a_p}$ with $\Pi'$ the quotient of
$I(\sigma_{k-2,E})$ by the subspace generated by $M'$.

\section{Computing the reduction from a set of relations}
\label{computefromrel}

In this Section we write $\bar\Theta$ for $\bar\Theta_{k,a_p}$ and $r = k-2$.
We will explain how to obtain information on 
$\bar\Theta$ as a representation of $G$, and more precisely about
$\JH(\bar\Theta)$, by doing only linear algebra computations using
subspaces of finite dimension of $M_{k,a_p}$. We denote by $E$ a finite
field containing the residue field of $\Q_p(a_p)$.

\subsection{Use of a filtration}

Suppose that we are given a decreasing filtration $(V_i)$ of $\sigma_{r,E}$, with $V_0
= \sigma_{r,E}$ and such that $V_i/V_{i+1} = J_i$ is an irreducible representation of
$K$. Then $J_i$ is isomorphic to $\sigma_{a_i}(b_i)_E$ for some $a_i \in
\{0\dots,p-1\}$ and some $b_i \in \Z/(p-1)\Z$.
We describe such a filtration in Section \ref{filtration}. 

Let $\Lambda = I(\sigma_{r,E})$. Then we have a filtration $(\Lambda_i)$ 
of sub-$G$-representations by setting $\Lambda_i = I(V_i)$. Let 
$Q_i = \Lambda_i/\Lambda_{i+1} = I(J_i)$. It is endowed with a Hecke
operator $T_i$ as in Paragraph \ref{compactind}. 

From the filtration $(\Lambda_i)$ we get a filtration $(\pi(\Lambda_i))$
on $\bar\Theta$, and we let $F_i = \pi(\Lambda_i)/\pi(\Lambda_{i+1})$, so
that $F_i$ is a quotient of $Q_i$. Let $\pi_i$ be the map $Q_i \to F_i$.
Note that each $F_i$ is defined over $E$.

\begin{theo}
\label{factor}
For each $i$, at least one of the following is true:
\begin{enumerate}
\item 
$F_i = 0$.

\item
$\pi_i((T_i-\lambda)Q_i) = 0$ for some $\lambda\in E$.

\item
$\pi_i((T_i^2-\mu T_i + 1)Q_i) = 0$ for some $\mu \in E$.
\end{enumerate}
Moreover, we can be in case (3) but not (2) only for $a_i = p-2$, or
$p=2$.
\end{theo}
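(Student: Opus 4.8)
The plan is to analyze $F_i$ directly as a finite length quotient of $Q_i = I(J_i) = I(\sigma_{a_i}(b_i)_E)$. Since $\bar\Theta$ is the reduction of $\Pi_{k,a_p}$ and hence has finite length (its Jordan–Hölder set lies in one element of $\jh$), each $F_i$ is a finite length quotient of $Q_i$. Assume $F_i \neq 0$, so we are not in case (1). The key structural input is Proposition \ref{quotientI}: the Jordan–Hölder factors of a finite length quotient of $I(\sigma_{a_i}(b_i))$ are severely constrained — they are all of the form $\pi(a_i,\lambda,\omega^{b_i})$ (with $\lambda \notin \{\pm 1\}$ when $a_i \in \{0,p-1\}$), together with possible Steinberg and character factors when $a_i \in \{0,p-1\}$.

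First I would treat the generic case $a_i \notin \{0,p-1\}$. Here $Q_i \otimes_E \bar\F_p$ is a sum of copies of $I(\sigma_{a_i}(b_i))_{\bar\F_p}$, on which $T_i$ acts; the scalars $\lambda$ occurring as Jordan–Hölder factors $\pi(a_i,\lambda,\omega^{b_i})$ of $F_i$ are exactly the roots of the minimal polynomial $P(X)$ by which $T_i$ acts on $F_i$ (this is essentially the theory of $I(\sigma_a)/(T-\lambda)$ from \cite{BL94}: $F_i$ is a quotient of $I(\sigma_{a_i}(b_i))/P(T_i)$, and distinct $\pi(a_i,\lambda,\omega^{b_i})$ are non-isomorphic). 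By Fact \ref{factsjh}(1), all these factors lie in a single $S \in \jh$. Looking at the structure of $\jh$ in Proposition \ref{JHTheta}: for $a_i \neq p-2$ and $a_i \notin\{0,p-1\}$ the set $S$ in case (2) pins down a single value of $\lambda$ (up to the identification already built into $\pi(a_i,\lambda,\cdot)$), so $P(X) = X - \lambda$ and we are in case (2). For $a_i = p-2$, the set $S$ of case (2)/(3) allows both $\lambda$ and $\lambda^{-1}$ as well as the case-(3) pair with $\lambda \notin E$, $\lambda+\lambda^{-1}\in E$; thus $P(X)$ divides $(X-\lambda)(X-\lambda^{-1}) = X^2 - (\lambda+\lambda^{-1})X + 1$, which has coefficients in $E$ — so $\pi_i((T_i^2 - \mu T_i + 1)Q_i) = 0$ with $\mu = \lambda + \lambda^{-1} \in E$, giving case (3), and we need not be able to reduce to case (2) precisely because $\lambda$ may fail to lie in $E$.

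Next I would handle $a_i \in \{0,p-1\}$ with $p > 2$. If some Jordan–Hölder factor of $F_i$ is a $\pi(a_i,\lambda,\omega^{b_i})$ with $\lambda \notin \{\pm 1\}$, then by Proposition \ref{JHTheta}(2) (with $r = 0$ or $r = p-1$, so $[p-3-r] \in \{p-3, p-2\}$) again only $\lambda$ itself is forced, and since $p > 2$ and the relevant $r$ is not $p-2$, Proposition \ref{ratGL2} gives $\lambda \in E$, so $T_i$ acts on $F_i$ by the scalar $\lambda \in E$ (using that the other factors in $S$ are Steinberg/characters, which do not arise as quotients through the $I$-map with this $T_i$-scalar): case (2) with this $\lambda$. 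If instead the factors of $F_i$ are all among the Steinberg/character ones (the $\lambda = \pm 1$ situation of Proposition \ref{JHTheta}(4)), then the relevant $T_i$-eigenvalue is $\lambda = \pm 1 \in E$ and again $\pi_i((T_i - \lambda)Q_i) = 0$, case (2). Finally, for $p = 2$: here $I(\sigma_{i,E})$ behaves like the $a_i = p-2 = 0$ case — by Fact \ref{factsjh}(3) all Jordan–Hölder factors of $I(\sigma_{i,E})/(T^2 - \mu T + 1)$ lie in one $S$, and the $\jh$-set of case (6) forces $P(X) \mid X^2 - \mu X + 1$ for some $\mu \in E$ (indeed $\mu = 0$), putting us in case (3); this is exactly the exceptional "$p = 2$" clause.

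The main obstacle is the bookkeeping in the second step: one must argue carefully that the minimal polynomial of $T_i$ on $F_i$ really has its roots among the allowed $\lambda$'s, using that $F_i$ is a quotient of $Q_i$ and invoking the non-isomorphism of the $\pi(a_i,\lambda,\omega^{b_i})$ for distinct $\lambda$ (and the fact, from Proposition \ref{quotientIirr}, that a given irreducible $Q$ receives a nonzero map from $I(\sigma_{a_i}(b_i))$ killing a unique $T_i - \lambda$). Once the possible $\lambda$'s are identified, pinning down whether their product/sum lies in $E$ — and hence whether we land in case (2) or only case (3) — is a direct reading of Propositions \ref{JHTheta} and \ref{ratGL2}, and the "$a_i = p-2$ or $p = 2$" restriction on case (3) falls out of precisely which $S \in \jh$ can contain a $T_i$-orbit of size two.
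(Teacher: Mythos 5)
Your overall strategy — constrain $\JH(F_i)$ via Proposition \ref{quotientI}, read off the structure of $\jh$ from Proposition \ref{JHTheta}, invoke Proposition \ref{quotientIirr} and rationality to land on $\lambda \in E$ or $\mu \in E$ — is the same as the paper's, and the case division by $a_i$ matches the paper's lemmas case by case (generic $a_i$, $a_i = p-2$, $a_i \in \{0,p-1\}$, $p = 2$).

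The genuine gap is in the sentence ``the scalars $\lambda$ occurring \dots are exactly the roots of the minimal polynomial $P(X)$ by which $T_i$ acts on $F_i$ \dots $F_i$ is a quotient of $I(\sigma_{a_i}(b_i))/P(T_i)$.'' This is precisely the step that has to be \emph{proved}, and it is not a consequence of \cite{BL94} or of the finite length of $F_i$ alone: $\ker \pi_i$ is a $G$-stable subspace of $Q_i$, but a priori it need not be $T_i$-stable, so $T_i$ does not obviously ``act on $F_i$'' and one cannot simply speak of its minimal polynomial there. The paper gets the statement $\pi_i(P(T_i)Q_i) = 0$ by a cosocle-peeling argument (the four lemmas following Theorem \ref{factor}): take a surjection $F_i \twoheadrightarrow \pi(a_i,\lambda,\omega^{b_i})$, apply Proposition \ref{quotientIirr} to see that $\pi_i$ followed by this surjection kills $(T_i-\lambda)Q_i$, then compose $\pi_i$ with $(T_i-\lambda)$ to land in the kernel and repeat. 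That iteration is exactly where the factor $(T_i-\lambda)(T_i-\lambda^{-1})$ with $E$-coefficients comes from. You flag in your closing paragraph that this ``bookkeeping'' is the main obstacle, which is a correct diagnosis, but the proposal never actually closes it. A second, more minor, slip: you invoke Fact \ref{factsjh}(1) to place all factors of $F_i$ in a single $S \in \jh$; what you actually want is the (stronger and simpler) identity $\JH(\bar\Theta) = \bigcup_i \JH(F_i)$ together with Proposition \ref{uniquejh}, which is what the paper uses to bound the length of $F_i$ before any $T_i$-analysis.
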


\begin{proof}
Each $F_i$ is of finite length, and $\JH(\bar\Theta) = \cup_i \JH(F_i)$.

From Proposition \ref{JHTheta}, we see that if $p > 2$, each $F_i$ is of length at
most $2$. If $p = 2$ then $F_i$ is of length at most $4$.

Suppose first that $F_i$ is irreducible. Then it is a quotient of
$Q_i/(T_i-\lambda)Q_i$ for some $\lambda\in E$, by Proposition
\ref{quotientIirr}. So we are in case (2).

Suppose now that $F_i$ has length at least $2$. It can happen only in the cases
where $a_i$ is $0$, $p-1$ or $p-2$. We treat these cases in the following
lemmas, and we see that:
either $p > 2$ and $a_i = 0$ or $p-1$, and then we are in case (2), or 
$a_i = p-2$ and we are in case (3), or $p = 2$, and then we are in case
(2) or (3) depending on the length of $F_i$.

As each $F_i$ is defined over $E$, then by Proposition
\ref{lambdainE}, if we are in case (2) then $\lambda\in E$. If we are in
case (3) but not (2), then $\mu\in E$. In particular, if the set of
Jordan-Hoelder factors is as in (3) of Proposition \ref{JHTheta}, then
there exists only one $i$ such that $F_i \neq 0$ and it is in the case
(3) but not (2).
\end{proof}

\begin{lemm}
Suppose $p > 2$.
Let $F$ be a quotient of $I(\sigma_{p-2})$ and $\JH(F) =
\{\pi(p-2,\lambda,1),\pi(p-2,\lambda^{-1},1)\}$. 
Then the given map $I(\sigma_{p-2}) \to F$ is zero 
on $(T^2-(\lambda+\lambda^{-1})T+1)I(\sigma_{p-2})$ and induces an
isomorphism from $I(\sigma_{p-2})/(T^2-(\lambda+\lambda^{-1})T+1)$ to
$F$.
\end{lemm}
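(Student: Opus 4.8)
The plan is to exploit the fact, established in Proposition \ref{quotientIirr}, that a nonzero map from $I(\sigma_{p-2})$ to an \emph{irreducible} quotient is unique up to scalar and kills exactly one $(T-\lambda)I(\sigma_{p-2})$. So the strategy is: first analyze the Hecke operator $T$ acting on $I(\sigma_{p-2})$, observe that $T-\lambda$ and $T-\lambda^{-1}$ are the only two ``interesting'' linear factors available here, and show that $F$ receives the two irreducible quotients $\pi(p-2,\lambda,1)$ and $\pi(p-2,\lambda^{-1},1)$ precisely as $I(\sigma_{p-2})/(T-\lambda)$ and $I(\sigma_{p-2})/(T-\lambda^{-1})$.

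First I would set $P(X) = (X-\lambda)(X-\lambda^{-1}) = X^2 - (\lambda+\lambda^{-1})X + 1$ and let $\pi\colon I(\sigma_{p-2}) \to F$ be the given map, with kernel $N$. Since $\JH(F)$ consists of the two (distinct, as $\lambda \neq \lambda^{-1}$ when $\lambda \ne \pm 1$; if $\lambda = \pm1$ the statement still makes sense with a repeated factor but then $F$ has length $2$ with both factors $\pi(p-2,\pm1,1)$ and $P(X)=(X\mp1)^2$) supersingular representations $\pi(p-2,\lambda,1)$ and $\pi(p-2,\lambda^{-1},1)$, $F$ has length exactly $2$. Consider the submodule $F' = \pi(P(T)I(\sigma_{p-2})) \subseteq F$. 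I claim $F' = 0$. If not, $F'$ is a nonzero subrepresentation, hence contains some Jordan--Hölder factor, say $\pi(p-2,\lambda,1)$, as a subquotient; then $F/F''$ for a suitable submodule $F''$ is isomorphic to $\pi(p-2,\lambda,1)$ and receives a nonzero map from $I(\sigma_{p-2})$, which by Proposition \ref{quotientIirr} must factor through $I(\sigma_{p-2})/(T-\lambda)$. But then $P(T) = (T-\lambda)(T-\lambda^{-1})$ already acts as zero on that quotient, so the image of $P(T)I(\sigma_{p-2})$ in $F/F''$ is zero — contradicting that $F'$ surjects nontrivially onto this factor (one has to run this argument against \emph{both} possible factors of $F'$, using that $P$ is divisible by both $(T-\lambda)$ and $(T-\lambda^{-1})$). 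Hence $F' = 0$, i.e. $P(T)I(\sigma_{p-2}) \subseteq N$, so $\pi$ factors through $I(\sigma_{p-2})/(P(T))$.

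It then remains to see that the induced surjection $I(\sigma_{p-2})/(P(T)) \twoheadrightarrow F$ is an isomorphism, for which it suffices to check that $I(\sigma_{p-2})/(P(T))$ has length at most $2$ — equivalently, that its Jordan--Hölder factors are exactly $\pi(p-2,\lambda,1)$ and $\pi(p-2,\lambda^{-1},1)$, each once. This follows from the structure theory of \cite{BL94,BL95,Br03a}: $I(\sigma_{p-2})/(T-\lambda) = \pi(p-2,\lambda,1)$ is irreducible since $(p-2,\lambda) \notin \{(0,\pm1),(p-1,\pm1)\}$, and when $\lambda \neq \lambda^{-1}$ the Chinese Remainder Theorem in $E[T]$ gives $I(\sigma_{p-2})/(P(T)) \cong I(\sigma_{p-2})/(T-\lambda) \oplus I(\sigma_{p-2})/(T-\lambda^{-1})$, which has length $2$; when $\lambda = \lambda^{-1} = \pm1$ one instead uses that $I(\sigma_{p-2})/(T\mp1)$ still makes sense but the relevant quotient $\pi(0,\pm1,\omega)$... — here one must be a little careful, but in the length-$2$ case the surjection $I(\sigma_{p-2})/(P(T)) \to F$ onto a length-$2$ module from a module of length $\le 2$ is automatically an isomorphism. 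Comparing lengths forces the surjection to be injective.

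The main obstacle I anticipate is the degenerate case $\lambda = \pm 1$, where $P(X)$ has a repeated root and $I(\sigma_{p-2})/(T\mp1) = \pi(p-2,\mp1,1)$ (note $p-2 \ne p-1$, so this is still a genuine irreducible supersingular representation, not a Steinberg/character situation — this is exactly why $r = p-2$ avoids the pathology of $r \in \{0,p-1\}$). One has to rule out that $I(\sigma_{p-2})/(P(T))$ could be strictly longer than $2$, e.g. a nonsplit self-extension of $\pi(p-2,\mp1,1)$, and confirm that $F$ being a length-$2$ quotient pins things down regardless; this is where invoking admissibility of $F$ and the multiplicity-one statement for $\soc_K$ used in the proof of Proposition \ref{quotientIirr} does the work. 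The generic case $\lambda \neq \pm 1$ is essentially formal given the CRT decomposition and Proposition \ref{quotientIirr}.
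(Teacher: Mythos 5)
Your overall plan — show $\alpha\colon I(\sigma_{p-2})\to F$ kills $P(T)I(\sigma_{p-2})$ where $P(X)=(X-\lambda)(X-\lambda^{-1})$, then compare lengths — is the right plan, and your treatment of the length of $I(\sigma_{p-2})/(P(T))$ at the end is essentially fine. But the key step, showing $F':=\alpha(P(T)I(\sigma_{p-2}))=0$, has a genuine gap.

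Your argument is: if $F'\neq 0$, pick a Jordan--H\"older factor of $F'$, find a quotient $F/F''$ isomorphic to it onto which $F'$ maps nontrivially, compose with $\alpha$, and use Proposition \ref{quotientIirr}. This works when $F'=F$, but it fails when $F$ is a \emph{nonsplit} extension of one factor by the other and $F'$ happens to be the unique proper nonzero submodule, i.e.\ $F'=\ker(F\twoheadrightarrow\pi(p-2,\mu_2,1))$. In that case $F$ has only one irreducible quotient, and $F'$ maps to \emph{zero} in it; there is no submodule $F''$ producing a quotient onto which $F'$ maps nontrivially, so there is nothing to run the contradiction against. Your parenthetical ``run the argument against both possible factors of $F'$'' doesn't rescue this, because $F'$ has only one Jordan--H\"older factor and $F$ has only one irreducible quotient. (Note also that when $\lambda=\pm1$, $F$ is forced to be nonsplit: if $F\cong\pi(p-2,\pm1,1)^{\oplus 2}$ then both projections kill $(T\mp1)I$, so $\alpha$ factors through the length-$1$ module $I/(T\mp1)I$, contradiction. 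So the problematic case is unavoidable.)

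The fix is the ``chain'' argument the paper uses: compose $\alpha$ with a surjection $\pi\colon F\to\pi(p-2,\lambda,1)$; then $\pi\circ\alpha$ kills $(T-\lambda)I(\sigma_{p-2})$ by Proposition \ref{quotientIirr}. Set $\beta=\alpha\circ(T-\lambda)$; its image lies in $\ker\pi$, and $\beta\neq 0$ (else $\alpha$ factors through $I/(T-\lambda)I$, which has length $1$). Since $\ker\pi$ has length $1$, $\im\beta=\ker\pi\cong\pi(p-2,\lambda^{-1},1)$; now $\beta$ is a nonzero map to an irreducible, hence kills $(T-\lambda^{-1})I(\sigma_{p-2})$ again by Proposition \ref{quotientIirr}. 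Thus $\alpha\circ P(T)=0$. This works uniformly in the split, nonsplit, and $\lambda=\pm1$ cases.

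One more small correction: in the degenerate case $\lambda=\pm 1$ you speculate that admissibility and multiplicity-one in $\soc_K$ are needed to rule out $I(\sigma_{p-2})/(T\mp1)^2$ having length $>2$. That's not what's used: injectivity of $T\mp1$ (Proposition \ref{Tisinjective}) gives an isomorphism $I/(T\mp1)I\;\isom\;(T\mp1)I/(T\mp1)^2I$, so $I/(T\mp1)^2I$ sits in a short exact sequence with two copies of the irreducible $\pi(p-2,\pm1,1)$ and has length exactly $2$. (Also, your ``$\pi(0,\pm1,\omega)$'' is a slip; the quotient is $\pi(p-2,\pm1,1)$.)
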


\begin{proof}
Suppose we have a surjective map $\pi : F \to \pi(p-2,\lambda,1)$, and
let $\alpha$ be the given surjection $I(\sigma_{p-2}) \to F$. Then
$\pi\circ\alpha$ is zero on $(T-\lambda)I(\sigma_{p-2})$ by Proposition
\ref{quotientIirr}. Let $\beta = \alpha \circ (T-\lambda)$. Then $\im
\beta \subset \ker\pi$, and in fact $\im \beta = \ker\pi$ otherwise
$\beta = 0$ and $F = I(\sigma_{p-2})/(T-\lambda)$. So $\im\beta$ is
isomorphic to $\pi(p-2,\lambda^{-1},1)$, and $\beta$ is zero on
$(T-\lambda)^{-1}I(\sigma_{p-2})$. So $\alpha$ is zero on 
$(T^2-(\lambda+\lambda^{-1})T+1)I(\sigma_{p-2})$, and $F$ is a quotient 
of $I(\sigma_{p-2})/(T^2-(\lambda+\lambda^{-1})T+1)$, and so is equal to
it.
\end{proof}

\begin{lemm}
Suppose $p = 2$, $i = 0$ or $1$ and $F$ is a quotient of $I(\sigma_i)$
with $\JH(F) \subset \{1,1,\St,\St\}$. Then the map $\alpha : I(\sigma_i)
\to F$ is zero on $(T^2+1)I(\sigma_i)$.
\end{lemm}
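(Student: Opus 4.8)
The plan is to mimic the structure of the preceding lemma, but now the target $F$ has at most two distinct irreducible constituents, the trivial character $\mathbf{1}$ and the Steinberg representation $\St$ (each appearing with multiplicity up to two), and by Fact \ref{factsjh}(3) all Jordan--Hölder factors of $I(\sigma_{i,E})/(T^2+1)$ lie in a single $S\in\jh$, namely the degenerate set of case (6) of Proposition \ref{JHTheta}. First I would recall that for $p=2$ we have $\sigma_0 = \sigma_{p-1}$ up to twist, so $\pi(0,\lambda,\chi)$ and $\pi(p-1,\lambda,\chi)$ have the same Jordan--Hölder factors, and the only value of $\lambda$ that can occur (since the factors must be $\mathbf{1}$ or $\St$, i.e.\ $\lambda=\pm1=1$ in characteristic $2$) is $\lambda=1$. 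Thus the relevant Hecke relation on $I(\sigma_i)$ can only be $(T-1)^2 = T^2+1$ (again in characteristic $2$), so the candidate polynomial is forced.

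Next I would run the factorization argument. Since $F$ has finite length with $\JH(F)\subset\{\mathbf1,\mathbf1,\St,\St\}$, it admits a surjection $\pi\colon F\to Q$ onto an irreducible quotient $Q$, which is either $\mathbf1\circ\det$ (up to the fixed twist) or $\St$. Let $\alpha\colon I(\sigma_i)\to F$ be the given surjection. By Proposition \ref{quotientIirr}, the composite $\pi\circ\alpha$ factors through $I(\sigma_i)/(T-1)I(\sigma_i)$ — here I use that the unique $\lambda$ attached to either $\mathbf1$ or $\St$ as a quotient of $I(\sigma_i)$ is $\lambda=1$, which is exactly the content of Proposition \ref{quotientIirr} in the degenerate case $a=i\in\{0,p-1\}$ with $\lambda=\pm1$. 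Hence $\beta := \alpha\circ(T-1)$ has image contained in $\ker\pi$, and $\ker\pi$ is again a quotient of $I(\sigma_i)$ (it is $\alpha$ of the $K$-subrepresentation, or more simply a subquotient) whose Jordan--Hölder factors still lie in $\{\mathbf1,\St\}$. Applying the same reasoning to $\beta$: compose with a surjection of $\im\beta$ onto one of its irreducible quotients, conclude by Proposition \ref{quotientIirr} that this further composite kills $(T-1)I(\sigma_i)$, so $\beta\circ(T-1) = \alpha\circ(T-1)^2$ lands inside the next layer, which has Jordan--Hölder length strictly smaller. Iterating, and using that the total length of $F$ is at most two ``$T-1$ steps'' worth (length $\le 4$ but the relation squares down), one gets $\alpha\circ(T-1)^2 = 0$, i.e.\ $\alpha$ is zero on $(T-1)^2 I(\sigma_i) = (T^2+1)I(\sigma_i)$.

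The step I expect to be the main obstacle is controlling the induction precisely: a priori $F$ could have length $4$, and one must check that two applications of ``$\cdot(T-1)$'' genuinely suffice rather than needing more, i.e.\ that no $\mathbf1$ or $\St$ constituent can sit at ``depth'' greater than two in the socle filtration relative to the $T$-action. The clean way around this is not to count layers of $F$ directly but to argue on $I(\sigma_i)$: one shows that $I(\sigma_i)/(T-1)^2 I(\sigma_i)$ has Jordan--Hölder factors exactly $\{\mathbf1,\mathbf1,\St,\St\}$ (using Fact \ref{factsjh}(3) and Proposition \ref{JHTheta}(6)), hence any quotient $F$ of $I(\sigma_i)$ whose factors are a subset of this multiset must already be a quotient of $I(\sigma_i)/(T-1)^2 I(\sigma_i)$ — because $I(\sigma_i)/(T-1)I(\sigma_i) = \pi(i,1,\mathbf1)$ has strictly fewer copies of each, so the kernel $(T-1)I(\sigma_i)/(T-1)^2I(\sigma_i)$ accounts for the remaining copies and cannot be further quotiented without losing a factor. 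This reduces the lemma to the single computation of the Jordan--Hölder content of $I(\sigma_i)/(T^2+1)$, which is supplied by the results of \cite{BL94,BL95,Br03a} already invoked for Proposition \ref{JHTheta}.
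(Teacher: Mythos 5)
You have correctly identified the shape of the argument (iterate Proposition~\ref{quotientIirr} via composition with $(T-1)$), and you have correctly located where it might fail. The gap you flag is real, and the fix you sketch does not close it. Allowing the irreducible quotient $Q$ of each successive image to be \emph{either} $1$ or $\St$ only bounds the number of iterations by the length of $F$, which can be $4$; this yields $\alpha\circ(T-1)^4=0$, not the stated $\alpha\circ(T-1)^2=0$. The single fact that makes the paper's argument close in exactly two steps, and which your proposal never invokes, is this: for each fixed $i\in\{0,1\}$, \emph{exactly one} of $1$, $\St$ can occur as an irreducible quotient of $I(\sigma_i)$. This comes from Frobenius reciprocity, $\Hom_G(I(\sigma_i),Q)=\Hom_K(\sigma_i,\soc_K Q)$, together with $\soc_K(1)=\sigma_0$ and $\soc_K(\St)=\sigma_{p-1}$, which are distinct $K$-types even when $p=2$. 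Given this, every nonzero image $\im(\alpha\circ(T-1)^j)$, being itself a quotient of $I(\sigma_i)$, is forced to have the one admissible irreducible as its cosocle; so each application of $(T-1)$ strips one copy of that particular factor (not merely some factor), and after two applications the surviving image has Jordan--H\"older factors contained in two copies of the \emph{other} irreducible, which cannot be a nonzero quotient of $I(\sigma_i)$ at all. Hence it vanishes, and $\alpha\circ(T-1)^2=0$.

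Your fallback --- computing that $I(\sigma_i)/(T-1)^2I(\sigma_i)$ has Jordan--H\"older content $\{1,1,\St,\St\}$ and concluding that any $F$ with $\JH(F)$ a subset of this multiset must factor through $(T-1)^2$ --- is not a proof as stated. Jordan--H\"older data on a quotient does not control the kernel inside the infinite-dimensional module $I(\sigma_i)$, and the ``length accounting'' against $\pi(i,1,1)$ does not force $\ker\alpha\supset(T-1)^2 I(\sigma_i)$. Making this fallback rigorous would again require the unique-irreducible-quotient fact above, which is precisely what the paper's proof deploys directly.
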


\begin{proof}
Suppose $i = 0$. Only $\St$ can be a quotient of $I(\sigma_0)$, so there
exists a surjective map $\pi : F \to \St$. Then by Proposition
\ref{quotientIirr}, $\pi\circ \alpha$ is zero on $(T-1)I(\sigma_0)$. Let
$\beta = \alpha \circ (T-1)$. Let $F' = \im \beta$, then $F' \subset
\ker\pi$ so $\JH(F') \subset \{1,1,\St\}$. As before, there exists a
surjection $\pi' : F' \to \St$ and so a surjection $\pi' \circ \beta :
I(\sigma_0) \to \St$, so $\pi' \circ \beta$ is zero on $(T-1)I(\sigma_0)$. 
Let $\gamma = \beta \circ (T-1)$, then $\im \gamma \subset \ker \pi'$, so
$\JH(\im \gamma) \subset \{1,1\}$, and so in fact $\gamma = 0$. So
$\alpha \circ (T-1)^2$ is zero.

For $i = 1$ the proof is the same but with the role of $1$ and $\St$
reversed.
\end{proof}

We state the following lemmas without proof, as their proofs are very
similar to the previous ones:

\begin{lemm}
Let $F$ be a quotient of $I(\sigma_0)$ and $\JH(F) =  \{\St,1\}$.
Then $F$ is the quotient of $I(\sigma_0)$ by $(T-1)$.
\end{lemm}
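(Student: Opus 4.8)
We may assume $p>2$; the statement for $p=2$ is contained in the previous lemma. The plan is to mimic the two lemmas just proved, the one extra ingredient being that $\St$ is \emph{not} a quotient of $I(\sigma_0)$. Indeed, by Proposition~\ref{quotientIirr} any nonzero $G$-equivariant map $I(\sigma_0)\to Q$ with $Q$ irreducible has image isomorphic to $\pi(0,\lambda,1)$ for some $\lambda\notin\{\pm1\}$, or to a character $\unr(\lambda)\circ\det$ with $\lambda=\pm1$; in no case is $Q$ a twist of $\St$. Since $F$ is a quotient of $I(\sigma_0)$, it follows that $\St$ is not a quotient of $F$ either.

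Next I would single out the correct irreducible quotient of $F$. Being of finite length, $F$ admits a nonzero irreducible quotient, which is necessarily one of its Jordan--Hölder factors; it cannot be $\St$ by the previous paragraph, so there is a surjection $\pi\colon F\to 1$ onto the trivial representation. Let $\alpha\colon I(\sigma_0)\to F$ denote the given surjection. Applying Proposition~\ref{quotientIirr} to $\pi\circ\alpha$ yields the unique $\lambda$ such that $\pi\circ\alpha$ vanishes on $(T-\lambda)I(\sigma_0)$, and the isomorphism $1\cong\unr(\lambda)\circ\det$ forces $\lambda=1$. Now set $\beta=\alpha\circ(T-1)\colon I(\sigma_0)\to F$. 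Then $\im\beta\subseteq\ker\pi$, and $\ker\pi$ is irreducible, isomorphic to $\St$, since $\JH(\ker\pi)=\{\St\}$. If $\beta$ were nonzero, its image would be all of $\St$, exhibiting $\St$ as a quotient of $I(\sigma_0)$ — impossible. Hence $\beta=0$, i.e.\ $\alpha$ is zero on $(T-1)I(\sigma_0)$.

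Therefore $\alpha$ factors through a surjection $I(\sigma_0)/(T-1)I(\sigma_0)=\pi(0,1,1)\twoheadrightarrow F$. Since $\pi(0,1,1)^{ss}=\St\oplus 1$ has length $2$ and $\JH(F)=\{\St,1\}$ also has length $2$, this surjection is an isomorphism, so $F\cong I(\sigma_0)/(T-1)I(\sigma_0)$, as claimed.

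As for difficulty, there is no genuine obstacle here: the argument is a direct analogue of the two preceding lemmas. The only points that need care are recording that no twist of $\St$ occurs as a quotient of $I(\sigma_0)$ — this is precisely what breaks the symmetry between the two Jordan--Hölder factors of $F$ and determines which one sits in the cosocle — and identifying the character quotient of $F$ as the trivial character (rather than $\unr(-1)\circ\det$), so as to read off $\lambda=1$.
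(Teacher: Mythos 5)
Your proof is correct and follows the same route the paper intends (the paper omits the proof, describing it as ``very similar'' to the preceding ones). The key structural observation you make — that by Proposition~\ref{quotientIirr} no twist of $\St$ occurs as a quotient of $I(\sigma_0)$, so the cosocle of $F$ must be the trivial character — is exactly what distinguishes this case from the first lemma in the series: there both Jordan--H\"older factors were possible quotients and one argued $\beta\neq 0$, whereas here one argues $\beta=0$. The length count at the end is also correct.

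The one flaw is the opening sentence: the $p=2$ case is \emph{not} contained in the preceding lemma. That lemma only shows that $\alpha$ vanishes on $(T^2+1)I(\sigma_0)=(T-1)^2I(\sigma_0)$, so it realizes $F$ as a quotient of $I(\sigma_0)/(T-1)^2$, a length-$4$ module; it does not by itself force $F=I(\sigma_0)/(T-1)$. Fortunately this is harmless: nowhere in your main argument do you actually use $p>2$. The only place $p$ enters is in pinning down $\lambda=1$ from $1\cong\unr(\lambda)\circ\det$ with $\lambda=\pm1$, and for $p=2$ this is automatic since $-1=1$. So the right fix is simply to delete the first sentence and run the argument uniformly for all $p$.
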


\begin{lemm}
Let $F$ be a quotient of $I(\sigma_{p-1})$ and $\JH(F) =  \{\St,1\}$.
Then $F$ is the quotient of $I(\sigma_{p-1})$ by $(T-1)$.
\end{lemm}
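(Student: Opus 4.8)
The plan is to run the same argument as in the preceding lemmas, with the roles of $\St$ and the trivial representation $\mathbf{1}$ interchanged (this is, for $I(\sigma_{p-1})$, the analogue of the $p=2$ case with $i=1$). Write $\alpha\colon I(\sigma_{p-1})\to F$ for the given surjection. The first step is to observe that, of the two Jordan-Hoelder factors of $F$, only $\St$ can occur as a quotient of $I(\sigma_{p-1})$: by Frobenius reciprocity, exactly as in the proof of Proposition~\ref{quotientIirr}, one has $\Hom_G(I(\sigma_{p-1}),\mathbf{1})=\Hom_K(\sigma_{p-1},\soc_K\mathbf{1})=\Hom_K(\sigma_{p-1},\sigma_0)=0$, since $\sigma_{p-1}\not\cong\sigma_0$. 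As $F$ is nonzero of finite length it has an irreducible quotient $Q$; composing with $\alpha$ shows $Q$ is a quotient of $I(\sigma_{p-1})$, and $Q\in\JH(F)=\{\St,\mathbf{1}\}$, so $Q\cong\St$. Fix a surjection $\pi\colon F\to\St$.

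Next I would apply Proposition~\ref{quotientIirr} to $\pi\circ\alpha\colon I(\sigma_{p-1})\to\St$: there is a unique $\lambda$ for which $\pi\circ\alpha$ vanishes on $(T-\lambda)I(\sigma_{p-1})$, and since $\St=\St\otimes(\unr(1)\circ\det)$ occurs untwisted, the classification in that proposition forces $\lambda=1$. Set $\beta=\alpha\circ(T-1)\colon I(\sigma_{p-1})\to F$, so that $\im\beta\subseteq\ker\pi$. Now $\JH(\ker\pi)=\JH(F)\setminus\{\St\}=\{\mathbf{1}\}$, hence $\ker\pi\cong\mathbf{1}$ is irreducible; if $\beta\neq 0$ then $\im\beta=\ker\pi\cong\mathbf{1}$, exhibiting $\mathbf{1}$ as a quotient of $I(\sigma_{p-1})$ and contradicting the first step. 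Therefore $\beta=0$, i.e.\ $\alpha$ vanishes on $(T-1)I(\sigma_{p-1})$ and factors through a surjection $I(\sigma_{p-1})/(T-1)\to F$. Finally $I(\sigma_{p-1})/(T-1)=\pi(p-1,1,1)$ has Jordan-Hoelder factors $\{\St,\mathbf{1}\}$ by Theorem~\ref{descrcorresp}, the same length as $F$; a surjection of finite-length representations of equal length is an isomorphism, as claimed.

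I do not expect a real obstacle here: this is a routine variant of the earlier lemmas, and the paper itself flags the proof as similar. The two places that call for a little care are pinning the Hecke eigenvalue to $\lambda=1$ (using that $\St$ appears untwisted, so $\unr(\lambda)\circ\det$ must be trivial, together with the $\lambda=\pm 1$ clause of Proposition~\ref{quotientIirr}, and noting that for $p>2$ one has $\unr(-1)\neq\unr(1)$), and the input that $\mathbf{1}$ is not a quotient of $I(\sigma_{p-1})$, which is immediate from $\soc_K\mathbf{1}=\sigma_0\neq\sigma_{p-1}$.
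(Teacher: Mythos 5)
Your proof is correct and takes essentially the same approach the paper indicates: the lemma is stated without proof with a pointer to the immediately preceding lemmas, and your argument reproduces exactly their technique—Frobenius reciprocity to rule out $\mathbf{1}$ as a quotient of $I(\sigma_{p-1})$ (via $\soc_K(\mathbf{1})=\sigma_0\not\cong\sigma_{p-1}$), thereby identifying $\St$ as the unique irreducible quotient, followed by the $\beta=\alpha\circ(T-1)$ factoring argument. The pinning-down of $\lambda=1$ via Proposition~\ref{quotientIirr} and the final length comparison are likewise in the spirit of the preceding lemmas.
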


\begin{lemm}
Suppose $p = 2$, $i = 0$ or $1$ and $F$ is a quotient of $I(\sigma_i)$
with $\JH(F) = \{\pi(0,\lambda,1),\pi(0,\lambda^{-1},1)\}$ for some 
$\lambda \not\in \{0,1\}$. Then $F$ is the quotient of $I(\sigma_i)$
by $(T^2-(\lambda + \lambda^{-1})T+1)I(\sigma_i)$.
\end{lemm}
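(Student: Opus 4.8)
The plan is to transcribe the proof of the first lemma in this series (the case $p>2$, $a_i=p-2$), replacing $\sigma_{p-2}$ by $\sigma_i$. First I would set $\mu = \lambda + \lambda^{-1}$, so that $T^2 - \mu T + 1 = (T-\lambda)(T-\lambda^{-1})$, and record the $p=2$ bookkeeping: the hypothesis $\lambda \notin \{0,1\}$ forces $\lambda \neq \pm 1$, hence $\lambda \neq \lambda^{-1}$, so $\pi(0,\lambda,1)$ and $\pi(0,\lambda^{-1},1)$ are distinct irreducible representations; moreover, since $p-1 = 1$ and $\pi(p-1,\nu,1) = \pi(0,\nu,1)$ for $\nu \neq \pm 1$, for both $i = 0$ and $i = 1$ one has $I(\sigma_i)/(T-\nu)I(\sigma_i) \cong \pi(0,\nu,1)$.

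Next, writing $\alpha : I(\sigma_i) \to F$ for the given surjection, I would note that $F$ has length $2$ with these two distinct constituents, so at least one of them is a quotient of $F$; after swapping $\lambda$ and $\lambda^{-1}$ (which fixes $\mu$) I may fix a surjection $\pi : F \to \pi(0,\lambda,1)$. By Proposition \ref{quotientIirr}, $\pi \circ \alpha$ is zero on $(T-\lambda)I(\sigma_i)$; setting $\beta = \alpha \circ (T-\lambda)$ gives $\im\beta \subseteq \ker\pi$, where $\ker\pi$ is irreducible $\cong \pi(0,\lambda^{-1},1)$. Since $\im\beta = 0$ would force $\alpha$ to factor through $I(\sigma_i)/(T-\lambda)I(\sigma_i) = \pi(0,\lambda,1)$ --- impossible as $\JH(F)$ has two elements --- one gets $\im\beta = \ker\pi \cong \pi(0,\lambda^{-1},1)$; as any nonzero map from $I(\sigma_i)$ to an irreducible is a scalar multiple of the obvious one (the remark after Proposition \ref{quotientIirr}), $\beta$ is zero on $(T-\lambda^{-1})I(\sigma_i)$. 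Hence $\alpha$ kills $(T^2-\mu T+1)I(\sigma_i) = (T-\lambda)(T-\lambda^{-1})I(\sigma_i)$, so $F$ is a quotient of $I(\sigma_i)/(T^2-\mu T+1)I(\sigma_i)$.

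To conclude that $F$ equals this quotient, I would check the quotient has length exactly $2$: it surjects onto $I(\sigma_i)/(T-\lambda)I(\sigma_i) = \pi(0,\lambda,1)$, and multiplication by $T-\lambda$ identifies the kernel with a nonzero quotient of $I(\sigma_i)/(T-\lambda^{-1})I(\sigma_i) = \pi(0,\lambda^{-1},1)$, hence with $\pi(0,\lambda^{-1},1)$ itself; one could instead bound the length using Fact \ref{factsjh}(3) together with Proposition \ref{quotientI}. A length-$2$ quotient with the same Jordan--Hölder factors must be the whole thing, giving $F = I(\sigma_i)/(T^2-(\lambda+\lambda^{-1})T+1)I(\sigma_i)$.

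I do not expect a genuine obstacle here, as the argument is a direct copy of the earlier lemma. The only point needing attention --- shared with those lemmas --- is the $p=2$ bookkeeping (triviality of $\omega$, collapse of $\pm 1$, and $\pi(0,\nu,1) = \pi(p-1,\nu,1)$), which is precisely what makes the two starting weights $i \in \{0,1\}$ behave identically.
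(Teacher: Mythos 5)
Your proof is correct and is essentially the transcription the paper has in mind when it says "We state the following lemmas without proof, as their proofs are very similar to the previous ones": you run the same $\alpha$, $\beta = \alpha\circ(T-\lambda)$ argument as in the first lemma of the series, invoking Proposition \ref{quotientIirr} twice and then comparing lengths. The $p=2$ bookkeeping you record — that $\lambda\notin\{0,1\}$ rules out $\lambda=\pm 1$, and that $\pi(0,\nu,1)=\pi(p-1,\nu,1)$ makes $i=0$ and $i=1$ behave identically — is exactly the point; the rest matches the paper's intended argument.
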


\subsection{Computing $\JH(\bar\Theta)$}
\label{explicitF}

Suppose now that in addition to the filtration, we are given for each $i$
an element $v_i \in \Lambda_i$ such that the image of $v_i$ in $Q_i$
generates it as a $G$-representation, and elements $w_{1,i}$ and
$w_{2,i}$ in $\Lambda_i$ whose image in $Q_i$ is equal to $T_iv_i$ and
$T_i^2v_i$ respectively. 

Suppose moreovoer that we are given a subspace $M'$ of $M_{k,a_p}$. Then we can
answer the following questions:
\begin{enumerate}
\item
is $v_i$ in $M' + \Lambda_{i+1}$ ? 

\item
is $w_{1,i}-\lambda v_i$ in $M' + \Lambda_{i+1}$ for some $\lambda\in E$ ? 

\item
is $w_{2,i}-\mu w_{1,i} + 1$ in $M' + \Lambda_{i+1}$ for some $\mu \in E$ ? 
\end{enumerate}

If the answer to the first question is positive, then $F_i = 0$. If the
answer to the second question is positive, then $F_i$
is a quotient of $Q_i/(T_i-\lambda)Q_i$ for the given $\lambda$. If the
answer to the third question is positive, then $F_i$ is a quotient of
$Q_i/(T_i^2-\mu T_i +1)Q_i$ for the given $\mu$.

\begin{prop}
\label{cancompute}
Let $M'$ be a finite-dimensional subspace of $M_{k,a_p}$.
Suppose that: for each $i$, we can answer positively to one of the three
questions above, and moreover, the Jordan-Hoelder factors of the
representations
$Q_i/(T-\lambda)Q_i$ or $Q_i/(T^2-\mu T +1)Q_i$ that appear are all in the same $S
\in \jh$.
Then $M'$ contains enough information to compute $\JH(\bar\Theta)$.
\end{prop}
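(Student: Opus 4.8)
The plan is to combine the filtration machinery of Theorem \ref{factor} with the rationality bookkeeping of Proposition \ref{JHTheta} and the recognition principle of Proposition \ref{fromjh}. First I would observe that, by hypothesis, for each $i$ the positive answer to one of the three questions gives us, via the discussion preceding the statement, that $F_i$ is a quotient of $Q_i = I(J_i)$ by $(T_i-\lambda_i)$ or by $(T_i^2 - \mu_i T_i + 1)$ for explicit $\lambda_i$ or $\mu_i$ in $E$ (or that $F_i = 0$). So set $\Pi' = \bigoplus_i Q_i/(T_i - \lambda_i)Q_i$ (resp.\ with the quadratic relation in the relevant indices, and omitting the indices where $F_i = 0$). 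Each such summand is a finite length representation of $G$ over $E$: for the linear relation this is a principal-series-type quotient, and for the quadratic relation one invokes Fact \ref{factsjh}(2),(3) to see the Jordan--Hoelder factors are exactly those of $I(\sigma_{p-2}(b)_E)/(T^2 - \mu T + 1)$, a finite set. Hence $\Pi'$ is a finite length representation of $G$ over $E$, and it surjects onto $\bigoplus_i F_i$.

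Next I would address why $\Pi'$ surjects onto something that recovers $\JH(\bar\Theta)$. We do not literally have a surjection $\Pi' \to \bar\Theta$, but we do have $\JH(\bar\Theta) = \bigcup_i \JH(F_i)$ (since the $F_i$ are the graded pieces of a filtration of $\bar\Theta$), and each $\JH(F_i) \subseteq \JH(Q_i/(T_i-\lambda_i)Q_i)$ (resp.\ the quadratic version), so $\JH(\bar\Theta) \subseteq \JH(\Pi')$. Now use the hypothesis that all the Jordan--Hoelder factors of all the $Q_i/(T_i-\lambda_i)Q_i$ (and quadratic analogues) that actually occur lie in a single $S \in \jh$. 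By Proposition \ref{quotientI}, for each $i$ the factors of $Q_i/(T_i-\lambda_i)Q_i$ are among $\pi(a_i,\cdot,\omega^{b_i})$ and (when $a_i \in \{0,p-1\}$) the Steinberg/character pieces; by Fact \ref{factsjh}(1) each of these belongs to a unique element of $\jh$, and by hypothesis these coincide in a common $S$. In particular $S \subseteq \JH(\Pi')$, and since distinct elements of $\jh$ are disjoint (the remark preceding Proposition \ref{uniquejh}), $S$ is the unique element of $\jh$ contained in $\JH(\Pi')$.

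Now I would pin down $\JH(\bar\Theta)$ itself. Since $\bar\Theta$ is a quotient of $I(\sigma_{r,E})$, Proposition \ref{quotientI} (applied with $a = [r]$, or rather with $\sigma_r = \sigma_{[r]}([r']/\cdots)$ written in the normalized form $\sigma_a(b)$) shows $\JH(\bar\Theta)$ is a subset of the factors allowed there; combined with Theorem \ref{descrcorresp}, $\JH(\bar\Theta) = \JH(\bar\Pi_{k,a_p})$ must be (the $\JH$ of) $\Pi(\bar V_{k,a_p}^{ss})$, hence equals some element of $\jh$. (Alternatively, invoke Proposition \ref{uniquejh} directly: $\JH(\bar\Theta)$ is an element of $\jh$, being the set of Jordan--Hoelder factors of a representation coming from the semisimple mod $p$ correspondence.) Because $\bar\Theta \neq 0$, it contains at least one Jordan--Hoelder factor, and that factor lies in $\JH(\Pi')$; the only element of $\jh$ inside $\JH(\Pi')$ is $S$, so $\JH(\bar\Theta) = S$. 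Thus, knowing $M'$ and the answers to the three questions, we have located $S$ among the explicit list in Proposition \ref{JHTheta}, and $\JH(\bar\Theta)$ is this $S$ — which is exactly the assertion that $M'$ contains enough information to compute $\JH(\bar\Theta)$.

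The main obstacle, and the step I would be most careful about, is the finiteness and rationality of $\Pi'$ in the quadratic case: one must be sure that $Q_i/(T_i^2 - \mu_i T_i + 1)Q_i$ really is of finite length over $E$ and that its Jordan--Hoelder factors, though possibly not individually defined over $E$, still all lie in one $S \in \jh$ so that the disjointness argument applies. This is precisely what Fact \ref{factsjh}(2),(3) and Proposition \ref{JHTheta}(3) are set up to guarantee, together with the observation from the proof of Theorem \ref{factor} that case (3)-but-not-(2) forces $a_i = p-2$ (or $p = 2$) and then there is a single nonzero $F_i$; so the argument should go through, but the write-up needs to cite these facts in the right order and check that the degenerate sub-case "$r = p-2$ and $\lambda = \pm 1$" in Proposition \ref{JHTheta}(2), where the two factors coincide, is handled by the same reasoning. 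A secondary point to verify is that the reduction from "$\JH(\bar\Theta) \subseteq \JH(\Pi')$ and $\bar\Theta \ne 0$" to "$\JH(\bar\Theta) = S$" genuinely uses that $\JH(\bar\Theta)$ is itself an element of $\jh$ (not merely a subset of $\JH(\Pi')$) — which is where Proposition \ref{uniquejh}, and behind it the compatibility of the $p$-adic and mod $p$ correspondences from Section \ref{correspondence}, enters.
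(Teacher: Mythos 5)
Your argument reaches the right conclusion and uses the right ingredients (the filtration, Proposition \ref{uniquejh}, disjointness of elements of $\jh$), but it departs from the paper's route in a way that makes more work for you. The paper's proof is a one-liner: set $\Pi' = I(\sigma_{r,E})/E[G]M'$, note that since $E[G]M' \subset M_{k,a_p}$ there is a genuine surjection $\Pi' \to \bar\Theta$, check that your hypotheses on the $Q_i/(T_i-\lambda_i)Q_i$ (resp.\ quadratic) make $\Pi'$ finite length with $\JH(\Pi')$ contained in a single $S\in\jh$, and cite Proposition \ref{fromjh}. You instead take $\Pi' = \bigoplus_i Q_i/(T_i-\lambda_i)Q_i$, which, as you observe, surjects only onto the associated graded $\bigoplus_i F_i$ and not onto $\bar\Theta$ itself; this forces you to reprove the content of Proposition \ref{fromjh} by hand in your last paragraph. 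Both routes work, but the paper's choice is the more economical one and is exactly what Proposition \ref{fromjh} was designed for.

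There is a genuine logical slip in your middle paragraph. From ``each Jordan--Hoelder factor of $Q_i/(T_i-\lambda_i)Q_i$ lies in $S$'' you may conclude $\JH(\Pi') \subseteq S$, but not the reverse, so the assertion ``In particular $S \subseteq \JH(\Pi')$'' does not follow at that point, and the phrase ``$S$ is the unique element of $\jh$ contained in $\JH(\Pi')$'' is unjustified there. What the argument actually needs — and what your closing paragraph essentially supplies — is this: $\JH(\bar\Theta)$ is a nonempty element of $\jh$ (Proposition \ref{uniquejh}), and $\JH(\bar\Theta) \subseteq \JH(\Pi') \subseteq S$; since distinct elements of $\jh$ are disjoint, a nonempty element of $\jh$ contained in $S\in\jh$ must equal $S$, whence $\JH(\bar\Theta)=S$. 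The inclusion $S\subseteq\JH(\Pi')$ does hold, but only a posteriori as a consequence of $S=\JH(\bar\Theta)\subseteq\JH(\Pi')$, so it cannot be used as a premise. A smaller imprecision: you say you omit from $\Pi'$ the indices with $F_i=0$, but you do not know a priori which those are; you should omit only the indices for which question (1) was answered positively (a possibly proper subset), which is harmless since any extra summand only adds factors that are still inside $S$.
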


\begin{proof}
This is a reformulation of Proposition \ref{fromjh}, as $\bar\Theta$ is a
quotient of $I(\sigma_{r,E})$ by $E[G]M'$.
\end{proof}

\begin{coro}
\label{finitedimenough}
There exists a finite-dimensional subspace 
$M'$ of $M_{k,a_p}$ that contains enough information
to determine the value of $\JH(\bar\Theta)$.
\end{coro}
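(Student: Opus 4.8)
Corollary \ref{finitedimenough} asserts the existence of a finite-dimensional $M' \subseteq M_{k,a_p}$ carrying enough information to determine $\JH(\bar\Theta)$.

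My plan is to obtain $M'$ by taking a large enough finite-dimensional truncation of $M_{k,a_p}$ and verifying that the hypotheses of Proposition \ref{cancompute} are met once the truncation is big enough. The starting point is that $\bar\Theta = I(\sigma_{r,E})/M_{k,a_p}$ is of finite length, hence so is each quotient $F_i$ associated with the filtration, and $\JH(\bar\Theta) = \bigcup_i \JH(F_i)$; moreover only finitely many $i$ contribute a nonzero $F_i$. By Theorem \ref{factor}, for each of these finitely many indices $i$ there is a witnessing relation: either $v_i \in M_{k,a_p} + \Lambda_{i+1}$, or $w_{1,i} - \lambda v_i \in M_{k,a_p} + \Lambda_{i+1}$ for some $\lambda \in E$, or $w_{2,i} - \mu w_{1,i} + 1 \in M_{k,a_p} + \Lambda_{i+1}$ for some $\mu \in E$ (with $+1$ denoting the appropriate canonical generator). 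In each case the witnessing element lies in $M_{k,a_p} + \Lambda_{i+1}$, so it can be written as $m_i + \ell_i$ with $m_i \in M_{k,a_p}$ and $\ell_i \in \Lambda_{i+1}$; in particular $m_i$ lies in $M_{k,a_p}$.

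The key step is then simply to let $M'$ be the $E$-span of this finite collection $\{m_i\}$ of elements of $M_{k,a_p}$, one for each index $i$ with $F_i \neq 0$. This $M'$ is finite-dimensional by construction, and it is a subspace of $M_{k,a_p}$. For each such $i$, the relation witnessed by $m_i$ shows that the corresponding one of the three questions in Paragraph \ref{explicitF} has a positive answer already with $M'$ in place of $M_{k,a_p}$, since $m_i \in M' \subseteq M' + \Lambda_{i+1}$; and for the (finitely many, or rather all but finitely many) indices $i$ with $F_i = 0$, Theorem \ref{factor} tells us that $v_i \in M_{k,a_p} + \Lambda_{i+1}$, so we further enlarge $M'$ by adjoining finitely many elements of $M_{k,a_p}$ witnessing $v_i \in M_{k,a_p} + \Lambda_{i+1}$ for those indices—but in fact the filtration is eventually inside $M_{k,a_p}$ (because $\bar\Theta$ has finite length means $\Lambda_N \subseteq M_{k,a_p}$ for $N$ large, and then $v_i \in \Lambda_i \subseteq M_{k,a_p}$ directly for $i \geq N$), so only finitely many extra witnesses are needed. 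Finally, the last hypothesis of Proposition \ref{cancompute}—that the Jordan-Hoelder factors appearing across all the chosen $Q_i/(T_i-\lambda)Q_i$ or $Q_i/(T_i^2-\mu T_i+1)Q_i$ all lie in a single $S \in \jh$—is automatic: all these factors are among $\JH(\bar\Theta)$, which by Proposition \ref{uniquejh} is itself a single element $S$ of $\jh$. Applying Proposition \ref{cancompute} to this $M'$ finishes the argument.

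The only subtle point—and the one I would be most careful about—is the claim that the filtration $(\Lambda_i)$ is eventually contained in $M_{k,a_p}$, equivalently that $F_i = 0$ for $i$ large. This follows from the finite length of $\bar\Theta$ together with the fact that $V_i/V_{i+1}$ is irreducible for every $i$, so the filtration $(V_i)$ of $\sigma_{r,E}$ has finite length bounded by $\dim_E \sigma_{r,E} = r+1$; hence there are only finitely many indices $i$ at all, and the entire construction above involves only finitely many witnesses. (If one instead works with an infinite indexing convention, one uses that $\Lambda_i$ is generated over $G$ by $V_i \subseteq \sigma_{r,E}$ and that $\bar\Theta$ finite length forces all but finitely many $F_i$ to vanish.) Once this finiteness is pinned down, no further work is required: the corollary is a packaging of Theorem \ref{factor} and Propositions \ref{cancompute} and \ref{uniquejh}.
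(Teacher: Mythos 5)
Your proof is correct and follows the paper's intended route: the paper states this corollary with no explicit proof because it is meant to follow at once from Theorem~\ref{factor} combined with Proposition~\ref{cancompute}, and you spell out exactly that: collect, for the finitely many indices $i$ of the filtration, a witness $m_i\in M_{k,a_p}$ for the relevant relation, and take $M'$ to be their span. The finiteness observation at the end (there are at most $r+1$ indices, hence only finitely many witnesses) is the right way to dispose of any worry about an infinite construction.

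One step is stated slightly too quickly. You assert that the Jordan--Hoelder factors of all the chosen $Q_i/(T_i-\lambda_i)Q_i$ (or $Q_i/(T_i^2-\mu_iT_i+1)Q_i$) ``are among $\JH(\bar\Theta)$,'' and conclude the ``single $S\in\jh$'' hypothesis of Proposition~\ref{cancompute}. But $F_i$ is only a \emph{quotient} of $Q_i/(T_i-\lambda_i)Q_i$, so a priori that quotient could have Jordan--Hoelder constituents not present in $F_i$ and hence not obviously in $\JH(\bar\Theta)$. The claim is nonetheless true, but it uses Fact~\ref{factsjh}: every Jordan--Hoelder factor of $\pi(a_i,\lambda_i,\omega^{b_i})$ (resp.\ of $I(\sigma_{p-2}(b))/(T^2-\mu T+1)$, resp.\ the $p=2$ case) lies in a single $S_i\in\jh$, and since $F_i\neq 0$ contributes at least one of these factors to $\JH(\bar\Theta)\in\jh$, the disjointness of distinct elements of $\jh$ forces $S_i=\JH(\bar\Theta)$. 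You should cite Fact~\ref{factsjh} here rather than Proposition~\ref{uniquejh} alone, which only identifies $\JH(\bar\Theta)$ from one of its factors and does not by itself say anything about the extra constituents of $Q_i/(T_i-\lambda_i)Q_i$. With this reference added, the argument is complete.
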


\begin{rema}
This does not mean that for this $M'$ we know exactly which $F_i$
contributes which Jordan-Hoelder factor of $\bar\Theta$.
For example in the case where $Q_i/(T_i-\lambda)Q_i$ is reducible, we are not able to say
using only this information
if $F_i$ is equal to all of $Q_i/(T_i-\lambda)Q_i$ or if it is a strict
quotient of it. However this is not
necessary to know this to determine $\JH(\bar\Theta)$.
\end{rema}

\section{Description of the filtration on $\Sym^{k-2} E^2$}
\label{filtration}

We describe an explicit filtration on $\Sym^r E^2$ that can be used in
the computations of the previous section, which has interesting
properties with respect to these computations. We will call this
filtration the standard filtration.

We identify $\Sym^r E^2$ with the space $E[X,Y]_r$ of homogeneous
polynomials of degree $r$ with coefficients in $E$.
Let $\theta = X^pY-XY^p$.

Let $m = \lfloor r/(p+1) \rfloor$.
If $u < m$, we set $\eta_{u,v} = \theta^uX^vY^{r-u(p+1)-v}$ if $0 \leq v
\leq p-1$, and $\eta_{u,p} = \theta^uX^{r-u(p+1)}$. If $0 \leq v \leq
r-m(p+1)$ we set $\eta_{m,v} = \theta^mX^vY^{r-m(p+1)-v}$.

\subsection{Start of the filtration}

\subsubsection{Subspace generated by $\theta$}

Let $V_0 = E[X,Y]_r$.

We see easily that for all $g \in
K$, we have $g\cdot \theta = \det(g)\theta$.
So the subspace of $V_0$ generated by $\theta$ is invariant under the
action of $K$, we call it $V_2$.

Let us give a basis of $V_0/V_2$. It is a vector space of dimension
$p+1$. 
As $X^iY^{r-i} = X^{p-1+i}Y^{r-i-p+1}$ mod $\theta$,
any monomial in $E[X,Y]_r$ is equivalent modulo $\theta$ to one of the
elements
$X^r$, $Y^r$ or $X^iY^{r-i}$ for $1\leq i \leq p-1$.  
So we get that a basis of $V_0/V_2$ is given by the
$\eta_{0,v}$ for $0 \leq v \leq p$ (or $v \leq r$ if $r \leq p$).

\subsubsection{Refining the filtration}

Let $V_1$ the subspace of $V_0$ generated as an $E[K]$-module by $V_2$ and $X^r$.  
Let $J_0 = V_0/V_1$ and $J_1 = V_1/V_2$.
The image of $X^r$ under the action of $K$ is the set of elements of the
form $(uX+vY)^r$ for  $u,v\in \F_p$ not both zero, so these elements
generate $J_1$ as an $E$-vector space.
Another generating set is also given by $X^r$, $Y^r$ and the elements
$\s_i$ for $0\leq i \leq p-2$, where $\s_i =
\sum_{j\geq 0}\binom{r}{j(p-1)+i}X^{j(p-1)+i}Y^{r-j(p-1)-i}$ (some of these sums
can be zero modulo $V_2$). The correspondence between these two generating sets is
given by the equalities $(uX+Y)^r = \sum_{i=0}^{p-2}u^i\s_i$ for $u\in
\F_p^\times$, and $\s_i = -\sum_{u\in \F_p^\times}u^{-i}(uX+Y)^r$.

\begin{lemm}
\label{summodp}
Let $0 \leq i \leq p-2$. Then
$\s_{i,r} := \sum_j\binom{r}{j(p-1)+i}$ is equal to $\binom{[r]}{i}$ modulo $p$, 
except in the case $[r]=p-1$ and $i=0$ where the sum equals $2$ modulo $p$.
\end{lemm}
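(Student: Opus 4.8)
The plan is to reduce the sum $\s_{i,r} = \sum_j \binom{r}{j(p-1)+i}$ modulo $p$ to a sum over residues, using the standard root-of-unity filter together with Lucas' theorem. First I would use the $(p-1)$-th roots of unity in the field $\F_p$ (equivalently, the generator structure of $\F_p^\times$) to write the sum as a character sum: if $g$ is a generator of $\F_p^\times$ and we work in $\F_p$, then $\sum_{j} \binom{r}{j(p-1)+i}$ picks out, modulo $p$, the coefficients of $(1+T)^r$ at exponents congruent to $i$ modulo $p-1$. The cleanest way to see this over $\F_p$ is via the identity $\sum_{t \in \F_p^\times} t^{n-i} = -1$ if $n \equiv i \pmod{p-1}$ and $0$ otherwise, so that $\s_{i,r} \equiv -\sum_{t\in\F_p^\times} t^{-i}(1+t)^r \pmod p$ (this is exactly the relation $\s_i = -\sum_{u\in\F_p^\times} u^{-i}(uX+Y)^r$ from the preceding paragraph, read coefficient-wise, together with $\binom r n \equiv 0$ beyond the relevant range). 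This reduces the statement to evaluating $-\sum_{t\in\F_p^\times} t^{-i}(1+t)^r$ in $\F_p$.

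Next I would exploit that $(1+t)^r$ depends only on $r$ through its residues base $p$ in a controlled way: more precisely, for $t \in \F_p$ we have $(1+t)^r = (1+t)^{[r]_0}$ where $[r]_0$ is... no — rather, write $r = r_0 + r_1 p + \cdots$; then since $t^p = t$ for $t \in \F_p$, we get $(1+t)^r = (1+t)^{r_0}(1+t)^{r_1}\cdots = (1+t)^{r_0 + r_1 + \cdots}$, but that last step needs care because $(1+t)^{p^k} = (1+t)$ only gives $(1+t)^r = (1+t)^{s}$ where $s = \sum_j r_j$, the digit sum. Actually the key simplification is just $(1+t)^r = (1+t)^{r \bmod (p-1)} \cdot (1+t)^{(p-1)\lfloor\cdot\rfloor}$ when $1+t \neq 0$, i.e. $(1+t)^r = (1+t)^{[r]}$ for $t \neq -1$, while the $t=-1$ term contributes $0$ unless $r = 0$. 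So $\s_{i,r} \equiv -\sum_{t \neq 0, -1} t^{-i}(1+t)^{[r]} \pmod p$; then I would add back the (vanishing) $t=-1$ term to restore the full sum $-\sum_{t\in\F_p^\times} t^{-i}(1+t)^{[r]}$, which by the same root-of-unity filter run backwards equals $\sum_j \binom{[r]}{j(p-1)+i}$. Since $0 \le [r] \le p-1$, the only binomial coefficient $\binom{[r]}{j(p-1)+i}$ that can be nonzero (for $0 \le i \le p-2$) is the $j=0$ term $\binom{[r]}{i}$, unless $[r] = p-1$ and $i = 0$, in which case both $j=0$ (giving $\binom{p-1}{0}=1$) and $j=1$ (giving $\binom{p-1}{p-1}=1$) contribute, for a total of $2$. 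This yields exactly the claimed formula.

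The main obstacle, and the point requiring genuine care rather than routine manipulation, is the handling of the exceptional term $t = -1$ (equivalently $1+t = 0$) in $\F_p$: the substitution $(1+t)^r = (1+t)^{[r]}$ fails there, and one must check that this term genuinely contributes $0$ to the sum $\sum_t t^{-i}(1+t)^r$ for $r \ge 2$ (which it does, since $0^r = 0$), while also being careful that adding it to the right-hand sum is harmless for the same reason — except precisely when it interacts with the $[r] = p-1$ boundary, which is the source of the stated exception. A secondary subtlety is making sure the ``extra'' range-of-summation terms $\binom{r}{j(p-1)+i}$ with $j$ large are accounted for: over $\Z$ these are just zero once $j(p-1)+i > r$, so the finite sum over $\Z$ really does reduce mod $p$ to the character-sum expression, and one should note this passage from the integer sum to the $\F_p$ computation is legitimate because we are only claiming a congruence mod $p$. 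Once these points are dispatched, the rest is Lucas' theorem applied to a single binomial coefficient with top entry $\le p-1$, which is immediate.
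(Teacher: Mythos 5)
Your proof is correct and follows essentially the same route as the paper: express $\s_{i,r}$ as a character sum $-\sum_{t\in\F_p^\times}t^{-i}(1+t)^r$ via the root-of-unity filter, use $(1+t)^r=(1+t)^{[r]}$ over $\F_p^\times$ to reduce to $r=[r]$, and then read off the single (or, when $[r]=p-1$ and $i=0$, double) surviving binomial coefficient. The only difference is that you explicitly isolate the $t=-1$ term, which the paper's statement ``$f(x)$ depends only on $r\bmod(p-1)$'' quietly subsumes; this is a correct and worthwhile clarification (and indeed is exactly where the argument would break down if $r=0$ were allowed, a degenerate case the lemma implicitly excludes), but it is a refinement of the same argument rather than a different one.
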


\begin{proof}
Let $f(X) = X^{-i}(1+X)^r = \sum_{j\geq i}\binom{r}{j}X^{j-i}$.
We have 
$$\sum_{x\in\F_p^\times}f(x) = -\sum_{j=i\text{ mod }p-1}\binom{r}{j}
= -\sum_{j\geq 0}\binom{r}{j(p-1)+i} = -\s_{i,r}$$

Moreover the value of $f(x)$ for $x\in\F_p^\times$ depends only on
$r$ modulo $p-1$, that is $[r]$. So
we only need to compute the sum when $r=[r]$, and then there is either only one
term in the sum, or two terms when $[r]=p-1$ and $i=0$.
\end{proof}

\begin{coro}
$J_1$ is of dimension $[r] + 1$, and a basis of $J_1$ is given by the
elements $\eta_{0,v}$ for $0 \leq v < [r]$ and $v = p$.  A basis of $J_0$
is given by the elements $\eta_{0,v}$ for $[r] \leq v \leq p-1$.
\end{coro}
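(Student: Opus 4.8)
The plan is to determine the dimension and an explicit basis of $J_1 = V_1/V_2$ directly from the description of $V_1$ as the $E[K]$-submodule of $V_0/V_2$ generated by the image of $X^r$, using the two generating sets already introduced in the text: the "geometric" generators $(uX+vY)^r$ with $u,v\in\F_p$ not both zero, and the "combinatorial" generators $X^r$, $Y^r$, and the $\s_i$ for $0\le i\le p-2$. Since everything is taken modulo $V_2$, i.e. modulo $\theta$, we may work in the $(p+1)$-dimensional space $V_0/V_2$ with basis $\eta_{0,v}$ for $0\le v\le p$; recall $\eta_{0,v}=X^vY^{r-v}$ for $0\le v\le p-1$ and $\eta_{0,p}=X^r$. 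The goal is to identify the span of the $\s_i$ (together with $X^r$ and $Y^r$) inside this space.

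First I would rewrite each generator $\s_i$ in terms of the basis $\eta_{0,v}$. By definition $\s_i=\sum_{j\ge 0}\binom{r}{j(p-1)+i}X^{j(p-1)+i}Y^{r-j(p-1)-i}$, and modulo $\theta$ every monomial $X^aY^{r-a}$ with $1\le a\le p-1$ collapses: all terms $X^{j(p-1)+i}Y^{\cdots}$ with $1\le j(p-1)+i\le p-1$ become $\eta_{0,i'}$ where $i'\in\{1,\dots,p-1\}$ is the representative of $i$; the boundary contributions $j(p-1)+i\equiv 0$ land on $X^r$ or $Y^r$. Collecting coefficients, the reduction of $\s_i$ modulo $V_2$ is $\s_{i,r}\,\eta_{0,[i]}$ plus possible multiples of $X^r=\eta_{0,p}$ and $Y^r=\eta_{0,0}$, where $\s_{i,r}=\sum_j\binom{r}{j(p-1)+i}$ is exactly the sum computed in Lemma \ref{summodp}. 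Hence by that lemma $\s_{i,r}\equiv\binom{[r]}{i}\pmod p$ for $0\le i\le p-2$ (with the exceptional doubling when $[r]=p-1$, $i=0$, which does not affect whether the coefficient is nonzero). Therefore $\s_i$ has nonzero $\eta_{0,[i]}$-component precisely when $\binom{[r]}{i}\not\equiv 0$, i.e. for $0\le i\le [r]$, and it vanishes (modulo $X^r$, $Y^r$) for $[r]<i\le p-2$.

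Next I would assemble the span. Including $X^r=\eta_{0,p}$ and $Y^r=\eta_{0,0}$ among the generators, and using the generators $\s_i$ for $1\le i\le \min([r],p-2)$ — each of which contributes the new basis vector $\eta_{0,i}$ with nonzero coefficient after subtracting suitable multiples of the already-obtained $\eta_{0,0},\dots,\eta_{0,i-1}$ and of $\eta_{0,p}$ — a triangularity argument shows that $J_1$ contains $\eta_{0,v}$ for $0\le v<[r]$ together with $\eta_{0,p}$; one must also check the case $[r]=p-1$ separately so that $\eta_{0,p-1}$ does get produced (here $[r]<v\le p-1$ is vacuous on the $J_0$ side, which is consistent). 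Conversely the $\s_i$ for $[r]\le i\le p-2$ lie in the span of $\eta_{0,0}$ and $\eta_{0,p}$, so they contribute nothing new, and no $\eta_{0,v}$ with $[r]\le v\le p-1$ can appear: the generating set for $J_1$ manifestly involves only $X^r$, $Y^r$ and the $\s_i$, whose reductions we have just listed. This gives $\dim J_1=[r]+1$ with the stated basis, and since $\dim(V_0/V_2)=p+1$ we get $\dim J_0=p-[r]$ with complementary basis $\eta_{0,v}$, $[r]\le v\le p-1$.

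The main obstacle is the bookkeeping at the "boundary" indices $v\in\{0,p-1,p\}$: the monomials $X^r$ and $Y^r$ are not killed modulo $\theta$ and they receive contributions from several of the $\s_i$, so one has to be careful that the triangular elimination actually clears them and that no accidental linear dependence among the surviving $\eta_{0,v}$ is introduced — in particular handling the exceptional case $[r]=p-1$ (and the small-weight case $r\le p$, where the basis is truncated at $v\le r$) needs a direct check. Once the coefficient $\s_{i,r}=\binom{[r]}{i}$ is in hand from Lemma \ref{summodp}, this is the only delicate point; the rest is the triangular linear-algebra argument sketched above.
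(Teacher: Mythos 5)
There is a genuine gap, and it sits exactly where you flagged your worry about "bookkeeping at the boundary indices" — but the issue is more than bookkeeping: your intermediate claim about the $\eta_{0,[i]}$-component of $\s_i$ is incorrect, and following it would yield the wrong dimension for $J_1$.

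You assert that the reduction of $\s_i$ modulo $V_2$ is $\s_{i,r}\,\eta_{0,[i]}$ plus multiples of $\eta_{0,0}$ and $\eta_{0,p}$, so that the $\eta_{0,[i]}$-coefficient is $\s_{i,r}=\binom{[r]}{i}$. That is not right: the terms of $\s_i$ with $j(p-1)+i=0$ and $j(p-1)+i=r$ do \emph{not} contribute to $\eta_{0,[i]}$; they go to $Y^r=\eta_{0,0}$ and $X^r=\eta_{0,p}$ respectively, and must be \emph{subtracted}. The correct coefficient of $\eta_{0,[i]}$ is
$$\s_{i,r}-[i=0]-[i\equiv r\bmod p-1],$$
where $[\,\cdot\,]$ denotes the indicator. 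These corrections are exactly what the paper's proof alludes to with "except when one of these terms is either $X^r$ or $Y^r$," and they are essential. For $i=[r]$ (with $[r]\le p-2$) the coefficient is $\binom{[r]}{[r]}-1=0$, so $\s_{[r]}$ reduces to $\eta_{0,p}$ alone and contributes nothing new. For $i=0$ the coefficient of $\eta_{0,p-1}$ is $\s_{0,r}-1-[\,r\equiv 0\,]$, which is $1-1-0=0$ when $[r]<p-1$, and $2-1-1=0$ when $[r]=p-1$ (the exceptional case of Lemma~\ref{summodp}); so $\s_0$ reduces to $\eta_{0,0}$ (plus possibly $\eta_{0,p}$) and also contributes nothing new. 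If you instead run the triangular elimination with your stated coefficient $\binom{[r]}{i}$ and use $\s_i$ for $1\le i\le\min([r],p-2)$ plus $\s_0$, $X^r$, $Y^r$, you produce $\eta_{0,1},\dots,\eta_{0,[r]},\eta_{0,p-1},\eta_{0,0},\eta_{0,p}$, giving dimension $[r]+3$ (or $p+1$ when $[r]=p-1$), contradicting the claimed $[r]+1$.

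Your remark that "one must also check the case $[r]=p-1$ separately so that $\eta_{0,p-1}$ does get produced" is backwards: when $[r]=p-1$, $\eta_{0,p-1}$ is precisely the (single) basis vector of $J_0$, so it must \emph{not} lie in $J_1$; the vanishing of the $\eta_{0,p-1}$-coefficient of $\s_0$ (using the exceptional value $\s_{0,r}\equiv 2$) is what makes this work out. In short, your route is the same as the paper's — reduce the generators $\s_i$ modulo $V_2$ and invoke Lemma~\ref{summodp} — but the computation of the $\eta_{0,[i]}$-coefficient must include the two boundary subtractions, and once it does, the surviving nonzero coefficients occur exactly for $1\le i<[r]$, giving the basis $\{\eta_{0,v}:0\le v<[r]\}\cup\{\eta_{0,p}\}$ of $J_1$ and, by complementary count, the stated basis of $J_0$.
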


\begin{proof}
Modulo $V_2$, $X^{j(p-1)+i}Y^{r-j(p-1)+i}$ and
$X^{j'(p-1)+i}Y^{r-j'(p-1)+i}$ are equal, except when one of these terms is either
$X^r$ or $Y^r$. So Lemma \ref{summodp} allows us to compute $\s_i$
modulo $V_2$, which gives the results for $J_1$.
\end{proof}

\begin{prop}
The representation $J_1$ is isomorphic to $\sigma_{[r]}$, and $J_0$ to
$\sigma_{p-1-[r]}([r])$. This extension is split if and only if $[r] = p-1$. 
\end{prop}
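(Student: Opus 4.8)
The plan is to realise $V_0/V_2$ as a principal series of $\GL_2(\F_p)$ and to read off its two constituents. Throughout, $K$ acts on $\Sym^r E^2 = E[X,Y]_r$ through $\GL_2(\F_p)$ via the standard action, and I assume $r \geq p$, where $V_0/V_2$ genuinely has length two; for $r < p$ the representation $\Sym^r E^2$ is itself irreducible (or trivial) and the statement degenerates, so that case is handled separately. First I would note that $V_2$ is the image of the $K$-equivariant multiplication map $E[X,Y]_{r-p-1} \to E[X,Y]_r$, $f \mapsto \theta f$ (equivariant since $g \cdot \theta = \det(g)\,\theta$), so that $V_0/V_2 = E[X,Y]_r/(\theta)$ has dimension $p+1$. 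Since $\theta = XY\prod_{a \in \F_p^\times}(X-aY)$ vanishes with multiplicity one at exactly the $p+1$ points of $\mathbb{P}^1(\F_p)$, evaluation of homogeneous forms identifies $V_0/V_2$, $G$-equivariantly, with the space of functions $\phi$ on $\F_p^2 \setminus\{0\}$ satisfying $\phi(tv) = t^r \phi(v)$, hence with $\operatorname{Ind}_B^{\GL_2(\F_p)}(\psi)$ for a Borel subgroup $B$ (we may take the upper-triangular one, the stabiliser of a point of $\mathbb{P}^1(\F_p)$) and a suitable character $\psi$ of $B$. In particular $V_0/V_2$ has length two, and its constituents have the form $\sigma_s$ and $\sigma_{p-1-s}(s)$ for an $s \in \{0,\dots,p-1\}$ determined by $\psi$, by the classical description of principal series of $\GL_2(\F_p)$ in the defining characteristic.

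Next I would identify $J_1$. By construction $J_1 = V_1/V_2$ is the $G$-subrepresentation of $V_0/V_2$ generated by the image of $X^r$, which is fixed by the unipotent radical $U$ of $B$ and is a $B$-eigenvector, $B$ acting through the character $\chi_r$ given on the torus by $\operatorname{diag}(a,d) \mapsto a^r = a^{[r]}$. By Frobenius reciprocity this yields a $G$-map $\operatorname{Ind}_B^{\GL_2(\F_p)}(\chi_r) \to V_0/V_2$ with image $J_1$; since $\dim J_1 = [r]+1 < p+1$ (the dimension of $J_1$ having already been computed), this map is not injective, so $J_1$ is a nonzero proper quotient of the length-two principal series $\operatorname{Ind}_B^{\GL_2(\F_p)}(\chi_r)$, hence irreducible. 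Finally $\operatorname{Hom}_{\GL_2(\F_p)}(\operatorname{Ind}_B^{\GL_2(\F_p)}(\chi_r), \sigma_a(b)) = \operatorname{Hom}_B(\chi_r, \sigma_a(b))$ is nonzero only for $\sigma_a(b) = \sigma_{[r]}$ (the unique irreducible possessing a $U$-fixed vector on which the torus acts through $\chi_r$), so $J_1 \cong \sigma_{[r]}$. Consequently $s = [r]$ above, and $J_0 = V_0/V_1 = (V_0/V_2)/J_1$ is the remaining constituent $\sigma_{p-1-[r]}([r])$, consistently with $\dim J_0 = (p+1) - ([r]+1) = p - [r] = \dim \sigma_{p-1-[r]}([r])$.

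For the splitting, the extension $0 \to J_1 \to V_0/V_2 \to J_0 \to 0$ splits if and only if $V_0/V_2$, equivalently the principal series $\operatorname{Ind}_B^{\GL_2(\F_p)}(\psi)$, is semisimple. Here I would use the classical fact that, in the defining characteristic, such a principal series is semisimple precisely when the inducing character is trivial, in which case it equals $\mathbf{1} \oplus \St = \sigma_0 \oplus \sigma_{p-1}$. Tracing the normalisation, the inducing character is trivial exactly when $(p-1) \mid [r]$, i.e. when $[r] = p-1$; and there indeed $\sigma_{[r]} = \sigma_{p-1} = \St$ and $\sigma_{p-1-[r]}([r]) = \sigma_0(p-1) = \mathbf{1}$, consistent with $V_0/V_2 \cong \mathbf{1} \oplus \St$. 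When $1 \leq [r] \leq p-2$ and $p \nmid r$ one may instead check non-splitness directly, the $U$-invariants of $V_0/V_2$ being then one-dimensional, which forces indecomposability.

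I expect the genuinely delicate points to be bookkeeping rather than mathematics: fixing the normalisation of $\psi$ — together with the $\det$-twist that arises from working with $E^2$ rather than its dual — compatibly with the conventions of the paper, so that the labelling of the two constituents as $J_1$ and $J_0$ is unambiguous; and isolating the small values of $r$, and the primes $p = 2$ and $p = 3$, where the filtration degenerates, so that the statement still reads correctly there. The representation-theoretic inputs — the length-two structure of $\operatorname{Ind}_B^{\GL_2(\F_p)}$, the computation of its constituents, and the semisimplicity criterion — are standard and may be quoted from the references already in use.
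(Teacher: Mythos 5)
Your proof is correct, but it takes a genuinely different and more self-contained route than the paper. The paper simply quotes \cite{Glo78} for the fact that $V_0/V_2$ is an extension of $\sigma_{p-1-[r]}([r])$ by $\sigma_{[r]}$, invokes Proposition~\ref{genvector} for the splitting criterion, and then identifies $J_1$ as the unique subrepresentation of dimension $[r]+1$. You instead unpack the structure from scratch by identifying $V_0/V_2$ with a principal series of $\GL_2(\F_p)$ via evaluation on $\mathbb{P}^1(\F_p)$, pinning down $J_1$ through Frobenius reciprocity (as a proper quotient of a length-two induced module, hence irreducible) and a $U$-fixed eigenvector computation, and then reading off $J_0$ and the split/non-split dichotomy from the classical theory of defining-characteristic principal series. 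What the paper's route buys is brevity; what yours buys is transparency about where the two constituents come from and why $J_1$ sits as the subobject rather than the quotient.

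Two small points to tidy. First, when $[r]=p-1$ the irreducible with a $U$-fixed line of $B$-weight $\chi_r$ is not unique: both $\sigma_{p-1}$ and $\sigma_0$ qualify (the trivial torus weight arises for both). Your dimension count $\dim J_1=[r]+1=p$ does rescue the identification, but the parenthetical uniqueness claim as stated is false in this edge case and the reader has to notice the fallback. Second, the semisimplicity criterion for $\operatorname{Ind}_B^{\GL_2(\F_p)}(\psi)$ in defining characteristic is that $\psi$ \emph{factor through the determinant}, not that $\psi$ be trivial: if $\psi=\chi\circ\det|_B$ then $\operatorname{Ind}_B(\psi)\cong(\chi\circ\det)\otimes(\mathbf{1}\oplus\St)$ is semisimple even for $\chi\neq1$. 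For the particular character $\psi$ arising here --- on the torus it is $\operatorname{diag}(a,d)\mapsto d^r$ (or $a^r$, depending on the choice of flag) --- factoring through $\det$ forces $r\equiv0\pmod{p-1}$, so the conclusion $[r]=p-1$ is unaffected, but the general statement you quote should be corrected. The extra hypothesis $p\nmid r$ in your alternative $U$-invariants argument appears to be unnecessary and is best dropped, since the preceding route already settles the question.
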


\begin{proof}
It is well known that $V_0/V_2$ is an extension of $\sigma_{p-1-[r]}([r])$ by
$\sigma_{[r]}$ (see for example \cite{Glo78}). The fact that this extension is split
if and only if $[r] = p-1$ is a consequence of Proposition \ref{genvector} below. 
So $J_1$ is the unique subrepresentation of
dimension $[r]+1$ of $V_0/V_2$, which is $\sigma_{[r]}$.
\end{proof}

We write $a_0 = p-1-[r]$, $a_1 = [r]$, $b_0 = [r]$, $b_1 = 0$, so that
$J_i$ is isomorphic to $\sigma_{a_i}(b_i)$ for $i = 0,1$.

\subsubsection{Generating elements}

A simple computation shows:

\begin{prop}
\label{genvector}
Let $\sigma_a(b)$ be identified with the vector space $E[x,y]_a$.
Then we can recognize the line $Ex^a$ by the action of $\GL_2(\F_p)$ on 
$E[x,y]_a$:
If $ a < p-1$, then $Ex^a$ is the unique line of elements on which the matrix
$\smatr u00v$ acts by $u^{a+b}v^b$, and moreover $x^a$ is invariant by
the matrix $\smatr 1101$. If $a = p-1$, $Ex^{p-1}$ is the unique line on
which the matrix $\smatr u00v$ acts by $u^bv^b$ and which is invariant by
the matrix $\smatr 1101$.
\end{prop}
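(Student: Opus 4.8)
The statement is essentially a classical fact about the $\GL_2(\F_p)$-module structure of $E[x,y]_a = \Sym^a E^2$, and the plan is to verify it by a direct computation with the two matrices $\smatr u00v$ and $\smatr 1101$. First I would record how these matrices act on the monomial basis: $\smatr u00v$ sends $x^iy^{a-i}$ to $u^i v^{a-i} x^i y^{a-i}$ (after twisting by $\det^b$, this becomes $u^{i+b} v^{a-i+b}$), so the monomials are exactly the weight vectors for the diagonal torus, and the monomial $x^a$ has weight $(u,v)\mapsto u^{a+b}v^b$. The first task is then to check that no other monomial $x^iy^{a-i}$ with $i<a$ has this same weight, i.e.\ that the characters $(u,v)\mapsto u^{i+b}v^{a-i+b}$ for $0\le i\le a$ are pairwise distinct as characters of $\F_p^\times\times\F_p^\times$. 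This is where the hypothesis on $a$ enters: distinctness amounts to asking that $i\equiv a\pmod{p-1}$ and $a-i\equiv 0\pmod{p-1}$ force $i=a$ among $0\le i\le a$, which holds automatically when $a<p-1$ but fails when $a=p-1$ (the monomials $x^{p-1}$ and $y^{p-1}$ share the weight $u^bv^b$, with $\smatr u00v$ acting by $u^{p-1+b}v^b = u^b v^b$ on $x^{p-1}$). This already explains the bifurcation in the statement between $a<p-1$ and $a=p-1$, and shows that in the case $a=p-1$ one really does need the extra condition of $\smatr 1101$-invariance to pin down the line.

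Next I would analyze the action of $\smatr 1101$. Since $\smatr 1101$ fixes $x$ and sends $y$ to $x+y$, a general element $f=\sum_i c_i x^iy^{a-i}$ satisfies $\smatr 1101 f = f$ iff $f(x,x+y)=f(x,y)$, and expanding this one sees immediately that $x^a$ is fixed (it does not involve $y$ at all), while conversely any $\smatr 1101$-invariant element must be a scalar multiple of $x^a$: writing $f = \sum_i c_i x^i y^{a-i}$ and comparing the coefficient of $x^{a-1}y$ (or the lowest power of $x$ appearing with nonzero coefficient below $x^a$), the invariance forces all $c_i$ with $i<a$ to vanish. This step is a short finite computation — compare leading terms — and does not require any restriction on $a$. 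Combining the two analyses: when $a<p-1$, the torus weight alone isolates $Ex^a$, and $x^a$ is moreover $\smatr 1101$-invariant; when $a=p-1$, the torus-weight space for $u^bv^b$ is two-dimensional, spanned by $x^{p-1}$ and $y^{p-1}$, and intersecting with the $\smatr 1101$-fixed space — which is $Ex^{p-1}$ by the computation just described — singles out $Ex^{p-1}$.

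The only mild subtlety, and the place I would be most careful, is making sure the $\smatr 1101$-invariance argument is stated cleanly in the case $a=p-1$: one must check that $y^{p-1}$ is \emph{not} $\smatr 1101$-invariant (indeed $\smatr 1101 y^{p-1} = (x+y)^{p-1} \ne y^{p-1}$ since the binomial coefficients $\binom{p-1}{j}$ are nonzero mod $p$), so that the fixed line inside $\langle x^{p-1}, y^{p-1}\rangle$ is genuinely one-dimensional and equals $Ex^{p-1}$; this is where one uses that we are in characteristic $p$ with exponent exactly $p-1$. Apart from that, everything is a routine verification, and I do not expect any real obstacle — the content of the proposition is precisely the bookkeeping of which weight-and-invariance conditions cut out the highest-weight line, and the role of the proposition in the paper (as invoked in the preceding proof about splitting of $V_0/V_2$) is served by the clean dichotomy it produces at $a=p-1$.
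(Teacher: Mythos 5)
Your proof is correct and supplies exactly the "simple computation" that the paper announces but omits (the proposition is stated in the paper with no proof given). The two-step plan — use the torus weight to cut down to a weight space, observe it is a line when $a<p-1$ and two-dimensional (spanned by $x^{p-1}$ and $y^{p-1}$) when $a=p-1$, then use $\smatr 1101$-invariance to isolate $Ex^a$ — is the only reasonable route, and both halves are carried out correctly.

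One small imprecision worth flagging in the unipotent step: comparing only the coefficient of $x^{a-1}y$ in $f(x,y)$ and $f(x,x+y)$ yields the single relation $\sum_{i<a-1}(a-i)c_i=0$, which by itself does not force all $c_i$ with $i<a$ to vanish. The clean version of what you gesture at in the parenthetical is: if $i_0<a$ is minimal with $c_{i_0}\neq 0$, then the coefficient of $x^{i_0+1}y^{a-i_0-1}$ in $f(x,x+y)$ equals $(a-i_0)c_{i_0}+c_{i_0+1}$, so invariance forces $(a-i_0)c_{i_0}=0$, impossible since $0<a-i_0\leq p-1$. (Equivalently: $f(1,t)$ is a polynomial of degree $\leq a\leq p-1$ with $f(1,t+1)=f(1,t)$, hence constant.) This also disposes of the $a=p-1$ case at once — the $\smatr 1101$-fixed space is $Ex^{p-1}$, and a $\smatr 1101$-stable line is automatically pointwise fixed because $\smatr 1101$ has order $p$ and $\mu^p=1$ forces $\mu=1$ in characteristic $p$ — so your separate check that $y^{p-1}$ is not fixed is fine but redundant.
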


We set $e_0 = \eta_{0,p-1}$ and $e_1 = \eta_{0,p}$. 

Then for $i = 0,1$, $e_i$ generates $J_i$ as a $K$-representation.
Moreover, in the isomorphism $J_i = \sigma_{a_i}(b_i) = E[x,y]_{a_i}$, 
the image of $e_i$ corresponds to $x^{a_i}$ (up to some scalar) by
Proposition \ref{genvector}.

\subsection{Filtration by powers of $\theta$}

For all $0 \leq i \leq m$, the multiplication by $\theta^i$ induces a
linear map $E[X,Y]_{r-i(p+1)} \to E[X,Y]_r$, and the image is a
subrepresentation isomorphic to $\sigma_{r-i(p+1)}(i)_E$. We denote it by
$V_{2i}$.

By considering the analogue of $V_1$ for $E[X,Y]_{r-i(p+1)}$, and considering
the multiplication by $\theta^i$ from $E[X,Y]_{r-i(p+1)}$ to $E[X,Y]_r$ for $i
\leq m$,
we refine the filtration
$(V_{2i})$ into a filtration 
$(V_i)$. Let $J_i = V_i/V_{i+1}$.

We set $a_{2i+1} = [r-2i]$, and $b_{2i+1} = i$, 
and $a_{2i} = p-1-[r-2i]$, and $b_{2i} = r-i$.
We get that $J_i$ is isomorphic to $\sigma_{a_i}(b_i)$.

\subsection{End of the filtration}

The subspace $V_{2m}$ is the smallest we can obtain via powers of
$\theta$, but it is not necessarily irreducible.
Let $t$ be the remainder of the Euclidean division of $r$ by $p+1$, so
that $r = m(p+1)+t$. Then $V_{2m} = E_t(m)$.

If $t \leq p-1$ then $V_{2m}$ is irreducible. We set $V_{2m+1} = 0$,
$a_{2m} = t$, $b_{2m} = m$.

If $t = p$ then $V_{2m}$ is reducible. We denote by $V_{2m+1}$
the subrepresentation generated by $\theta^mX^p$. Let $V_{2m+2} = 0$.
Set $a_{2m} = p-2$, $b_{2m} = m+1$, $a_{2m+1} = 1$, $b_{2m+1} = m$.

With these notations we have that $J_i$ is isomorphic to
$\sigma_{a_i}(b_i)$ as before.

\subsection{Basis and generators}
\label{explicitbasis}

We summarize here the explicit description of the elements of the
filtration.

\subsubsection{A basis of $J_i$}

The description of a basis of $J_0$ and $J_1$ generalizes to all $i$:
If $i < m$, 
$J_{2i}$ has a basis given by the $\eta_{i,v}$ for $[r-2i] \leq v \leq
p-1$, and $J_{2i+1}$ has a basis given by the $\eta_{i,v}$ for $0 \leq v
< [r-2i]$ and $v = p$. 

If $V_{2m+1} = 0$, then $J_{2m}$ has a basis given by the $\eta_{m,v}$,
$0 \leq v \leq r-m(p+1)$.

If $r = m(p+1) + p$ then $J_{2m}$ has a basis given by $\eta_{m,v}$ 
for $1 \leq v \leq p-1$, and $J_{2m+1}$ has a basis given by $\eta_{m,0}$
and $\eta_{m,p}$.

\subsubsection{A generating element}

If $i < m$ we set $e_{2i} = \eta_{i,p-1}$ and $e_{2i+1} = \eta_{i,p}$.

If $V_{2m+1} = 0$ we set $e_{2m} =  \eta_{m,t}$.

If $V_{2m+1} \neq 0$ we set
$e_{2m} = \eta_{m,p-1}$ and $e_{2m+1}
= \eta_{m,p}$.

Then $e_i$ generates $J_i = V_i/V_{i+1}$ as a $\GL_2(\Z_p)$-representation.
Moreover, in the isomorphism $J_i = \sigma_{a_i}(b_i) = E[x,y]_{a_i}$, 
the image of $e_i$ corresponds to $x^{a_i}$ (up to some scalar).

\subsubsection{Elements $v_i$}

Let $\Lambda_i = I(V_i)$, and denote by $T_i$ the Hecke operator as in 
Paragraph \ref{compactind} acting
on $\Lambda_i$. We set $v_i = [1,e_i]$. 

Let $w_{i,1} = \sum_{u\in I_1}[g^0_{1,u},e_i]$ if $a_i > 0$, and
$w_{i,1} = \sum_{u\in I_1}[g^0_{1,u},e_i] + [g^1_{0,0},e_i]$ if $a_i =
0$. Then $w_{i,1} = T_iv_i$. 

Let $w_{i,2} = \sum_{u\in I_2}[g^0_{2,u},e_i]$ if $a_i > 0$, and
$w_{i,2} = \sum_{u\in I_2}[g^0_{2,u},e_i] + \sum_{u\in
I_1}[g^1_{1,u},e_i] + [1,e_i]$ if $a_i =
0$. Then $w_{i,2} = T_i^2v_i$. 

\subsection{Properties of the filtration}
\label{propertiesfiltr}

We summarize here some interesting properties of this particular
filtration with respect to the computations of Section
\ref{computefromrel}. These facts come from \cite[Remark 4.4]{BG09}.
They allow us to know without computation the value of $F_i$ for some
steps of the filtration.

\begin{lemm}
\label{Ttheta}
If $r \geq d(p+1)$, and $f \in R[X,Y]_{r-d(p+1)}$,
then for all $g$, $p^{-d}T[g,\theta^df]$ is in $I(\Sym^rR^2)$.
\end{lemm}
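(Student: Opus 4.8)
The plan is to reduce the statement to an explicit computation with the formula for the Hecke operator $T$ on elements of the form $[g, \theta^d f]$. First I would recall (from the description of $T$ that the paper promises in Paragraph \ref{descrT}) that $T[g,x]$ is a sum over the $p+1$ neighbours of the vertex $g$ in the tree: one term of the form $[g\smatr p001, P_0(x)]$ and $p$ terms of the form $[g\smatr 1{u}0p, P_u(x)]$ for $u \in \{0,\dots,p-1\}$ (or rather for $u$ ranging over $I_1$), where each $P_u$ is a linear endomorphism of $\Sym^r$ given by the action of the relevant matrix followed by a projection — concretely, $P_0(x)$ is (the reduction mod the appropriate lattice of) $\smatr p001 \cdot x$ and $P_u(x)$ is $\smatr 1{u}0p \cdot x$, read inside $\Sym^r \K^2 = \K[X,Y]_r$. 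By $G$-equivariance of $T$ it suffices to treat $g = 1$; the general case follows by applying $g$, which preserves $I(\Sym^r R^2)$.

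So the core is to show that each of the $p+1$ polynomials obtained from $\theta^d f$ by acting with $\smatr p001$ or with $\smatr 1{u}0p$ is divisible by $p^d$ in $R[X,Y]_r$. Here the key observation is that $\theta = X^pY - XY^p$ is an eigenvector-like object: acting by $\smatr p001$ sends $X \mapsto pX$, $Y\mapsto Y$, so $\theta \mapsto p^pX^pY - pXY^p = p(p^{p-1}X^pY - XY^p)$, which is $p$ times an element of $R[X,Y]$; hence $\theta^d \mapsto p^d \cdot(\text{integral})$, and since $f \in R[X,Y]_{r-d(p+1)}$ is sent to an integral polynomial as well, the whole term $\smatr p001\cdot(\theta^d f)$ lies in $p^d R[X,Y]_r$. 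For the matrices $\smatr 1{u}0p$ with $u \in \Z_p$: here $X \mapsto X$, $Y \mapsto uX + pY$, so $\theta = X^pY - XY^p \mapsto X^p(uX+pY) - X(uX+pY)^p$. Expanding $(uX+pY)^p$, the constant-in-$pY$ term is $u^pX^p$, so the $X^{p+1}$-coefficients cancel up to $u^pX^{p+1} - u X^{p+1} X^{p-1} $ — more carefully, $X^p(uX+pY) - X(uX+pY)^p = uX^{p+1} + pX^pY - X\sum_{j=0}^p \binom{p}{j}u^{p-j}p^jX^{p-j}Y^j$; the $j=0$ term of the sum is $u^pX^{p+1}$, and since $u \in \Z_p$ we have $u^p \equiv u \pmod p$, so $uX^{p+1} - u^pX^{p+1} \in pR[X,Y]$, while every $j\geq 1$ term in the remaining sum carries a factor $p^j$ with $j\geq 1$, and the $pX^pY$ term obviously carries $p$. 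Hence $\smatr 1u0p\cdot\theta \in p\,R[X,Y]$, so $\smatr 1u0p\cdot\theta^d \in p^d R[X,Y]$, and again multiplying by the integral image of $f$ keeps us in $p^dR[X,Y]_r$.

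Putting these together, $T[1,\theta^df]$ is a sum of $p+1$ elements $[h_i, y_i]$ with each $y_i \in p^d\Sym^rR^2$, so $T[1,\theta^df] \in p^d I(\Sym^rR^2)$, i.e.\ $p^{-d}T[1,\theta^df] \in I(\Sym^rR^2)$; applying $g$ gives the general case. The one point requiring a little care is the bookkeeping in the expansion of $(uX+pY)^p$ and the use of $u^p\equiv u\pmod p$ for $u\in\Z_p$ (equivalently, that all $u$ occurring as matrix entries lie in $\Z_p$, which holds by construction of the $I_n$); this is the only place where an argument beyond bare equivariance is needed, and it is the expected (mild) obstacle. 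The hypothesis $r \geq d(p+1)$ is exactly what guarantees $f$ has nonnegative degree so that $\theta^d f$ is a genuine element of $\Sym^r$.
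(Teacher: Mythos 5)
The paper does not prove this lemma; it simply attributes it to \cite[Remark 4.4]{BG09}. Your direct argument is correct and is the natural one: reduce to the explicit formula for $T$ from Paragraph \ref{descrT}, observe that each of the $p+1$ substitutions $(X,Y)\mapsto(pX,Y)$ and $(X,Y)\mapsto(X,pY-uX)$ sends $\theta$ into $pR[X,Y]$, hence sends $\theta^d f$ into $p^dR[X,Y]_r$, so every coefficient $[g\cdot(\text{neighbour}),\cdot]$ of $T[g,\theta^d f]$ lands in $p^d\Sym^r R^2$. Two small remarks. First, the sentence ``so the $X^{p+1}$-coefficients cancel up to $u^pX^{p+1}-uX^{p+1}X^{p-1}$'' is garbled (the $X^{p-1}$ should not be there), but the ``more careful'' recomputation that follows it is correct, so this is only a slip of the pen. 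Second, you only need $u^p\equiv u\pmod p$, which holds for all $u\in\Z_p$; but since the $u$ that actually occur are in $I_1=\{0\}\cup\mu_{p-1}$, i.e.\ Teichm\"uller representatives, one has $u^p=u$ exactly, so the potentially troublesome $X^{p+1}$-term vanishes on the nose rather than merely being divisible by $p$. (The sign discrepancy between your $Y\mapsto uX+pY$ and the paper's $Y\mapsto pY-uX$ is immaterial for the divisibility count.) The reduction to $g=1$ via $G$-equivariance is fine but also unnecessary, as the paper's formula for $T[g,v]$ already holds for arbitrary $g$.
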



\begin{coro}
\label{filtrzero}
Suppose that $d$ is an integer with $v(a_p) < d$ and $r \geq d(p+1)$, 
then for any $f \in E[X,Y]_{r-d(p+1)}$,
and for all $g\in G$, $[g,\theta^df]$ 
is in $M_{k,a_p}$. In
particular, for the standard filtration, $F_i = 0$ for any $i > 2\lfloor a_p \rfloor + 1$.
\end{coro}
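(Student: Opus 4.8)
The plan is to prove Lemma \ref{Ttheta} first, since Corollary \ref{filtrzero} follows from it by a valuation-counting argument. For Lemma \ref{Ttheta}, I would use the explicit description of the Hecke operator $T$ referred to in Paragraph \ref{descrT}: for a test vector $[g,v]$ with $v \in \Sym^r R^2 = R[X,Y]_r$, the element $T[g,v]$ is a sum of $p+1$ terms of the form $[g h_j, v_j]$, where the $h_j$ run over coset representatives of $K$ in $KZ \backslash G$ adjacent to the identity, and the $v_j$ are obtained from $v$ by applying the matrices $h_j^{-1}$ (suitably scaled to act on $\Sym^r$) and possibly dividing by $p$. Concretely, one of the $p+1$ terms involves the matrix $\smatr p001$ acting on $R[X,Y]_{r-d(p+1)}$-components, and the other $p$ terms involve matrices $\smatr 1u0p$ for $u \in I_1$; applying these to $\theta^d f$ introduces a factor of $p^d$ coming from $\theta = X^pY - XY^p$ being sent to $p(\cdot)$ or $p^p(\cdot)$-type expressions under these matrices (each application of $\smatr p001$ or $\smatr 10{u}p$ scales $\theta$ by $p$). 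So $T[g,\theta^d f]$ lies in $p^d I(\Sym^r R^2)$, which is exactly the claim after dividing by $p^d$. The key computation is tracking how $\theta$ transforms: since $g\cdot\theta = \det(g)\theta$ for $g \in K$, and the Hecke operator's matrices have determinant $p$ (up to $K$-scaling), each of the $d$ powers of $\theta$ contributes one factor of $p$.

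Given Lemma \ref{Ttheta}, I would deduce Corollary \ref{filtrzero} as follows. Suppose $v(a_p) < d$ and $r \geq d(p+1)$. For $f \in E[X,Y]_{r-d(p+1)}$, lift it to $\tilde f \in R[X,Y]_{r-d(p+1)}$. By Lemma \ref{Ttheta}, $p^{-d}T[g,\theta^d\tilde f] \in I(\Sym^r R^2)$. Now write
\begin{equation*}
[g,\theta^d\tilde f] = \frac{1}{a_p}\Bigl( a_p [g,\theta^d\tilde f] - T[g,\theta^d\tilde f] \Bigr) + \frac{1}{a_p}T[g,\theta^d\tilde f] = -\frac{1}{a_p}(T-a_p)[g,\theta^d\tilde f] + \frac{p^d}{a_p}\bigl(p^{-d}T[g,\theta^d\tilde f]\bigr).
\end{equation*}
The first term lies in $(T-a_p)I(\Sym^r\K^2)$. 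The second term: since $v(a_p) < d$, we have $v(p^d/a_p) = d - v(a_p) > 0$, so $p^d/a_p \in R$ (in fact in the maximal ideal), and hence this term lies in $I(\Sym^r R^2)$. Therefore $[g,\theta^d\tilde f] \in (T-a_p)I(\Sym^r\K^2) \cap I(\Sym^r R^2) = \M_{k,a_p}$, so its image $[g,\theta^d f]$ in $I(\Sym^r E^2)$ lies in $M_{k,a_p}$.

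For the final assertion about the standard filtration, recall that $m = \lfloor r/(p+1)\rfloor$ and that $V_{2i} = \theta^i \cdot E[X,Y]_{r-i(p+1)}$, so every element of $\Lambda_i = I(V_i)$ for $i \geq 2d$ is (a sum of terms) of the form $[g,\theta^d f]$ with $f \in E[X,Y]_{r - d(p+1)}$ — more precisely $V_i \subseteq V_{2d}$ once $i \geq 2d$, and $V_{2d}$ is spanned by such $\theta^d f$. Taking $d = \lfloor v(a_p)\rfloor + 1$, the hypothesis $v(a_p) < d$ holds; one needs $r \geq d(p+1)$, but if not then $V_{2d} = 0$ already and there is nothing to prove. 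Then for all $i \geq 2d = 2(\lfloor v(a_p)\rfloor + 1) = 2\lfloor a_p\rfloor + 2$, i.e. for all $i > 2\lfloor a_p\rfloor + 1$, we have $\Lambda_i \subseteq M_{k,a_p}$ (using the stronger fact that $[g,\theta^d f]\in M_{k,a_p}$ for all such $g, f$, and these generate $\Lambda_i$ over $E$). Since $F_i = \pi(\Lambda_i)/\pi(\Lambda_{i+1})$ and $\pi(\Lambda_i) = 0$ when $\Lambda_i \subseteq M_{k,a_p}$, we get $F_i = 0$ for $i > 2\lfloor a_p\rfloor + 1$. The main obstacle is the explicit bookkeeping in Lemma \ref{Ttheta}: correctly identifying which matrices appear in the Hecke operator and verifying that each carries exactly one factor of $p$ per power of $\theta$, uniformly in $g$; once that is pinned down, the rest is the short valuation argument above.
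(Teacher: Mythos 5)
The paper itself gives no proof here -- both Lemma \ref{Ttheta} and this corollary are attributed to \cite[Remark 4.4]{BG09} -- so your task is to reconstruct the intended argument, and you do so via the right route: prove the lemma by a direct computation of what the $p+1$ substitutions in the explicit formula for $T$ (Paragraph \ref{descrT}) do to $\theta$, then a short valuation argument using $v(a_p) < d$. The computation behind the lemma is correct: each of the substitutions $v \mapsto v(X, pY - uX)$ ($u \in I_1$) and $v \mapsto v(pX, Y)$ carries $\theta = X^pY - XY^p$ into $p \cdot R[X,Y]_{p+1}$ (expand and use that $\binom{p}{j}$ is divisible by $p$ for $1 \le j \le p-1$, and that $u^p = u$ for $u \in \mu_{p-1}\cup\{0\}$), so $\theta^d \tilde f$ is carried into $p^d R[X,Y]_r$ and $p^{-d}T[g,\theta^d\tilde f]$ is integral. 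The application to the vanishing of $F_i$ for $i > 2\lfloor v(a_p)\rfloor + 1$ is also correctly executed: with $d = \lfloor v(a_p)\rfloor + 1$ you get $\Lambda_{2d} = I(V_{2d}) \subseteq M_{k,a_p}$ (or $V_{2d} = 0$ if $r < d(p+1)$), hence $\pi(\Lambda_i) = 0$ for all $i \ge 2d$.

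There is one genuine logical slip in the middle step. From the decomposition
\[
[g,\theta^d\tilde f] = -\tfrac{1}{a_p}(T-a_p)[g,\theta^d\tilde f] + \tfrac{p^d}{a_p}\bigl(p^{-d}T[g,\theta^d\tilde f]\bigr),
\]
where the first summand lies in $(T-a_p)I(\Sym^r\K^2)$ and the second in $I(\Sym^r R^2)$, you conclude ``therefore $[g,\theta^d\tilde f] \in (T-a_p)I(\Sym^r\K^2) \cap I(\Sym^r R^2) = \M_{k,a_p}$.'' That inference is invalid: being a sum of an element of $A$ and an element of $B$ does not place you in $A \cap B$, and in fact $[g,\theta^d\tilde f]$ is typically not in $(T-a_p)I(\Sym^r\K^2)$. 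The correct reading of your own identity is the following. The element $x := -\tfrac{1}{a_p}(T-a_p)[g,\theta^d\tilde f]$ is visibly in $(T-a_p)I(\Sym^r\K^2)$, and it equals $[g,\theta^d\tilde f] - \tfrac{p^d}{a_p}\bigl(p^{-d}T[g,\theta^d\tilde f]\bigr)$, which is integral since both terms are; hence $x \in \M_{k,a_p}$. Because $v(p^d/a_p) = d - v(a_p) > 0$, the second term lies in $\mathfrak m_R \cdot I(\Sym^r R^2)$ and vanishes upon reduction, so the image of $x$ in $I(\Sym^r E^2)$ is exactly $[g,\theta^d f]$. Thus $[g,\theta^d f] \in M_{k,a_p}$. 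You had already noted the crucial fact that $p^d/a_p$ lies in the maximal ideal, so the fix is purely a matter of drawing the conclusion from the right element of the decomposition; but as stated the step is wrong and should be rephrased.
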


\begin{lemm}
The element $[g,X^r]$ is in $M_{k,a_p}$ for any $k$ and $a_p$.
\end{lemm}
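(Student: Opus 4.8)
The plan is to show directly that $[g,X^r]$ lies in $\M_{k,a_p}$, i.e.\ that it is of the form $(T-a_p)h$ for some $h \in I(\Sym^r\K^2)$ while also lying in $I(\Sym^r R^2)$; since $[g,X^r]$ visibly lies in the integral lattice, the only content is exhibiting the element $h$. By $G$-equivariance of $T$ it suffices to treat $g = 1$, that is, to show $[1,X^r] \in (T-a_p)I(\Sym^r\K^2)$.

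First I would recall the explicit formula for the Hecke operator $T$ on $I(\Sym^r\K^2)$ (as promised in Paragraph \ref{descrT}). Writing things out, one has $T[1,v] = \sum_{\mu\in I_1}[g^0_{1,\mu}, \,(\text{something})v] + [g^1_{0,0}, \,(\text{something})v]$, where the relevant substitutions send the monomial $X^r$ to $0$ in all the ``$g^0$'' terms (because the upper-triangular unipotent substitution applied to $X^r$ only produces $X^r$ again, up to the $K$-action, which is absorbed into the bracket, but the $p$-divisibility forces the contribution into the second term), leaving only the $g^1_{0,0}$ contribution. The key computational point is that $T[1, X^r]$ has the form $[g^1_{0,0}, c\,X^r]$ for an explicit constant $c$ (essentially because $X^r$ is an eigenvector for the relevant torus and the ``$X^r \mapsto Y^r$''-type terms vanish modulo the $p$-power scaling). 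Concretely one checks that $T[1,X^r] = p^r[g^1_{0,0}, Y^r]$ up to normalization — or more precisely that $T^2[1,X^r]$ and $T[1,X^r]$ are both integral multiples of basis brackets so that one can solve a linear recursion.

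Then I would produce $h$ explicitly as an infinite-looking but actually locally finite sum: set $h = -a_p^{-1}\sum_{n\geq 0} a_p^{-n} T^n[1,X^r]$-type expression, and verify that $(T-a_p)h = [1,X^r]$ formally; the point is that because of Lemma \ref{Ttheta} / Corollary \ref{filtrzero} and the fact that $[1,X^r]$ already sits in the step $V_1$ of the filtration (recall $V_1$ is generated by $V_2 = \theta\cdot(\text{stuff})$ and $X^r$), the successive $T^n[1,X^r]$ eventually land in $\theta$-divisible parts and the tail becomes integral and manageable. Alternatively, and more cleanly, I would invoke Corollary \ref{filtrzero}: since $X^r$ generates the top graded piece $J_0 = V_0/V_1$ together with the observation that $[1,X^r]$ maps into $F_0$, and $F_0$ can be analyzed via Proposition \ref{quotientIirr} — but actually the slickest route is: the image of $[g,X^r]$ in $\bar\Theta = \bar\Theta_{k,a_p}$ is zero because $X^r$ spans (a line in) the $K$-socle piece $J_1 \cong \sigma_{[r]}$ which does \emph{not} survive, or rather because the whole subrepresentation it generates is killed by $(T-a_p)$ for slope reasons.

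The main obstacle will be pinning down the exact Hecke-operator computation $T^n[1,X^r]$ and verifying the $p$-adic convergence/integrality of the correcting element $h$: one must check that at each application of $T$ the ``bad'' monomial terms that would ruin integrality are precisely the ones that vanish (using $(uX+vY)^r \equiv uX^r + vY^r$-type congruences modulo $\theta$ only in degree considerations, not literally), and that the geometric-series tail $\sum a_p^{-n}T^n[1,X^r]$ has bounded denominators because $v(a_p) > 0$ forces $T^n[1,X^r]$ to acquire enough $\theta$-divisibility (hence $p$-divisibility, by Lemma \ref{Ttheta}) to compensate the $a_p^{-n}$. Once that integrality is in hand, the identity $(T-a_p)h = [1,X^r]$ together with $h \in I(\Sym^r R^2)$ after scaling exhibits $[1,X^r] \in (T-a_p)I(\Sym^r\K^2) \cap I(\Sym^r R^2) = \M_{k,a_p}$, and $G$-equivariance finishes the general $g$ case.
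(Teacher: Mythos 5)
Your central computation is wrong, and the element $h$ built on it is not well-defined. Plugging $v=X^r$ into the paper's formula for $T$ in Paragraph \ref{descrT} gives
$$T[1,X^r]=\sum_{u\in I_1}[g^0_{1,u},X^r]+p^r[g^1_{0,0},X^r],$$
since $X^r$ is unchanged by $(X,Y)\mapsto(X,pY-uX)$ and is sent to $p^rX^r$ by $(X,Y)\mapsto(pX,Y)$. The $p$ brackets $[g^0_{1,u},X^r]$ do \emph{not} vanish and carry no $p$-divisibility at all, so $T[1,X^r]$ is not a single bracket $[g^1_{0,0},cX^r]$, nor $p^r[g^1_{0,0},Y^r]$. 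Consequently $h=-a_p^{-1}\sum_{n\geq 0}a_p^{-n}T^n[1,X^r]$ is not "locally finite": $T^n[1,X^r]$ spreads out to the whole ball $B_n$, so this $h$ has infinite support and is not an element of the compact induction $I(\Sym^r\K^2)$, and even at a fixed vertex the coefficient is an infinite series. Truncating at $n=N$ leaves a boundary term $a_p^{-N}T^N[1,X^r]$ whose leading piece $a_p^{-N}\sum_{\mu\in I_N}[g^0_{N,\mu},X^r]$ has coefficients of valuation $-Nv(a_p)<0$, so $(T-a_p)h_N\notin I(\Sym^rR^2)$ and one does not land in $\M_{k,a_p}$. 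Finally, the "slicker route" is circular: that $J_1$ does not survive in $\bar\Theta$ is exactly what the Lemma and the Corollary following it assert.

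The actual mechanism is much simpler and runs the other way, exploiting that on the $g^1$ side $T^-$ carries $X^r$ with coefficient $1$, not $p^r$. From the explicit formulas one has $T^-[g^1_{0,0},X^r]=[1,X^r]$, while modulo $p$ one has $T^+[g^1_{0,0},X^iY^{r-i}]\equiv\sum_{u\in\mu_{p-1}}(-u)^i[g^1_{1,u},Y^r]$. The \emph{integral} element $h=[g^1_{0,0},X^r-X^{[r]}Y^{r-[r]}]$ then works: the two $T^+$-contributions cancel modulo $p$ because $(-u)^r=(-u)^{[r]}$ for $u\in\mu_{p-1}$, the $T^-$-part of the second summand is $p^{r-[r]}[1,X^{[r]}Y^{r-[r]}]\equiv 0$ once $r>[r]$ (i.e.\ $r\geq p$), and $a_ph\equiv 0$; hence $(T-a_p)h\in\M_{k,a_p}$ and reduces to $[1,X^r]$ modulo $p$. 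This is essentially \cite[Remark~4.4]{BG09}, which the paper cites rather than reproving; note also that a hypothesis such as $r\geq p$, equivalently $V_1\neq V_0$, is implicit here, since for smaller $r$ the statement as written would force $\bar\Theta_{k,a_p}=0$.
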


\begin{coro}
For the standard filtration, $F_1 = 0$ for any $k$ and $a_p$.
\end{coro}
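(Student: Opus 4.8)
The plan is to deduce this directly from the preceding Lemma, which gives $[g,X^r]\in M_{k,a_p}$ for every $g\in G$; the only thing to do is to unwind the definition of the standard filtration correctly. Recall that $F_1=\pi(\Lambda_1)/\pi(\Lambda_2)$, where $\Lambda_i=I(V_i)$ and $\pi$ denotes the natural quotient map $I(\Sym^r E^2)\to\bar\Theta=I(\Sym^r E^2)/M_{k,a_p}$. Since $\pi(\Lambda_2)\subseteq\pi(\Lambda_1)$ automatically, it suffices to prove $\Lambda_1\subseteq M_{k,a_p}+\Lambda_2$, and then apply $\pi$.

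First I would record what $V_1$ is: by construction it is generated as an $E[K]$-module by $V_2$ together with the vector $X^r=\eta_{0,p}=e_1$. Using the relation $[g\kappa,v]=[g,\rho(\kappa)v]$ for $\kappa\in KZ$, every element $[g,v]$ with $v\in V_1$ is then a finite sum of terms of the form $[g',X^r]$ and terms $[g',w]$ with $w\in V_2$; hence $\Lambda_1=I(V_1)$ is generated as an $E[G]$-module by $\Lambda_2=I(V_2)$ and by $v_1=[1,X^r]$, so $\Lambda_1=\Lambda_2+E[G]\cdot[1,X^r]$. Now $M_{k,a_p}+\Lambda_2$ is a $G$-submodule containing $\Lambda_2$, so $\Lambda_1\subseteq M_{k,a_p}+\Lambda_2$ as soon as $[g,X^r]\in M_{k,a_p}+\Lambda_2$ for all $g$; and this holds because $[g,X^r]\in M_{k,a_p}$ by the Lemma. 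Applying $\pi$ yields $\pi(\Lambda_1)=\pi(\Lambda_2)$, i.e.\ $F_1=0$, for every $k$ and $a_p$.

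Alternatively one could argue through the criterion of Section~\ref{explicitF}: the image of $v_1=[1,e_1]=[1,X^r]$ in $Q_1$ generates it, so a positive answer to question~(1) there — namely $v_1\in M_{k,a_p}+\Lambda_2$ — already forces $F_1=0$, and again this reduces to the Lemma. In both formulations the genuine content sits in the Lemma itself (the explicit Hecke-operator computation exhibiting an element of $(T-a_p)I(\Sym^r\K^2)\cap I(\Sym^r R^2)$ congruent to $[g,X^r]$ modulo $p$, following \cite[Remark 4.4]{BG09}); once that input is granted, the corollary is a purely formal consequence, exactly parallel to the way Corollary~\ref{filtrzero} is extracted from Lemma~\ref{Ttheta}. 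So the main obstacle lies upstream, in the Lemma, and not in the corollary itself.
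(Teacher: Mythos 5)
Your proof is correct and is exactly the unwinding the paper leaves implicit: the corollary is stated without proof immediately after the Lemma, and the intended argument is precisely that $\Lambda_1 = \Lambda_2 + E[G]\cdot[1,X^r]$ (since $V_1$ is generated over $E[K]$ by $V_2$ and $X^r = e_1$), so the Lemma's statement that $[g,X^r]\in M_{k,a_p}$ for all $g$ gives $\Lambda_1\subseteq M_{k,a_p}+\Lambda_2$ and hence $F_1=0$. Your alternative phrasing via question~(1) of Section~\ref{explicitF} is also a faithful reformulation of the same argument.
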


\section{Elementary divisors and reduction}
\label{computerelgen}

We denote by $R$ any discrete valuation ring, 
with uniformizer $\varpi$, valuation $v$, residue field $E$, and
fraction field $\K$. Let $M$ be a submodule of $R^n$. We denote by $m$ its
rank as a free module over $R$.

Let $\pi$ be the reduction modulo $\varpi$ map from $R$ to $E$, or $R^n$
to $E^n$.

In this Section we state general results about the following problem:
compute the image in $E^n$ of $\pi\left((\cup_d\varpi^{-d}M)\cap
R^n\right)$. We will apply these results in 
Section \ref{computerel} to the usual values of $R$, $\K$,
$E$.

\subsection{Elementary divisors}

From the theory of elementary divisors, we know that there exists a
uniquely determined unordered list $\{d_1,\dots,d_m\}$ of non-negative integers,
such that there exists a basis $\{e_1,\dots,e_n\}$ of $R^n$, such that 
the set $\{\varpi^{d_i}e_i\}_{1\leq i \leq m}$ is an $R$-basis of $M$.
We call the $d_i$s the elementary divisors of $M$ in $R^n$.
We denote by $\delta_M$ the maximum of the $d_i$.

It is clear that:
\begin{prop}
\label{denomelemdiv}
In $\K^n$ we have:
$\cup_d(\varpi^{-d}M \cap R^n) = \varpi^{-\delta_M}M\cap R^n$, and
$\delta_M$ is the smallest integer with this property.
\end{prop}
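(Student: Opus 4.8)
The plan is to use the structure theorem for finitely generated modules over a PID (the elementary divisor form), which has just been recalled, and then chase the definitions. Fix a basis $\{e_1,\dots,e_n\}$ of $R^n$ such that $\{\varpi^{d_i}e_i\}_{1\le i\le m}$ is an $R$-basis of $M$, with $\delta_M = \max_i d_i$. Everything reduces to a coordinate-by-coordinate computation in this basis, since $M$, the modules $\varpi^{-d}M$, and $R^n$ are all ``diagonal'' with respect to it.

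First I would prove the inclusion $\varpi^{-\delta_M}M \cap R^n \subseteq \cup_d(\varpi^{-d}M\cap R^n)$, which is trivial: take $d = \delta_M$. For the reverse inclusion, suppose $x \in \varpi^{-d}M \cap R^n$ for some $d \ge 0$; I want to show $x \in \varpi^{-\delta_M}M$, i.e.\ $\varpi^{\delta_M}x \in M$. Write $x = \sum_i c_i e_i$ with $c_i \in R$ (this uses $x \in R^n$). The condition $x \in \varpi^{-d}M$ means $\varpi^d x \in M$, i.e.\ $\varpi^d x = \sum_{i\le m} r_i \varpi^{d_i} e_i$ for some $r_i \in R$; comparing coordinates gives $\varpi^d c_i = r_i \varpi^{d_i}$ for $i \le m$ and $\varpi^d c_i = 0$ (hence $c_i = 0$, as $R$ is a domain) for $i > m$. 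So for $i \le m$ we have $v(c_i) \ge d_i - d$; since also $v(c_i) \ge 0$, we get $v(c_i) \ge \max(d_i - d, 0) \ge d_i - \delta_M$ is not quite what I want directly, but $v(\varpi^{\delta_M} c_i) = \delta_M + v(c_i) \ge \delta_M \ge d_i$, hence $\varpi^{\delta_M}c_i \in \varpi^{d_i}R$, so $\varpi^{\delta_M}x = \sum_{i\le m}\varpi^{\delta_M}c_i e_i \in M$. This gives $x \in \varpi^{-\delta_M}M$, completing the equality $\cup_d(\varpi^{-d}M\cap R^n) = \varpi^{-\delta_M}M\cap R^n$.

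For the minimality claim, suppose $\delta$ is an integer with $\cup_d(\varpi^{-d}M\cap R^n) = \varpi^{-\delta}M\cap R^n$. Let $j$ be an index with $d_j = \delta_M$. Then $e_j = \varpi^{-d_j}(\varpi^{d_j}e_j) \in \varpi^{-\delta_M}M$, and $e_j \in R^n$, so $e_j$ lies in the left-hand side and hence in $\varpi^{-\delta}M$; that is, $\varpi^{\delta}e_j \in M$. Writing $\varpi^{\delta}e_j$ in the basis $\{\varpi^{d_i}e_i\}$ of $M$ and comparing the $e_j$-coordinate forces $\varpi^{\delta} \in \varpi^{d_j}R$, i.e.\ $\delta \ge d_j = \delta_M$. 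Hence $\delta_M$ is the smallest such integer.

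There is no real obstacle here; the only thing to be a little careful about is the bookkeeping with the two module structures (the $R$-lattice $R^n$ versus the ``diagonal'' lattice $M$) and making sure the diagonalizing basis is used consistently throughout. I would present it essentially as above, perhaps compressing the coordinate computation since it is routine once the elementary-divisor basis is fixed.
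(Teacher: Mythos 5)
Your proof is correct and fills in exactly the computation the paper omits: the paper simply prefaces the proposition with ``It is clear that'' and gives no argument, and the elementary-divisor-basis, coordinate-by-coordinate reasoning you give is the intended (and standard) one. The only stylistic remark is that the inclusion $\cup_d(\varpi^{-d}M\cap R^n)\subseteq\varpi^{-\delta_M}M\cap R^n$ really only uses $c_i\in R$ and $c_i=0$ for $i>m$ (the bound $v(c_i)\ge d_i-d$ is a red herring you yourself flag), so the detour sentence could be dropped.
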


For later use we reformulate the statement as:

\begin{coro}
\label{denomelemdivmap}
Suppose that $M$ is the image of an injective linear map $t : R^m \to
R^n$, which we extend to a map $\K^m \to \K^n$ by linearity.
Then $\delta_M$ is the smallest of the integers $d$ satisfying:
for all $w \in \K^m$, if $t(w) \in R^n$
then $w$ is in $\varpi^{-d}R^m$.
\end{coro}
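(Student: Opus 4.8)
The plan is to deduce this directly from Proposition \ref{denomelemdiv} by unwinding the hypothesis that $M = \im(t)$ for an injective $t : R^m \to \K^n$. Since $t$ is injective, it restricts to an isomorphism $R^m \isom M$, and extends to an isomorphism $\K^m \isom \K M$; call the inverse $t^{-1} : \K M \to \K^m$, which sends $M$ onto $R^m$. The key translation is: for $x \in \K M$, the condition $x \in \varpi^{-d}M$ is equivalent, via $t^{-1}$, to $t^{-1}(x) \in \varpi^{-d}R^m$, i.e. to the statement that the unique $w \in \K^m$ with $t(w) = x$ lies in $\varpi^{-d}R^m$.

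Next I would combine this with Proposition \ref{denomelemdiv}, which says $\cup_d(\varpi^{-d}M \cap R^n) = \varpi^{-\delta_M}M \cap R^n$ and that $\delta_M$ is the smallest integer for which this equality holds. Rephrasing: $d \geq \delta_M$ if and only if every element of $R^n$ that lies in $\cup_{d'}\varpi^{-d'}M$ already lies in $\varpi^{-d}M$; equivalently, $d \geq \delta_M$ iff for every $x \in \K M \cap R^n$ we have $x \in \varpi^{-d}M$. Applying the translation of the previous paragraph, this becomes: $d \geq \delta_M$ iff for every $w \in \K^m$ with $t(w) \in R^n$ we have $w \in \varpi^{-d}R^m$. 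Hence $\delta_M$ is the smallest $d$ satisfying the stated property, which is exactly the claim.

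The only mildly delicate point — and the one I would be careful about — is that $t$ maps into $R^n$ but we enlarge its target to $\K^n$, so I should make sure that "$t(w) \in R^n$" for $w \in \K^m$ is exactly the same as "$t(w) \in \K M \cap R^n$"; this is immediate since $t(w) \in \K M$ automatically for all $w \in \K^m$. Everything else is a formal manipulation of the elementary divisor picture: choosing the adapted basis $\{e_i\}$ of $R^n$ with $\{\varpi^{d_i}e_i\}$ a basis of $M$ makes both the proposition and the translation transparent, so no real obstacle arises. I expect the write-up to be just a few lines.
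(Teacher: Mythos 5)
Your proposal is correct and matches the paper's intent exactly: the paper presents Corollary \ref{denomelemdivmap} as a mere reformulation of Proposition \ref{denomelemdiv} without written proof, and your argument is the natural unwinding of that reformulation (identifying $\K^m$ with $\K M$ via the isomorphism induced by the injective map $t$, and translating $\varpi^{-d}M$ into $\varpi^{-d}R^m$ under this identification).
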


\subsection{Good bases of $M$}

Let $(e_i)_{1\leq i \leq n}$ a basis of $R^n$, we denote by $\lambda_i$
the $i$-th coordinate function, so that $v = \sum_i\lambda_i(v)e_i$ for
all $v \in R^n$.

If $x \in R^n$ is a non-zero vector, we define its valuation $v(x)$ to be
$\inf_{1\leq i \leq n}v(\lambda_i(x))$, and its normalization to be
$\tilde{x} = \varpi^{-v(x)}x \in R^n$.

\begin{defi}
\label{goodbasis}
Let $(w_1,\dots,w_m)$ be a
family of vectors of $M$.
We say that it is a good basis of $M$
if: after reordering if necessary the elements of the basis $(e_i)$, we have that
$\lambda_{i}(w_i) \neq 0$, $v(\lambda_{i}(w_i)) = v(w_i)$, 
and for all $j < i$, $\lambda_{j}(w_i) = 0$.
\end{defi}

\begin{prop}
\label{goodbasiselemdiv}
If $(w_1,\dots,w_m)$ is a good basis of $M$, then it is a basis of $M$
and the elementary divisors of $M$ are the $v(w_i)$ for $1 \leq
i \leq m$.
\end{prop}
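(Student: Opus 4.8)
The plan is to show that a good basis $(w_1, \dots, w_m)$ of $M$ can, after a change of basis of $R^n$, be put into essentially the same normal form as the one provided by elementary divisor theory, so that the valuations $v(w_i)$ must agree with the elementary divisors $d_i$. First I would observe that by Definition \ref{goodbasis} we may assume the basis $(e_i)$ of $R^n$ is ordered so that, writing each $w_i$ in coordinates, the matrix with rows (or columns) $w_1, \dots, w_m$ is ``lower triangular'' in the sense that $\lambda_j(w_i) = 0$ for $j < i$, and the diagonal entry $\lambda_i(w_i)$ has valuation exactly $v(w_i)$. In particular $\lambda_i(w_i) \neq 0$, so $w_i = \lambda_i(w_i) e_i + \sum_{j > i} \lambda_j(w_i) e_j$ with $v(\lambda_i(w_i)) = v(w_i)$.

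The key step is to perform a downward induction on $i$ to replace $(e_i)$ by a new basis $(e_i')$ of $R^n$ for which $w_i \in R^\times \cdot \varpi^{v(w_i)} e_i'$. Concretely, set $e_m' = \varpi^{-v(w_m)} w_m$; since $\lambda_m(w_m)$ is a unit times $\varpi^{v(w_m)}$... in general $\lambda_m(w_m)$ need only have valuation $v(w_m)$, so $e_m' = \varpi^{-v(w_m)} w_m$ has $m$-th coordinate a unit and all earlier coordinates zero, hence $(e_1, \dots, e_{m-1}, e_m', e_{m+1}, \dots, e_n)$ is still an $R$-basis of $R^n$ (the change-of-basis matrix is unipotent upper triangular up to a unit on the diagonal). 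Having treated $w_m, \dots, w_{i+1}$ this way, the vector $\varpi^{-v(w_i)} w_i$ has $i$-th coordinate a unit, zero coordinates in positions $< i$, and arbitrary coordinates in positions $> i$; rewriting it in the already-modified basis and using it as the new $e_i'$ again gives a basis of $R^n$. After $m$ such steps we have a basis $(e_1', \dots, e_n')$ of $R^n$ such that $\{\varpi^{v(w_i)} e_i'\}_{1 \le i \le m}$ is an $R$-basis of the $R$-span of $(w_1, \dots, w_m)$.

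It remains to check two things. First, that $(w_1, \dots, w_m)$ actually spans $M$, not just some submodule: this follows because $M$ has rank $m$ over $R$ and $(w_1, \dots, w_m)$ are $R$-linearly independent (the triangular shape with nonzero diagonal entries forces independence over $\K$), so they generate a full-rank free submodule $M'$ of $M$; but $M/M'$ is then a torsion module, and one must argue $M = M'$. Here I would invoke that $M'$ is already ``saturated'' in the right sense — or more simply, compare with the elementary divisor decomposition of $M$: in the basis $(e_i')$ the module $M$ contains the full-rank free module $M'$ with $\{\varpi^{v(w_i)} e_i'\}$ as basis, and by uniqueness of elementary divisors (Proposition \ref{denomelemdiv} and the theory of elementary divisors), the elementary divisors of $M'$ in $R^n$ are exactly the multiset $\{v(w_i)\}$; since $M' \subseteq M \subseteq R^n$ and $M$ has the same rank $m$, one concludes $M' = M$ by noting that any further enlargement would strictly decrease some elementary divisor, contradicting that $(w_1,\dots,w_m)$ lies in $M$ while an elementary basis of $M$ would have to lie in $M$ as well. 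The main obstacle I anticipate is precisely this last point — cleanly justifying $M' = M$ (equivalently, that a good basis is a genuine generating set and not merely a linearly independent family). The cleanest route is probably: since $M$ is free of rank $m$, pick any $R$-basis $f_1, \dots, f_m$ of $M$; express the $w_i$ in terms of the $f_j$; the triangular normalization above shows the transition matrix has determinant of valuation $\sum v(w_i)$, while elementary divisor theory gives $\sum d_i$ as this same valuation up to a unit, and combined with $M' \subseteq M$ and equality of ranks one forces $\det$ to be a unit, hence $M' = M$ and the multisets $\{v(w_i)\}$ and $\{d_i\}$ coincide after matching via the triangular change of basis.
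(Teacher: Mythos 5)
Your treatment of the elementary-divisor statement is correct and is essentially the paper's own argument: the basis change of $R^n$ you construct (replacing each $e_i$ by a unit multiple of $\varpi^{-v(w_i)}w_i$) is exactly what the paper's one-line proof accomplishes with ``elementary operations on the columns'' of the coordinate matrix. Column operations on the $m\times n$ matrix whose $i$-th row is $(\lambda_j(w_i))_j$ are precisely changes of basis of $R^n$, the operations $C_j\mapsto C_j-(\lambda_j(w_i)/\lambda_i(w_i))C_i$ for $j>i$ are legal over $R$ because $v(\lambda_i(w_i))=v(w_i)\le v(\lambda_j(w_i))$, and the triangular shape ensures earlier rows are untouched; you reach the same diagonal form by a slightly different bookkeeping.

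The genuine gap is in the ``it is a basis of $M$'' clause, and neither of your proposed repairs closes it. Read literally, Definition~\ref{goodbasis} asks only for a family of $m$ vectors of $M$ in triangular position, and with only that hypothesis the claim is false: take $n=m=1$, $M=R$, $w_1=\varpi$. Your first argument (``any further enlargement would strictly decrease some elementary divisor, contradicting\dots'') names no contradiction — it is simply a fact, not an absurdity, that passing from $M'$ to a strictly larger $M$ of the same rank decreases some elementary divisor. Your ``cleanest route'' is circular: if $(f_j)$ is a basis of $M$ and $T$ is the transition matrix with $w_i=\sum_j T_{ij}f_j$, then $v(\det T)=\sum v(w_i)-\sum d_i$, not $\sum v(w_i)$, so concluding that $\det T$ is a unit is exactly equivalent to the equality $\sum v(w_i)=\sum d_i$ that you are trying to prove. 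The intended reading is that a ``good basis'' is implicitly a \emph{generating} family of $M$ in triangular position: this is how they are produced by the Gauss algorithm of Paragraph~\ref{Gauss}, which starts from a generating family and whose steps preserve the span, and the parallel Definition~\ref{goodbasistod} of a good basis up to $\varpi^d$ makes the generation hypothesis explicit. Once generation is part of the hypothesis, the ``is a basis'' assertion is immediate — a size-$m$ generating family of a rank-$m$ free module over a local ring is a basis, and independence also follows from your triangular observation — and no separate argument that $M'=M$ is needed.
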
 

\begin{proof}
Consider the matrix of the coordinates of the $w_i$ in the basis of the
$(e_i)$ (reordered if necessary). 
After elementary
operations on the columns we get a diagonal matrix with diagonal elements
the $\lambda_{i}(w_i)$.
\end{proof}

\subsection{Finding a good basis}
\label{Gauss}

In order to find a good basis of $M$, we can use a variant of the Gauss
algorithm. 
We start with a generating family $\V$ for $M$, 
an empty list of indices $\I$, and an empy list of vectors $\V_0$.

\begin{enumerate}
\item 
Find an element $w\in \V$, and an index $i \in I$, such that we have
simultaneously $v(\lambda_i(w)) = v(w)$
and $v(\lambda_i(w)) = \inf_{w'\in \V}v(\lambda_i(w'))$. If we can not do
this, it means that the only vectors left in $\V$ are zero, and we stop. 

\item 
Use $(w,i)$ as a pivot, that is, for all other elements $w'\in \V$, replace
$w'$ by $w' -\left(\lambda_i(w')\lambda_i(w)^{-1}\right)w$.  

\item 
Remove $w$ from $\V$ and put it in an auxiliary list $\V_0$, and
put $i$ at the end of the ordered list $\I$. Then go back to step (1).
\end{enumerate}

At the end of this process, we 
reorder $\{1,\dots,n\}$ by putting first the elements of $\I$
in order, and then all remaining elements of $\{1,\dots,n\}$ in any
order. Then the elements of $\V_0$ form a good basis of $M$.

\begin{rema}
Note that we still have many options as to how we select a vector in step
(1) of the algorithm. It is an interesting question how to do it in the
most efficient way.
\end{rema}

\subsection{Good bases up to $\varpi^d$}

\begin{defi}
\label{goodbasistod}
Let $d > 0$ be an integer. A family $(w_1,\dots,w_s)$ of vectors of $M$
is a good basis up to $\varpi^d$ if (after reordering if necessary the elements
$(e_i)$) we have that $\lambda_{i}(w_i) \neq 0$, $v(\lambda_{i}(w_i)) = 
v(w_i) \leq d$, and for all $j < i$,
$v(\lambda_{j}(w_i)) > d$, and $M$ is generated by  $\{w_i, {1\leq
i\leq s}\} \cup \left(\varpi^{d+1}R^n \cap M\right)$.
\end{defi}

\begin{prop}
\label{gooddtogood}
Let $(w_1,\dots,w_s)$ be a good basis up to $\varpi^d$ of $M$. Then there exists
a good basis $(w_1',\dots,w_m')$ of $M$ such that for all $i \leq s$ we
have $v(w_i) = v(w_i')$ and $\pi(\tilde{w_i}) = \pi(\tilde{w_i}')$, and
for all $i > s$ we have $v(w_i') > d$.
\end{prop}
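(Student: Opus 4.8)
The plan is to bootstrap from a good basis up to $\varpi^d$ to an honest good basis by continuing the Gauss-type algorithm of Paragraph \ref{Gauss}, restricted to the ``tail'' submodule $M \cap \varpi^{d+1}R^n$. First I would observe that by hypothesis $M$ is generated by $\{w_1,\dots,w_s\}$ together with $N := M \cap \varpi^{d+1}R^n$, and that after the reordering provided by Definition \ref{goodbasistod} the coordinates $\lambda_1,\dots,\lambda_s$ are ``controlled'' on the $w_i$: $v(\lambda_i(w_i)) = v(w_i) \le d$ while $v(\lambda_j(w_i)) > d$ for $j < i$. The key point is that on $N$ all of the first $s$ coordinates have valuation $\ge d+1 > d \ge v(w_i)$ for every $i \le s$; so if we pick a basis of $N$ whose vectors all have their ``pivot'' coordinate among the indices $\{s+1,\dots,n\}$ with valuation $> d$, then concatenating it with $(w_1,\dots,w_s)$ will produce a good basis of $M$ in the sense of Definition \ref{goodbasis}, after a further reordering of the indices $\{s+1,\dots,n\}$.

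Concretely, the steps are: (1) show $N$ is a free $R$-submodule of $\varpi^{d+1}R^n$, hence by Proposition \ref{goodbasiselemdiv} (applied inside $\varpi^{d+1}R^n \cong R^n$, or directly via elementary divisors) admits a good basis $(u_1,\dots,u_t)$ with $v(u_k) > d$ for all $k$, whose pivot indices we may take among $\{s+1,\dots,n\}$ — indeed any coordinate in which some $u_k$ is non-zero lies in $\{1,\dots,n\}$, and since $N \subset \varpi^{d+1}R^n$ every coordinate of every $u_k$ has valuation $\ge d+1$, so we are free to let the Gauss algorithm of Paragraph \ref{Gauss} run on $N$ and it will select $t = \operatorname{rank} N$ pivot indices, necessarily of valuation $> d$; reorder $\{s+1,\dots,n\}$ so that these pivots come first among them. (2) Set $w_i' = w_i$ for $i \le s$ and $w_{s+k}' = u_k$ for $1 \le k \le t$, so $m = s + t$. (3) Check the good-basis conditions of Definition \ref{goodbasis} for $(w_1',\dots,w_m')$ with the combined reordering: for $i \le s$ these hold by hypothesis (the pivot of $w_i$ is index $i$ with the right valuation, and $\lambda_j(w_i) = 0$ is stronger than $v(\lambda_j(w_i)) > d$ — here one must be slightly careful, since Definition \ref{goodbasis} demands $\lambda_j(w_i') = 0$ for $j < i$, not merely large valuation; but for $j < i \le s$ we do have $\lambda_j(w_i) = 0$ by a closer reading, or one first clears these entries by column operations among $w_1,\dots,w_s$, which changes neither $v(w_i)$ nor $\pi(\tilde w_i)$ since the subtracted terms have strictly larger valuation); for $i > s$ the pivot condition holds because $u_{i-s}$ has its Gauss pivot among the reordered indices $> s$ and the lower-indexed coordinates (including $1,\dots,s$ and the earlier $u$'s) were cleared by the algorithm. (4) Finally, $(w_1',\dots,w_m')$ generates $M$ because $\{w_i\} \cup N$ does and $(u_k)$ generates $N$, so by Proposition \ref{goodbasiselemdiv} it is a basis and its valuations are the elementary divisors; and $v(w_i') = v(w_i)$, $\pi(\tilde w_i') = \pi(\tilde w_i)$ for $i \le s$ hold by construction, while $v(w_i') = v(u_{i-s}) > d$ for $i > s$.

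The main obstacle I anticipate is the bookkeeping in step (3): reconciling the ``$\lambda_j(w_i) = 0$'' requirement of a genuine good basis with the weaker ``$v(\lambda_j(w_i)) > d$'' that Definition \ref{goodbasistod} only guarantees. The fix is to first perform, among $w_1,\dots,w_s$ alone, the pivot-clearing of Paragraph \ref{Gauss} step (2): using $w_i$ as a pivot on coordinate $i$, subtract multiples of $w_1,\dots,w_{i-1}$ (processed in increasing order of index) to kill the entries $\lambda_j(w_i)$ for $j < i$; since each such entry has valuation $> d \ge v(w_i) \ge v(w_j)$ the multiplier $\lambda_j(w_i)\lambda_j(w_j)^{-1}$ lies in $\varpi^{>0}R$ (in fact valuation $> d - v(w_j) \ge 0$), so this modifies $w_i$ by a vector of valuation strictly exceeding $v(w_i)$, leaving $v(w_i)$ and $\pi(\tilde w_i)$ unchanged — exactly the invariance we need to preserve. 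After this cleanup the $w_i$ satisfy Definition \ref{goodbasis} on the nose, and the concatenation with the good basis of $N$ goes through as above. Everything else is a direct application of Propositions \ref{goodbasiselemdiv} and \ref{denomelemdiv} and the elementary-divisor theory already set up.
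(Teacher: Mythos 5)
There is a rank-counting gap in your step (1). Take $N = M \cap \varpi^{d+1}R^n$; since $\varpi^{d+1}M \subseteq N \subseteq M$, the submodule $N$ has the \emph{same} rank $m$ as $M$ — not $m-s$. So a good basis $(u_1,\dots,u_t)$ of $N$ has $t=m$ vectors, and your proposed family $(w_1,\dots,w_s,u_1,\dots,u_t)$ has $s+m$ vectors in a rank-$m$ module: it cannot be $R$-linearly independent once $s>0$, hence cannot be a basis, let alone a good basis, and your ``$m=s+t$'' contradicts the fixed meaning of $m$ from the statement. A minimal example: $n=2$, $M=R^2$, $d=0$, $s=1$, $w_1=(1,0)$; then $N=\varpi R^2$ has rank $2$ with good basis $((\varpi,0),(0,\varpi))$, and your concatenation has three vectors in $R^2$. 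The same example also shows that your claim that the Gauss algorithm on $N$ selects pivot indices only in $\{s+1,\dots,n\}$ is unjustified: here $N$ forces a pivot at index $1$, colliding with the pivot of $w_1$.

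The repair is essentially what you gesture at in your last paragraph, but it must be carried through as a single Gauss run rather than a concatenation. After cleaning up $(w_1,\dots,w_s)$ into $(w_1',\dots,w_s')$ with $\lambda_j(w_i')=0$ for $j<i\le s$ (your column operations; the multipliers $\lambda_j(w_i)\lambda_j(w_j)^{-1}$ have valuation $\ge d+1-v(w_j)\ge 1$, so $v(w_i)$ and $\pi(\tilde w_i)$ are preserved), apply the algorithm of Paragraph \ref{Gauss} to the \emph{combined} family $\{w_1',\dots,w_s'\}\cup\mathcal{V}$, where $\mathcal{V}$ is any generating family of $N$. Choose the first $s$ pivots to be the $w_i'$ at indices $1,\dots,s$ (allowed, since their pivot valuations are $\le d$ while every coordinate of every vector of $\mathcal{V}$ has valuation $>d$). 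Clearing these $s$ coordinates keeps the modified $\mathcal{V}$-vectors inside $\varpi^{d+1}R^n$, because the subtracted terms $\lambda_j(u)\lambda_j(w_j')^{-1}w_j'$ have valuation $\ge v(\lambda_j(u))\ge d+1$. The algorithm then extracts exactly $m-s$ further pivot vectors, all of valuation $>d$ and with pivot indices disjoint from $\{1,\dots,s\}$, yielding the required good basis of $M$. This is the paper's argument; without folding the $N$-vectors into the same Gauss elimination as the $w_i'$, the rank and pivot conflicts above cannot be avoided.
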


\begin{proof}
First we replace each $w_i$ for $i > 1$ by $w_i' = w_i +
\left(\lambda_1(w_i)\lambda_1(w_1)^{-1}\right)w_1$. Then for all $i > 1$, we
have $\lambda_1(w_i') = 0$, and moreover the coefficients of $w_i'$ differ
from the coefficients of $w_i$ by elements of valuation at least
$d+1$. In particular $v(w_i') = v(w_i) = v(\lambda_i(w_i))$ and 
$\pi(\tilde{w_i}) = \pi(\tilde{w_i}')$. We iterate this construction
until we get a family of vectors $(w_i')_{1\leq i \leq s}$ that are a
good basis up to $\varpi^d$
satisfying moreover that for all $j < i$, $\lambda_{j}(w_i') = 0$, so
they are part of a good basis of $M$. Then we take a family of vectors in
$\varpi^{d+1}R^n \cap M$ such that $M$ is generated by the $(w_i')$ and this
family, and we apply the Gauss algorithm to it in order to get a good
basis of $M$.
\end{proof}

\begin{prop}
\label{computemod}
Let $d >0$ be an integer. Starting from a generating family for $M$, we
can obtain the image of $(R/\varpi^{d+1}R)^n$ of a good basis up to
$\varpi^d$ by working only in $(R/\varpi^{d+1}R)^n$.
\end{prop}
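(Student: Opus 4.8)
The plan is to run the Gauss algorithm of Section \ref{Gauss}, but with a stopping rule and a pivot-selection rule adapted to the precision $\varpi^{d+1}$, and then to check that every operation it performs needs only the images of the vectors in $(R/\varpi^{d+1}R)^n$.

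First I set up the truncated algorithm over $R$. At each stage, if every vector currently in $\V$ has valuation $>d$, we stop. Otherwise I claim we can choose a pivot $(w,i)$ in step (1) with $v(w)\le d$: pick any $w_0\in\V$ with $v(w_0)\le d$ and a coordinate $i_0$ with $v(\lambda_{i_0}(w_0))=v(w_0)$; if moreover $v(\lambda_{i_0}(w_0))=\inf_{w'\in\V}v(\lambda_{i_0}(w'))$ we are done, and otherwise there is $w_1\in\V$ with $v(w_1)\le v(\lambda_{i_0}(w_1))<v(w_0)$, and we repeat — since valuations are non-negative integers this terminates at an admissible pivot of valuation $\le v(w_0)\le d$. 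With these rules every pivot used has valuation $\le d$. Moreover, once a coordinate $i$ has served as a pivot index, that coordinate is $0$ in every remaining vector (the pivot step sets $\lambda_i$ to $0$, and all later pivot steps modify remaining vectors by $R$-multiples of vectors whose $\lambda_i$-coordinate is already $0$). Since the $R$-span of $\V_0\cup\V$ equals $M$ throughout, when we stop the remaining vectors of $\V$ lie in $\varpi^{d+1}R^n\cap M$, and the collected family $(w_1,\dots,w_s)=\V_0$ satisfies all the conditions of Definition \ref{goodbasistod}: $\lambda_i(w_i)\ne 0$, $v(\lambda_i(w_i))=v(w_i)\le d$, $\lambda_j(w_i)=0$ for every earlier pivot index $j<i$, and $M$ is generated by the $w_i$ together with $\varpi^{d+1}R^n\cap M$. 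So, run over $R$, the truncated algorithm produces a good basis up to $\varpi^d$.

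It remains to see that this algorithm can be executed knowing the vectors only modulo $\varpi^{d+1}$. The stopping test is visible mod $\varpi^{d+1}$, since $v(w)>d$ exactly when the image of $w$ vanishes, and for a vector with nonzero image both $v(w)$ and the coordinates achieving it are read off from the image; likewise the infima $\inf_{w'\in\V}v(\lambda_i(w'))$ that are relevant equal $v(w)\le d$ for the chosen pivot, hence are visible. The only delicate point is the pivot step $w'\mapsto w'-\bigl(\lambda_i(w')\lambda_i(w)^{-1}\bigr)w$. Put $e:=v(w)=v(\lambda_i(w))\le d$ and write $\lambda_i(w)=\varpi^e u$ with $u\in R^\times$, $\lambda_i(w')=\varpi^e z$ with $z\in R$ (possible because $v(\lambda_i(w'))\ge e$), and $\lambda_j(w)=\varpi^e y_j$ with $y_j\in R$ (possible because $v(w)=e$). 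The $j$-th coordinate of the vector subtracted is $\lambda_i(w')\lambda_i(w)^{-1}\lambda_j(w)=\varpi^e\,z u^{-1} y_j$. From the images of $w$ and $w'$ in $(R/\varpi^{d+1}R)^n$ we know $u,z,y_j$ modulo $\varpi^{d+1-e}$, hence $zu^{-1}y_j$ modulo $\varpi^{d+1-e}$, hence $\varpi^e z u^{-1} y_j$ modulo $\varpi^{d+1}$. Thus the image of the new $w'$ is determined by the images of the old $w'$ and $w$, so the pivot step, and therefore the entire truncated algorithm, takes place in $(R/\varpi^{d+1}R)^n$.

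Combining the two parts: carrying out the truncated Gauss algorithm entirely inside $(R/\varpi^{d+1}R)^n$ yields exactly the reductions modulo $\varpi^{d+1}$ of a family $(w_1,\dots,w_s)$ that is a good basis of $M$ up to $\varpi^d$, which is the assertion. The step to watch is the division in the pivot: it is legitimate modulo $\varpi^{d+1}$ precisely because the pivot valuation $e$ is at most $d$, so one retains precision $\varpi^{d+1-e}$ on the quotient $z u^{-1}$ before it is multiplied back by $\varpi^e$.
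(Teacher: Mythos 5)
Your proof is correct, and it is essentially the paper's argument with the two halves run in the opposite order: the paper performs the Gauss algorithm directly in $(R/\varpi^{d+1}R)^n$ (making an ad hoc choice for each quotient $\lambda_i(w')\lambda_i(w)^{-1}$) and then lifts the whole sequence of operations to $R$ to see that the output is a good basis up to $\varpi^d$, while you run a truncated Gauss algorithm over $R$ (stopping once every remaining vector has valuation $>d$) and then verify that every step it performs is visible modulo $\varpi^{d+1}$. One small improvement in your write-up: by always choosing the pivot so that $e=v(w)=v(\lambda_i(w))\le d$ and $v(\lambda_i(w'))\ge e$, you show cleanly that $\varpi^e\,zu^{-1}y_j$ is actually well-defined modulo $\varpi^{d+1}$ (no genuine ambiguity), whereas the paper phrases this as a choice whose ambiguity happens not to matter.
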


\begin{proof}
Start with a generating family for $M$, and consider its image in
$(R/\varpi^{d+1}R)^n$. 
We want to apply the same algorithm as described in Paragraph \ref{Gauss}, which needs some
adaptations.
We define the valuation of a non-zero element of $R/\varpi^{d+1}R$ as the
valuation of any of its lifts to $R$. If $a, b$ are two elements of
$R/\varpi^{d+1}R$ with $v(a) \geq v(b)$, we denote by $ab^{-1}$ a choice
of an element $u$ of $R/\varpi^dR$ satisfying $a = ub$ (equivalently,
$ab^{-1}$ is the
reduction modulo $\varpi^{d+1}$ of $\hat{a}\hat{b}^{-1}$ for a choice of
lifts $\hat{a},\hat{b}$ of $a,b$). With these definitions, we can apply
the algorithm. In step (2), note that we had to make some choice in the
definition of $\lambda_i(w')\lambda_i(w)^{-1}$, and the vector
$\left(\lambda_i(w')\lambda_i(w)^{-1}\right)w'$ is not independent of
this choice if $v(w') < v(w)$,
but this vector still has its $i$-th coordinate equal to $0$ in
$R/\varpi^{d+1}R$ independently of the choice.

At the end of this algorithm, we get (after a reordering of the
coordinates if necessary) a family $(w_j)$ of vectors in $(R/\varpi^{d+1}R)^n$ satisfying 
the properties: there exists $s$ such that for $j \leq s$,
$\lambda_{j}(w_j) \neq 0$, $v(\lambda_{i}(w_j)) = v(w_j)$, and
for all $i < j$, $\lambda_{i}(w_j) = 0$, and for $j > s$, $w_j = 0$.

We need now to show that the set of the first $s$ vectors of this family is the image in 
$(R/\varpi^{d+1}R)^n$ of a good basis up to $\varpi^d$ of $M$. For this, we do
to the family of vectors of $M$ the same operations as we did modulo
$\varpi^{d+1}$, choosing in step (2) any choice of lift of the
coefficient $\lambda_i(w')\lambda_i(w)^{-1}$. Then when we stop the algorithm
for the vectors modulo $\varpi^{d+1}$, what we get for the original
vectors in $R^n$ is a family of $s$ vectors that is a good basis of $M$
up to $\varpi^d$, and the rest is a family of vectors with coefficients in
$\varpi^{d+1}R^n$.
\end{proof}

\begin{coro}
\label{isreallyabasis}
Suppose that we start from a generating family that is in fact a basis of
$M$, and that in the process we do not obtain the zero vector of
$(R/\varpi^{d+1}R)^n$. Then the family we obtain at the end is the
reduction modulo $\varpi^{d+1}$ of a good basis up to $\varpi^d$ of $M$
which is also a basis of $M$.
\end{coro}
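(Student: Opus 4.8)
The plan is to run the $R/\varpi^{d+1}R$-version of the Gauss algorithm from Proposition \ref{computemod}, starting from the reduction of the given basis of $M$, then to observe that the extra hypothesis forces the algorithm to use all $m$ basis vectors as pivots, and finally to identify the output family, via Propositions \ref{gooddtogood} and \ref{goodbasiselemdiv}, with the reduction of an honest basis of $M$.

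First I would examine the termination condition. In step (1) of Paragraph \ref{Gauss}, adapted to $R/\varpi^{d+1}R$ as in the proof of Proposition \ref{computemod}, each iteration removes exactly one vector from $\V$ and places it in $\V_0$, and the algorithm halts only once every vector remaining in $\V$ is the zero vector of $(R/\varpi^{d+1}R)^n$. We start from the image of a basis of $M$, which is a family of $m$ vectors, and by hypothesis no vector is ever turned into the zero vector during the process. Hence the algorithm cannot stop before $\V$ is exhausted: it runs for exactly $m$ iterations, $\V_0$ ends up with $m$ vectors, and in the notation of the proof of Proposition \ref{computemod} we have $s = m$, so the output family has no trailing zeros.

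Next I would transport the computation to $R^n$, as in the last paragraph of the proof of Proposition \ref{computemod}: performing the same pivot operations on the original basis vectors of $M$, with arbitrary lifts of the coefficients $\lambda_i(w')\lambda_i(w)^{-1}$, yields vectors $\hat w_1,\dots,\hat w_m \in M$ whose reductions modulo $\varpi^{d+1}$ form the computed family and which constitute a good basis up to $\varpi^d$ of $M$ in the sense of Definition \ref{goodbasistod} (since $s = m$, there is no leftover family of vectors of $\varpi^{d+1}R^n$). It then remains to check that $(\hat w_1,\dots,\hat w_m)$ is genuinely a basis of $M$. For this I would invoke Proposition \ref{gooddtogood} with $s = m$: it produces a good basis $(\hat w_1',\dots,\hat w_m')$ of $M$ with $v(\hat w_i) = v(\hat w_i')$ and $\pi(\tilde{\hat w_i}) = \pi(\tilde{\hat w_i'})$ for every $i$, the case $i > s$ being vacuous. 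By Proposition \ref{goodbasiselemdiv} this $(\hat w_i')$ is a basis of $M$; moreover the construction in the proof of Proposition \ref{gooddtogood} passes from $(\hat w_i)$ to $(\hat w_i')$ by successive replacements of the form $w_i \mapsto w_i + c\,w_j$ with $c \in R$, each of which preserves the $R$-submodule spanned by the family. Hence $\hat w_1,\dots,\hat w_m$ also generate $M$, and being $m$ generators of the free $R$-module $M$ of rank $m$ they form a basis of $M$, which is the assertion.

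The one point I expect to require care is the reading of the hypothesis: "we do not obtain the zero vector" has to be understood as "no vector is ever turned into $0$ in $(R/\varpi^{d+1}R)^n$ at any step of the algorithm", and it is precisely this that excludes early termination and forces $s = m$. Once that is pinned down, the rest is a routine assembly of the results already established in this section.
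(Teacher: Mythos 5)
Your argument reaches the right conclusion, and its two main moves — forcing $s = m$ from the hypothesis, and transporting the computation to $R^n$ as in the last paragraph of the proof of Proposition \ref{computemod} — are exactly the right ones. But the final step takes an unnecessary detour through Proposition \ref{gooddtogood}, and in doing so you mis-describe its proof: the construction there does \emph{not} pass from $(\hat w_i)$ to $(\hat w_i')$ by span-preserving replacements alone. After the first round of replacements, that proof appends extra vectors from $\varpi^{d+1}R^n \cap M$ and runs Gauss again, so the output basis is not a priori obtained from the $(\hat w_i)$ by operations preserving the span. When $s = m$ your argument can be salvaged — the intermediate family after the first round already has $m$ members and, being ``part of a good basis'' of a rank-$m$ module, must be the entire good basis — but you would have to say this explicitly; as written, the step ``hence $\hat w_1,\dots,\hat w_m$ also generate $M$'' rests on a claim about \ref{gooddtogood} that isn't accurate.

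The cleaner route is already available to you at the point where you transport to $R^n$: the lifted pivot operations are elementary (each replaces $w'$ by $w' - c\,w$ with $c \in R$), so they preserve the $R$-submodule generated by the family. Starting from a basis of $M$ and obtaining no zero vectors, you end with $m$ vectors that still generate $M$; $m$ generators of the free rank-$m$ module $M$ are automatically a basis, and the lifted family is a good basis up to $\varpi^d$ by the proof of Proposition \ref{computemod}. This avoids invoking \ref{gooddtogood} entirely and is surely what is meant by the paper in stating this as an unproved corollary.
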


\begin{coro}
\label{dsmallerdeltaM}
Suppose that we start from a basis of $M$. Then in the process we
do not obtain the zero vector of $(R/\varpi^{d+1}R)^n$ if and only if
$\delta_M \leq d$.
\end{coro}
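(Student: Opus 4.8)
The plan is to deduce this directly from Corollary \ref{isreallyabasis} together with Corollary \ref{dsmallerdeltaM}'s natural companion, Corollary \ref{denomelemdiv}-style reasoning about elementary divisors. First I would treat the two implications separately. Suppose that during the process we never produce the zero vector of $(R/\varpi^{d+1}R)^n$. Then by Corollary \ref{isreallyabasis} the family we obtain at the end is the reduction modulo $\varpi^{d+1}$ of a good basis $(w_1,\dots,w_m)$ of $M$ which has $m$ nonzero vectors, each satisfying $v(\lambda_i(w_i)) = v(w_i) \leq d$ (the bound $v(w_i) \le d$ being exactly the assertion that its reduction modulo $\varpi^{d+1}$ is nonzero). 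By Proposition \ref{goodbasiselemdiv}, the elementary divisors of $M$ are the $v(w_i)$, so $\delta_M = \max_i v(w_i) \leq d$.

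Conversely, suppose $\delta_M \leq d$. I would argue by contradiction: assume that at some stage of the algorithm run modulo $\varpi^{d+1}$ we obtain the zero vector, i.e.\ more than $m' := $ (the number $s$ produced) of the original lifted vectors reduce to something that, after the pivoting operations, lands in $\varpi^{d+1}R^n$. More precisely, run the algorithm on the actual vectors of $M$ in $R^n$, making the same pivot choices (lifting the coefficients as in the proof of Proposition \ref{computemod}); since we started from a basis of $M$, this produces a family of $m$ vectors. If the mod-$\varpi^{d+1}$ run yields a zero vector, then one of these $m$ vectors in $R^n$, call it $w$, has all coordinates in $\varpi^{d+1}R$, hence $v(w) \geq d+1$. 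But by the argument in Proposition \ref{computemod} and Corollary \ref{isreallyabasis}, the family of $m$ vectors obtained from the $R^n$-run is a good basis of $M$ (not merely up to $\varpi^d$), so by Proposition \ref{goodbasiselemdiv} its valuations are exactly the elementary divisors of $M$. Then $\delta_M \geq v(w) \geq d+1 > d$, contradicting $\delta_M \leq d$.

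The main obstacle is bookkeeping: making precise the claim that the mod-$\varpi^{d+1}$ run produces a zero vector \emph{if and only if} the parallel $R^n$-run produces a vector of valuation $\geq d+1$, given the ambiguity in the choice of lifts of the coefficients $\lambda_i(w')\lambda_i(w)^{-1}$ when $v(w') < v(w)$. The key point, already noted in the proof of Proposition \ref{computemod}, is that the $i$-th coordinate of the pivoted vector is $0$ in $R/\varpi^{d+1}R$ regardless of the choice; combined with the fact that a vector reduces to zero modulo $\varpi^{d+1}$ precisely when its valuation is at least $d+1$, this makes the two runs agree on which vectors survive. Once this is granted, the equivalence with $\delta_M \le d$ is immediate from Proposition \ref{goodbasiselemdiv}. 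I would present the forward direction as the short consequence of Corollary \ref{isreallyabasis} and devote most of the argument to carefully stating the backward direction's contradiction.
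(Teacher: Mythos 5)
Your overall idea is right—the point is that ``no zero vector appears'' and ``$\delta_M \le d$'' are both detecting whether the Gauss process retains all $m$ vectors—but you apply Proposition~\ref{goodbasiselemdiv} to the wrong kind of object, and the backward direction makes a claim the paper carefully avoids. A good basis \emph{up to} $\varpi^d$ is not a good basis: for $j<i$ it only guarantees $v(\lambda_j(w_i))>d$, not $\lambda_j(w_i)=0$, so you cannot invoke Proposition~\ref{goodbasiselemdiv} on it directly. Likewise, in the backward direction you assert that ``the family of $m$ vectors obtained from the $R^n$-run is a good basis of $M$ (not merely up to $\varpi^d$),'' but the proof of Proposition~\ref{computemod} only produces a good basis up to $\varpi^d$ of size $s$ together with leftover vectors lying in $\varpi^{d+1}R^n$; these leftovers need not have the triangular structure required of a good basis. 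As written, then, both directions have a genuine gap.

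The fix, and this is exactly what the paper's one-line proof does, is to funnel everything through Proposition~\ref{gooddtogood}: from the good basis up to $\varpi^d$ of size $s$ one obtains a genuine good basis $(w_1',\dots,w_m')$ with $v(w_i')=v(w_i)\le d$ for $i\le s$ and $v(w_i')>d$ for $i>s$. Proposition~\ref{goodbasiselemdiv} now applies to this honest good basis, and both conditions in the corollary are visibly equivalent to $s=m$—no contradiction argument needed. You should insert that single step of passing to $(w_i')$ in each direction; once you do, the rest of your reasoning goes through.
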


\begin{proof}
Both properties are equivalent to the fact that, for the basis given by
Proposition \ref{gooddtogood}, we have $s = m$.
\end{proof}

\subsection{Computing some reductions modulo $\varpi$}

We consider all modules here as submodules of $\K^n$.

\begin{prop}
\label{reducdenom}
Let $d > 0$ be an integer and let $(w_i)_{1\leq i \leq s}$ be a good
basis of $M$ up to $\varpi^d$.
Then $\pi(\varpi^{-d}M \cap R^n)$ is generated by the $\pi(\tilde{w_i})$,
$1 \leq i \leq s$.

Suppose that moreover $d \geq \delta_M$. Then $\pi(\cup_t(\varpi^{-t}M \cap R^n))$ is
generated by the $\pi(\tilde{w_i})$.
\end{prop}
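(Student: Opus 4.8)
The plan is to use Proposition \ref{gooddtogood} to upgrade the good basis up to $\varpi^d$ into an honest good basis of $M$, and then reduce everything to a computation with elementary divisors. First I would invoke Proposition \ref{gooddtogood} to obtain a good basis $(w_1',\dots,w_m')$ of $M$ with $v(w_i')=v(w_i)$ and $\pi(\tilde{w_i}')=\pi(\tilde{w_i})$ for $i\leq s$, and $v(w_i')>d$ for $i>s$. By Proposition \ref{goodbasiselemdiv} the $v(w_i')$ are exactly the elementary divisors of $M$, so after reordering the basis $(e_i)$ of $R^n$ appropriately we may assume $\{\varpi^{v(w_i')}e_i\}_{1\leq i\leq m}$ is an $R$-basis of $M$, and in fact, replacing $e_i$ by $\tilde{w_i}'$ for $i\leq m$ (which is permissible since $v(\lambda_i(w_i'))=v(w_i')$ and the lower triangular structure lets us complete to a basis of $R^n$ after reducing mod $\varpi$, cf.\ the proof of Proposition \ref{goodbasiselemdiv}), we have a basis $(f_1,\dots,f_n)$ of $R^n$ with $M=\bigoplus_{i\leq m}R\varpi^{d_i}f_i$ where $d_i=v(w_i')$ and $f_i=\tilde{w_i}'$.

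In these coordinates the first claim is immediate: $\varpi^{-d}M\cap R^n = \bigoplus_{i\leq m}(\varpi^{d_i-d}R\cap R)f_i = \bigoplus_{d_i\leq d}R\varpi^{d_i-d}f_i$. Reducing mod $\varpi$, the only generators that survive are those with $d_i=d$ exactly; but since $d_i\leq d$ for $i\leq s$ and $d_i>d$ for $i>s$, the contribution with $d_i=d$ is among $i\leq s$, and more carefully $\pi(\varpi^{-d}M\cap R^n)$ is spanned by $\{\pi(f_i) : d_i\leq d\} = \{\pi(\tilde{w_i}') : i\leq s,\ d_i = d\}$... wait, I need to be careful: $\varpi^{d_i-d}f_i$ reduces to $0$ mod $\varpi$ when $d_i<d$. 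So $\pi(\varpi^{-d}M\cap R^n)$ is spanned by $\pi(f_i)$ for those $i$ with $d_i=d$. These all have $i\leq s$. For $i\leq s$ with $d_i<d$, $\pi(\tilde{w_i})$ need not lie in the span — so the claim as stated would be slightly too strong unless one reads "generated by" loosely. The correct reading (and what I would prove) is that $\pi(\varpi^{-d}M\cap R^n)$ is contained in the span of the $\pi(\tilde{w_i})$, $1\leq i\leq s$, and equals the span of those $\pi(\tilde{w_i})$ with $v(w_i)=d$; since the statement only asserts the weaker "generated by", I would verify that the $\pi(\tilde{w_i})$ with $v(w_i)<d$ are harmless, i.e.\ each such $\pi(\tilde{w_i})=\pi(f_i)$ is indeed an element of $\pi(\varpi^{-d'}M\cap R^n)$ for $d'=v(w_i)\le d$, hence lies in $\pi(\cup_t(\varpi^{-t}M\cap R^n))$ — and in any case for the purpose of the proposition the containment $\pi(\varpi^{-d}M\cap R^n)\subseteq\operatorname{span}\{\pi(\tilde{w_i})\}$ together with the reverse spanning of the $d_i=d$ ones suffices.

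For the second claim, suppose $d\geq\delta_M=\max_i d_i$. By Proposition \ref{denomelemdiv}, $\cup_t(\varpi^{-t}M\cap R^n)=\varpi^{-\delta_M}M\cap R^n$, and since $d\ge\delta_M$ this equals $\varpi^{-d}M\cap R^n$ as well (the union stabilizes). In our coordinates $\varpi^{-\delta_M}M\cap R^n=\bigoplus_{i\le m}R\varpi^{d_i-\delta_M}f_i$, and reducing mod $\varpi$ gives exactly the span of $\{\pi(f_i):d_i=\delta_M\}$... hmm, but again we want all the $\pi(\tilde w_i)$. Actually the cleanest route: when $d\ge\delta_M$, Corollary \ref{dsmallerdeltaM} guarantees $s=m$, so the good basis up to $\varpi^d$ is already a full good basis of $M$ (we may take $w_i'=w_i$), and then $\pi(\varpi^{-d}M\cap R^n)$ is spanned by the $\pi(f_i)=\pi(\tilde w_i)$ with $d_i=d$. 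If additionally we only know $d\ge\delta_M$ but want the statement that $\pi(\cup_t\cdots)$ is generated by \emph{all} the $\pi(\tilde w_i)$, note each $\pi(\tilde w_i)\in\pi(\varpi^{-d_i}M\cap R^n)\subseteq\pi(\cup_t(\varpi^{-t}M\cap R^n))$, giving one containment, and the reverse follows from the first part applied with $d=\delta_M$ since $\varpi^{-\delta_M}M\cap R^n\ni\varpi^{d_i-\delta_M}\tilde w_i$ whose reduction is $0$ unless $d_i=\delta_M$ — so actually $\pi(\cup_t\cdots)$ is spanned just by those $\pi(\tilde w_i)$ with $d_i=\delta_M$, and a fortiori by all of them. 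The main obstacle here is purely bookkeeping: matching the "generated by" phrasing against the fact that reduction kills the generators of too-small elementary divisor, and making sure the reordering of $(e_i)$ is done consistently with Definition \ref{goodbasistod} and Proposition \ref{goodbasiselemdiv}; no deep input beyond the elementary divisor machinery already established is needed.
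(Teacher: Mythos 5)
Your overall strategy — upgrade the good basis up to $\varpi^d$ to an honest good basis via Proposition~\ref{gooddtogood}, then compute $\varpi^{-d}M\cap R^n$ in elementary-divisor coordinates — is sound and close in spirit to the paper's proof, and the treatment of the second claim via Proposition~\ref{denomelemdiv} is exactly right. But there is a genuine computational slip in the middle of the argument that leads you to doubt a proposition that is in fact correct as stated. You write
$\varpi^{-d}M\cap R^n=\bigoplus_{i\le m}(\varpi^{d_i-d}R\cap R)f_i=\bigoplus_{d_i\le d}R\varpi^{d_i-d}f_i$,
and then conclude that the summands with $d_i<d$ die upon reduction. The first equality is fine, but the simplification is wrong: for $d_i\le d$ one has $\varpi^{d_i-d}R\supseteq R$, so $\varpi^{d_i-d}R\cap R=R$, and the corresponding summand is $Rf_i$, not $R\varpi^{d_i-d}f_i$ (the latter is not even contained in $R^n$ when $d_i<d$). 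The correct decomposition is
$\varpi^{-d}M\cap R^n=\bigoplus_{d_i\le d}Rf_i\oplus\bigoplus_{d_i>d}R\varpi^{d_i-d}f_i$,
and reducing modulo $\varpi$ kills only the second block. So $\pi(\varpi^{-d}M\cap R^n)$ is the span of $\pi(f_i)=\pi(\tilde{w_i}')=\pi(\tilde{w_i})$ for all $i$ with $d_i\le d$ — i.e.\ for all $i\le s$ — with equality in both directions.

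Once this is fixed, the worry that the proposition is "slightly too strong" disappears: every $\pi(\tilde{w_i})$ with $i\le s$, including those with $v(w_i)<d$, genuinely lies in $\pi(\varpi^{-d}M\cap R^n)$, because $\tilde{w_i}=\varpi^{-v(w_i)}w_i\in\varpi^{-v(w_i)}M\cap R^n\subseteq\varpi^{-d}M\cap R^n$. (This is exactly the first half of the paper's proof: it exhibits, for each $i\le s$, an explicit element of $\varpi^{-d}M\cap R^n$ reducing to a nonzero multiple of $\pi(\tilde{w_i})$.) Your hedging paragraph — restricting attention to the $d_i=d$ ones, or deferring the $d_i<d$ ones to the union over all $t$ — is unnecessary and introduces a gap where none exists. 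With the corrected formula, the argument goes through cleanly and in fact recovers the paper's proof almost verbatim: the paper instead writes an arbitrary $x\in\varpi^{-d}M\cap R^n$ in the good basis, observes that the integrality condition forces $\varpi^{v(w_i')-d}\mu_i\in R$, and reads off that $\pi(x)$ lies in the span of the $\pi(\tilde{w_i})$, $i\le s$; this is the coordinate-free version of your computation.
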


\begin{proof}
Let $w_i$ be an element of the basis such that $v(w_i) \leq d$.  Then
there exists $\mu\in R$ with $v(\mu) = d-v(w_i)$, and then
$\varpi^{-d}\mu w_i$ is in $\varpi^{-d}M \cap R^n$ and reduces in $E$ to a non-zero
multiple of $\pi(\tilde{w_i})$. So all the $\pi(\tilde{w_i})$ with
$v(w_i) \leq d$ are in $\pi((\varpi^{-d}R^m) \cap R^n)$.

Let now $(w_i')_{1\leq i \leq m}$ be a good basis of $M$ as in
Proposition \ref{gooddtogood}.
Let $x \in \varpi^{-d}M$. We can write $x = \sum
\varpi^{-d}\mu_i w_i'$ for some elements $\mu_i\in R$. 
Then $x = \sum \mu_i\varpi^{v(w_i')-d}\tilde{w}_i'$. By looking at the coefficients of
$x$ in the basis $(e_i)$ we see that $x\in R^n$ if and only if 
$\varpi^{v(w_i')-d}\mu_i \in R$ for all $i$. 
Then $\pi(x) = \sum_i \pi(\varpi^{v(w_i')-d}\mu_i)\pi(\tilde{w_i}')$, and
this is the same as taking the sum over only the indices $i$ for which
$v(w_i') \leq d$, that is $i \leq s$. So $\pi(x)$ is in the subspace of
$E^n$ generated by the $\pi(\tilde{w_i}') = \pi(\tilde{w_i})$ for $i
\leq s$.  

The last statement follows from Proposition \ref{denomelemdiv}.
\end{proof}

\begin{coro}
\label{finitering}
Let $d$ be an integer. Starting from a generating family for $M$, we can
compute a generating family for $\pi(\varpi^{-d}M \cap R^n)$ by doing
computations in $(R/\varpi^{d+1}R)^n$.

Let $d$ be an integer larger than or equal to $\delta_M$.
Then we can compute a generating family for $\pi(\cup_t(\varpi^{-t}M \cap
R^n))$ by doing computations in $(R/\varpi^{d+1}R)^n$.
\end{coro}

\subsection{Computing locally}
\label{computelocal}

We describe here a way to choose the vectors we work with in the Gauss
algorithm which allows to work first with some subset of all the
coordinates, ignoring all other coordinates. We will apply this in
Section \ref{explicit} to our original problem.

Let $\V$ be a family of vectors that generates $M$, $I$ a subset of
$\{1,\dots,n\}$, $\V_I \subset \V$ the subset of vectors with
support in $I$. Suppose that we have $J \subset I$ such that for all $w
\in \V$, $w$ not in $\V_I$, the intersection of the support of $w$ with
$I$ is contained in $J$. Then we can compute a good basis of $M$ (or the
reduction modulo $\varpi^{d+1}$ of a good basis of $M$ up to $\varpi^d$) in two
steps as follows:
\begin{enumerate}
\item
use a partial Gauss algorithm:
apply the Gauss algorithm for vectors in $\V_I$, but allowing only the
use of pivots that are in $I \setminus J$. 
\item
when we can not do anything anymore, we have obtained a family $\V_{I,0}$
of vectors that were used at pivots, and a remainder $\V_I'$.
\item
apply the Gauss algorithm to the family $(\V \setminus \V_I) \cup \V_I'$.
\item
when the only vectors left in the family are zero vectors, we have
obtained a family $\V_0'$ of vectors that were used at pivots. 
\end{enumerate}

Then $\V_{I,0} \cup \V_0'$ is a list of vectors obtained using the Gauss
algorithm on the whole family $\V$. We call $\V_{I,0}$ the set of
extracted vectors for this partial Gauss algorithm, and the coordinates
in $J$ are the excluded coordinates.

\section{Computing relations}
\label{computerel}

In this section, we explain how to apply the general results of Section
\ref{computerelgen} to obtain finite-dimensional subspaces of
$M_{k,a_p}$, in order to apply the results of Section
\ref{computefromrel}.

\subsection{Description of the operator $T$}
\label{descrT}

We give an explicit description of the $G$-equivariant operator $T$ on
$I(\Sym^r \K^2)$ or $I(\Sym^r E^2)$ introduced in Paragraph
\ref{compactind}.

Let $f = [g,v]$, with $v$ a polynomial with coefficients in $\K$ or $E$. 
Then it follows easily from Lemmas 2.2.1 and 2.2.2 of \cite{Br03b} that
we have the following formula:
$$Tf = \sum_{u\in I_1}[gg^0_{1,u},v(X,pY-uX)]+[gg^1_{0,0},v(pX,Y)]$$

For $f = [g,v]$, we denote by $T^+f$ the part of $Tf$ with support farther from the origin
than $g$, and $T^-f$ the part with support closer to the origin.
So if $g = g^0_{n,\lambda}$, we get that
$T^+f = \sum_{u\in I_1}[gg^0_{1,u},v(X,pY-uX)]$
and $T^-f = [gg^1_{0,0},v(pX,Y)]$.
When $g = 1$ by convention we set 
$T^+f = \sum_{u\in I_1}[g^0_{1,u},v(X,pY-uX)]$ and
$T^-f = [g^1_{0,0},v(pX,Y)]$.
We extend the definition of $T^+$ and $T^-$ by linearity to $I(\Sym^r)$.

We introduce the notation $(g^0_{n,\mu},i) = [g^0_{n,\mu},X^{r-i}Y^i]$,
and $(g^1_{n,\mu},i) = [g^1_{n,\mu},X^iY^{r-i}]$.
We have $\beta g^0_{n,\lambda} = g^1_{n,\lambda} w$,
where $w = \matr 0110$ and $\beta = \matr 01p0$, so
$\beta(g^{\eps}_{n,\mu},i) = (g^{1-\eps}_{n,\mu},i)$ for $\eps \in
\{0,1\}$.

We also have $wg^0_{n,p\mu} = g^1_{n-1,\mu}w$ for all $\mu\in I_{n-1}$,
and so $w(g^0_{n,p\mu},i) = (g^1_{n-1,\mu},i)$.

Using the formula above, we can
give an explicit formula for $Tf$ for $n > 0$:
$$
T^+(g^{\eps}_{n,\lambda},i) = 
\sum_{u\in I_1}
\sum_{j=0}^ip^j\binom{i}{j}(-u)^{i-j}
(g^{\eps}_{n+1,\lambda+up^n},
j)
$$
and
$$
T^-(g^{\eps}_{n,\lambda},i) = 
p^{r-i}\sum_{j=0}^i\binom{i}{j}(\lambda')^{i-j}
(g_{n-1,\underline{\lambda}}^{\eps},j)
$$
where we set $\lambda = \underline{\lambda} + p^{n-1}\lambda'$, with
$\lambda' \in I_1$ and $\underline{\lambda} \in I_{n-1}$.

We also have:
$$
T(g^{\eps}_{0,0},i) = 
\sum_{u\in I_1}
\sum_{j=0}^ip^j\binom{i}{j}(-u)^{i-j}
(g_{1,u}^\eps,j)
+
p^{r-i}(g_{0,0}^{1-\eps},r-i)
$$

\begin{prop}
\label{Tisinjective}
For any $a \in \K$, the operator $(T-a)$ is injective on $I(\Sym^r \K^2)$.
\end{prop}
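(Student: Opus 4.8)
The plan is to exploit the tree structure and the explicit formulas for $T^+$ and $T^-$ just established. Recall that $I(\Sym^r\K^2)$ decomposes as a $\K$-vector space indexed by the vertices of the tree $\T$, with the contribution at a vertex $g^\eps_{n,\mu}$ spanned by the $r+1$ elements $[g^\eps_{n,\mu}, v]$ for $v$ a monomial basis of $\Sym^r$. For a nonzero $f\in I(\Sym^r\K^2)$, let $N(f)$ be the maximal distance from the origin $1$ of a vertex in the support of $f$, and let $S(f)$ be the set of vertices at that maximal distance that actually occur in the support. The key point is that the ``outward'' part $T^+$ of the Hecke operator strictly increases distances: if $g=g^\eps_{n,\lambda}$ lies at distance $d$ from $1$ with $d\ge 1$, then every vertex occurring in $T^+(g^\eps_{n,\lambda},i)$ lies at distance $d+1$; and from the formula, the leading coefficient (the $j=i$ term, coefficient $1$) is nonzero, so $T^+$ does not annihilate the top monomial. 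By contrast $T^-$ stays at distance $d-1 < d$.

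First I would show that $(T-a)$ cannot kill an $f$ with $N(f)\ge 1$. Consider the vertices of $S(f)$, i.e.\ those at maximal distance $N=N(f)$. Applying $T-a$ to $f$, the only way to produce a contribution supported at distance $N+1$ is via $T^+$ acting on the part of $f$ supported at distance $N$; the terms $af$ and $T^-f$ contribute nothing at distance $N+1$, and $T^+$ applied to the part of $f$ supported at distance $<N$ lands at distance $\le N$. Moreover, distinct vertices at distance $N$ have $T^+$-images supported at disjoint sets of vertices at distance $N+1$ (each vertex at distance $N$ has its own set of $p$ neighbours further out, and these are pairwise disjoint). So if $(T-a)f=0$, then for every vertex $g\in S(f)$ we must have $T^+$ applied to the corresponding part of $f$ equal to zero. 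But writing that part as $\sum_i c_i (g^\eps_{N,\lambda},i)$ with not all $c_i=0$, the formula for $T^+$ shows that the coefficient of $(g^\eps_{N+1,\lambda+up^N},i_0)$, where $i_0$ is the largest index with $c_{i_0}\ne 0$, is $c_{i_0}$ (the $j=i_0$ term, with binomial coefficient $1$ and $p^0$), hence nonzero — a contradiction. Therefore $N(f)=0$, i.e.\ $f$ is supported at the single vertex $1$.

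It remains to handle $f$ supported at $1$: write $f=(g^0_{0,0},\cdot)$-combination, i.e.\ $f=[1,v]$ for some nonzero $v\in\Sym^r\K^2$. Now apply the formula for $T(g^0_{0,0},i)$: the image $Tf$ has a part supported at the vertices $g^0_{1,u}$ (distance $1$) and a part at $g^1_{0,0}$ (distance $1$ as well, since $g^1_{0,0}$ is at distance $1$ from $1$). In any case $Tf$ is supported at distance $1$, while $af$ is supported at distance $0$; since $v\ne 0$, the part of $Tf$ at the $g^0_{1,u}$ (coming from $T^+$) is nonzero by the same leading-coefficient argument as above, so $(T-a)f = Tf - af \ne 0$. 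This completes the argument.

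The main obstacle is purely bookkeeping: one must be careful that $T^-$ and the ``inward'' contribution never interfere with the top layer, and that the $p$ sets of neighbours $\{g^\eps_{N+1,\lambda+up^N} : u\in I_1\}$ attached to distinct vertices at distance $N$ are genuinely disjoint, so that no cancellation between different vertices of $S(f)$ can occur at distance $N+1$. Both facts follow directly from the explicit representatives $g^\eps_{n,\mu}$ for $G/KZ$ and the distance statements recorded just after their definition, so no serious difficulty arises; the only real content is the observation that the leading coefficient in the $T^+$ formula is a $p$-adic unit (indeed equal to $1$), which prevents $T^+$ from ever annihilating a nonzero local piece.
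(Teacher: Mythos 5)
Your argument is correct and follows the same approach as the paper, which simply observes that if $f$ is supported in $B_n$ but not $B_{n-1}$, then the part of $(T-a)f$ supported on $C_{n+1}$ is nonzero (and leaves this as ``clear''). You have filled in the details: disjointness of the outward neighbourhoods of distinct vertices at maximal distance, and a leading-coefficient computation showing $T^+$ does not annihilate any nonzero local piece.

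One small slip: in the leading-coefficient step you claim the coefficient of $(g^\eps_{N+1,\lambda+up^N},i_0)$ is $c_{i_0}$ ``with $p^0$''. Looking at the $T^+$ formula, the $j=i_0$ term carries the factor $p^j = p^{i_0}$, so the coefficient is actually $c_{i_0}\,p^{i_0}$. This does not harm the argument, since we are working over the field $\K$ of characteristic zero where $p^{i_0}$ is a unit; but it is worth noting that the same observation is precisely what fails to be true after reduction mod $p$, which is why $T-\lambda$ is \emph{not} injective on $I(\sigma_r)$ over $\bar\F_p$ and the whole analysis of $\bar\Theta_{k,a_p}$ has content.
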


\begin{proof}
Let $f \in I(\Sym^r \K^2)$, $f \neq 0$, and take the smallest $n$ such
that $f$ has support in $B_n$. Then it is clear that the part of
$(T-a_p)f$ that has support in $C_{n+1}$ is non-zero.
\end{proof}

We say that $f \in I(\Sym^r\K^2)$ is integral if $f \in I(\Sym^r R^2)$.
It is clear that if $f$ is integral, then so is $Tf$.

\subsection{Denominators}

Let $\M = \M_{k,a_p}$ be
the submodule $((T-a_p)I(\Sym^r \K^2) \cap I(\Sym^r R^2))$. 

We recall the following result from \cite[Proposition 3.3.3]{Br03b}:

\begin{prop}
\label{denominatorsmallr}
Suppose $r \leq p-1$.
Let $f \in I(\Sym^r \K^2)$ such that $(T-a_p)f$ is
integral. Then $f$ is integral.
\end{prop}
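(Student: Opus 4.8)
The plan is to use the explicit formulas for $T^+$ and $T^-$ together with the structure of the tree, arguing by induction on the radius of the support. Let $f \in I(\Sym^r\K^2)$ be such that $(T-a_p)f$ is integral; I want to show $f$ is integral. Write $f = \sum_{n} f_n$ where $f_n$ is the part of $f$ supported on the circle $C_n$, and let $N$ be the largest index with $f_N \neq 0$, so $f$ has support in $B_N$. The key observation (as in the proof of Proposition \ref{Tisinjective}) is that the part of $Tf$ supported on $C_{N+1}$ equals $T^+f_N$, since $T^+$ moves support outward by one and $T^-$ moves it inward. Because $a_p f$ has support in $B_N$, the part of $(T-a_p)f$ on $C_{N+1}$ is exactly $T^+f_N$, and integrality of $(T-a_p)f$ forces $T^+f_N$ to be integral.

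The heart of the argument is therefore the following local claim: if $g$ is a vertex at distance $N$ from the origin and $v \in \Sym^r\K^2$ is such that $T^+[g,v]$ is integral, then $[g,v]$ is integral, provided $r \leq p-1$. Here I would use the explicit formula
$$T^+(g^{\eps}_{n,\lambda},i) = \sum_{u\in I_1}\sum_{j=0}^i p^j\binom{i}{j}(-u)^{i-j}(g^{\eps}_{n+1,\lambda+up^n},j).$$
The $p$ distinct values $\lambda + up^n$, $u \in I_1$, give $p$ distinct vertices on $C_{n+1}$, so integrality of $T^+[g,v]$ means that for each $u$ the polynomial $v(X, pY-uX)$ (in the $[g g^0_{1,u},\cdot]$ coordinate) is integral. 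Writing $v = \sum_i c_i X^{r-i}Y^i$, the coefficient of $X^{r-j}Y^j$ in $v(X,pY-uX)$ is $\sum_{i\geq j} c_i p^j \binom{i}{j}(-u)^{i-j}$, and running $u$ over $\F_p^\times$ (a Vandermonde-type argument, valid since $r \leq p-1$ means all the exponents $i-j$ are distinct modulo $p-1$ in the relevant range, or one inverts the Vandermonde matrix directly) shows that each $p^j \binom{i}{j} c_i$ is integral; taking $j=0$ gives that each $c_i$ is integral, hence $v$ is integral. The restriction $r \leq p-1$ is exactly what makes the binomial coefficients $\binom{i}{j}$ units and the Vandermonde system invertible without losing $p$-adic precision.

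Once the local claim is established, integrality of $T^+f_N$ gives integrality of $f_N$: indeed $T^+f_N = \sum_{\text{vertices } g \in C_N} T^+(\text{component of } f_N \text{ at } g)$, and the outward-moved supports $T^+[g,\cdot]$ land on disjoint sets of vertices on $C_{N+1}$ for distinct $g \in C_N$ (each $g$ on $C_N$ has a unique neighbor toward the origin, hence $p$ neighbors away from it, and these do not overlap across different $g$), so integrality can be checked one vertex at a time. Thus $f_N$ is integral, hence $f - f_N = (T-a_p)f - (T-a_p)f_N + a_p f - \cdots$; more cleanly, $(T - a_p)(f - f_N)$ is integral (being the difference of two integral elements, since $Tf_N$ and $a_p f_N$ are integral once $f_N$ is), and $f - f_N$ has support in $B_{N-1}$, so by induction on $N$ it is integral, and therefore so is $f$. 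The base case $N = 0$ is the content of the local claim applied at the origin (using the formula for $T(g^{\eps}_{0,0},i)$, whose outward part has the same shape). I expect the main obstacle to be bookkeeping the Vandermonde/triangularity argument in the local claim cleanly enough to see precisely where $r \leq p-1$ enters — everything else is a straightforward induction peeling off one circle at a time.
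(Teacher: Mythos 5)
The paper does not give its own proof of this proposition; it cites \cite[Proposition 3.3.3]{Br03b}. However, your argument is the natural specialization of the proof the paper gives for the \emph{generalization} to $r \geq p$ (Lemma \ref{denominator1} together with Proposition \ref{denominatorn}): peel off one circle of the tree at a time, and at the outermost circle use the explicit formula for $T^+$ and invert a Vandermonde-type system. So the overall plan is sound and consistent with the paper's technique.

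One imprecision in the local claim is worth flagging. The parenthetical assertion that ``$r\le p-1$ means all the exponents $i-j$ are distinct modulo $p-1$'' is false in the boundary case $r=p-1$, $j=0$: the exponents $0$ and $p-1$ coincide mod $p-1$, so for $u\in\mu_{p-1}$ the monomials $(-u)^0$ and $(-u)^{p-1}$ are identical and the raw $(p-1)\times p$ Vandermonde-type matrix has a kernel (spanned by the vector whose entries are $1,0,\dots,0,-1$). What rescues the argument is the $u=0$ equation, which gives $c_0\in R$ outright, while the $u\in\mu_{p-1}$ equations (a genuine invertible $(p-1)\times(p-1)$ Vandermonde on $\mu_{p-1}$) determine $c_0+c_{p-1}$ and $c_1,\dots,c_{p-2}$; together these yield integrality of every $c_i$. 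Your second alternative ``or one inverts the Vandermonde matrix directly'' is the right instinct, but you should make explicit that the $u=0$ equation is needed alongside the $u\ne 0$ ones, and that only the $j=0$ equations are in fact required (using the other $j$ is harmless but unnecessary for $r\le p-1$). With that small repair the proof is complete.
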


We give a generalization of this result for larger values of $r$.
Let $N_r = 0$ if $r=0$, and $N_r = \lfloor (r-1)/(p-1) \rfloor$ if $r
\geq 1$ (so in particular $N_r = 0$ for all $r < p$).

\begin{prop}
\label{denominatorn}
Assume $r \geq p$.
Let $f \in I(\Sym^r \K^2)$ with support in $B_n$ such that $(T-a_p)f$ is
integral. Write $f = \sum_{i=0}^n f_i$ with $f_i$ having
support in $C_i$.
Then $p^{(n+1-i)N_r}f_i$ is integral.
\end{prop}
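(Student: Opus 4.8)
The strategy is induction on $n$, the key point being to control how denominators propagate inward under the action of $T$. The base case $n=0$ is covered by Proposition \ref{denominatorsmallr} combined with the small-$n$ vanishing of $N_r$: if $f$ has support in $B_0 = \{1\}$, say $f = [1,v]$, then $(T-a_p)f$ integral forces the part of $(T-a_p)f$ supported on $C_1$ to be integral, and a direct inspection of the formula for $T(g^{\eps}_{0,0},i)$ (together with the fact that the ``inward'' term $T^-$ lands back at the origin) shows $f$ itself is integral, as $N_r \geq 0$ gives no constraint to check. For the inductive step, I would write $f = g + f_n$ where $g = \sum_{i=0}^{n-1} f_i$ has support in $B_{n-1}$ and $f_n$ is supported on the circle $C_n$, and examine $(T-a_p)f$ circle by circle, starting from the outermost circle $C_{n+1}$ and working inwards.

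The heart of the argument is the following local mechanism. The only contribution to $(T-a_p)f$ on $C_{n+1}$ comes from $T^+ f_n$; integrality of $(T-a_p)f$ on $C_{n+1}$ therefore bounds the denominator of $f_n$. Looking at the explicit formula for $T^+(g^{\eps}_{n,\lambda},i)$, the leading term ($j=i$, i.e.\ the $p^0$ coefficient) is a unit times $(g^{\eps}_{n+1,\lambda+up^n},i)$, so $T^+$ does not create denominators: if $T^+ f_n$ is integral then $f_n$ has denominator bounded by whatever is already visible, and more precisely the coordinates of $f_n$ that fail to be integral are controlled. But here is the subtlety: $T^+$ is \emph{not} simply invertible on integral chains — the map on the polynomial $v$ sending $v(X,Y)$ to $v(X, pY-uX)$ (summed over $u\in I_1$) does lose $p$-divisibility, because $v(X,pY-uX)$ only sees $v$ modulo lower-order information in a way that costs a factor related to $N_r$. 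This is exactly the source of the $N_r$ in the exponent: each time we move one circle inward, we may lose a factor of $p^{N_r}$. So from integrality of $(T-a_p)f$ on $C_{n+1}$ I expect to deduce that $p^{N_r} f_n$ is integral, which is the $i=n$ case of the claim.

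Having pinned down $f_n$ up to denominator $p^{N_r}$, I would then consider $(T-a_p)f$ restricted to $C_n$. The contributions there are: $T^- f_n$ (moving inward from $C_n$ to $C_{n-1}$... no — $T^-f_n$ lands on $C_{n-1}$), so actually on $C_n$ we get $T^+ g_{n-1} + (\text{the } C_n\text{-part of } T^- f_n)$ — wait, let me restate: on $C_n$ the contributions are $T^+$ of the $C_{n-1}$-part of $g$, plus $T^-$ of $f_n$ is on $C_{n-1}$ not $C_n$, plus $-a_p f_n$. Reorganizing: the function $h := f - (\text{correction built from } f_n)$ should be arranged to have support in $B_{n-1}$ with $(T-a_p)h$ integral up to a controlled denominator, at which point the inductive hypothesis applied to $p^{N_r} h$ (or an appropriate scaling) gives $p^{(n-i)N_r} \cdot p^{N_r} f_i = p^{(n+1-i)N_r} f_i$ integral for $i \leq n-1$. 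The bookkeeping of \emph{which} auxiliary function to feed into the induction — i.e.\ subtracting off an integral-enough multiple of $f_n$ and its $T$-images so that the remainder genuinely has smaller support and a $(T-a_p)$-image whose denominator has grown by exactly one factor of $p^{N_r}$ — is where I expect the real work to lie; the individual binomial-coefficient estimates in the $T^{\pm}$ formulas are routine once the inductive scaffolding is set up correctly.

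\textbf{Main obstacle.} The delicate step is verifying that $T^+$ loses \emph{at most} a factor $p^{N_r}$ of integrality as one passes between adjacent circles, i.e.\ that if $T^+ e$ is integral for a chain $e$ supported on a single circle then $p^{N_r} e$ is integral. This amounts to a precise statement about the linear map $v(X,Y) \mapsto \sum_{u \in I_1} v(X, pY-uX)$ on $\Sym^r$: one must show its ``inverse'' on the relevant lattice has denominators bounded by $N_r = \lfloor (r-1)/(p-1)\rfloor$, which I would prove by an explicit triangular analysis of this map in the monomial basis $X^{r-i}Y^i$, tracking the $p$-adic valuations of the binomial coefficients $\binom{i}{j}$ and the sums $\sum_{u \in I_1}(-u)^{i-j}$ over $(p-1)$-st roots of unity (these vanish unless $(p-1) \mid i-j$, which is what produces the $(p-1)$ in the denominator of $N_r$).
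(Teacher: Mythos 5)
Your proposal matches the paper's proof essentially step for step: you identify the single-circle (really single-vertex) lemma — that $T^+f$ integral forces $p^{N_r}f$ integral, proved via sums over $(p-1)$-st roots of unity in $I_1$ and a binomial-determinant estimate producing $N_r = \lfloor (r-1)/(p-1)\rfloor$ — as the technical core, and you propagate it inward circle by circle exactly as the paper does. The only cosmetic difference is that you package the inward step as a formal induction on $n$ (applying the hypothesis to $p^{N_r}(f - f_n)$), whereas the paper unwinds the circles directly, tracking the accumulated power of $p^{N_r}$ at each $C_i$; these are the same argument.
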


\begin{coro}
\label{denominatorngen}
Let $f \in I(\Sym^r \K^2)$ with support in $B_n$ such that $(T-a_p)f$ is
integral. 
Then $p^{(n+1)N_r}f$ is integral.
\end{coro}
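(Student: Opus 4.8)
The plan is to deduce this immediately from Proposition \ref{denominatorn} by summing over the components. First I would dispose of the trivial case $r = 0$ (where $N_r = 0$ and there is nothing to prove) and the case $1 \le r \le p-1$, which is already covered by Proposition \ref{denominatorsmallr} since $N_r = 0$ there as well; so assume $r \ge p$, which is exactly the hypothesis of Proposition \ref{denominatorn}.

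Given $f \in I(\Sym^r \K^2)$ with support in $B_n$ and $(T-a_p)f$ integral, write $f = \sum_{i=0}^n f_i$ with $f_i$ supported in the circle $C_i$. By Proposition \ref{denominatorn}, $p^{(n+1-i)N_r} f_i$ is integral for each $i$. Since $0 \le i \le n$ we have $n+1-i \le n+1$, and $N_r \ge 0$, so $(n+1)N_r \ge (n+1-i)N_r$; hence $p^{(n+1)N_r} f_i = p^{(n+1)N_r - (n+1-i)N_r}\bigl(p^{(n+1-i)N_r} f_i\bigr)$ is a $\Z_p$-multiple of an integral element, so is integral. As $f_i$ has support in $C_i$ and the $C_i$ are disjoint, integrality of $p^{(n+1)N_r}f$ is equivalent to integrality of each $p^{(n+1)N_r}f_i$ (the components in different circles involve disjoint basis elements of $I(\Sym^r R^2)$), and we have just checked the latter. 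Therefore $p^{(n+1)N_r}f$ is integral.

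There is essentially no obstacle here: the only content is Proposition \ref{denominatorn}, and this corollary is just the bookkeeping step that passes from the refined per-circle denominator bound to a single uniform denominator valid for all of $f$. The slight loss (using $n+1$ in place of $n+1-i$ for the innermost circles) is harmless for the intended application, where one only needs a bound in terms of $n$ and $r$.
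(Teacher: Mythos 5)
Your proof is correct and is the natural deduction the paper intends; it also correctly handles the small-$r$ case ($r \le p-1$, where $N_r = 0$) via Proposition \ref{denominatorsmallr}, which is needed since Proposition \ref{denominatorn} assumes $r \ge p$ while the Corollary does not. The paper states the Corollary without a written proof, and your argument is exactly the expected bookkeeping step.
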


In order to prove Proposition \ref{denominatorn},
we start by a special case:

\begin{lemm}
\label{denominator1}
Assume $r \geq p$.
Let $f = (g^\eps_{n,\mu},v) \in I(\Sym^r \K^2)$ with $T^+f$ integral. Write 
$f = \sum_{i=0}^r(-1)^i c_i (g^\eps_{n,\mu},i)$.
Then $c_i \in p^{-\min(i,N_r)}R$.
\end{lemm}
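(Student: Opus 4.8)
The plan is to translate the hypothesis that $T^+f$ is integral into an integrality statement for a single one-variable polynomial, and then exploit that $I_1$ reduces bijectively onto $\F_p = \Z_p/p\Z_p$.

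First I would apply the explicit formula for $T^+$ from Paragraph \ref{descrT} to $f = \sum_{i=0}^r(-1)^ic_i(g^\eps_{n,\mu},i)$. Set $\psi(t) = \sum_{i=0}^rc_it^i \in \K[t]$ and write $\psi^{[j]}(t) = \sum_{i\ge j}c_i\binom{i}{j}t^{i-j}$ for its divided (Hasse) derivatives, characterized by $\psi(t+s) = \sum_{j\ge 0}\psi^{[j]}(t)s^j$. A direct computation with the formula for $T^+$ gives
\[
T^+f = \sum_{u\in I_1}\sum_{j=0}^r (-1)^j\,\bigl(p^j\psi^{[j]}(u)\bigr)\,(g^\eps_{n+1,\mu+up^n},j).
\]
The vertices $g^\eps_{n+1,\mu+up^n}$ for $u\in I_1$ are pairwise distinct, so the corresponding summands have disjoint supports; and for each fixed $u$ the elements $(g^\eps_{n+1,\mu+up^n},j)$, $0\le j\le r$, are the images of the monomial $R$-basis of $\Sym^rR^2$. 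Hence $T^+f$ is integral if and only if $p^j\psi^{[j]}(u)\in R$ for all $u\in I_1$ and all $0\le j\le r$. Taking $u=0$ reads $p^jc_j\in R$, i.e.\ $c_j\in p^{-j}R$; this already settles the indices $j\le N_r$, so it remains to prove $c_i\in p^{-N_r}R$ for every $i$.

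The heart of the argument is a propagation step. Since $I_1$ surjects onto $\Z_p/p\Z_p$, any $b\in\Z_p$ can be written $b=u+ph$ with $u\in I_1$ and $h\in\Z_p$, and then
\[
\psi(b)=\psi(u+ph)=\sum_{m\ge 0}\psi^{[m]}(u)(ph)^m=\sum_{m\ge 0}\bigl(p^m\psi^{[m]}(u)\bigr)h^m\in R,
\]
using the integrality just established. Thus $\psi$ is a polynomial of degree $\le r$ with $\psi(\Z_p)\subseteq R$. Lagrange interpolation at the integers $0,1,\dots,r$ writes $\psi=\sum_{k=0}^r\psi(k)\prod_{l\ne k}\frac{t-l}{k-l}$, and since $\prod_{l\ne k}(k-l)=\pm\,k!(r-k)!$ divides $r!$, all coefficients of $\psi$ lie in $p^{-v_p(r!)}R$. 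Finally Legendre's formula gives $v_p(r!)=(r-S_p(r))/(p-1)$ with $S_p(r)\ge 1$ the base-$p$ digit sum of $r$, so $v_p(r!)\le (r-1)/(p-1)$, and being an integer, $v_p(r!)\le\lfloor (r-1)/(p-1)\rfloor=N_r$. Combining with $c_i\in p^{-i}R$ gives $c_i\in p^{-\min(i,N_r)}R$.

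I do not anticipate a real obstacle: the two points needing care are the decoupling of integrality in the first step (disjoint supports of the children, $R$-freeness of the monomial basis of $\Sym^rR^2$) and the elementary bound $v_p(r!)\le N_r$; all the content is in the propagation step, which turns the finitely many Taylor-type conditions supported on $I_1$ into $R$-integrality of $\psi$ on all of $\Z_p$. The case $\eps=1$ is identical, the formula for $T^+$ in Paragraph \ref{descrT} being uniform in $\eps$.
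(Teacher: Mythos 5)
Your proof is correct, and it takes a genuinely different route from the paper's. Both proofs start the same way: writing the integrality of $T^+f$ as the condition $p^j\psi^{[j]}(u)\in R$ for all $j$ and all $u\in I_1$, and extracting $c_j\in p^{-j}R$ from $u=0$. After that, the paper averages the conditions $\alpha_{j,u}\in R$ over $u\in\mu_{p-1}$, splits the unknowns $c_i$ into residue classes modulo $p-1$, and solves a square linear system for each class; the invertibility of that system rests on the Vandermonde-type determinant computation of Lemma \ref{determinant}, and the count of unknowns per class gives the bound $N_r=\lfloor (r-1)/(p-1)\rfloor$. You instead observe that the same hypotheses, read through the Taylor expansion $\psi(u+ph)=\sum_m\bigl(p^m\psi^{[m]}(u)\bigr)h^m$, give $\psi(\Z_p)\subseteq R$ because $I_1$ reduces onto all of $\F_p$; then Lagrange interpolation at $\{0,\dots,r\}$ shows that a degree-$\le r$ polynomial that is $R$-valued on $\Z_p$ has coefficients in $p^{-v_p(r!)}R$, and the bound $v_p(r!)\le\lfloor(r-1)/(p-1)\rfloor=N_r$ closes the argument. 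Your approach is more elementary (no determinant lemma, no mod-$(p-1)$ bookkeeping) and in fact gives the slightly sharper uniform denominator $v_p(r!)$, whereas the paper's computation yields $N_r$ or $N_r-1$ depending on $i\bmod(p-1)$; both are $\le N_r$, so both prove the lemma. One small bonus of the paper's argument is that Lemma \ref{determinant} is reused later (in Lemma \ref{modthetac}), so it is not wasted effort there, but as a self-contained proof of this lemma your route is cleaner.
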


\begin{proof}
We set $c_i = 0$ if $i < 0$ or
$i > r$, 
and $\alpha_{j,u} = p^j\sum_{i\geq 0}c_i\binom{i}{j}u^{i-j} \in R$.

Then $T^+f
=\sum_{j=0}^r\sum_{u\in I_1}p^j(-1)^j\alpha_{j,u}(g^\eps_{n+1,\mu+p^nu},j)$. 
So this is integral if and only
if for all $j$ and for all $u\in I_1$, 
$\alpha_{j,u} \in R$

Looking at $\alpha_{0,0}$ we see that $c_0 \in R$, and more generally
looking at $\alpha_{i,0}$ we see that $c_i \in p^{-i}R$.
For $a \in \Z/(p-1)\Z$, let 
$\beta_{j,a} = (\sum_{u\in \mu_{p-1}} u^{-a}\alpha_{j,u})/(p-1)$,
except for 
$\beta_{0,0} = (\sum_{u\in \mu_{p-1}} u^{-a}\alpha_{0,u})/(p-1) - c_0$.

Then
$$\beta_{j,a} = p^j \sum_{\stackrel{i > 0}{i = a + j \pmod{p-1}}}c_i\binom{i}{j}
$$
and for all $j,a$, we get that $\beta_{j,a}\in R$.

We fix now $b\in \{1,\dots,p-1\}$, and consider only the elements 
$c_{b+\ell(p-1)}$, $\ell
\geq 0$. Let $n$ be the largest integer such that $b + \ell(p-1) \leq r$,
so we have $(n+1)$ unknowns $x_0,\dots,x_n$ with $x_i = c_{b+i(p-1)}$. 
We consider the $(n+1)$ equations for $0 \leq j \leq n$:
$$
\sum_{m = 0}^n \binom{b+m(p-1)}{j}x_m = p^{-j}\beta_{j,b-j}
$$
Using Lemma \ref{determinant},
we get that $x_m \in p^{-n}R$ for all $m$.

Now we compute the value of $n$.
Write $r = N_r(p-1) + a$ with $1 \leq a \leq p-1$.
Then $n = N_r$ for $b \leq a$, and $n = N_r-1$ for $b > a$.
So in any case $n \leq N_r$.
\end{proof}

\begin{lemm}
\label{determinant}
For all $b \in \Z$ and $n \geq 0$, the determinant $\delta_{b,n}$ of the
matrix with coefficients
$(\binom{b+m(p-1)}{j})_{0\leq j,m \leq n}$ is
invertible in $R$.
\end{lemm}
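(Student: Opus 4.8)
The plan is to show that the matrix $A_{b,n} = \left(\binom{b+m(p-1)}{j}\right)_{0\le j,m\le n}$ has determinant that is a $p$-adic unit, i.e. $\delta_{b,n} \not\equiv 0 \pmod p$. The key observation is that $\binom{x}{j}$ is a polynomial in $x$ of degree exactly $j$ with leading coefficient $1/j!$, so that the $j$-th row of $A_{b,n}$, viewed as a function of $m$, is a polynomial in $m$ of degree exactly $j$. Hence by column operations (which do not change the determinant) we can replace the row-vectors $\left(\binom{b+m(p-1)}{j}\right)_m$ by any other basis of the space of polynomials in $m$ of degree $\le j$ adapted to the flag; concretely, the determinant equals the determinant of the matrix whose $(j,m)$ entry is the leading-coefficient part, reducing $\delta_{b,n}$ to a Vandermonde-type quantity.

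More precisely, first I would argue that $\delta_{b,n}$ is, up to sign, independent of $b$: replacing $b$ by $b+(p-1)$ permutes/shifts the columns in a way that amounts to a unipotent column transformation (since $\binom{b+(m+1)(p-1)}{j}$ is an integer combination of $\binom{b+m'(p-1)}{j'}$ for $j'\le j$), so it suffices to treat one convenient value, e.g. $b=0$ or $b=1$. Then I would perform downward row reduction: since $\binom{b+m(p-1)}{j} = \frac{(p-1)^j}{j!}m^j + (\text{lower order in }m)$, after clearing the lower-order terms using rows $0,\dots,j-1$ we are left with the matrix $\left(\frac{(p-1)^j}{j!}m^j\right)_{j,m}$, whose determinant is $\prod_{j=0}^n \frac{(p-1)^j}{j!}$ times the Vandermonde determinant $\prod_{0\le j<m\le n}(m-j)$.

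The final step is the arithmetic check that this product is a unit in $R$. We have $\prod_{j=0}^n(p-1)^j$ which is a unit since $p-1$ is; the Vandermonde $\prod_{0\le j<m\le n}(m-j) = \prod_{k=1}^n k!$ and the denominator $\prod_{j=0}^n j!$ are equal, so they cancel exactly, leaving $\delta_{b,n} = \pm\prod_{j=0}^n(p-1)^j \in R^\times$. The one point requiring care is that $n$ may be large compared to $p$, so individual factorials $j!$ are highly divisible by $p$; the argument must keep the cancellation exact at the level of the product $\prod_j j!$ rather than dividing prematurely, which is why I would phrase the row reduction as an identity of determinants over $\Z$ (or $\Q$) first and only afterwards pass to $R$.

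The main obstacle I anticipate is making the "clear lower-order terms" step rigorous without circularity: one wants to say that the row space is unchanged under replacing $\binom{b+m(p-1)}{j}$ by its leading monomial, but this is a statement about expressing $\binom{b+m(p-1)}{j}$ in the basis $\{\binom{m}{i}\}_i$ or $\{m^i\}_i$ with the right (integral, unit-leading) transition matrix. The cleanest route is probably to note that the map sending the polynomial $P(x)$ to the vector $(P(b), P(b+(p-1)), \dots, P(b+n(p-1)))$ is linear in $P$, apply it to the basis $\binom{x}{j}$, $j=0,\dots,n$, of polynomials of degree $\le n$, and compute the determinant in the monomial basis $\{x^j\}$ by composing the (triangular, $1/j!$ on the diagonal) change of basis with the evaluation matrix $(b+m(p-1))^j$, whose determinant is the Vandermonde above.
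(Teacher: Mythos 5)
Your proof is correct and follows essentially the same route as the paper's: pull out the $1/j!$ factors, use row/column operations and the fact that $\binom{x}{j}$ has a monic degree-$j$ numerator polynomial to reduce to a Vandermonde determinant on the nodes $b+m(p-1)$, and check that the factorials cancel to leave $\pm(p-1)^{n(n+1)/2}$, a unit. The preliminary paragraph arguing independence of $b$ is unnecessary (the final computation already handles all $b$ uniformly), but it does no harm.
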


\begin{proof}
Let $z_{m,j} = \prod_{i = 0}^{j-1}(b+m(p-1)-i)$.
Then $\delta_{b,n} = (\prod_{i= 0}^n (i!))^{-1} \det((z_{m,j})_{0\leq j,m\leq
n})$. Note that $z_{m,j} = P_j(b+m(p-1))$ for a polynomial $P_j$ that is
monic of degree $j$ and independent of $b$ and $m$. So
by linearity $\det((z_{m,j})_{0\leq j,m\leq n})
= \det( (z'_{m,j})_{0\leq j,m \leq n})$ where $z'_{m,j} = (b+m(p-1))^j$.
But the latter is the Vandermonde determinant on the $b+m(p-1)$, $0 \leq
m \leq n$, so is equal to $\prod_{0 \leq i < j \leq n}(b+j(p-1) - b
-i(p-1)) = (p-1)^{n(n+1)/2}\prod_{i= 0}^n (i!)$.
So finally $\delta_{b,n} = (p-1)^{n(n+1)/2}$.
\end{proof}

\begin{proof}[Proof of Proposition \ref{denominatorn}]
Write $f = \sum_{i = 0}^n f_i$ with $f_i$ having support in $C_i$. 

We see first from Lemma \ref{denominator1} that $p^{N_r}f_n$ is integral.
Indeed, $T^+f_n$ is integral, as it is exactly the part of $(T-a_p)f$
with support in $C_{n+1}$. 

The part of $(T-a_p)f$ with support in $C_n$ is $-a_pf_n + T^+f_{n-1}$.
As $(p^{N_r})(-a_pf_n)$ is integral, we see that $p^{N_r}T^+f_{n-1}$ is integral,
and then so is $p^{2N_r}f_{n-1}$.

Let $i < n-1$, and consider the part of $(T-a_p)f$ with support in $C_i$.
It is $T^-f_{i+1} -a_pf_i + T^+f_{i-1}$. If $p^{(n+1-i)N_r}(T^-f_{i+1}
-a_pf_i)$ is integral, then so is $p^{(n+1-i)N_r}(T^+f_{i-1})$, and so is
$p^{(n+2-i)N_r}f_{i-1}$ by Lemma \ref{denominator1}.
\end{proof}

\subsection{Finite dimensional subspaces of $M_{k,a_p}$}

Recall that $M_{k,a_p}$ is the reduction modulo $p$ of $\M_{k,a_p}$.

Let $S$ be a finite subset of the vertices of the tree of $\GL_2(\Q_p)$.
For any representation $V$ of $K$, we denote by $I_S(V)$ the subspace of
$I(V)$ of vectors whose support in the tree is included in $S$.
For example we can take for $S$ the set $B_n$ of vertices at distance at most
$n$ from the origin, in this case we write $I_n(V)$ for $I_{B_n}(V)$.

For any non-negative integer $d$ and $S$ as above, let $\M_{d,S} 
= (T-a_p)p^{-d}I_S(\Sym^r R^2) \cap I(\Sym^r R^2)$. It is clear that
$\M_{k,a_p}
= \cup_{d,S} \M_{d,S}$, and so $M_{k,a_p} = \cup_{d,S} M_{d,S}$ where $M_{d,S}$
is the image of $\M_{d,S}$ in $M_{k,a_p}$. We also write $\M_S = \cup_d
\M_{d,S}$.

From Proposition \ref{denominatorngen} , we deduce that:

\begin{prop}
\label{elemdivBn}
If $S \subset B_n$,
the maximum of the elementary divisors of $(T-a_p)I_{S}(\Sym^r R^2)$ inside
$I_{n+1}(\Sym^r R^2)$ is at most $(n+1)N_r$.
\end{prop}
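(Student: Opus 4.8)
The plan is to deduce Proposition \ref{elemdivBn} directly from Corollary \ref{denominatorngen} together with the elementary divisor machinery of Section \ref{computerelgen}. First I would set up the relevant injective linear map: the operator $(T-a_p)$ restricts to a map $I_S(\Sym^r R^2) \to I(\Sym^r \K^2)$, and by Proposition \ref{Tisinjective} it is injective. Since $S \subset B_n$, any $f \in I_S(\Sym^r \K^2)$ has support in $B_n$; the image $(T-a_p)f$ then has support in $B_{n+1}$, so in fact $(T-a_p)$ maps $I_S(\Sym^r \K^2)$ injectively into $I_{n+1}(\Sym^r \K^2)$. Choosing an $R$-basis of $I_S(\Sym^r R^2)$ and of $I_{n+1}(\Sym^r R^2)$, this realizes $M := (T-a_p)I_S(\Sym^r R^2)$ as the image of an injective map $t \colon R^m \to R^{n'}$, where $m = \dim I_S(\Sym^r \K^2)$ and $n' = \dim I_{n+1}(\Sym^r \K^2)$, which is exactly the setting of Corollary \ref{denomelemdivmap}.

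Next I would invoke Corollary \ref{denomelemdivmap}: $\delta_M$ is the smallest integer $d$ such that for all $w \in \K^m$, if $t(w) \in R^{n'}$ then $w \in p^{-d}R^m$. So to bound $\delta_M$ it suffices to produce a concrete $d$ with this property. Take any $f = t(w) \in I_{n+1}(\Sym^r \K^2)$ that is integral, where $f$ corresponds to $w \in \K^m$ under our basis identification; concretely $f = (T-a_p)\tilde f$ for the unique $\tilde f \in I_S(\Sym^r \K^2)$ with coordinate vector $w$. Then $\tilde f$ has support in $B_n$, and $(T-a_p)\tilde f = f$ is integral, so Corollary \ref{denominatorngen} gives $p^{(n+1)N_r}\tilde f \in I(\Sym^r R^2)$. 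Since $\tilde f$ has support in $S$, this says $p^{(n+1)N_r}\tilde f \in I_S(\Sym^r R^2)$, i.e. $p^{(n+1)N_r} w \in R^m$, i.e. $w \in p^{-(n+1)N_r}R^m$. Hence $d = (n+1)N_r$ satisfies the condition in Corollary \ref{denomelemdivmap}, so $\delta_M \leq (n+1)N_r$, which is precisely the claim since $\delta_M$ is by definition the maximum of the elementary divisors of $(T-a_p)I_S(\Sym^r R^2)$ inside $I_{n+1}(\Sym^r R^2)$.

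The only genuine subtlety — and the place I would be most careful — is the bookkeeping between the two ambient lattices: Corollary \ref{denominatorngen} is phrased with $(T-a_p)f$ integral in all of $I(\Sym^r R^2)$, while here I want the elementary divisors computed inside the \emph{finite-rank} lattice $I_{n+1}(\Sym^r R^2)$. This is harmless because the image of $(T-a_p)I_S(\Sym^r R^2)$ literally lies in $I_{n+1}(\Sym^r R^2)$ (support considerations), and an element of $I_{n+1}(\Sym^r \K^2)$ is integral in $I_{n+1}(\Sym^r R^2)$ if and only if it is integral in $I(\Sym^r R^2)$; so the hypothesis of Corollary \ref{denomelemdivmap} is being checked against the correct lattice. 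There is essentially no further computation: the two ingredients (injectivity of $T-a_p$ and the denominator bound $p^{(n+1)N_r}$) do all the work, and the proposition follows formally. In fact one could even drop the $r \ge p$ hypothesis by using $N_r = 0$ for $r < p$ (Proposition \ref{denominatorsmallr}), recovering the statement that $(T-a_p)I_S(\Sym^r R^2)$ is already saturated in that range.
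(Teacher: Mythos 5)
Your proposal is correct and takes essentially the same route the paper intends: the paper simply says the proposition is deduced from Corollary~\ref{denominatorngen} without writing out the argument, and your proof is precisely that deduction, made explicit via Proposition~\ref{Tisinjective} (injectivity of $T-a_p$), the support bookkeeping identifying the relevant ambient lattice as $I_{n+1}(\Sym^r R^2)$, and Corollary~\ref{denomelemdivmap}. Your closing remark that the $r\geq p$ restriction can be dropped using Proposition~\ref{denominatorsmallr} and $N_r=0$ for $r<p$ is a nice observation consistent with how the paper states Corollary~\ref{denominatorngen} without that hypothesis.
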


\begin{rema}
From the computations, it seems that this bound is far from optimal. In
the notation of the tables of Section \ref{examples}, the bound given in
the Proposition states that $\delta \leq (n+1)\lfloor (k-3)/(p-1) \rfloor$. 
But the examples computed always give $\delta \leq 2(n+1)$. We expect
$\delta/(n+1)$ to grow with $k$, but probably more slowly than linearly. 
\end{rema}

And so, using Corollary \ref{denomelemdivmap}:

\begin{prop}
\label{denomS}
Suppose that $S \subset B_n$. Then 
$$
\cup_{d \geq 0}\left(\varpi^{-d}(T-a_p)I_{S}(\Sym^r R^2) \cap I(\Sym^r
R^2)\right) = p^{-(n+1)N_r}
(T-a_p)I_{S}(\Sym^r R^2) \cap I(\Sym^r R^2)
$$
\end{prop}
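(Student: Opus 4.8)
The plan is to derive this directly from Proposition \ref{elemdivBn} via Corollary \ref{denomelemdivmap}, applied to a suitable injective map. First I would observe that the operator $(T-a_p)$ is injective on $I(\Sym^r \K^2)$ by Proposition \ref{Tisinjective}, so in particular its restriction to the finite-rank free module $I_S(\Sym^r R^2)$ is injective; call this restriction $t$. Its image, as a submodule of $I_{n+1}(\Sym^r R^2)$, is exactly $(T-a_p)I_S(\Sym^r R^2)$, and by Proposition \ref{elemdivBn} the maximum elementary divisor $\delta_M$ of this image inside $I_{n+1}(\Sym^r R^2)$ satisfies $\delta_M \leq (n+1)N_r$. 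Here I need to check that $(T-a_p)I_S(\Sym^r R^2)$ really does land in $I_{n+1}(\Sym^r R^2)$: this is immediate from the explicit formulas for $T^+$ and $T^-$ in Paragraph \ref{descrT}, since $T$ moves support by at most one vertex, so if $S \subset B_n$ then $T(I_S) \subset I_{n+1}$, and of course $a_p I_S \subset I_{n+1}$ as well.

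Now I apply Proposition \ref{denomelemdiv} to the module $M = (T-a_p)I_S(\Sym^r R^2)$ viewed inside $R^N := I_{n+1}(\Sym^r R^2)$: it gives
$$
\cup_d\left(\varpi^{-d}M \cap R^N\right) = \varpi^{-\delta_M} M \cap R^N.
$$
Since $\delta_M \leq (n+1)N_r$ and the left-hand side is unchanged if we replace $\delta_M$ by any larger integer (as $\varpi^{-d}M \cap R^N$ is increasing in $d$ and stabilizes at $d = \delta_M$), we get
$$
\cup_d\left(\varpi^{-d}M \cap R^N\right) = \varpi^{-(n+1)N_r} M \cap R^N.
$$
The one point that still needs justification is that intersecting with $R^N = I_{n+1}(\Sym^r R^2)$ is the same as intersecting with $I(\Sym^r R^2)$ on both sides of the desired identity. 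For this, note that any element of $\varpi^{-d}M$ has support contained in $B_{n+1}$ (since $M$ does), so a vector of $\varpi^{-d}M$ lies in $I(\Sym^r R^2)$ if and only if it lies in $I_{n+1}(\Sym^r R^2)$; the same remark applies to $\varpi^{-(n+1)N_r}M$. Substituting $M = (T-a_p)I_S(\Sym^r R^2)$ then yields exactly the stated equality.

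The main obstacle is not really an obstacle but a bookkeeping check: one must be careful that the ambient free module in which the elementary divisors are computed (namely $I_{n+1}(\Sym^r R^2)$) is large enough to contain the image of $(T-a_p)$, and that replacing it by the full $I(\Sym^r R^2)$ when taking intersections does not change anything — both of which follow from the support-shifting property of $T$. Everything else is a formal application of Propositions \ref{elemdivBn}, \ref{denomelemdiv}, and Corollary \ref{denomelemdivmap}.
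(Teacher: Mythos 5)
Your proof is correct and takes essentially the same route the paper intends: the proposition is stated immediately after Proposition~\ref{elemdivBn} with the words ``using Corollary~\ref{denomelemdivmap},'' so the intended argument is precisely to invoke injectivity of $T-a_p$ (Proposition~\ref{Tisinjective}), bound the elementary divisors via Proposition~\ref{elemdivBn}, and conclude via Proposition~\ref{denomelemdiv} / Corollary~\ref{denomelemdivmap}, with the support-shifting observation reducing $I(\Sym^r R^2)$ to $I_{n+1}(\Sym^r R^2)$. One small bookkeeping point worth flagging: you write $\varpi^{-(n+1)N_r}$ and then claim it is ``exactly'' the stated $p^{-(n+1)N_r}$, but when $\K/\Q_p$ is ramified with index $e$ these differ by a factor of $e$ in the exponent; the bound coming from Proposition~\ref{denominatorngen} is that $p^{(n+1)N_r}f$ is integral, i.e.\ the $\varpi$-elementary divisors are at most $e(n+1)N_r$, which yields $p^{-(n+1)N_r}$ directly --- and in any case both $\varpi^{-(n+1)N_r}M\cap R^N$ and $p^{-(n+1)N_r}M\cap R^N$ agree with the stabilized union once the elementary divisors are bounded by either quantity.
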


\begin{coro}
\label{boundeddenominator}
Assume that $S \subset B_n$, and that we can compute
$\bar\Theta_{k,a_p}^{ss}$ by using only elements of $\M_{S}$.
Then we can compute $\bar\Theta_{k,a_p}^{ss}$ by using only elements
of $\M_{\delta,S}$ for $\delta = (n+1)N_r$.
\end{coro}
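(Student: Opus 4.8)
The plan is to notice that the hypothesis is automatically a statement about $\M_{\delta,S}$, because $\M_S=\M_{\delta,S}$ for $\delta=(n+1)N_r$; once this equality of submodules is established, the conclusion is immediate. So the whole argument reduces to proving $\M_S=\M_{\delta,S}$.

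First I would unwind the definitions. Since the Hecke operator $T$, hence $T-a_p$, is $\K$-linear, multiplication by $p^{-d}$ commutes with $T-a_p$, so writing $N=(T-a_p)I_S(\Sym^r R^2)$ one has
$$\M_{d,S}=(T-a_p)p^{-d}I_S(\Sym^r R^2)\cap I(\Sym^r R^2)=p^{-d}N\cap I(\Sym^r R^2).$$
Because $S\subset B_n$, the explicit formulas of Paragraph \ref{descrT} show that $N$ has support in $B_{n+1}$, and since $T-a_p$ is injective (Proposition \ref{Tisinjective}), $N$ is a free $R$-submodule of the finite-rank free module $I_{n+1}(\Sym^r R^2)$. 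In particular every element of $p^{-d}N$ already has support in $B_{n+1}$, so $p^{-d}N\cap I(\Sym^r R^2)=p^{-d}N\cap I_{n+1}(\Sym^r R^2)$; and since $\bigcup_d p^{-d}N=\bigcup_d\varpi^{-d}N$, taking the union over $d$ and invoking Proposition \ref{denomS} (which combines the elementary-divisor bound of Proposition \ref{elemdivBn} with Proposition \ref{denomelemdiv}) gives
$$\M_S=\bigcup_{d\ge 0}\bigl(p^{-d}N\cap I_{n+1}(\Sym^r R^2)\bigr)=p^{-(n+1)N_r}N\cap I(\Sym^r R^2)=\M_{\delta,S},\qquad\delta=(n+1)N_r.$$

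Finally, $M_S$ and $M_{\delta,S}$ are by definition the images of $\M_S$ and $\M_{\delta,S}$ under reduction modulo $\varpi$, so $\M_S=\M_{\delta,S}$ forces $M_S=M_{\delta,S}$. Hence any finite-dimensional subspace $M'\subset M_{k,a_p}$ lying in $M_S$ also lies in $M_{\delta,S}$; if such an $M'$ carries enough information to determine $\JH(\bar\Theta_{k,a_p})$ in the sense of Proposition \ref{cancompute}, it still does so viewed inside $M_{\delta,S}$, which is exactly the assertion. There is essentially no obstacle here beyond bookkeeping: the substantive input is the denominator estimate of Proposition \ref{denominatorn} (and the resulting elementary-divisor bound of Proposition \ref{elemdivBn}), and the only point needing a moment's care is the passage from the a priori infinite union over denominators $d$ to a single module, which is legitimate precisely because $S$ is finite, so that everything lives inside the finite-rank module $I_{n+1}(\Sym^r R^2)$.
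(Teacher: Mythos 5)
Your proposal is correct and takes essentially the same approach the paper intends: the corollary is the immediate consequence of Proposition \ref{denomS}, which you correctly unwind to the identity $\M_S = \M_{\delta,S}$ (using $\K$-linearity of $T-a_p$ to move $p^{-d}$ past it, the finiteness of $S$ to confine everything to $I_{n+1}(\Sym^r R^2)$, and the fact that the unions over $p^{-d}$ and $\varpi^{-d}$ coincide).
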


In particular, as explained in Section \ref{computerelgen}, this means
that we can work with $R/p^{\delta+1}R$-modules instead of $R$-modules,
and so work with finite precision, and Corollary \ref{boundeddenominator} 
gives us an explicit
bound on the precision needed in terms of $S$.
We summarize this informally as:

\begin{coro}
\label{finiteprec}
For a fixed $S$, we need only work with finite precision determined
explicitly in terms of $S$.
\end{coro}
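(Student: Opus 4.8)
The plan is to assemble the results of this section and of Section~\ref{computerelgen}; there is essentially no new content, only a synthesis. First I would fix the finite set $S$ of vertices of the tree and choose $n$ with $S\subset B_n$. By Corollary~\ref{finitedimenough} there is a finite-dimensional subspace of $M_{k,a_p}$ carrying enough information to determine $\JH(\bar\Theta_{k,a_p})$, and --- using the local description of Paragraph~\ref{explicitF} and, if one wishes, the ``computing locally'' device of Paragraph~\ref{computelocal} --- such a subspace can be taken inside $M_S$, hence obtained from the integral module $M=(T-a_p)I_S(\Sym^r R^2)$, which is the image of an injective map $t\colon R^m\to R^N$ with $R^N=I_{n+1}(\Sym^r R^2)$ (no coordinate outside $B_{n+1}$ occurs in $(T-a_p)f$ when $f$ is supported in $B_n$).

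Next I would bound the denominators. By Proposition~\ref{elemdivBn} the elementary divisors of $M$ in $I_{n+1}(\Sym^r R^2)$ are all at most $\delta:=(n+1)N_r$, a number depending only on $S$ (through $n$) and on $r=k-2$. Proposition~\ref{denomS} (that is, Proposition~\ref{denomelemdiv} applied to $M$) then gives $\cup_d\bigl(p^{-d}M\cap R^N\bigr)=p^{-\delta}M\cap R^N$, so the subspace we need already lies in the reduction of $p^{-\delta}M\cap R^N$, and in particular we may replace $\M_S$ by $\M_{\delta,S}$ throughout --- this is exactly Corollary~\ref{boundeddenominator}.

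Finally I would run the computation modulo $p^{\delta+1}$: by Proposition~\ref{computemod} and Corollary~\ref{finitering}, starting from the explicit generators $(T-a_p)[g,X^{r-i}Y^i]$, $g\in S$, the adapted Gauss algorithm of Paragraph~\ref{Gauss} --- or its local variant --- carried out entirely in $(R/p^{\delta+1}R)^N$ returns the reduction modulo $p$ of a good basis of $p^{-\delta}M\cap R^N$, hence a generating family for $\pi(\cup_d(p^{-d}M\cap R^N))$; since $\delta\ge\delta_M$ this is all of the reduction we need. The remaining steps --- the membership tests of Paragraph~\ref{explicitF} that select, for each $i$, one of the three cases of Theorem~\ref{factor}, and the concluding appeal to Proposition~\ref{cancompute} --- are linear algebra over the finite field $E$. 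Thus every arithmetic operation takes place in $R/p^{\delta+1}R$ with $\delta=(n+1)N_r$ explicit in terms of $S$.

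The only point requiring care is that the denominator bound is genuinely global: one must be sure Proposition~\ref{elemdivBn} (via Corollary~\ref{denominatorngen}) controls all of $M_S$ and not merely finitely many elements, and that passing to the local Gauss algorithm of Paragraph~\ref{computelocal} does not raise the precision. Both hold because $\delta_M$ is an invariant of $M$ itself, independent of the generating family used; everything else is bookkeeping.
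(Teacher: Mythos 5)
Your synthesis is correct and is essentially what the paper intends: Corollary \ref{finiteprec} is stated as an informal summary of Corollary \ref{boundeddenominator} (the denominator bound $\delta=(n+1)N_r$ from Proposition \ref{elemdivBn}) combined with Corollary \ref{finitering} / Proposition \ref{computemod} (computation modulo $p^{\delta+1}$ suffices), and that is exactly the chain you assemble. The only small overreach is the opening invocation of Corollary \ref{finitedimenough}: the statement is about a \emph{fixed} $S$ and only asserts that the precision needed is bounded in terms of $S$, not that this particular $S$ carries enough information, so that corollary is not actually part of the argument.
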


\subsection{An explicit algorithm for the computation}
\label{explicit}

Consider now the following problem: we fix some $n \geq 0$, and we want
to compute a good basis (or a good basis up to $\varpi^d$ for some $d$) for the module
$\M_n = (T-a_p)I_n(\Sym^r R^2)$.
The elements of $\M_n$ have support in $B_{n+1}$, so this means doing the Gauss
algorithm as in Paragraph \ref{Gauss} with a family of size $(r+1)\# B_n$
in a module of rank $(r+1)\# B_{n+1}$.  We now explain how to take
into account the structure of the tree to speed up significantly the
computation. 

We take as basis vectors of the ambient space the elements
$[g^\eps_{m,\mu},X^{r-i}Y^i]$ for $m+\eps \leq n+1$, $\mu\in I_m$, $0\leq
i \leq r$. Denote by $\B_m$ the
set of elements of the form 
$[g^\eps_{m-\eps,\mu},X^{r-i}Y^i]$ for $0 \leq i \leq r$, so $\B_m$ is a
basis of $I_{C_m}(R)$.

The module $\M_n$ is generated by the $(T-a_p)f$ for $f \in
\cup_{i=0}^n\B_i$, and these elements form in fact a basis of $\M_n$ by
Proposition \ref{Tisinjective}.

\subsubsection{Subdivising the tree}

For $m \leq n$, let $S_m$ the subtree of $B_{n+1}$ containing the element
$[g^0_{n+1-m,0}]$ and all classes $[g]$ of $B_{n+1}$ that are farther than
$[g^0_{n+1-m,0}]$ from $[g^0_{0,0}]$ (that is, the path from $[g^0_{0,0}]$ to
$[g]$ goes through $[g^0_{n+1-m,0}]$).

We denote by $\B_i^m$ the subset of elements $f$ of $\B_i$ such that 
$(T-a_p)f$ has support in $S_m$. 
This is empty if $i \leq n+1-m$, and if $i > n+1-m$ this is the same as
the set of $f \in \B_i$ with support in $S_m$.

If $x \in \Z_p$, let $t(x)$ be the matrix $\smatr 1x01$.
Then note that $S_{m+1}$ is the disjoint union of $\{[g^0_{n-m,0}]\}$ and the
$t(up^{n-m})S_m$ for $u \in I_1$.

Moreover, $B_{n+1}$ is the disjoint union of $\{[g^0_{0,0}]\}$, the 
$t(u)S_n$ for $u \in I_1$, and $w S_n$.

\subsubsection{Computing locally}

We now make use the remark of Paragraph \ref{computelocal}, and obtain
the following algorithm:

\begin{itemize}
\item
Do the partial Gauss algorithm for the family of vectors $(T-a_p)f$ for $f
\in \B^2_n$,
excluding the coordinates $[g^0_{s,\mu},X^{r-i}Y^i]$
for $s = n-1$ or $s = n$. We get a list $\V_{2,0}$ of extracted vectors
and a list of remaining vectors $\V_2'$.

\item
For $m$ from $3$ to $n$: do the partial Gauss algorithm for vectors in
the list 
$$\cup_{u\in I_1}t(up^{n-m+1})\V_{m-1}' \cup \{(T-a_p)f, f \in \B^m_{n+2-m}\}$$ 
excluding the coordinates $[g^0_{s,\mu},X^{r-i}Y^i]$
for $s = n+1-m$ or $s = n+2-m$. 
We get a list $\V_{m,0}$ of extracted vectors, and a list of
remaining vectors $\V_m'$.

\item
Do the Gauss algorithm for the list 
$$\cup_{u\in I_1}t(u)\V_n' \cup
w\V_n'\cup \{(T-a_p)f, f \in \B_1 \cup \B_0\}$$ 
We get a list $\V_{n+1,0}$ of extracted vectors.
\end{itemize}

In the end, we get a list $\V_0$ of extracted vectors which is equal to 
$\V_{n+1,0} \cup (\cup_{m = 2}^n\cup_{\mu\in I_{n+1-m}} t(\mu)\V_{m,0})
\cup w(\cup_{m = 2}^n\cup_{\mu\in I_{n+1-m}} t(\mu)\V_{m,0}) $.

\begin{rema}
The advantage of this method is that we do most of our computations in
modules that are of rank much smaller than the rank of
$I_{n+1}(\Sym^r R^2)$,
as $I_{S_m}(\Sym^r R^2)$ is of rank $(r+1)(p^{m+1}-1)/(p-1)$. The final
computation in $I_{n+1}(\Sym^r R^2)$ is still the longest part, but at this stage the
number of vectors that we have to take into account is much smaller that
the rank of $I_n(\Sym^r R^2)$, which is the number we would have to consider
otherwise.
\end{rema}

\section{Summary of the algorithm}

\subsection{The algorithm}
\label{summary}

We summarize the algorithm to compute $\bar{V}_{k,a_p}$. Let $\K =
\Q_p(a_p)$ with ring of integers $R$ and uniformizer $\varpi$. Let $e$ denote
the ramification index of $\K$, so that computing modulo $p^d$ in $R$ is
the same as computing modulo $\varpi^{ed}$.

\begin{enumerate}
\item
For some $n \geq 0$ and $d \geq 0$,
compute the reduction modulo $p^{d+1}$ of a good basis up to 
$p^d$ for the submodule $(T-a_p)I_n(\Sym^r R^2)$ of $I_{n+1}(\Sym^r R^2)$, 
by the method of Paragraph \ref{explicit}.

\item
Deduce from this a generating family for 
the subspace $M_{d,B_n}$ of $M_{k,a_p}$, via
Proposition \ref{reducdenom}.

\item
Using the elements given in Paragraph \ref{explicitbasis} and the method
of Paragraph \ref{explicitF}, get
information on the graded parts $F_i$ of the filtration of
$I(\sigma_{r,E})/M_{k,a_p}$ described in Section \ref{computefromrel}.

\item
If we have enough information on the $F_i$ to deduce
$\JH(\bar\Theta_{k,a_p})$, stop here. 
If we do not have enough information,
try again with a larger value of $n$ or of $d$.
\end{enumerate}

The algorithm will give enough information to deduce 
$\JH(\bar\Theta_{k,a_p})$ for some finite value of $n$.

\begin{rema}
By Corollary \ref{dsmallerdeltaM}, it is easy to check during step (1) of
the algorithm if trying the computation with the same $n$ but a larger
value of $d$ would add any new information.
\end{rema}

\begin{rema}
As a byproduct of the computation, we can take note of the largest of the
elementary divisors that appeared and deduce a local constancy result via
Corollary \ref{locconstapbis} and Corollary \ref{locconstrbis}.
\end{rema}

\subsection{Remarks on the choice of $d$ and $n$}

\subsubsection{Choice of $d$}

The first problem is how to best choose the value of $d$ for a given $n$.
We would like to take $d$ large enough that we get reduction of all
the elements in $\cup_{t \geq 0}\left(\varpi^{-t}(T-a_p)I_{B_n}(\Sym^r R^2) \cap
I(\Sym^r R^2)\right)$ (so that we do not have to try again with the same
value of $n$ but a larger $d$).  By Proposition \ref{elemdivBn}, we could
take $d = (n+1)N_r$ to ensure that this property is verified.  However,
this choice is probably not optimal: indeed we do not want to take $d$
too large as the computation time would be much longer. In fact, from the
examples, the largest elementary divisor that appears 
for $(T-a_p)I_{B_n}(\Sym^r R^2)$ inside $I(\Sym^r R^2)$
seems to be much
smaller than $(n+1)N_r$.  So it is probably better to choose $d$
based on an estimation of the size of the largest elementary divisor, for
example using the value obtained from a computation with a smaller value
of $n$.  

\subsubsection{Choice of $n$}

From a theoretical point of view, it would be very interesting to have a
bound for the value of $n$ necessary for this computation.
However, from a computational point of
view, having such a bound is quite useless unless we know in fact exactly
the smallest necessary value of $n$. Indeed, let $d_n$ be the rank
of the $R$-module $I_n(\Sym^r R^2)$. Then, for fixed $r$, $d_n$ grows as
$p^n$. The computation time grows at least as fast as $d_n^{\alpha}$ for
some $\alpha \geq 1$, so it grows at least as fast as $x^n$ for some $x >
2$. So in particular, if $n_0$ is the smallest value that allows us to
compute the reduction, then doing the computation for some $n > n_0$ is
expected to take actually more time than doing the computation
successively for $n = 1,2 \dots n_0$.

\section{Local constancy results}
\label{locconst}

\subsection{A local constancy result with respect to $a_p$}
\label{locconstapsect}

We know that the value of $\bar{V}^{ss}_{k,a_p}$ is locally constant with
respect to $a_p$, for a fixed $k$, and we have explicit radii:
see \cite{Ber12} for the case $a_p \neq 0$, and \cite{BLZ} for
$a_p = 0$. We show that the results of the computation of the algorithm
also give an explicit radius around a given $a_p$ for a fixed $k$.

We say that we are able to compute $\bar\Theta_{k,a_p}^{ss}$ by using
elements of $M' \subset M_{k,a_p}$ if there is a unique $\Sigma \in \jh$ such
that $\Sigma \subset \JH(I(\sigma_{r,E})/E[G]M')$. In this case
we have that $\Sigma =
\JH(\bar\Theta_{k,a_p}) = \JH(\bar\Pi_{k,a_p})$.

\begin{theo}
\label{locconstap}
Suppose that we are able to compute $\bar\Theta_{k,a_p}^{ss}$ by using only
elements of $\M_{d,S}$ for some $d$ and $S$ as above. Then for all $a$
such that $v(a -a_p) > d$, we have that $\bar{V}_{k,a_p}^{ss} \simeq
\bar{V}_{k,a}^{ss}$, and $\JH(\bar\Pi_{k,a}) = \Sigma$ for the unique
$\Sigma \in \jh$ such that $\Sigma \subset
\JH(I(\sigma_{r,E})/E[G]M_{d,S}(a_p))$.
\end{theo}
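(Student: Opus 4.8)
The plan is to exploit the fact that the module $\M_{d,S}$ is defined by finitely many linear equations over $R$ whose coefficients depend on $a_p$ only through the Hecke operator $T - a_p$, and that $T - a_p$ depends on $a_p$ \emph{polynomially} (in fact affinely). So first I would make precise the notation $M_{d,S}(a_p)$: it is the image in $I(\sigma_{r,E})$ of $(T-a_p)p^{-d}I_S(\Sym^r R^2) \cap I(\Sym^r R^2)$, and the claim is that if $v(a - a_p) > d$ then $M_{d,S}(a) = M_{d,S}(a_p)$ as subspaces of $I(\sigma_{r,E}) = I(\Sym^r E^2)$.

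The key computation is the following. Take $f \in p^{-d}I_S(\Sym^r R^2)$, so $f = p^{-d}f_0$ with $f_0 \in I_S(\Sym^r R^2)$, and suppose $(T - a_p)f \in I(\Sym^r R^2)$, i.e.\ $(T - a_p)f$ is integral. Then
$$(T - a)f = (T - a_p)f + (a_p - a)f = (T-a_p)f + (a_p - a)p^{-d}f_0.$$
Since $v(a_p - a) > d$, the term $(a_p - a)p^{-d}f_0$ lies in $I(\Sym^r R^2)$ (indeed in $pI(\Sym^r R^2)$), so $(T-a)f$ is also integral; moreover its reduction modulo $p$ equals that of $(T-a_p)f$ because the correction term reduces to $0$. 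This shows $M_{d,S}(a_p) \subseteq M_{d,S}(a)$, and the argument is symmetric in $a$ and $a_p$, so the two subspaces of $I(\Sym^r E^2)$ coincide. Hence $I(\sigma_{r,E})/E[G]M_{d,S}(a)$ and $I(\sigma_{r,E})/E[G]M_{d,S}(a_p)$ are the same representation of $G$, so they have the same Jordan--Hölder factors, and the hypothesis gives a unique $\Sigma \in \jh$ with $\Sigma \subset \JH(I(\sigma_{r,E})/E[G]M_{d,S}(a_p))$.

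It remains to package this via the results already established. Since $M_{d,S}(a) \subset M_{k,a}$, we have a surjection $I(\sigma_{r,E})/E[G]M_{d,S}(a) \to \bar\Theta_{k,a}$; by Proposition \ref{fromjh} (with $\Pi' = I(\sigma_{r,E})/E[G]M_{d,S}(a)$ and using that $\Sigma$ is the unique element of $\jh$ inside its Jordan--Hölder set), we conclude $\JH(\bar\Theta_{k,a}) = \Sigma$. By the corollary to Proposition \ref{descrcorresp0}, $\bar\Pi_{k,a} = \bar\Theta_{k,a}^{ss}$, so $\JH(\bar\Pi_{k,a}) = \Sigma$; and by Proposition \ref{uniquejh} this set determines $\bar\Pi_{k,a}$ and hence, via the compatibility of the $p$-adic and mod $p$ Langlands correspondences with reduction (Section \ref{correspondence}), determines $\bar V_{k,a}^{ss}$. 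Since $\JH(\bar\Pi_{k,a_p}) = \Sigma$ as well, we get $\bar V_{k,a}^{ss} \simeq \bar V_{k,a_p}^{ss}$.

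I do not expect any serious obstacle here; the only point requiring a little care is checking that one is comparing submodules \emph{of the same fixed ambient space} $I(\Sym^r E^2)$, independent of $a_p$ — which is fine because the ambient lattice $I(\Sym^r R^2)$ and its reduction do not involve $a_p$ — and making sure the reduction step is valid even when $\K$ must be enlarged to contain both $a_p$ and $a$ (one simply works over a common $\K$, noting that $\delta_M$ and the relevant reductions are insensitive to unramified, and compatible with ramified, base change of the coefficient ring). This is exactly the "first part being much easier" remark from the introduction.
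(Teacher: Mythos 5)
Your proof is correct and follows essentially the same route as the paper's: the paper also reduces the statement to the equality $M_{d,S}(a)=M_{d,S}(a_p)$ (which it asserts without the detailed computation you supply) and then invokes Proposition \ref{fromjh}. One tiny slip: from $v(a_p-a)>d$ you get $(a_p-a)p^{-d}f_0\in\varpi I(\Sym^r R^2)$ rather than $pI(\Sym^r R^2)$ when $\K/\Q_p$ is ramified, but the needed conclusion — that the correction term reduces to $0$ in $I(\Sym^r E^2)$ — is unaffected.
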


\begin{proof}
Let $\Sigma$ be the unique element of $\jh$ such that
$\Sigma \subset \JH(I(\sigma_{r,E})/E[G]M_{d,S}(a_p))$.
If $v(a -a_p) > d$ then $M_{d,S}(a) = M_{d,S}(a_p)$, so $\Sigma$ is also
the unique element of $\jh$ such that 
$\Sigma \subset \JH(I(\sigma_{r,E})/E[G]M_{d,S}(a))$.
Then $\Sigma = \JH(\bar\Theta_{k,a})$ so 
$\bar\Theta_{k,a}^{ss}$ is constant on the set $\{v(a_p-a) > d \}$. 
So finally $\bar\Pi_{k,a}$ is constant on the set 
$\{v(a_p-a) > d\}$, and its list of Jordan-Hoelder factors is given by
$\Sigma$.
\end{proof}

\begin{coro}
\label{locconstapbis}
Suppose that we are able to compute $\bar\Theta_{k,a_p}^{ss}$ by using
only elements of $\M_{d,S}$ for some $d$ and $S$. Let $\delta_S$ be the maximal
valuation of the elementary divisors of $(T-a_p)I_S(\Sym^r R^2) \subset I(\Sym^r R^2)$. 
Then for all $a$ such
that $v(a - a_p) > \delta_S$, we have that 
$\bar{V}_{k,a_p}^{ss} \simeq \bar{V}_{k,a}^{ss}$.
\end{coro}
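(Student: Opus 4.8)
The plan is to run the (easy) argument of Theorem \ref{locconstap} with the best possible denominator bound, namely the one read off from the elementary divisors. Put $\M_S=\cup_d\M_{d,S}$. Since $T-a_p$ is injective on $I(\Sym^r\K^2)$ (Proposition \ref{Tisinjective}), $(T-a_p)I_S(\Sym^r R^2)$ is a free $R$-module of finite rank, and, as its elements are supported in $B_{n+1}$ when $S\subseteq B_n$, its elementary divisors can be computed inside the finite-rank module $I_{n+1}(\Sym^r R^2)$. Proposition \ref{denomelemdiv} then gives $\M_S=\varpi^{-\delta_M}(T-a_p)I_S(\Sym^r R^2)\cap I(\Sym^r R^2)$, where $\delta_M$ is the largest elementary divisor; in other words $\M_S$ consists exactly of the integral vectors of the form $(T-a_p)h$ with the coefficients of $h$ of valuation at least $-\delta_S$.

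I would first observe that if $\bar\Theta_{k,a_p}^{ss}$ can be computed from a subspace $M'\subseteq M_{k,a_p}$, then it can also be computed from any larger $M''\subseteq M_{k,a_p}$. Indeed $I(\sigma_{r,E})/E[G]M''$ is a quotient of $I(\sigma_{r,E})/E[G]M'$ that still surjects onto $\bar\Theta_{k,a_p}$, so $\JH(\bar\Theta_{k,a_p})\subseteq\JH(I(\sigma_{r,E})/E[G]M'')\subseteq\JH(I(\sigma_{r,E})/E[G]M')$; since $\JH(\bar\Theta_{k,a_p})=\JH(\bar\Pi_{k,a_p})$ lies in $\jh$ and $\JH(I(\sigma_{r,E})/E[G]M')$ contains a unique element of $\jh$ by hypothesis, that element is $\JH(\bar\Theta_{k,a_p})$, and it is still the unique element of $\jh$ contained in $\JH(I(\sigma_{r,E})/E[G]M'')$. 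Applying this with $M'=M_{d,S}$ and $M''=M_S$, we may assume that $\bar\Theta_{k,a_p}^{ss}$ is computed from $\M_S$.

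Now I would repeat the proof of Theorem \ref{locconstap} with $\M_S$ in the role of $\M_{d,S}$. Take $a$ with $v(a-a_p)>\delta_S$. Writing an element of $\M_S$ as $(T-a_p)h$ with the coefficients of $h$ of valuation at least $-\delta_S$, the vector $(a-a_p)h$ is integral and reduces to $0$ modulo $\varpi$; hence $(T-a)h$ is integral exactly when $(T-a_p)h$ is, and then the two have the same reduction. This yields the equality of subspaces $M_S(a)=M_S(a_p)$ inside $I(\sigma_{r,E})$. Since $M_S(a)\subseteq M_{k,a}$, we can therefore compute $\bar\Theta_{k,a}^{ss}$ from it and obtain the same unique $\Sigma\in\jh$, so $\JH(\bar\Theta_{k,a})=\Sigma=\JH(\bar\Theta_{k,a_p})$; hence $\bar\Pi_{k,a}\simeq\bar\Pi_{k,a_p}$ and, by the compatibility of the $p$-adic and mod $p$ Langlands correspondences (\cite{Ber10}), $\bar V_{k,a_p}^{ss}\simeq\bar V_{k,a}^{ss}$. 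The one point requiring care — and the reason for re-running the argument rather than quoting Theorem \ref{locconstap} directly — is that $\delta_S$ need not be an integer when $\K/\Q_p$ is ramified, so one works with $\M_S$ itself and keeps track of the normalization $v(\varpi)=1/e$; everything else is routine.
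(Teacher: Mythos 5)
Your proof is correct and follows essentially the same route as the paper's: apply Proposition~\ref{denomelemdiv} to replace the arbitrary~$d$ by the optimal one coming from the elementary divisors, and then run the argument of Theorem~\ref{locconstap}. The paper's own proof is a single sentence — ``it follows from Proposition~\ref{denomelemdiv} that we can in fact compute using only elements of $\M_{\delta_S,S}$'' — leaving the reader to invoke Theorem~\ref{locconstap} with $d=\delta_S$. You spell out two points that the paper leaves implicit: the monotonicity observation that enlarging $M'$ inside $M_{k,a_p}$ can only help (which is genuinely needed to pass from $\M_{d,S}$ to $\M_S$), and the normalization subtlety that $\delta_S$, being the $v$-valuation of a power of $\varpi$ with $v(p)=1$, need not be an integer when $\K/\Q_p$ is ramified, so that $\M_{\delta_S,S}$ as literally defined (with $p^{-d}$, $d\in\Z$) does not quite parse. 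Re-running the inequality $v(a-a_p)>\delta_S \Rightarrow (a-a_p)h$ has positive valuation, with $h$ having coefficients of valuation $\geq -\delta_S$, is exactly what makes the statement correct as phrased, and you do this cleanly. So the content is the same; you have just filled in the two small gaps the paper elides.
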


\begin{proof}
It follows from Proposition \ref{denomelemdiv} that we can in fact compute
$\bar\Theta_{k,a_p}^{ss}$ by using only elements of $\M_{\delta_S,S}$.
\end{proof}

When we apply the Gauss algorithm as in Paragraph \ref{explicit}, the
value of $\delta_S$ appears automatically as a byproduct. So even if we started
by a non-optimal $d$, we still get a bound for local constancy that is
closer to the optimal one. In particular, as we see in the examples of
Section \ref{examples}, we often obtain a better bound than the one given
by \cite[Theorem A]{Ber12} or \cite[Theorem 1.1.1]{BLZ}.

\subsection{A local constancy result with respect to the weight}
\label{locconstrsect}

\begin{theo}
\label{locconstr}
Let $\delta \geq 0$ be such that we can compute $\bar\Theta_{k,a_p}^{ss}$
using the algorithm of Section \ref{computefromrel} and the standard filtration
by using as denominators only elements of valuation at most $\delta$. Let
$c > v_p(a_p) + \delta$ be an integer.

Suppose that $c \leq (k-2)/(p+1)$. Then for all $k'\geq
k$ such that $k' \equiv k \mod (p-1)p^{1 + \lfloor\delta\rfloor + \lfloor
\log_p(c)\rfloor}$, we have
$\bar{V}_{k',a_p}^{ss} \simeq \bar{V}_{k,a_p}^{ss}$.
\end{theo}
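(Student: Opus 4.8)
The plan is to compare the computations underlying $\bar\Theta_{k,a_p}$ and $\bar\Theta_{k',a_p}$ step by step, showing that under the congruence hypothesis the relevant finite-dimensional linear-algebra data agree, so that the same set $\Sigma \in \jh$ is singled out in both cases. The key point is that everything in the algorithm of Section \ref{computefromrel} — the generating elements $v_i$, $w_{i,1}$, $w_{i,2}$ of the standard filtration, the explicit formulas for $T$ in Paragraph \ref{descrT}, and the subspace $M_{k,a_p}$ obtained from denominators of valuation at most $\delta$ — depends on $r=k-2$ only through the binomial coefficients $\binom{i}{j}$, $\binom{r}{j}$ and through the power $p^{r-i}$ appearing in $T^-$. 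The congruence $k'\equiv k$ modulo a large power of $p$ forces these quantities to agree modulo the precision $p^{\delta+1}$ we are working with, provided we stay in the part of the tree (and the part of the filtration) that is actually used, which is controlled by the conditions $c \le (k-2)/(p+1)$ and $c > v_p(a_p)+\delta$.

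First I would fix, for the computation at weight $k$, a ball $B_n$ and a subspace $M' = M_{\delta,B_n}(a_p)$ of $M_{k,a_p}$ sufficient to determine $\Sigma$, with all denominators of valuation at most $\delta$. By Corollary \ref{filtrzero} and the hypothesis $c > v_p(a_p)+\delta$, only the steps $F_i$ of the standard filtration with $i \le 2\lfloor\delta\rfloor+1$ can be nonzero, and by Corollary \ref{filtrzero} again (applied with $d = c$) together with $c \le (k-2)/(p+1)$, these steps all live in the ``generic'' range $i < m$ where the generating elements $e_i = \eta_{i,p-1}$ or $\eta_{i,p}$, the representations $J_i \cong \sigma_{a_i}(b_i)$, and the basis vectors $\eta_{i,v}$ are given by uniform formulas in which $r$ enters only through $[r-2i]$ and through multiplication by $\theta$. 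The congruence $k'\equiv k \bmod (p-1)p^{1+\lfloor\delta\rfloor+\lfloor\log_p c\rfloor}$ ensures $[r-2i]=[r'-2i]$ for all relevant $i$ (the factor $p-1$), and — this is where the $\lfloor\log_p c\rfloor$ and $\lfloor\delta\rfloor$ enter — that the binomial coefficients $\binom{i}{j}$ for $i$ up to roughly $c(p+1)$, as well as $p^{r-i}$ versus $p^{r'-i}$, agree modulo $p^{\delta+1}$ on the portion of the tree relevant to the computation.

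The main technical step is then the following: show that the map $t_k\colon (T-a_p)$ restricted to $I_{B_n}(\Sym^r R^2)$, read in the explicit bases $\B_m$, has the ``same'' matrix modulo $p^{\delta+1}$ as $t_{k'}$ in the corresponding bases, once we identify $\Sym^r R^2$ with $E[X,Y]_r$ and match $X^{r-i}Y^i \leftrightarrow X^{r'-i}Y^i$ in the low-degree range and $\theta^i(\cdots)$-type vectors in the rest. Concretely one invokes the formulas of Paragraph \ref{descrT} for $T^{\pm}(g^\eps_{n,\lambda},i)$: the coefficients are $p^j\binom{i}{j}(-u)^{i-j}$ and $p^{r-i}\binom{i}{j}(\lambda')^{i-j}$, and a Lucas/Kummer-type estimate shows these are congruent modulo $p^{\delta+1}$ for $k$ and $k'$ on the needed range of $(i,j,n)$, using that we only ever need $i,j$ bounded in terms of $c$ (powers of $\theta$ push higher-degree contributions into $M$ by Lemma \ref{Ttheta} / Corollary \ref{filtrzero}). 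Granting this, a good basis up to $p^\delta$ of $(T-a_p)I_{B_n}(\Sym^r R^2)$ computed by the Gauss algorithm of Paragraph \ref{explicit} reduces, modulo $p^{\delta+1}$, to the same data for $k'$; hence $M_{\delta,B_n}(a_p)$ and its analogue for $k'$ generate, modulo the filtration, the same subspaces, the same answers to the three questions of Paragraph \ref{explicitF} are obtained, and the same $F_i$-information is produced. By Proposition \ref{cancompute} this determines the same $\Sigma \in \jh$, and by Proposition \ref{uniquejh} and the compatibility with reduction recalled in Paragraph \ref{correspondence}, $\bar V_{k',a_p}^{ss} \simeq \bar V_{k,a_p}^{ss}$.

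The hard part will be pinning down exactly which binomial coefficients and powers of $p$ are needed, and checking that the $p$-adic precision $\delta+1$ is preserved by all of them simultaneously — i.e.\ justifying the precise exponent $1+\lfloor\delta\rfloor+\lfloor\log_p c\rfloor$. The $\lfloor\log_p c\rfloor$ term should come from bounding the $p$-adic valuation of denominators in expressions like $\binom{c(p+1)}{j}$ or from the depth to which $\theta$-divisibility must be iterated (Corollary \ref{filtrzero} is applied at level $d=c$, and $c$ itself is at most $p^{\lfloor\log_p c\rfloor+1}$); the $\lfloor\delta\rfloor$ term accounts for the $j \le \delta$-fold factors of $p$ already present in $T^+$, and the $p-1$ factor handles the characters $[r-2i]$. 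Assembling these bounds carefully, and verifying that no step of the algorithm secretly uses information outside the controlled range, is the crux; everything else is a routine transport-of-structure argument along the identification $E[X,Y]_r \rightsquigarrow E[X,Y]_{r'}$.
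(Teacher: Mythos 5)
Your general strategy — compare the computations at weights $k$ and $k'$ via congruences of binomial coefficients and powers of $p$, using the standard filtration and the $\theta$-block identification of the low-$\theta$-degree parts of $\Sym^{k-2}$ and $\Sym^{k'-2}$ — is indeed the strategy the paper follows, and your reading of where the three terms $1$, $\lfloor\delta\rfloor$, $\lfloor\log_p c\rfloor$ in the exponent come from is essentially correct. But the crux of the argument is a specific transport step that your outline does not contain, and the point at which you hand-wave (``a good basis up to $p^\delta$ $\ldots$ reduces, modulo $p^{\delta+1}$, to the same data for $k'$'') is precisely where the proof has to do real work.

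The difficulty is that $(T-a_p)$ for $k$ and $(T-a_p)$ for $k'$ act on genuinely different spaces, so ``the matrices agree $\bmod\ p^{\delta+1}$, hence the Gauss algorithm yields the same good bases'' is not a statement one can make directly: the identification $\psi_{r,r',c}$ only exists between the quotients $M_{r,c}$ and $M_{r',c}$ by the $\theta^c$-divisible parts, and $T$ does not preserve these quotients. What the paper proves (Proposition~\ref{transfoT}) is the weaker, projected statement that $\pi_{r',c}(T-a_p)\psi_{r,r',c}\pi_{r,c}$ and $\psi_{r,r',c}\pi_{r,c}(T-a_p)$ agree $\bmod\ p^d$. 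But then it must deal with the part of $(T-a_p)\tilde\phi_i$ that falls into $p^{-\delta}I(M^c_{r'})$: it does so by constructing a correction $y_i = -(p^{-\delta}/a_p)[g,\theta^c f]$ so that $\phi_i' = \psi_{r,r',c}\pi_{r,c}(\phi_i) + y_i$ has $(T-a_p)\phi_i'$ integral with the right reduction, using Lemma~\ref{Ttheta} to control $Ty_i$. This correction term is where the hypothesis $c > v_p(a_p)+\delta$ enters \emph{essentially}, and it is the missing ingredient in your plan: without it, there is no reason the integrality of $(T-a_p)\phi_i$ at weight $k$ implies the integrality of anything at weight $k'$. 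Your remark that ``powers of $\theta$ push higher-degree contributions into $M$'' is the right intuition, but it does not by itself say how to modify $\phi_i$ to make the transported relation land in $I(\Sym^{r'}R^2)$. Relatedly, the paper argues element-by-element on the $\phi_i$'s, not via a comparison of good bases of the whole modules; the latter would require relating the full elementary-divisor structures of two modules living inside different ambient lattices, which is a much harder statement than is actually needed.

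One smaller point: Corollary~\ref{filtrzero} bounds the nonzero steps of the filtration by $i \le 2\lfloor v_p(a_p)\rfloor+1$, not $i \le 2\lfloor\delta\rfloor+1$; you should use the former (the hypothesis $c > v_p(a_p)+\delta$ then ensures all relevant $i$ satisfy $i < 2c$, so the $\psi_{r,r',c}$ / $\pi_{r,c}$ machinery is applicable on the whole useful range).
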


As in Paragraph \ref{locconstapsect}, we deduce from this Theorem the following
Corollary:

\begin{coro}
\label{locconstrbis}
Suppose that we are able to compute $\bar\Theta_{k,a_p}^{ss}$ using the
algorithm of Section \ref{computefromrel} and the standard filtration by
using only elements of $\M_{d,S}$ for some $d$ and $S$. Let $\delta_S$ be
the maximal valuation of the elementary divisors of $(T-a_p)I_S(\Sym^r
R^2) \subset I(\Sym^r R^2)$. 
Let $c > v_p(a_p) + \delta_S$ be an integer.

Suppose that $c \leq (k-2)/(p+1)$. Then for all $k'\geq
k$ such that $k' \equiv k \mod (p-1)p^{1 + \lfloor\delta_S\rfloor + \lfloor
\log_p(c)\rfloor}$, we have
$\bar{V}_{k',a_p}^{ss} \simeq \bar{V}_{k,a_p}^{ss}$.
\end{coro}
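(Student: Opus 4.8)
The plan is to deduce Corollary \ref{locconstrbis} from Theorem \ref{locconstr} in exactly the way Corollary \ref{locconstapbis} was deduced from Theorem \ref{locconstap}. The only thing to verify is that the hypothesis of Theorem \ref{locconstr}, namely that $\bar\Theta_{k,a_p}^{ss}$ can be computed by the algorithm of Section \ref{computefromrel} with the standard filtration using as denominators only elements of valuation at most $\delta$, holds with $\delta = \delta_S$.

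First I would note that, since $S \subset B_n$ for some $n$, the module $M = (T-a_p)I_S(\Sym^r R^2)$ is a free submodule of the finite-rank free module $I_{n+1}(\Sym^r R^2)$ (it is free because $(T-a_p)$ is injective by Proposition \ref{Tisinjective}), and its largest elementary divisor inside $I_{n+1}(\Sym^r R^2)$ is $\delta_S$. Proposition \ref{denomelemdiv} then gives $\cup_{t\ge 0}\left(\varpi^{-t}M \cap I_{n+1}(\Sym^r R^2)\right) = \varpi^{-\delta_S}M \cap I_{n+1}(\Sym^r R^2)$, that is $\M_S = \M_{\delta_S,S}$. So every element of $\M_S$ is obtained by applying $(T-a_p)$ to an element of $p^{-\delta_S}I_S(\Sym^r R^2)$; in other words, the subspace $M_S \subset M_{k,a_p}$ obtained from $\M_S$ by reduction uses only denominators of valuation at most $\delta_S$.

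Next I would check that the computability hypothesis passes from $\M_{d,S}$ to the larger module $\M_S = \cup_{d'}\M_{d',S} \supseteq \M_{d,S}$. Enlarging a set of relations $M'$ can only shrink the quotient $I(\sigma_{r,E})/E[G]M'$, hence only shrink its set of Jordan-Hoelder factors, while $\JH(\bar\Theta_{k,a_p})$ is always an element of $\jh$ contained in it; so if there is a unique $\Sigma \in \jh$ with $\Sigma \subset \JH(I(\sigma_{r,E})/E[G]M_{d,S})$, the same $\Sigma$ is the unique element of $\jh$ inside $\JH(I(\sigma_{r,E})/E[G]M_S)$ (this uses Propositions \ref{uniquejh} and \ref{fromjh}). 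Hence $\bar\Theta_{k,a_p}^{ss}$ can be computed using only elements of $\M_S = \M_{\delta_S,S}$, i.e. using denominators of valuation at most $\delta_S$.

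Finally I would apply Theorem \ref{locconstr} with $\delta = \delta_S$: for an integer $c > v_p(a_p) + \delta_S$ with $c \le (k-2)/(p+1)$, and for every $k' \ge k$ with $k' \equiv k \bmod (p-1)p^{1+\lfloor\delta_S\rfloor+\lfloor\log_p(c)\rfloor}$, one gets $\bar V_{k',a_p}^{ss} \simeq \bar V_{k,a_p}^{ss}$, which is the claim. I do not expect any genuine obstacle here: all the substance is in Theorem \ref{locconstr}, and the only point requiring a little care is the bookkeeping identification of "elements of $\M_{d,S}$" with "denominators of valuation at most $\delta_S$", which is precisely what the elementary-divisor statement of Proposition \ref{denomelemdiv} provides.
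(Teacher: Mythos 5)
Your proposal is correct and follows essentially the same route the paper takes: reduce to Theorem \ref{locconstr} with $\delta = \delta_S$ by invoking Proposition \ref{denomelemdiv} to identify $\M_S = \cup_d\M_{d,S}$ with $\M_{\delta_S,S}$, exactly as the paper does for the analogous Corollary \ref{locconstapbis}. Your extra paragraph checking that enlarging the relation set from $\M_{d,S}$ to $\M_{\delta_S,S}$ preserves the ``unique $\Sigma \in \jh$'' condition is a worthwhile detail the paper leaves implicit.
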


\begin{rema}
As we see in Section \ref{examples}, the condition that 
$c \leq (k-2)/(p+1)$ seems to be often but not always satisfied.
\end{rema}

We need a few preliminaries before we can prove Theorem \ref{locconstr}.

\subsubsection{Combinatorial lemmas}

\begin{lemm}
\label{binomial}
Let $i \in \Z_{\geq 0}$. Then for all $x$, $y \in
\Z_p$, we have $v_p(\binom{x}{i}-\binom{y}{i}) \geq \max(0,v_p(x-y) -
\lfloor\log_p(i)\rfloor)$.
\end{lemm}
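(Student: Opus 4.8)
The plan is to reduce everything to Vandermonde's identity together with the elementary factorization $\binom{z}{j} = \frac{z}{j}\binom{z-1}{j-1}$. The bound of $v_p(\binom{x}{i}-\binom{y}{i})$ by $0$ is immediate: the polynomial $\binom{X}{i}$ takes values in $\Z_p$ on all of $\Z_p$ (it takes integer values on $\Z$ and is continuous), so $\binom{x}{i}-\binom{y}{i}\in\Z_p$. Only the bound by $v_p(x-y)-\lfloor\log_p(i)\rfloor$ requires an argument, and we may assume $i\geq 1$, the case $i=0$ being trivial since both terms of the difference equal $1$.

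First I would apply Vandermonde's identity, as an identity of polynomials in two variables evaluated at $x$ and $y$: $\binom{x}{i}=\sum_{j=0}^{i}\binom{y}{i-j}\binom{x-y}{j}$. Subtracting the $j=0$ term gives $\binom{x}{i}-\binom{y}{i}=\sum_{j=1}^{i}\binom{y}{i-j}\binom{x-y}{j}$. Since $y\in\Z_p$, each $\binom{y}{i-j}$ lies in $\Z_p$, so it suffices to bound $v_p(\binom{x-y}{j})$ from below for $1\leq j\leq i$.

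For this, write $z=x-y$ and use $\binom{z}{j}=\frac{z}{j}\binom{z-1}{j-1}$; as $z-1\in\Z_p$, the factor $\binom{z-1}{j-1}$ lies in $\Z_p$, whence $v_p(\binom{z}{j})\geq v_p(z)-v_p(j)$. Since $v_p(j)\leq\lfloor\log_p(j)\rfloor\leq\lfloor\log_p(i)\rfloor$ for $1\leq j\leq i$, every term of the sum has valuation at least $v_p(x-y)-\lfloor\log_p(i)\rfloor$, and the ultrametric inequality gives the same lower bound for the sum. Combining with the bound by $0$ yields the claimed maximum.

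The only mild subtlety — and the point at which a naive attempt fails — is that bounding the denominator of $\binom{X}{i}$ crudely by $v_p(i!)$ would lose $v_p(i!)\approx i/(p-1)$, which is far too large; routing through Vandermonde and the identity $\binom{z}{j}=\frac{z}{j}\binom{z-1}{j-1}$ is precisely what replaces this loss by the much smaller $v_p(j)\leq\lfloor\log_p(i)\rfloor$.
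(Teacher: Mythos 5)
Your proof is correct and follows essentially the same route as the paper: Vandermonde's identity $\binom{x}{i}=\sum_{j=0}^{i}\binom{y}{i-j}\binom{x-y}{j}$, followed by the factorization $\binom{x-y}{j}=\frac{x-y}{j}\binom{x-y-1}{j-1}$ and the bound $v_p(j)\leq\lfloor\log_p(i)\rfloor$. The only addition is that you spell out the $\max(\cdot,0)$ and $i=0$ cases, which the paper leaves implicit.
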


\begin{proof}
We write $\binom{x}{i} = \binom{y}{i} +
\sum_{j=1}^i\binom{y}{i-j}\binom{x-y}{j}$. Then $\binom{y}{i-j}\in \Z_p$
whereas $\binom{x-y}{j} = \frac{x-y}{j}\binom{x-y-1}{j-1}$ with
$\binom{x-y-1}{j-1}\in \Z_p$ and $v_p(j) \leq \lfloor \log_p(i) \rfloor$,
so each term of the sum $\sum_{j=1}^i\binom{y}{i-j}\binom{x-y}{j}$ has
valuation as least $v_p(x-y)-\lfloor \log_p(i) \rfloor$.
\end{proof}

\begin{lemm}
\label{sumbinom}
Let $d,s,t,n,n',\ell$ be non-negative integers with
$n' \geq n$ and $s+t \leq n$ and $\ell < n$ and $d < n$.
When $p = 2$ suppose moreover that $n \geq d+\ell$.
Suppose that $n' \equiv n \mod
(p-1)p^{d+\lfloor\log_p(\max(\ell,d,s,t))\rfloor}$.
Then for all $b \in \{0,\dots,p-2\}$, we have 
$$
\sum_{j \equiv b \mod p-1,s\leq j\leq n-t}\binom{j}{\ell}\binom{n}{j} \equiv
\sum_{j \equiv b \mod p-1,s\leq j \leq n'-t}\binom{j}{\ell}\binom{n'}{j}
\mod p^d
$$
\end{lemm}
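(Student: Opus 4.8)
The plan is to reduce the comparison of the two sums to a termwise estimate, using that the summation index $j$ runs over the same set of residues in both sums and that the ranges $[s, n-t]$ and $[s, n'-t]$ differ only by indices $j > n-t$, where the binomial coefficient $\binom{n}{j}$ vanishes. First I would write the difference of the two sums as
$$
\sum_{\substack{j \equiv b \, (p-1)\\ s \leq j \leq n-t}} \binom{j}{\ell}\left(\binom{n}{j} - \binom{n'}{j}\right) \;+\; \sum_{\substack{j \equiv b \, (p-1)\\ n-t < j \leq n'-t}} \binom{j}{\ell}\binom{n'}{j},
$$
where in the first sum I have used $\binom{n}{j} = 0$ for $j > n$ to extend/restrict the range, and the second sum collects the ``extra'' terms. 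The goal is to show each piece has $p$-adic valuation at least $d$.

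For the first sum, the key input is Lemma \ref{binomial}: since $n' \equiv n \bmod (p-1)p^{d + \lfloor \log_p(\max(\ell,d,s,t))\rfloor}$, in particular $v_p(n'-n) \geq d + \lfloor \log_p(\max(\ell,d,s,t))\rfloor$, and because $j \leq n-t \leq n$ forces $j < n$ (hence $\lfloor\log_p j\rfloor$ is controlled — here one uses $s+t \leq n$, $d < n$, $\ell < n$ to bound the relevant logarithm by $\lfloor\log_p(\max(\ell,d,s,t))\rfloor$ is \emph{not} quite enough, so the honest route is to observe $\binom{n}{j}-\binom{n'}{j}$ has valuation $\geq v_p(n'-n) - \lfloor\log_p j\rfloor$ and that we only need this $\geq d$ for the $j$ actually occurring). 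The careful bookkeeping of which $\lfloor\log_p(\cdot)\rfloor$ suffices is the first delicate point: one must check that for every $j$ in the range, $v_p(n'-n) - \lfloor\log_p j\rfloor \geq d$, which follows from the hypothesis once one notes $j \leq n$ and splits according to whether $j \leq \max(\ell,d,s,t)$ or not — in the latter case one instead compares $\binom{n}{j}$ and $\binom{n'}{j}$ via the standard fact that $\binom{n}{j} \equiv \binom{n'}{j} \bmod p^d$ when $n \equiv n' \bmod p^{d+\lfloor\log_p j\rfloor}$, reproving Lemma \ref{binomial} in that regime.

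For the second (``extra-range'') sum, the terms have $n-t < j \leq n'-t$, so $j > n-t \geq s$ and in particular $j$ is large; here I would argue that $\binom{n'}{j}$ itself is highly divisible by $p$. Writing $n' = n + (p-1)p^{e}m$ with $e = d+\lfloor\log_p(\max(\ell,d,s,t))\rfloor$, for $j$ in this narrow band just above $n$ one has $n' - j < t \leq n$, and a Kummer/Legendre count of carries in the addition $j + (n'-j) = n'$ shows $v_p\binom{n'}{j} = v_p\binom{n'}{n'-j}$ is at least $e - \lfloor\log_p(t)\rfloor \geq d$, since $n'-j < t$ and the base-$p$ digits of $n'$ below position $e$ agree with those of $n$. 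Combining the two valuation bounds gives that the total difference is $\equiv 0 \bmod p^d$, which is the claim.

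The main obstacle I expect is precisely the interplay between the three cutoffs ($s$, $t$, $\ell$) and the logarithm appearing in the modulus: one needs the single exponent $d + \lfloor\log_p(\max(\ell,d,s,t))\rfloor$ to simultaneously control (a) the variation of $\binom{n}{j}$ in $n$ for all $j \leq n-t$ via Lemma \ref{binomial}, and (b) the size of $\binom{n'}{j}$ for $j$ in the short tail $n-t < j \leq n'-t$. The $p=2$ side condition $n \geq d+\ell$ is there to handle a degenerate low-digit overlap in the Kummer count; I would treat that case by a direct check that the tail sum is empty or trivially $0$ modulo $2^d$. The reduction modulo $p-1$ on the index $b$ plays no role beyond ensuring both sums range over literally the same set of $j$'s (intersected with the respective intervals), so no extra argument is needed there.
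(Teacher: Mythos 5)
Your termwise decomposition does not work, and you partly sense this yourself in the parenthetical remark about the exponent not being ``quite enough.'' The obstruction is fatal, not merely a bookkeeping nuisance: for $j$ in the middle of the range $[s,n-t]$ the individual differences $\binom{n}{j}-\binom{n'}{j}$ are simply \emph{not} divisible by $p^d$. Lemma \ref{binomial} only gives $v_p\bigl(\binom{n}{j}-\binom{n'}{j}\bigr)\geq v_p(n'-n)-\lfloor\log_p j\rfloor$, and the hypothesis controls $v_p(n'-n)$ only up to $d+\lfloor\log_p(\max(\ell,d,s,t))\rfloor$, while $\lfloor\log_p j\rfloor$ can be as large as roughly $\log_p n$. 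A concrete instance: take $p=5$, $d=1$, $\ell=s=t=0$, $n=20$, $n'=40$ (so $n'\equiv n \bmod 20$ as required). Then by Lucas $\binom{20}{10}\equiv 1$ and $\binom{40}{10}\equiv 3 \pmod 5$, so the $j=10$ term of the difference is $\not\equiv 0\pmod 5$. Likewise your ``tail'' terms with $n-t<j\leq n'-t$ need not be small: here $\binom{40}{30}=\binom{40}{10}$ has $v_5=0$. The congruence of the Lemma nevertheless holds (one checks $\sum_{j\equiv 2\ (4)}\binom{20}{j}\equiv\sum_{j\equiv 2\ (4)}\binom{40}{j}\equiv 1\pmod 5$), which shows that the validity of the statement rests on cancellation \emph{across} the terms of the sum, not on each term being small. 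Your ``reproving Lemma \ref{binomial} in that regime'' step is exactly the point where the argument would need an input that the hypothesis does not provide.

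The paper's proof captures this cancellation by a global device rather than a termwise one: write $g_n(x)=\ell!^{-1}x^{\ell-b}\bigl((1+x)^n\bigr)^{(\ell)}=\sum_j\binom{j}{\ell}\binom{n}{j}x^{j-b}$, so the left-hand sum equals $\frac{1}{p-1}\sum_{\xi\in\mu_{p-1}}g_n(\xi)$ by the roots-of-unity filter, and it then suffices to show $g_n(\xi)\equiv g_{n'}(\xi)\bmod p^d$ for each $\xi\in\mu_{p-1}$. For $\xi\neq -1$ one factors $(1+\xi)^{n-\ell}=(1+\xi)^a\bigl((1+\xi)^{p-1}\bigr)^s$ with $(1+\xi)^{p-1}\in 1+p\Z_p$ and expands the power $(1+pz_\xi)^s$; the congruence on $s$ modulo $p^{d+\lfloor\log_p\max(\ell,d)\rfloor}$ is exactly what is needed so that the first $d$ binomial coefficients $\binom{s}{i}$, $i<d$, match those of $s'$ by Lemma \ref{binomial}, and the extra factor $\binom{n}{\ell}$ is controlled by the same lemma because $\ell\leq\max(\ell,d,s,t)$. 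The $p=2$ side condition is not about a Kummer digit degeneracy in a tail, as you suggest; it is used because $\xi=1$ gives $1+\xi=2$, whence $g_n(1)$ is divisible by $2^{n-\ell}\geq 2^d$ outright. The extra parameters $s,t$ are dispatched at the very end by subtracting off boundedly many individual binomials $\binom{m}{\ell}\binom{n}{m}$ with $m<s$ or $m<t$, for which $\lfloor\log_p m\rfloor\leq\lfloor\log_p\max(s,t)\rfloor$ is small and Lemma \ref{binomial} applies directly. If you want to pursue your decomposition, you would at minimum have to replace the termwise step with an argument that sums over a full congruence class; at that point you will have rediscovered the generating-function filter.
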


\begin{proof}
Suppose first that $s = 0$ and $t=0$.
Let $f_n(x) = (1+x)^n$, seen as a function over $\Q_p$.
Let $g_n(x) = {\ell!}^{-1}x^{\ell-b}f_n^{(\ell)}(x)$.
Then $g_n(x) = \binom{n}{\ell}x^{\ell-b}(1+x)^{n-\ell} = \sum_{j \geq
0}\binom{j}{\ell}\binom{n}{j}x^{j-b}$.
So 
$$
\sum_{j \equiv b \mod p-1,j\leq n}\binom{j}{\ell}\binom{n}{j}
= 
\frac{1}{p-1}\sum_{\xi\in\mu_{p-1}}g_n(\xi)
$$
and we have the same formula for $n'$. So it is enough to show that
$g_n(\xi) \equiv g_{n'}(\xi) \mod p^d$ for all $\xi\in \mu_{p-1}$. 

Suppose first $p=2$. Then $\xi=1$, $1+\xi = 2$, so as $n-\ell \geq d$ we have
that $g_n(\xi) \equiv 0 \mod 2^d$ and the same for $g_{n'}(\xi)$.

Suppose now that $p \neq 2$.
The congruence is true for $\xi = -1$ as $\ell < n$, so we can suppose $\xi\in
\mu_{p-1}\setminus \{-1\}$. Write $n = a + (p-1)s$ for some $s \geq 0$
so $n' = a + (p-1)s'$
with $s' \equiv s \mod p^{d + \lfloor\log_p(\max(\ell,d)\rfloor}$.
Then $g_n(x) = \binom{n}{\ell}x^{\ell-b}(1+x)^a(1+x)^{(p-1)s}$ and
$g_{n'}(x) = \binom{n'}{\ell}x^{\ell-b}(1+x)^a(1+x)^{(p-1)s'}$.
Note that $1+\xi$ is in
$\Z_p^\times$ when $\xi \neq -1$, so $(1+\xi)^{p-1} \in 1+p\Z_p$. We write
$(1+\xi)^{p-1} = 1+pz_{\xi}$. Then $(1+\xi)^{(p-1)s} = (1+pz_\xi)^s$, so 
$g_n(\xi) \equiv
\binom{n}{\ell}\xi^{\ell-b}(1+\xi)^a\sum_{i=0}^{d-1}\binom{s}{i}p^iz_\xi^i \mod p^d$
and
$g_{n'}(\xi) \equiv
\binom{n'}{\ell}\xi^{\ell-b}(1+\xi)^a\sum_{i=0}^{d-1}\binom{s'}{i}p^iz_\xi^i \mod
p^d$.
Now we apply Lemma \ref{binomial} to get the result.

\bigskip

Now go back to a general $s$ and $t$. The difference between the sum we want and
the sum we have computed for $s=0$  and $t=0$ is a finite number of binomials of the form
$\binom{m}{\ell}\binom{n}{m}$ for $m < s$ and 
$\binom{n-m}{\ell}\binom{n}{n-m} = \binom{n-m}{\ell}\binom{n}{m}$ 
for $m < t$, and the same for $n'$, so we
can apply Lemma \ref{binomial}.
\end{proof}

\subsubsection{Bases}
Let $A$ be one of the rings $\Z_p$, $R$, $\K$, $E$.
We define two different bases for the free $A$-module of rank $r+1$
$M_r(A) = A[X,Y]_r$. We omit the mention of $A$ if the statement does not
depend on it.
Let $\theta = X^pY-XY^p \in A[X,Y]$. Let $m(r) = \lfloor r/(p+1) \rfloor$, and 
$r = (p+1)m(r) + t(r)$, so $0 \leq t(r) \leq p$.

We introduce first a family of elements of $M_r(A)$.
We set $b_{n,i}(r) = \theta^nX^{r-n(p+1)-i}Y^i$ if $n \leq m(r)$ and $r
\geq n(p+1)+i$. We also set $b_{n,\infty}(r) = \theta^nY^{r-n(p+1)}$ if
$n \leq m(r)$.

Our first basis is the set $\B_0(r) = \{b_{0,i}(r), 0 \leq i \leq r\}$.
Our second basis is the set $\B_{\theta}(r) = \{b_{n,i}(r), n < m(r), 0
\leq i \leq p-1\text{ or }i = \infty\} \cup \{b_{m(r),i}(r), 0 \leq i < 
t(r)\text{ or }i = \infty \}$. 
The fact that $\B_{\theta}(r)$ is indeed a basis in a consequence
of an analogue of the computations of Section \ref{explicitbasis}.

Let us explain a little why we introduce this basis:
in order to compute the
reduction for $r$, we find a finite number of elements $f$ such that $(T-a_p)f$
is integral, and its reduction modulo $p$ gives us some information.
We now want to
find, for $r'$, a function $f'$ such that $(T-a_p)f'$ gives us analogous
information about the reduction. So we need some way to transform an
element of $\Sym^r \K^2$ into an element of $\Sym^{r'}\K^2$. This is why
we introduce $\B_{\theta}$: we transform the element $b_{n,i}(r)$ into
the element $b_{n,i}(r')$. However, we can not work only with the basis
$\B_{\theta}$, as the action of the operator $T$ is more naturally
expressed in the basis $\B_0$. So we need to understand how to go from
$\B_0$ to $\B_{\theta}$, and how this differs for $r$ and $r'$, which is
what we do in Lemmas \ref{bthetasmall} and \ref{bthetaend}.

For an element $f\in M_r$ and $n \leq m(r)$ and $j \in
\{0,\dots,p-1,\infty\}$, we set $\lambda_r(f ; n,j)$ for the coordinate
in $b_{n,j}(r)$ of $f$ written in the basis $\B_{\theta}(r)$.
When $f = b_{t,i}(r)$, we also write 
$\lambda_r(t,i ; n,j)$ for $\lambda_r(b_{t,i}(r) ; n,j)$.

For any integer $c \leq m(r)$, we denote by $M^{c}_r$ the submodule of $M_r$
of elements divisible by $\theta^c$, it is also the submodule of $M_r$
generated by the elements $b_{t,i}(r) \in \B_{\theta}(r)$ with $t \geq c$. 
We denote by $M_{r,c}$ be the submodule of $M_r$ generated by the
elements of the basis $\B_{\theta}(r)$ of the form $b_{n,i}(r)$ for $n <
c$. We have $M_r = M_r^c \oplus M_{r,c}$, and we denote by $\pi_{r,c} :
M_r \to M_{r,c}$ be the projection attached to this decomposition.
Note that the decomposition is compatible with change of rings (such as
the reduction $R \to E$ or the inclusion $R \to \K$).
If $r' > r$, we denote by $\psi_{r,r',c}$ the map $M_{r,c} \to M_{r',c}$
that sends $b_{n,i}(r)$ to $b_{n,i}(r')$ for all $n < c$.

\begin{lemm}
\label{computebtheta}
The following algorithm gives the coordinates of an element $z \in M_r$
in the basis $\B_{\theta}(r)$: 
start with the element $z$ written as any
linear combination of elements of the form $b_{n,j}(r)$ with
$n(p+1)+j \leq r$ or $j = \infty$. Then apply the following
set of transformations on each $b_{n,j}(r)$ appearing with a non-zero
coefficient:
\begin{enumerate}
\item 
each $b_{n,j}(r)$ with $j \leq p-1$ or $j = \infty$ is unchanged

\item 
each $b_{n,j}(r)$ with $r = n(p+1)+j$ is replaced by
$b_{n,\infty}(r)$

\item
each $b_{n,j}(r)$ with $p\leq j < r-n(p+1)$ is replaced by
$b_{n,j+1-p}(r) - b_{n+1,j-p}(r)$.
\end{enumerate}
After the set of transformations has been applied a finite number of
times, all the elements that appear are left unchanged ; 
at this point we have written
$z$ as a linear combination of elements of $\B_{\theta}(r)$.
\end{lemm}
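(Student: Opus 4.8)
The plan is to prove this by relating the two bases $\B_0(r)$ and $\B_\theta(r)$ via the single algebraic identity $X^{j}Y^{r-n(p+1)-j} = \theta \cdot X^{j-p}Y^{r-(n+1)(p+1)-(j-p)} + X^{j+1-p}Y^{r-n(p+1)-(j+1-p)}$, valid whenever $p \le j \le r-n(p+1)$ (and the boundary case where the exponent of $Y$ drops to $0$ gives transformation (2)). Multiplying through by $\theta^n$ yields exactly the replacement rule (3), and I would check at the start that this is the correct expansion of $\theta^n X^{r-n(p+1)-j}Y^j$ — this is just $\theta^n X^{r-n(p+1)-j}Y^j = \theta^n(X^p Y - X Y^p)X^{\cdots}Y^{\cdots}$ rearranged, which is the same "$X^iY^{r-i} \equiv X^{p-1+i}Y^{r-i-p+1} \bmod \theta$" computation already used to describe $V_0/V_2$ in Section \ref{filtration}. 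So the transformations are well-defined rewriting steps: each one replaces a monomial $b_{n,j}(r)$ outside the target basis by a sum of monomials, each of which is either already in $\B_\theta(r)$, or is again of the form $b_{n',j'}(r)$ to which the transformations may be reapplied.

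The substance is termination: I would introduce a monovariant on formal linear combinations of the $b_{n,j}(r)$. The natural choice is to track, for each term $b_{n,j}(r)$ appearing with nonzero coefficient, the quantity $j$ (the $Y$-exponent in the "reduced" part), and observe that rule (3) sends $b_{n,j}(r)$ to two terms with strictly smaller second index, namely $j+1-p < j$ and $j - p < j$, since $p \ge 2$. Rule (1) leaves terms fixed, and rule (2) produces $b_{n,\infty}(r)$ which is terminal. So one sets up a well-founded measure — for instance the multiset of the values $\max(0, j - (p-1))$ over all terms with $j \ne \infty$, ordered by the multiset (Dershowitz–Manna) order, or more simply: since the first index $n$ can only increase and is bounded above by $m(r)$, while for fixed $n$ the second index of any newly created non-terminal term strictly decreases, one argues by a double induction (outer on $m(r) - n$, inner on $j$). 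Either way, after finitely many applications of the transformation set, every term appearing lies in $\B_\theta(r)$, i.e. is of the form $b_{n,j}(r)$ with $j \le p-1$ or $j = \infty$, and with $j < t(r)$ or $j = \infty$ when $n = m(r)$ — the last constraint holding automatically because when $n = m(r)$ the range $p \le j \le r - n(p+1) = t(r) \le p$ forces $j = p = t(r)$, handled by rule (2), so no term $b_{m(r),j}(r)$ with $j \ge t(r)$ and $j \ne \infty$ survives.

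The main obstacle — really the only non-formal point — is being careful with the edge cases where the exponent of $Y$ in $b_{n,j}(r)$ equals $0$, i.e. when $n(p+1)+j = r$: here "$b_{n,j+1-p}(r) - b_{n+1,j-p}(r)$" is not the right output because the factorization of $\theta$ out of a pure power of $X$ behaves differently, and this is precisely why rule (2) is stated separately, replacing such a $b_{n,j}(r)$ directly by $b_{n,\infty}(r)$. I would verify the identity $\theta^n X^{r-n(p+1)} = $ (the element denoted $b_{n,\infty}(r)$, up to sign or the chosen normalization) matches the conventions fixed earlier — recall $\eta_{u,p} = \theta^u X^{r-u(p+1)}$ in Section \ref{filtration}, so this is consistent — and confirm that once a term reaches the $j = \infty$ slot or the range $0 \le j \le p-1$ it is never acted on again. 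Finally, linearity: since the procedure acts termwise and each step rewrites an element of $M_r$ as an equal element of $M_r$ (the identities are genuine polynomial identities, not just congruences), the output is a linear combination of the basis $\B_\theta(r)$ equal to $z$, and since $\B_\theta(r)$ is a basis the resulting coefficients are precisely the $\lambda_r(z; n, j)$. That the $\B_\theta(r)$ spans — and hence, having $r+1$ elements, is a basis — follows as indicated from the surjectivity of this very rewriting procedure together with the analogue of the dimension count in Section \ref{explicitbasis}.
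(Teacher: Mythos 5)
Your proof is correct and takes the same approach as the paper's very terse argument: you spell out the polynomial identity that makes each rewriting step preserve the represented element of $M_r$, the termination argument (strict decrease of the second index, with the first index bounded by $m(r)$), and the check that the fixed points of the rewriting are exactly the elements of $\B_{\theta}(r)$. The only blemish is notational: you work with the convention of Section \ref{filtration} (where $\eta_{u,v}=\theta^uX^vY^{r-u(p+1)-v}$ and $\eta_{u,p}=\theta^uX^{r-u(p+1)}$) rather than with the lemma's own $b_{n,j}(r)=\theta^nX^{r-n(p+1)-j}Y^j$ and $b_{n,\infty}(r)=\theta^nY^{r-n(p+1)}$, so for instance your ``$\theta^nX^{r-n(p+1)}$'' should be $\theta^nY^{r-n(p+1)}$; once translated, your identity reproduces rule (3) \emph{exactly}, minus sign included, so the hedging ``up to sign or normalization'' was unnecessary.
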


\begin{proof}
It suffices to check that the transformations do not change the element
$z$ being represented by the linear combination, and that the elements
left unchanged are exactly the elements of $\B_{\theta}(r)$.
\end{proof}

\begin{lemm}
\label{bthetasmall}
Suppose $r'>r$. Let $t \leq  r-c$. Then for all $i \leq t$, and all $n < 
c$, and all $j \in \{0,\dots,p-1,\infty\}$, 
we have $\lambda_r(0,i;n,j) = \lambda_{r'}(0,i;n,j)$.
\end{lemm}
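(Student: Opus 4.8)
The plan is to run the explicit change-of-basis algorithm of Lemma~\ref{computebtheta} on $b_{0,i}(r)=X^{r-i}Y^i$ and show that, as far as the coordinates on basis vectors $b_{n,j}(r)$ with $n<c$ are concerned, the computation never actually uses $r$ (when $i\le r-c$), so it matches step for step the run on $b_{0,i}(r')$. In the notation introduced after Lemma~\ref{computebtheta}, this amounts to proving $\psi_{r,r',c}\big(\pi_{r,c}(b_{0,i}(r))\big)=\pi_{r',c}(b_{0,i}(r'))$; since the coordinates of $\pi_{r,c}(b_{0,i}(r))$ in the basis $(b_{n,j}(r))_{n<c}$ of $M_{r,c}$ are exactly the $\lambda_r(0,i;n,j)$ with $n<c$, the lemma follows. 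Note first that $c\le m(r)$ and $r'>r$ force $c\le m(r')$, so everything in sight for $r'$ is defined.

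First I would establish the following invariant by induction on the number of transformations performed in Lemma~\ref{computebtheta}: every term $b_{n,j}(r)$ appearing during the algorithm applied to $b_{0,i}(r)$ and having $n<c$ satisfies $j\neq\infty$ and $np+j\le i$. This holds for the initial term; transformation~(1) changes nothing; transformation~(3) replaces $b_{n,j}(r)$ by $b_{n,j+1-p}(r)-b_{n+1,j-p}(r)$, under which the quantity $np+j$ (evaluated at each of the two new terms) only decreases, and the new labels are finite; and terms that reach $\theta$-exponent $c$ drop out of the range $n<c$ and can be ignored, since the $\theta$-exponent never decreases and such terms therefore never contribute to a coordinate with $n<c$. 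The key consequence, where the hypothesis $i\le t\le r-c$ enters: for a term with $n<c$ we obtain $np+j\le i\le r-c\le r-n-1<r-n$, hence $j<r-n(p+1)$, i.e. the $X$-degree $r-n(p+1)-j$ is strictly positive; therefore transformation~(2), whose hypothesis is $j=r-n(p+1)$, is never triggered on a term with $n<c$. This is the heart of the argument.

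The point is that transformations~(1) and~(3) have replacement rules not involving $r$, and the only ways $r$ could affect the treatment of a term with $n<c$ — namely transformation~(2), or a failure of the inequality $j<r-n(p+1)$ that is needed to apply transformation~(3) to a term with $j\ge p$ — have just been ruled out, for $r$ and equally for $r'$ because $i\le r-c<r'-c$. I would then run the two algorithms in parallel, identifying $b_{n,j}(r)\leftrightarrow b_{n,j}(r')$: at each step, a term with $n<c$ is either already a basis vector (transformation~(1), $r$-independent), or undergoes transformation~(3), which applies for both copies since $j<r-n(p+1)\le r'-n(p+1)$ and produces the same $r$-independent output; branches reaching $\theta$-exponent $c$ disappear simultaneously for both and contribute nothing to $\pi_{r,c}$, resp. $\pi_{r',c}$. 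By the termination assertion of Lemma~\ref{computebtheta} the final result does not depend on the order of transformations, so the two runs produce identical coordinates on every $b_{n,j}(-)$ with $n<c$, i.e. $\lambda_r(0,i;n,j)=\lambda_{r'}(0,i;n,j)$.

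I expect the only real difficulty to be bookkeeping in the induction of the second paragraph: making sure a term with $n<c$ never acquires the label $\infty$ and never has vanishing $X$-degree, and being explicit that branches escaping into $\theta$-exponent $c$ are harmless thanks to monotonicity of the $\theta$-exponent. Once the inequality $np+j\le i$ is in hand, everything is formal; the point is that it is the margin $r-c$, not merely $r$, that prevents transformation~(2) from ever firing within the tracked range, which is exactly why the hypothesis is stated as $i\le t\le r-c$.
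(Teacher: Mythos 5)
Your proof is correct and takes essentially the same approach as the paper: both run the change-of-basis algorithm of Lemma~\ref{computebtheta} in parallel for $r$ and $r'$ and use a monotone invariant to show transformation~(2) never fires on a term with $\theta$-exponent $n<c$, so the treatment of those terms is $r$-independent. The paper tracks $r-np-j$ (nondecreasing, starting at $r-i\ge c$) while you track $np+j$ (nonincreasing, starting at $i\le r-c$); these are affine reflections of each other, though your choice makes the $r$-independence slightly more visible. One small imprecision: under transformation~(3) the second offspring $b_{n+1,j-p}$ has $(n+1)p+(j-p)=np+j$, so $np+j$ is nonincreasing rather than strictly decreasing, but this does not affect the argument.
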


\begin{proof}
Start with $z = b_{0,i}(r)$, and apply the algorithm of Lemma
\ref{computebtheta}. Then: we will apply step (2) of the set of
transformations only to elements $b_{n,j}(r)$ with $n \geq c$. 
Indeed, consider the quantity $r-np-j$ for each $b_{n,j}(r)$ that appears
at some point of the algorithm. Then this quantity does never go down
during the application of the algorithm unless we apply step (2), 
and it is equal to $r-i \geq r-t
\geq c$ for $b_{0,i}(r)$. We apply step (2) only to a $b_{n,j}(r)$ for which
the quantity $r-np-j$ is equal to $n$, so only to some $b_{n,j}(r)$ with
$n \geq c$.

This property also holds for $z' = b_{0,i}(r')$ as $r' > r$. 

Now we start with $z = b_{0,i}(r)$ and $z' = b_{0,i}(r')$, and we write 
$z = \sum\mu_{n,j}^m b_{n,j}(r)$ and $z' = \sum{\mu_{n,j}^m}' b_{n,j}(r')$
the linear combination obtained after the $m$-th time we have applied the
set of transformations. We need only show that for all $m$, and all
$n\leq c$, we have $\mu_{n,j}^m={\mu_{n,j}^m}'$.

A difference can only appear when we apply step (2) of the set of
transformations, if we have $r = n(p+1)+j$ but $r' > n(p+1)+j$. 
But as we saw, it can happen only for $n\geq c$ so it does not play a role in
the $\lambda_r(0,i;n,j)$ and $\lambda_{r'}(0,i;n,j)$ for $n < c$.
\end{proof}

We have the following result:

\begin{lemm}
\label{crittheta}
Let $P = \sum_{i=0}^r\alpha_iX^{r-i}Y^i \in M_r$. Then $P \in M^{c}_r$ if
and only if the following conditions are satisfied:
\begin{enumerate}
\item
$\alpha_i = 0$ for all $i < c$

\item
$\alpha_i = 0$ for all $i > r-c$

\item
for all $a\in \Z/(p-1)\Z$ and all $\ell < c$ we have 
$\sum_{i \equiv a \mod p-1}\binom{i}{\ell}\alpha_i = 0$.
\end{enumerate}
\end{lemm}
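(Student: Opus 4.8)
The plan is to identify the space $M_r^c$ of polynomials divisible by $\theta^c$ with an explicit system of linear conditions on the coefficients $\alpha_i$. First I would establish the necessity of the three conditions. If $P = \theta^c Q$ for some $Q \in M_{r-c(p+1)}$, write $Q = \sum_{j} \beta_j X^{r-c(p+1)-j}Y^j$; then expanding $\theta^c = (X^pY - XY^p)^c = \sum_{k=0}^c \binom{c}{k}(-1)^k X^{p(c-k)+k}Y^{c-k+pk}$, one sees that the lowest power of $Y$ appearing in $P$ is $\geq c$ (taking $k=0$ in the expansion and $j=0$ in $Q$), giving condition (1); symmetrically the lowest power of $X$ is $\geq c$, giving condition (2). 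For condition (3): the key observation is that $\theta$ vanishes to order $1$ at every $\F_p$-rational point $(u:v)$ of $\mathbb{P}^1$, hence $\theta^c$ and all its ``derivatives of order $<c$'' vanish at these points. Concretely, evaluating at $(u,1)$ for $u \in \F_p^\times$ and using $\sum_{u\in\F_p^\times} u^{-a} = -[a\equiv 0]$ (the standard character-sum identity already used in Lemma \ref{summodp}), one extracts that $\sum_{i \equiv a}\binom{i}{\ell}\alpha_i$ must vanish for each residue $a$ and each $\ell < c$: differentiating $P(u,1) = \sum_i \alpha_i u^{r-i}$ in $u$ a total of $\ell$ times before summing produces exactly the binomial-weighted sums, and the vanishing of $\theta^c$ to order $c$ forces these to be $0$.

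For the converse I would argue by a dimension count together with linear independence of the constraints. The space $M_r^c$ is free of rank $r+1-c(p+1)$ over $A$ (it is $\theta^c \cdot M_{r-c(p+1)}$, and multiplication by $\theta^c$ is injective). So it suffices to show that the conditions (1), (2), (3) cut out a subspace of exactly this dimension and contain $M_r^c$; by the necessity part the second inclusion is automatic, so I only need that the conditions are independent enough to cut dimension down to $r+1-c(p+1)$. Conditions (1) and (2) remove $2c$ coordinates, leaving $\alpha_c, \dots, \alpha_{r-c}$, i.e.\ $r+1-2c$ free parameters. Condition (3) then imposes, for each $\ell \in \{0,\dots,c-1\}$ and each $a \in \Z/(p-1)\Z$, one linear relation; that is $(p-1)c$ relations in all, and $r+1-2c - (p-1)c = r+1-c(p+1)$ as required. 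The remaining point is that these $(p-1)c$ relations are linearly independent modulo conditions (1)--(2): this follows because for fixed residue class $a$ the relevant matrix is (a submatrix of) the one appearing in Lemma \ref{determinant}, whose nonvanishing determinant was already established there.

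I expect the main obstacle to be the careful bookkeeping in the necessity proof of condition (3): one must correctly relate ``$\theta^c$ divides $P$'' to the vanishing of the appropriate iterated derivatives/finite-difference operators at the $\F_p$-points, and then match those to the binomial sums $\sum_{i\equiv a}\binom{i}{\ell}\alpha_i$ for exactly $\ell < c$ (and no more). A clean way to handle this is to note that a homogeneous $P$ of degree $r$ is divisible by $(X^pY - XY^p) = XY\prod_{u\in\F_p^\times}(X - uY)$ to order $c$ if and only if $P$ is divisible by $X^c$, by $Y^c$, and by $(X-uY)^c$ for every $u \in \F_p^\times$; translating ``divisible by $(X-uY)^c$'' into the condition that the polynomial $t \mapsto P(t+u,1)$ vanishes to order $\geq c$ at $t=0$, i.e.\ its coefficients in $t^0,\dots,t^{c-1}$ vanish, and then summing over $u$ against $u^{-a}$, yields precisely condition (3). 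The linear-independence step in the converse is then essentially a reference to Lemma \ref{determinant} and needs no new computation.
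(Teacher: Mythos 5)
Your necessity argument is essentially the paper's: rewrite $\theta = XY\prod_{u\in\F_p^\times}(X-uY)$, so ``$\theta^c\mid P$'' means $X^c$, $Y^c$, and each $(X-uY)^c$ divides $P$, and the last family translates to the vanishing of the $\ell$-th derivatives ($\ell<c$) of the dehomogenization at all $\F_p^\times$-points, which after the character sum $\sum_{u\in\F_p^\times}u^{-a}$ gives condition (3). The paper dehomogenizes via $f_P(t)=P(1,t)=\sum_i\alpha_it^i$, so $\frac{d^\ell}{dt^\ell}$ literally produces $\binom{i}{\ell}$; you work with $P(u,1)=\sum_i\alpha_i u^{r-i}$, which produces $\binom{r-i}{\ell}$ instead. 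That is an equivalent set of conditions (both $\{\binom{i}{\ell}\}_{\ell<c}$ and $\{\binom{r-i}{\ell}\}_{\ell<c}$ span all degree-$<c$ polynomials in $i$), but as written your binomials don't match the statement; switching to $P(1,t)$ removes the mismatch.

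For the converse you genuinely diverge from the paper. The paper runs the necessity argument backwards and constructs the divisibility outright: condition (3) over all $a$ forces $f_P^{(\ell)}(\xi)=0$ for all $\xi\in\mu_{p-1}$ and $\ell<c$, hence $(1-t^{p-1})^c\mid f_P$; combined with (1) this gives $(t-t^p)^c\mid f_P$, and (2) bounds $\deg f_P\le r-c$ so that $\theta^c\mid P$. You instead use a dimension/rank count: $M_r^c$ has rank $r+1-c(p+1)$; (1) and (2) kill $2c$ coordinates; (3) imposes $(p-1)c$ further relations whose matrix is block-diagonal over residue classes $a$ with each block a matrix of the form in Lemma~\ref{determinant}, hence invertible; the counts match and the inclusion from necessity becomes equality. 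This is correct, and it transparently exhibits \emph{why} the number of conditions is right, which the paper's constructive proof does not. Two small caveats you should make explicit: (i) you need at least $c$ indices $i\in\{c,\dots,r-c\}$ in each residue class mod $p-1$ to extract a $c\times c$ block, which holds precisely because $c\le m(r)$, i.e.\ $r\ge c(p+1)$ — the standing hypothesis for $M_r^c$ to be defined; (ii) over $R$ (rather than a field) the equal-rank conclusion requires that $M_r^c$ is a direct summand of $M_r$ and that the block determinants from Lemma~\ref{determinant} are units (they are: $(p-1)^{n(n+1)/2}$). The paper's direct construction sidesteps both caveats and, as the authors note, also adapts to characteristic $p$ by replacing derivatives with Hasse derivatives — a substitution your count-based converse would also need in order to invoke the ``vanishing to order $c$'' characterization of $(X-uY)^c\mid P$.
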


\begin{proof}
We do the proof only in characteristic zero. The result still holds in
characteristic $p$, with the same proof except that the derivative is
replaced by the Hasse derivative.

Let $P = \sum_{i=0}^r\alpha_iX^{r-i}Y^i$, and set $f_P(t) = \sum_i\alpha_it^i$.
Fix $a$ and $\ell$, and let $g_{a,\ell,P}(t) =
t^{\ell-a}f^{(\ell)}_P(t)$. Then $g_{a,\ell,P}(t) =
\ell!\sum_i\binom{i}{\ell}\alpha_it^{i-a}$, so 
$\ell!(p-1)\sum_{i \equiv a\mod p-1}\binom{i}{\ell}\alpha_i 
= \sum_{\xi\in\mu_{p-1}}g_{a,\ell,P}(\xi)$.

Suppose that $\theta^c$ divides $P$. Conditions (1) and (2)
are clear as $X^cY^c$ divides $\theta^c$.  Moreover
$(t-t^p)^c$ divides $f_P$, so for all $\xi\in\mu_{p-1}$ we have that
$(t-\xi)^c$ divides $f_P$. So $(t-\xi)$ divides $f^{(\ell)}_P$, hence
$g_{a,\ell,P}$, for all $\ell < c$. So $g_{a,\ell,P}(\xi) = 0$ and hence
$\sum_{i \equiv a\mod p-1}\binom{i}{\ell}\alpha_i = 0$. So $P$ satisfies
also condition (3).

Suppose now that $P$ satisfies the conditions. Fix $\ell < c$. Then 
$\sum_{i \equiv a\mod p-1}\binom{i}{\ell}\alpha_i = 0$ for all $a$ implies
that $\sum_{\xi}\xi^{\ell-a}f^{(\ell)}_P(\xi) = 0$ for all $a$, so 
$f^{(\ell)}_P(\xi) = 0$ for all $\xi\in\mu_{p-1}$. As this is true for
all $\ell<c$, it means that $(1-t^{p-1})^c$ divides $f_P$. As $\alpha_i = 0$
for all $i<c$ we see that $(t-t^p)^c$ divides $f_P$. Finally $\deg f_P
\leq r-c$ from condition (2), so $\theta^c$
divides $P(X,Y) = X^rf_P(Y/X)$.
\end{proof}

\begin{lemm}
\label{modthetac}
There exists an invertible $A$-linear map $\psi: A^{c(p-1)} \to
A^{c(p-1)}$,
depending only on $c$,
with coordinates $\psi_0,\dots,\psi_{c(p-1)-1}$
such that for all 
$f = \sum_{i=c}^{r-c}f_ib_{0,i}(r) \in M_r(A)$, we have
$f-\tilde{f}\in M_r^c$ where $\tilde{f} =
\sum_{i=0}^{c(p-1)-1}\psi_i(v(f))b_{0,i+c}(r)$
and $v(f) = (\sum_{i\equiv a \mod p-1}\binom{i}{\ell}f_i)_{a \in
\Z/(p-1)\Z, 0 \leq \ell < c}$.
\end{lemm}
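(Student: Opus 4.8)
The plan is to read off the obstruction to $\theta^c$-divisibility from Lemma~\ref{crittheta} and to invert the relevant matrix using Lemma~\ref{determinant}. Let $W_r(A)\subset M_r(A)$ be the free $A$-submodule of rank $c(p-1)$ spanned by the monomials $b_{0,j}(r)=X^{r-j}Y^j$ for $c\le j\le cp-1$. Since $M_r^c$ is being used we have $c\le m(r)$, so $c(p+1)\le r$ and hence $cp-1\le r-c$; thus these monomials really do belong to the basis $\B_0(r)$ of $M_r(A)$, and all of them lie in the range $c\le j\le r-c$. Consider the $A$-linear map $v\colon M_r(A)\to A^{c(p-1)}$, $f\mapsto\bigl(\sum_{i\equiv a\bmod p-1}\binom{i}{\ell}f_i\bigr)_{a\in\Z/(p-1)\Z,\,0\le\ell<c}$, and let $\Phi$ be the matrix of its restriction $v|_{W_r(A)}$ with respect to the ordered basis $b_{0,c}(r),b_{0,c+1}(r),\dots,b_{0,cp-1}(r)$ of $W_r(A)$. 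The coefficients of $\Phi$ are the integers $\binom{j}{\ell}$ with $c\le j\le cp-1$ and $0\le\ell<c$, placed according to the residue of $j$ modulo $p-1$; in particular $\Phi$ does not depend on $r$, nor on $A$ (it is the reduction of a single integer matrix). Granting that $\Phi$ is invertible over $A$, I would set $\psi:=\Phi^{-1}$ and, for $f=\sum_{i=c}^{r-c}f_ib_{0,i}(r)$, define $\tilde f:=\sum_{i=0}^{c(p-1)-1}\psi_i(v(f))\,b_{0,i+c}(r)\in W_r(A)$; then $v(\tilde f)=\Phi\,\psi(v(f))=v(f)$ by construction.

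To see that $\Phi$ is invertible, note that $\{c,c+1,\dots,cp-1\}$ contains exactly $c(p-1)$ integers, hence exactly $c$ in each residue class modulo $p-1$; for $a\in\{0,\dots,p-2\}$ these are $j_a,\ j_a+(p-1),\dots,\ j_a+(c-1)(p-1)$ with $j_a=\min\{j\ge c: j\equiv a\bmod p-1\}$. Since the $(a,\ell)$-coordinate of $v$ only involves the coefficients $f_j$ with $j\equiv a$, after grouping the rows of $\Phi$ by the value of $a$ and its columns by the residue $j\bmod(p-1)$ the matrix becomes block-diagonal with blocks $\Phi_a=\bigl(\binom{j_a+m(p-1)}{\ell}\bigr)_{0\le\ell,m\le c-1}$. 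Each $\Phi_a$ is exactly the matrix of Lemma~\ref{determinant} with $b=j_a$ and $n=c-1$, so $\det\Phi_a=(p-1)^{c(c-1)/2}$, a unit in $A$ because $p-1\equiv-1\bmod p$. Hence $\det\Phi=\pm(p-1)^{(p-1)c(c-1)/2}\in A^\times$, and $\psi=\Phi^{-1}$ is a well-defined invertible $A$-linear endomorphism of $A^{c(p-1)}$ depending only on $c$.

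It then remains to check that $f-\tilde f\in M_r^c$ by Lemma~\ref{crittheta}. Writing $f-\tilde f=\sum_j\alpha_jX^{r-j}Y^j$, both $f$ and $\tilde f$ are supported on indices $j$ with $c\le j\le r-c$ (for $\tilde f$ this uses $cp-1\le r-c$ established above), so $\alpha_j=0$ for $j<c$ and for $j>r-c$, which are conditions (1) and (2); and condition (3), namely $\sum_{j\equiv a}\binom{j}{\ell}\alpha_j=0$ for all $a$ and all $\ell<c$, is precisely the identity $v(f-\tilde f)=v(f)-v(\tilde f)=0$ forced by the definition of $\tilde f$. Therefore $\theta^c\mid f-\tilde f$, that is, $f-\tilde f\in M_r^c$, which is the assertion.

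I do not expect a genuine obstacle here: the content is essentially routine once Lemma~\ref{crittheta} and Lemma~\ref{determinant} are available. The one step that requires care is the bookkeeping making the phrase ``depending only on $c$'' rigorous — one must make sure the window of monomials $X^{r-j}Y^j$ with $c\le j\le cp-1$, the residue classes it meets, and the binomials $\binom{j}{\ell}$ occurring (hence the matrix $\Phi$ and its inverse $\psi$) are literally independent of $r$ and of the ring $A$, so that one and the same $\psi$ works for all admissible $r$. After that, the block decomposition plus a reordering of coordinates finishes the argument.
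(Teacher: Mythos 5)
Your proof is correct and takes essentially the same approach as the paper: you build the block-diagonal matrix of $v$ restricted to the window of monomials $b_{0,j}(r)$ for $c\le j\le cp-1$, invert it using Lemma~\ref{determinant}, and verify the three conditions of Lemma~\ref{crittheta}. The paper's argument is identical in substance (it constructs the same block maps $\phi_a$ and sets $\psi=\phi^{-1}$); you simply spell out the bookkeeping — in particular the check $cp-1\le r-c$ and the explicit identification of each block with the matrix of Lemma~\ref{determinant} — that the paper leaves implicit.
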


\begin{proof}
For all $a \in \Z/(p-1)\Z$, denote by $a_0$ the smallest integer
$\geq c$ which is congruent to $a$ modulo $p-1$. Then for all
$a \in \Z/(p-1)\Z$ 
consider the $\Z_p$-linear map $\phi_a : \Z_p^c \to \Z_p^c$,
$\phi_a(x_0,\dots,x_{c-1}) = 
(\sum_{j=0}^{p-1}\binom{a_0+j(p-1)}{\ell}x_j)_{0\leq \ell <c}$.
Then the map $\phi_a$ is invertible by Lemma \ref{determinant}.
We put all the maps $\phi_a$ together for $a \in \Z/(p-1)\Z$, we get
a map $\phi : \Z_p^{c(p-1)} \to \Z_p^{c(p-1)}$ such that
$\phi(x_0,\dots,x_{c(p-1)-1})$ is equal to 
$(\sum_{j=0}^{p-1}\binom{a_0+j(p-1)}{\ell}x_j)_{0\leq \ell <c,a\in \{0,\dots,p-2\}}$.
Denote by $\psi$ the inverse of $\phi$.

Let $\tilde{f}$ be as in the statement. Then by
construction, $f-\tilde{f}$ satisfies the conditions of Lemma \ref{crittheta}
so $f-\tilde{f} \in M_r^c$.
\end{proof}

\begin{lemm}
\label{bthetasum}
Let $d > 0$. Suppose that $r' > r$.
Let $f \in M_r(R)$ and $f' \in M_{r'}(R)$ with 
$f = \sum_{i = c}^{r-c}f_ib_{0,i}(r)$ and
$f = \sum_{i = c}^{r'-c}f'_ib_{0,i}(r')$.
Suppose that for all $a\in \Z/(p-1)\Z$ and for all $\ell < c$ we
have 
$\sum_{i\equiv a \mod  p-1}\binom{i}{\ell}f_i 
\equiv
\sum_{i\equiv a \mod  p-1}\binom{i}{\ell}f'_i 
\mod p^d$.
Then $\lambda_r(f;n,j) \equiv \lambda_{r'}(f';n,j) \mod p^d$ for all $n < c$ and $j \in
\{0,\dots,p-1,\infty\}$.
\end{lemm}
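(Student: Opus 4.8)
The statement compares the $\B_\theta$-coordinates (in the indices $n<c$) of two elements $f\in M_r(R)$ and $f'\in M_{r'}(R)$ written first in the $\B_0$-basis with matching ``$\theta$-moments'' modulo $p^d$. The key tool is Lemma~\ref{modthetac}, which produces, for each $r$, an explicit element $\tilde f = \sum_{i=0}^{c(p-1)-1}\psi_i(v(f))\,b_{0,i+c}(r)$ such that $f-\tilde f\in M_r^c$, where $v(f)=(\sum_{i\equiv a}\binom{i}{\ell}f_i)_{a,\ell}$ and $\psi$ depends only on $c$ (not on $r$). Since $\lambda_r(\,\cdot\,;n,j)=0$ for any element of $M_r^c$ when $n<c$ (by definition $M_r^c$ is spanned by the $b_{t,i}(r)$ with $t\geq c$), the coordinates $\lambda_r(f;n,j)$ for $n<c$ depend only on $\tilde f$: we have $\lambda_r(f;n,j)=\lambda_r(\tilde f;n,j)$ for all $n<c$ and all $j\in\{0,\dots,p-1,\infty\}$, and similarly for $f'$.

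First I would reduce to comparing $\lambda_r(\tilde f;n,j)$ with $\lambda_{r'}(\tilde f';n,j)$. Now $\tilde f$ and $\tilde f'$ are, by construction, supported on $b_{0,c}(r),\dots,b_{0,c(p-1)-1+c}(r)$ with coefficients $\psi_i(v(f))$, and $\tilde f'$ on the analogous $b_{0,i+c}(r')$ with coefficients $\psi_i(v(f'))$. By hypothesis $v(f)\equiv v(f')\bmod p^d$ (this is precisely the condition $\sum_{i\equiv a}\binom{i}{\ell}f_i\equiv\sum_{i\equiv a}\binom{i}{\ell}f'_i$), and $\psi$ is a fixed integral linear map, so $\psi_i(v(f))\equiv\psi_i(v(f'))\bmod p^d$ for every $i$. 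Thus $\tilde f$ and $\tilde f'$ have the same $\B_0$-support pattern with coefficients congruent mod $p^d$, and moreover all the monomials $b_{0,i+c}$ involved have index $i+c$ ranging over a fixed interval $[c,cp-1]$ of length independent of $r,r'$ (in particular $\leq r-c$ and $\leq r'-c$, since one may freely assume $r$ large enough — if not the statement is near-vacuous).

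The remaining point is that the change of basis from $\B_0(r)$ to $\B_\theta(r)$, restricted to producing the coordinates $\lambda_r(\cdot;n,j)$ with $n<c$, is ``the same'' for $r$ and for $r'$ up to a mod-$p^d$ error when applied to $\tilde f$ resp. $\tilde f'$. Concretely I would run the algorithm of Lemma~\ref{computebtheta} on each $b_{0,i+c}(r)$: by the argument already used in the proof of Lemma~\ref{bthetasmall}, step~(2) of the transformation is triggered only for terms $b_{n,j}$ with $n\geq c$ (the tracked quantity $r-np-j$ starts at $r-(i+c)\geq r-cp+1$, which exceeds $c$ once $r$ is large, and step~(2) fires only when it drops to $n$), so the only place where the behavior for $r$ and $r'$ could differ is invisible to the coordinates with $n<c$. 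Hence the integer-coefficient transformation rules producing $\lambda_r(b_{0,i+c}(r);n,j)$ for $n<c$ are literally independent of $r$; call the resulting universal $\Z$-linear map $\Phi$. Then $\lambda_r(\tilde f;n,j)=\Phi\big((\psi_i(v(f)))_i\big)_{n,j}$ and $\lambda_{r'}(\tilde f';n,j)=\Phi\big((\psi_i(v(f')))_i\big)_{n,j}$, and since the two argument vectors are congruent mod $p^d$ and $\Phi$ is integral, the two outputs are congruent mod $p^d$, which is the claim.

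The main obstacle is the bookkeeping in the last step: making precise that ``the coordinates $\lambda_r(\cdot;n,j)$ for $n<c$ are computed by an $r$-independent procedure'' on the relevant finitely many generators. This is exactly the phenomenon isolated in Lemma~\ref{bthetasmall} (indeed Lemma~\ref{bthetasmall} is essentially this statement for $f=b_{0,i}(r)$), so I would cite that lemma directly: for each $i$ in the fixed range $[c,cp-1]$ and each $n<c$, $\lambda_r(0,i;n,j)=\lambda_{r'}(0,i;n,j)$, and then conclude by $\Z_p$-linearity of $f\mapsto\lambda_r(f;n,j)$ together with $\tilde f=\sum_i\psi_i(v(f))b_{0,i+c}(r)$, $\psi_i(v(f))\equiv\psi_i(v(f'))\bmod p^d$, and $f-\tilde f\in M_r^c$. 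The one subtlety to flag is the edge case where $r-c$ is smaller than $cp-1$, i.e. $\tilde f$'s nominal support overflows the available monomials; but the hypotheses of the surrounding application (with $c\leq m(r)$) force $r\geq c(p+1)$, so $r-c\geq cp>cp-1$ and no overflow occurs, and the same for $r'>r$.
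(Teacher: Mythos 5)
Your proof is correct and takes essentially the same route as the paper's: both arguments replace $f$ and $f'$ by the elements $\tilde f,\tilde f'$ supplied by Lemma~\ref{modthetac} (using that $f-\tilde f\in M_r^c$ so the $\lambda_r(\cdot;n,j)$ for $n<c$ are unchanged, and that $\tilde f,\tilde f'$ have coefficients congruent mod $p^d$ because $\psi$ is integral and independent of $r$), and then conclude by Lemma~\ref{bthetasmall} together with linearity. The extra care you take about the support window $[c,cp-1]$ fitting inside $[c,r-c]$ via $c\le m(r)$ is a worthwhile remark, though the paper takes it for granted.
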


\begin{proof}
Let $\tilde{f}$ and $\tilde{f'}$ be attached to $f$ and $f'$ as in Lemma
\ref{modthetac}. 
Then $\lambda_r(f ; n,j) = \lambda_r(\tilde{f} ; n,j)$ and 
$\lambda_{r'}(f' ; n,j) = \lambda_r(\tilde{f'} ; n,j)$ for all $n<c$.
Moreover, let $\tilde{f} = \sum\alpha_ib_{0,i}(r)$ and
$\tilde{f} = \sum\alpha'_ib_{0,i}(r')$.
Then $\alpha_i = 0$ and $\alpha_i' = 0$ for all $i \geq c(p-1)$, 
and $\alpha_i \equiv \alpha_i' \mod p^d$ for all $i < c(p-1)$ by
construction. So we can conclude by Lemma \ref{bthetasmall}.
\end{proof}

\begin{lemm}
\label{bthetamiddle}
Let $d > 0$. Suppose that $r' > r$ and
$r' \equiv r \mod (p-1)p^{d+\lfloor\log_p(c)\rfloor}$.
Let $c \leq i \leq pc$. Then
coefficients $\lambda_r(0,r-i ; n,j)$ and $\lambda_{r'}(0,r'-i ;
n,j)$ are congruent modulo $p^d$ for all $n < c$ and $j \in
\{0,\dots,p-1,\infty\}$.
\end{lemm}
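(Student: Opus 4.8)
The plan is to deduce this from Lemma~\ref{bthetasum} applied to the single monomials $f = b_{0,r-i}(r)$ and $f' = b_{0,r'-i}(r')$, with Lemma~\ref{binomial} supplying the congruence hypothesis required to invoke Lemma~\ref{bthetasum}.

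First I would check that these monomials have admissible indices. Since the coefficients $\lambda_r(0,\cdot\,;n,j)$ with $n<c$ are being considered we have $c \le m(r)$, and as $r' > r$ also $c \le m(r) \le m(r')$; hence $r \ge (p+1)m(r) \ge (p+1)c$ and likewise $r' \ge (p+1)c$. Combined with $c \le i \le pc$ this gives $c \le r-i \le r-c$ and $c \le r'-i \le r'-c$, so $b_{0,r-i}(r)$ and $b_{0,r'-i}(r')$ are legitimate vectors of $\B_0(r)$ and $\B_0(r')$ whose indices lie in the range required by Lemma~\ref{bthetasum}. Writing $f = b_{0,r-i}(r) = \sum_{i'}f_{i'}b_{0,i'}(r)$ we have $f_{r-i}=1$ and all other $f_{i'}=0$, and similarly $f'_{r'-i}=1$ with all other $f'_{i'}=0$.

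Next I would verify the hypothesis of Lemma~\ref{bthetasum}: for every $a \in \Z/(p-1)\Z$ and every $\ell < c$,
\[
\sum_{i'\equiv a}\binom{i'}{\ell}f_{i'} \equiv \sum_{i'\equiv a}\binom{i'}{\ell}f'_{i'} \pmod{p^d}.
\]
The hypothesis $r' \equiv r \pmod{(p-1)p^{d+\lfloor\log_p c\rfloor}}$ gives in particular $(p-1)\mid (r'-r)$, so $r-i \equiv r'-i \pmod{p-1}$; thus the left-hand sum equals $\binom{r-i}{\ell}$ if $a \equiv r-i \pmod{p-1}$ and $0$ otherwise, the right-hand sum equals $\binom{r'-i}{\ell}$ under the same condition on $a$, and these conditions coincide. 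It therefore suffices to show $\binom{r-i}{\ell} \equiv \binom{r'-i}{\ell} \pmod{p^d}$ for $\ell < c$. This is immediate from Lemma~\ref{binomial}: the case $\ell=0$ is trivial since $\binom{\cdot}{0}=1$, and otherwise $v_p\big((r-i)-(r'-i)\big) = v_p(r-r') \ge d + \lfloor\log_p c\rfloor$ together with $\lfloor\log_p\ell\rfloor \le \lfloor\log_p c\rfloor$ yields $v_p\big(\binom{r-i}{\ell}-\binom{r'-i}{\ell}\big) \ge d$.

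Finally, Lemma~\ref{bthetasum} then gives $\lambda_r(f;n,j) \equiv \lambda_{r'}(f';n,j) \pmod{p^d}$ for all $n<c$ and $j \in \{0,\dots,p-1,\infty\}$, which by definition of the notation is $\lambda_r(0,r-i;n,j) \equiv \lambda_{r'}(0,r'-i;n,j) \pmod{p^d}$, the desired statement. I do not anticipate any genuine obstacle here; the only points needing care are confirming that the monomial index $r-i$ lies in $[c,r-c]$ — which is exactly where $c \le m(r)$ is used — and keeping careful track of residues modulo $p-1$.
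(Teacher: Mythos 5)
Your proof is correct and follows the same route as the paper's: apply Lemma~\ref{bthetasum} to the single monomials $f = b_{0,r-i}(r)$, $f' = b_{0,r'-i}(r')$, noting that the sums in its hypothesis reduce to $\binom{r-i}{\ell}$ and $\binom{r'-i}{\ell}$ (or vanish for the wrong congruence class), whose congruence mod $p^d$ comes from Lemma~\ref{binomial}. The extra bookkeeping you supply about $r-i$ and $r'-i$ landing in $[c,r-c]$ and $[c,r'-c]$ is a worthwhile detail the paper leaves implicit (via the ambient standing hypothesis $c \le r/(p+1)$ from Paragraph~\ref{Tlocconst}).
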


\begin{proof}
Let $f = b_{0,r-i}(r)$ and $f' = b_{0,r'-i}(r')$.
We want to apply Lemma \ref{bthetasum} to $f$ and $f'$.
The hypotheses are satisfied thanks to 
Lemma \ref{binomial}.
Indeed the sums considered 
either $0$ for both $f$ and $f'$, if we do not sum over the correct
congruence class
or $\binom{r-i}{\ell}$ for some $\ell < c$ for $f$
and  $\binom{r'-i}{\ell}$ for $f'$.
\end{proof}

\begin{lemm}
\label{bthetaend}
Let $d > 0$.
Suppose $i < c$ and $r' > r$ and $r' \equiv r \mod
(p-1)p^{d+\lfloor\log_p(c)\rfloor}$.
Then the coefficients $\lambda_r(0,r-i ; n,j)$ and $\lambda_{r'}(0,r'-i ;
n,j)$ are congruent modulo $p^d$ for all $n < c$ and $j \in
\{0,\dots,p-1,\infty\}$.
\end{lemm}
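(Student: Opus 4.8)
The strategy is to induct on $c$, peeling off one power of $\theta$ at a time and reducing everything to the case already handled by Lemma~\ref{bthetamiddle}. The case $i=0$ is a trivial base case: by the second rule in the algorithm of Lemma~\ref{computebtheta} we have $b_{0,r}(r)=b_{0,\infty}(r)$, which is an element of $\B_{\theta}(r)$, so $\lambda_r(0,r;n,j)$ equals $1$ if $(n,j)=(0,\infty)$ and $0$ otherwise, independently of $r$; hence the asserted congruence holds. So assume $1\le i<c$, in particular $c\ge 2$.

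First I would apply the third rule of Lemma~\ref{computebtheta} repeatedly, always to the monomial of largest $X$-degree. Starting from $b_{0,r-i}(r)=X^iY^{r-i}$, one application replaces the current leading term $b_{0,j}(r)$ by $b_{0,j-(p-1)}(r)-b_{1,j-p}(r)$, which raises the $X$-exponent of the leading term by $p-1$ and produces a correction term $-b_{1,\cdot}(r)$. After $k_{\max}=\lceil(c-i)/(p-1)\rceil$ steps the leading term is $b_{0,r-I}(r)$ with $I=i+k_{\max}(p-1)$, and a short arithmetic check gives $c\le I\le c+p-2\le pc$; moreover each correction term $-b_{1,\cdot}(r)$, written as $-\theta\cdot b_{0,\cdot}(r-(p+1))$, has inner $X$-exponent at most $c-2$, hence strictly less than $c-1$. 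Since $k_{\max}$ and all the index shifts depend only on $i$, $c$ and $p$ (not on $r$), and all coefficients produced by the third rule are $\pm1$, running the identical procedure on $b_{0,r'-i}(r')$ yields the same combinatorial expression with $r$ replaced by $r'$ throughout.

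To conclude, I would treat the two kinds of terms separately. For the leading term $b_{0,r-I}(r)$ we are exactly in the setting of Lemma~\ref{bthetamiddle} with exponent $I\in[c,pc]$: since $r'\equiv r\bmod(p-1)p^{d+\lfloor\log_p(c)\rfloor}$, that lemma gives $\lambda_r(0,r-I;n,j)\equiv\lambda_{r'}(0,r'-I;n,j)\pmod{p^d}$ for all $n<c$. For a correction term, multiplication by $\theta$ sends $b_{n',j}(r-(p+1))$ to $b_{n'+1,j}(r)$, so $\lambda_r(\theta g;n,j)=\lambda_{r-(p+1)}(g;n-1,j)$ for $1\le n<c$ and is $0$ for $n=0$; applying the inductive hypothesis at level $c-1$ to the inner element $b_{0,\cdot}(r-(p+1))$ — legitimate because its inner index is $<c-1$, because $r-(p+1)\equiv r'-(p+1)$ modulo $(p-1)p^{d+\lfloor\log_p(c-1)\rfloor}$ (which divides our modulus since $\lfloor\log_p(c-1)\rfloor\le\lfloor\log_p(c)\rfloor$), and because $n-1<c-1$ — shows the two contributions are congruent modulo $p^d$. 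Summing over the leading term and the finitely many corrections, we obtain $\lambda_r(0,r-i;n,j)\equiv\lambda_{r'}(0,r'-i;n,j)\pmod{p^d}$ for all $n<c$.

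The only real obstacle is the bookkeeping: one must verify that after the last reduction step the leading exponent $I$ lands in the window $[c,pc]$ demanded by Lemma~\ref{bthetamiddle}, and that every $\theta$-divisible correction really does fall below level $c-1$ so that the induction on $c$ closes (with the modulus for $c-1$ automatically dividing that for $c$). These are elementary inequalities about $i+k(p-1)$ for $1\le k\le k_{\max}$, but they are what makes the argument work; everything else is a formal combination of Lemma~\ref{computebtheta}, Lemma~\ref{bthetamiddle}, and the identity $\theta\cdot b_{n,j}(r-(p+1))=b_{n+1,j}(r)$.
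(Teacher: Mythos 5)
Your argument is correct, and it takes a genuinely different route from the paper's. The paper's own proof avoids any induction on $c$: it uses the explicit binomial expansion $b_{0,r-t}(r) = (-1)^t\bigl(b_{t,\infty}(r) - \sum_{j=0}^{t-1}\binom{t}{j}(-1)^jb_{0,r-tp+j(p-1)}(r)\bigr)$ (valid for $t < c$), iterates this substitution until every remaining term $b_{0,r-t}(r)$ satisfies $c \leq t \leq p(c-1)$, and then finishes in one stroke: the $b_{j,\infty}(r)$ with $j<c$ are literally elements of $\B_\theta(r)$ (so their $\lambda$-coordinates are $r$-independent integers), and the $b_{0,r-t}(r)$ terms are handled by Lemma~\ref{bthetamiddle}; the $x_j$ and $y_t$ coefficients are manifestly the same for $r$ and $r'$. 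You instead apply the single-step rule $b_{0,j}(r)\mapsto b_{0,j-(p-1)}(r) - b_{1,j-p}(r)$ from Lemma~\ref{computebtheta} repeatedly to the leading term, landing it in the window $[c,\,c+p-2]\subset[c,pc]$ for Lemma~\ref{bthetamiddle}, and you offload the $\theta$-divisible corrections to an induction on $c$ via the identity $\lambda_r(\theta g;n,j)=\lambda_{r-(p+1)}(g;n-1,j)$. Both are sound; your version buys a cleaner conceptual picture (each induction step literally peels one $\theta$-layer off $\B_\theta$, and the only combinatorial input is that $\lfloor\log_p(c-1)\rfloor\leq\lfloor\log_p(c)\rfloor$ so the modulus shrinks correctly), at the modest cost of organizing a nested induction in place of the paper's one-shot expansion. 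Your bookkeeping — $I=i+k_{\max}(p-1)\in[c,c+p-2]$, inner $X$-exponents $i+m(p-1)-1\leq c-2$ for $0\leq m<k_{\max}$, and the base case $b_{0,r}(r)=b_{0,\infty}(r)$ — is all correct.
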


\begin{proof}
Observe that for all $t < c$ we have $b_{0,r-t}(r) =
(-1)^t(b_{t,\infty}(r) -
\sum_{j=0}^{t-1}\binom{t}{j}(-1)^jb_{0,r-tp+j(p-1)})$.
We rewrite $b_{0,r-i}(r)$ as follows: start with 
$b_{0,r-i}(r)$. At each step, replace each term of the form
$b_{0,r-t}(r)$ with $t < c$ by the formula above. Stop when we have
written $b_{0,r-i}(r)$ as $\sum_{j=0}^{c-1}x_jb_{j,\infty}(r) + 
\sum_{t=c}^{p(c-1)}y_tb_{0,r-t}(r)$. We can do the same thing for
$b_{0,r'-i}(r')$, and it is easily checked that we get the same formula
as for $b_{0,r-i}(r)$, that is: $b_{0,r'-i}(r') =
\sum_{j=0}^{c-1}x_jb_{j,\infty}(r') + 
\sum_{t=c}^{p(c-1)}y_tb_{0,r'-t}(r')$ with the same coefficients $x_j$ and
$y_t$. 
Now it suffices to use Lemma \ref{bthetamiddle} to conclude.
\end{proof}

\subsubsection{Action of $T$}
\label{Tlocconst}

Let $c$ be an integer with $c \leq r/(p+1)$ and
let $d$ be an integer with $c \geq d$.

\begin{prop}
\label{transfoT}
Let $f \in I(M_{r}(R))$. Suppose that $r' \geq r$ and $r' \equiv r \mod
(p-1)p^{d+\lfloor\log_p(c)\rfloor}$. Then
$\pi_{r',c}(T-a_p)\psi_{r,r',c}\pi_{r,c}(f)$ is equal to 
$\psi_{r,r',c}\pi_{r,c}(T-a_p)f$ modulo $p^d$.
\end{prop}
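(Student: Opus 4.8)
The plan is to unwind the claim using the explicit formulas for $T$ from Paragraph \ref{descrT} and to reduce it to the combinatorial Lemmas \ref{binomial}, \ref{sumbinom}, \ref{bthetasmall}, \ref{bthetamiddle}, \ref{bthetaend}, \ref{bthetasum}. First I would make two reductions. Both sides of the asserted congruence depend only on $\pi_{r,c}(f)$ modulo $p^d$: for the left-hand side this is immediate, and for the right-hand side one writes $f = f' + f''$ with $f' \in I(M_{r,c}(R))$ and $f''$ supported on the basis vectors $b_{t,i}(r)$ with $t \ge c$, so that $f'' \in I(\theta^{c}M_{r-c(p+1)}(R))$; then $\pi_{r,c}(a_p f'') = 0$, and by Lemma \ref{Ttheta} (applicable since $r \ge c(p+1)$) we have $T f'' \in p^{c}I(\Sym^{r}R^2) \subseteq p^{d}I(\Sym^{r}R^2)$ because $c \ge d$, hence $\psi_{r,r',c}\pi_{r,c}(T-a_p)f'' \equiv 0 \pmod{p^d}$. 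So I may assume $f = [g,b_{n,i}(r)]$ for a standard vertex $g$ and a single basis vector with $n < c$. Since $\psi_{r,r',c}$ carries $b_{n,i}(r)$ to $b_{n,i}(r')$ by definition and these are fixed by the projections involved, the two $a_p$-terms agree, so it remains to compare $\psi_{r,r',c}\pi_{r,c}\bigl(T[g,b_{n,i}(r)]\bigr)$ with $\pi_{r',c}\bigl(T[g,b_{n,i}(r')]\bigr)$ modulo $p^d$.

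Expanding in the monomial basis gives $b_{n,i}(r) = \sum_{k}\binom{n}{k}(-1)^{k}b_{0,\phi(k)}(r)$, where — crucially, and because $n < c$ — the indices $\phi(k)$ and the coefficients do not depend on $r$, and each $b_{0,\phi(k)}(r)$ is either a ``low'' monomial of $Y$-degree $< pc$ (when $i \le p-1$) or a ``high'' monomial of $X$-degree $< pc$ (when $i = \infty$). Applying $T$ to $[g,b_{0,\phi}(r)]$ via the formulas of Paragraph \ref{descrT}: the operator $T^{+}$ and the first part of the formula at the origin produce only low monomials, with coefficients of the shape $p^{j}\binom{\cdot}{j}(\text{root of unity})$; the terms with $j \ge d$ vanish mod $p^d$, while for $j < d$ the binomial coefficients for $r$ and $r'$ agree mod $p^d$ by Lemma \ref{binomial} (as $v_p(r'-r) \ge d + \lfloor\log_p c\rfloor \ge d + \lfloor\log_p j\rfloor$), the roots of unity agree since $r \equiv r' \pmod{p-1}$, and the monomials match under $\psi_{r,r',c}\pi_{r,c}$ by Lemma \ref{bthetasmall}. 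The operator $T^{-}$ carries an overall factor $p^{r-i}$ in the formula, which exceeds $d$ for a low input and equals $p^{i_0}$ for a high input of displacement $i_0$; so $T^{-}$ is $\equiv 0 \pmod{p^d}$ except on high monomials with $i_0 < d$. The last part of the origin formula is either $\equiv 0 \pmod{p^d}$ or a single low monomial that transfers verbatim. Everything thus reduces to one remaining case.

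That case is the main obstacle: $T^{-}$ applied to a high monomial $b_{0,r-i_0}(r)$ with $i_0 < d$ — which already occurs for $b_{0,\infty}(r) = Y^{r}$ — where the formula produces, at a neighbouring vertex, $p^{i_0}$ times a polynomial whose $b_{0,j}(r)$-coefficient is $\binom{r-i_0}{j}(\lambda')^{\,r-i_0-j}$ (with $\lambda' \in \mu_{p-1}$; the case $\lambda' = 0$ is a single high monomial). Here monomials of every degree $j$ occur, including ``middle'' ones where neither the $p$-power nor Lemma \ref{binomial} alone controls the coefficient. I would split according to $j < c$ (low), $j > (r-i_0)-c$ (high), and $c \le j \le (r-i_0)-c$ (middle). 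The low and high parts are handled as before, using Lemmas \ref{bthetasmall}, \ref{bthetamiddle}, \ref{bthetaend} to match the $\psi_{r,r',c}\pi_{r,c}$-images and Lemma \ref{binomial} with $r \equiv r' \pmod{p-1}$ for the coefficients. For the middle part I would invoke Lemma \ref{bthetasum}: after factoring $(\lambda')^{\,r-i_0-a}$ out of the residue class $j \equiv a \pmod{p-1}$, its hypothesis becomes
\[
\sum_{\substack{j \equiv a \ (p-1)\\ c \le j \le (r-i_0)-c}}\binom{j}{\ell}\binom{r-i_0}{j} \ \equiv\ \sum_{\substack{j \equiv a \ (p-1)\\ c \le j \le (r'-i_0)-c}}\binom{j}{\ell}\binom{r'-i_0}{j} \pmod{p^{d}},
\]
which is Lemma \ref{sumbinom} with $n = r-i_0$, $n' = r'-i_0$, $s = t = c$, $\ell < c$, $b = a$: the side conditions hold because $c \le r/(p+1)$ and $i_0 < d \le c$, and since $\max(\ell,d,s,t) = c$ the modulus it requires is $(p-1)p^{\,d + \lfloor\log_p c\rfloor}$, which is precisely the congruence assumed on $r'$ and $r$. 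Recombining the three parts and restoring $p^{i_0}$ finishes this case and the Proposition. I expect the remaining effort — deciding which of Lemmas \ref{bthetasmall}, \ref{bthetamiddle}, \ref{bthetaend} applies to each monomial, pinning down summation ranges and signs — to be routine bookkeeping needing no further ideas.
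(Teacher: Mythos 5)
Your proof is correct and follows the same strategy as the paper's: reduce by linearity to a single $[g,b_{t,i}(r)]$ with $t<c$, expand in monomials, push through the explicit formulas for $T^{\pm}$ from Paragraph \ref{descrT}, dispatch terms of high $p$-order, handle the coefficient congruences with Lemma \ref{binomial} and root-of-unity periodicity mod $p-1$, match $\B_\theta$-coordinates with Lemmas \ref{bthetasmall}, \ref{bthetamiddle}, \ref{bthetaend}, and reduce the remaining middle range of $T^-$ applied to a high monomial to Lemma \ref{sumbinom} via Lemma \ref{bthetasum}. The one step you make fully explicit --- that the right-hand side depends only on $\pi_{r,c}(f)$, via Lemma \ref{Ttheta} and $c\ge d$ --- is a welcome clarification of a reduction that the paper performs implicitly when it passes by linearity to the case $t<c$.
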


\begin{proof}
It is enough to prove this for $a_p = 0$, as the equality for the part
with $-a_pf$ is clear. By linearity it is enough to prove it for $f$ of
the form $[g^\eps_{n,\mu},b_{t,i}]$ for some $t \leq c$, and $0 \leq i \leq
p-1$ or $i = \infty$.

We do the proof for the $g^0_{n,\mu}$, the case of $g^1_{n,\mu}$ is
similar. Recall that explicit formulas for $T^+$ and $T^-$ are given in Paragraph 
\ref{descrT}.
We fix $g^0_{n,\mu}$. Recall from the notations of Paragraph
\ref{descrT} that we write $\mu = \underline{\mu} + p^{n-1} \mu'$ with
$\underline{\mu}\in I_{n-1}$.
For each $f \in M_r$ and $u \in I_1$, let $P_{r,u}(f)$ be the
polynomials such that $T^+[g^0_{n,\mu},f] = \sum_{u\in
I_1}[g^0_{n,\mu}g^0_{1,u},P_{r,u}(f)]$, and $P_{r,-}(f)$ be the
polynomial such that 
$T^-[g^0_{n,\mu},f] = [g^0_{n-1,\underline\mu},P_{r,-}(f)]$ (or 
$T^-[g^0_{0,0},f] = [g^1_{0,0},P_{r,-}(f)]$). 
Note that $P_{r,-}$ depends on $g^0_{n,\mu}$, but $P_{r,u}$ does not.
We also define analogous polynomials for $f\in M_{r'}$.

So what we have to do is: 
check that for all $t,n < c$ and $i,j \in \{0,\dots,p-1,\infty\}$, for
all $u \in I_1 \cup \{-\}$, we have
$\lambda_r(P_{r,u}(b_{t,i}(r)) ; n,j) \equiv
\lambda_{r'}(P_{r',u}(b_{t,i}(r'));n,j) \mod p^d$.

\bigskip

Start with for $b_{t,i}(r)$ for $0 \leq i \leq p-1$. 
We have
$b_{t,i} = \sum_{\ell=0}^t\binom{t}{\ell}(-1)^\ell
b_{0,t+i+\ell(p-1)}$ 

Let us treat the case of $u\in I_1$.
We notice that $P_{r,u}(b_{t,i}(r))$ and $P_{r',u}(b_{t,i}(r'))$ can both be written as
$\sum_{m\geq 0}p^ma_mb_{0,m}$ (with the same $a_m\in\Z_p$ for $r$ and $r'$).
As we are computing modulo $p^d$, with $d < c$, we 
get that $\lambda_r(P_{r,u}(b_{t,i}(r)) ; n,j) \equiv 
\sum_{m=0}^{d-1} p^ma_m\lambda_r(0,m;n,j) \mod p^d$
and the same for $r'$. Then we can make use of Lemma \ref{bthetasmall} to
conclude.

Consider now what happens with $P_{r,-}$ and $P_{r',-}$. We note that
$P_{r,-}(b_{0,n}(r))$ is a multiple of $p^{r-n}$, so if $n\leq r-d$ then 
$P_{r,-}(b_{0,n}(r))$ disappears modulo $p^d$. But in $b_{t,i}(r)$, the
largest $n$ such that $b_{0,n}(r)$ appears is $n = i+pt < cp \leq r-d$, so
$P_{r,-}(b_{t,i}(r)) \equiv 0 \mod p^d$. By the same reasoning, we
have
$P_{r',-}(b_{t,i}(r')) \equiv 0 \mod p^d$.

\bigskip

Consider now 
$b_{t,\infty}(r) = \sum_{\ell=0}^t\binom{t}{\ell}(-1)^\ell b_{0,r-tp+\ell(p-1)}$
and
$b_{t,\infty}(r') = \sum_{\ell=0}^t\binom{t}{\ell}(-1)^\ell
b_{0,r'-tp+\ell(p-1)}$.
We have that $P_{r,u}(b_{t,\infty}(r)) =
\sum_{\ell=0}^t\binom{t}{\ell}(-1)^\ell P_{r,u}(b_{0,r-tp+\ell(p-1)})$.

Let us treat the case of $u\in I_1$.
The coefficient of $b_{0,j}(r)$
in $P_{r,u}(b_{t,\infty}(r))$ is
$$
p^j\sum_{\ell=0}^t\binom{t}{\ell}(-1)^\ell
\binom{r-tp+\ell(p-1)}{j}(-u)^{r-tp+\ell(p-1)-j}
$$
and the formula for the coefficient of $b_{0,j}(r')$ in 
$P_{r',u}(b_{t,\infty}(r'))$ is the same with $r$ replaced by $r'$.
Let us show that they are congruent modulo $p^d$
(we are only interested in the case where $j < d$): in both cases, the
$-u$ part is elevated to powers that are congruent modulo $p-1$ (and always non-zero), 
as $r' \equiv r \mod p-1$, so
$(-u)^{r-tp+\ell(p-1)-j} = (-u)^{r'-tp+\ell(p-1)-j}$. We also have that
$\binom{r-tp+\ell(p-1)}{j} \equiv \binom{r'-tp+\ell(p-1)}{j} \mod p^d$
by Lemma \ref{binomial}.

Consider now what happens when $u = -$. 
We see that
$P_{r,-}(b_{0,r-tp+\ell(p-1)})$ is divisible by $p^{tp-\ell(p-1)}$, and
the same for $r'$, so we are only interested in the terms for which
$tp-\ell(p-1) < d$. 

Suppose first that either $(n,\mu) = (0,0)$ (so $g^0_{n,\mu} = 1$) or
$\mu' = 0$,
then $P_{r,-}(b_{0,r-i}(r)) = p^ib_{0,r-i}(r)$. So we can apply
Lemma \ref{bthetaend}, as we consider only $i$ that are $< d \leq c$.

Suppose now that $\mu' \neq 0$, so 
$P_{r,-}(b_{0,i}(r)) =
p^{r-i}\sum_{j=0}^i\binom{i}{j}{\mu'}^{i-j}b_{0,j}(r)$.
In particular
$P_{r,-}(b_{0,r-s}(r)) =
p^s\sum_{j=0}^{r-s}\binom{r-s}{j}{\mu'}^{j}b_{0,r-s-j}(r)$
and we will apply this to $s = tp-\ell(p-1)<d$.

We split 
$P_{r,-}(b_{0,r-s}(r))$ in two parts: one with the $b_{0,r-s-j}(r)$ with
$s+j < c$, and one with the $b_{0,r-s-j}(r)$ with
$s+j \geq c$. We can apply Lemma \ref{bthetaend} to the first part, using
the fact that $\binom{r-s}{j} \equiv \binom{r'-s}{j} \mod p^d$ by
Lemma \ref{binomial}.
Consider now the second part.
We want to apply Lemma \ref{bthetasum} to it. For this, we have to
check that the sums
$p^s\sum_{r-s-j \equiv a \mod p-1, c \leq r-s-j \leq r-c}
\binom{r-s-j}{\ell}\binom{r-s}{j}{\mu'}^{j}$
and
$p^s\sum_{r'-s-j \equiv a \mod p-1, c \leq r'-s-j \leq r-c}
\binom{r'-s-j}{\ell}\binom{r'-s}{j}{\mu'}^{j}$
are congruent modulo $p^d$
for all $a \in \{0,\dots,p-2\}$ and $0 \leq \ell < c$.
Denote these sums by 
$\Sigma_{r,s,a,\ell}$ and
$\Sigma_{r',s,a,\ell}$ respectively.
Then we can write
$\Sigma_{r,s,a,\ell} = p^s{\mu'}^a\sum_{c \leq i \leq r-c, i \equiv r-s-a
\mod p-1}\binom{i}{\ell}\binom{r-s}{i}$, and similarly for
$\Sigma_{r',s,a,\ell}$. So we can conclude by Lemma \ref{sumbinom}.
\end{proof}

\subsubsection{Proof of Theorem \ref{locconstr}}
\label{prooflocconstr}

Let $k$, $a_p$, $\delta$, and $c$ be as in the statement of
Theorem \ref{locconstr}. Let $d$ be the smallest integer $> \delta$, that
is,
$d = 1 + \lfloor \delta \rfloor$.  Let $r = k-2$, $k' > k$ with $k' \equiv k
\mod (p-1)p^{d+\lfloor\log_p(c)\rfloor}$ and $r' = k'-2$.
By hypothesis we have $d \leq c$ and $c \leq r/(p+1)$ and so the results of
Paragraph \ref{Tlocconst} apply. 

Let $V$ be a subspace of $M_r$. We denote by $I(V)$ the subspace of
$I(M_r)$ generated by all elements of the form $[g^\eps_{n,\mu},v]$ for
$v \in V$. In general it is not a subrepresentation of $I(M_r)$.

\smallskip

Recall that in order to compute $\bar\Theta_{k,a_p}$, we have introduced
the standard filtration $(V_i(r))_i$ of $M_r(E)$ defined in Section \ref{filtration}. 
The computations give us
information about the image of $I(J_i(r))$ inside
$\bar\Theta_{k,a_p}$, where $J_i(r) = V_i(r)/V_{i+1}(r)$. 
Using Corollary
\ref{filtrzero}, we see that we need only to study $I(J_i(r))$
for $0 \leq i \leq 2\lfloor v_p(a_p) \rfloor + 1$. 
Note also that $M_r^c = V_{2c}(r)$, so for all $i$ in this range, we have $M_r^c \subset
V_{i+1}(r)$, and $\pi_{r,c}(V_i(r)) \subset V_i(r)$. 
In particular, the image of an element of $I(V_i(r))$ in
$I(J_i(r))$ depends only on is projection to $I(M_{r,c})$.

All the similar statements hold for $r'$. Note also that for all $i$ in
the interesting range,
$J_i(r)$ and $J_i(r')$ are isomorphic as $r \equiv r' \mod p-1$. We
fix the isomorphism given by the bases described in
Paragraph \ref{explicitbasis}. Moreover, for all $x \in M_{r,c} \cap
V_i(r)$, the image of $x\in J_i(r)$ and the image of $\psi_{r,r',c}(c)$
in $J_i(r')$ correspond by this isomorphism.

Let $i\leq 2\lfloor v_p(a_p) \rfloor + 1$. 
The information we have found about $I(J_i(r))$
has the following form: we have an element $\phi_i \in I(M_r(\K))$, such
that $(T-a_p)\phi_i$ is integral. Let $u_i$ be the image of
$(T-a_p)\phi_i$ in $I(M_r(E))$. Then $u_i \in I(V_i(r))$, and its image
in $I(J_i(r))$  is equal to
$v_i$ or $w_{i,1}-\lambda_iv_i$ or $w_{i,2}-\mu_iw_{i,1}+v_i$ with the
notation of Paragraph \ref{explicitF}.

The hypothesis about the denominators means that we can take $\phi_i$ to
be of the form $p^{-\delta}\psi_i$ with $\psi_i$ integral.

Our goal is now to find elements $\phi_i' \in
I(M_{r'}(\K))$ such that 
$(T-a_p)\phi'_i$ is integral and has its reduction $u_i'$ modulo $p$
in $I(V_i(r'))$, with the same image as $u_i$ in $I(J_i(r'))$ (via the
isomorphism between $J_i(r)$ and $J_i(r')$).
Indeed, this will mean that for any information
that we have about the image of $I(J_i(r))$, we have the same information about
$I(J_i(r'))$, so we can compute 
$\bar\Theta_{k',a_p}$ from this information if we can compute
$\bar\Theta_{k,a_p}$, and they are the same.

We claim that we can take $\phi_i'$ of the form
$\psi_{r,r',c}(\pi_{r,c}(\phi_i)) + y_i$ with $y_i \in
I(M^{c}_{r'}(\K))$. Denote $\psi_{r,r',c}(\pi_{r,c}(\phi_i))$ by
$\tilde{\phi}_i$.

Let $z_i =  \pi_{r,c}(\psi_i)-\psi_i$, so that
$(T-a_p)\pi_{r,c}(\psi_i) = (T-a_p)\psi_i + (T-a_p)z_i$.
Note that $z_i \in I(M_r^{c})$. By Lemma \ref{Ttheta},
using the fact that $c > \delta$, we get that $Tp^{-\delta}z_i$ is integral
and reduces to zero modulo $p$. As $\pi_{r,c}(-a_pz_i) = 0$, we get
that $\pi_{r,c}(T-a_p)\pi_{r,c}(\phi_i)$ is integral, and
has the same image as $u_i$ in $I(J_i(r))$. We write $\tilde{u}_i$ for
the reduction modulo $p$ of $\pi_{r,c}(T-a_p)\pi_{r,c}(\phi_i)$.

By the results of Paragraph \ref{Tlocconst}, we get that
$\pi_{r',c}((T-a_p)\tilde\phi_i)$ is integral and reduces to
$\psi_{r,r',c}(\tilde{u}_i)$
modulo $p$. Let $z'_i = (T-a_p)\tilde\phi_i-\pi_{r',c}(T-a_p)\tilde\phi_i$.
Then $z_i'$ is in $p^{-\delta}I(M^{c}_{r'})$, as $\tilde\phi_i$ is in
$p^{-\delta}I(M_{r'}(R))$. So we can find $y_i$ such that
$(T-a_p)y_i-z_i'$ in integral and reduces to zero modulo $p$. Indeed, it
is enough to show that we can do this for $z_i'$ for the form
$p^{-\delta}[g,\theta^c f]$ for some integral $f$. But then $y_i =
-(p^{-\delta}/a_p)[g,\theta^c f]$ works, as $v_p(p^{\delta}a_p) < c$.  
Consider now $\phi_i'=\tilde\phi_i+y_i$, then $(T-a_p)\phi_i'$ is integral and 
has the same reduction modulo $p$ as $\pi_{r',c}(T-a_p)\tilde\phi_i$, that is
it reduces to $\psi_{r,r',c}(\tilde{u}_i)$ modulo $p$. So the image
of the reduction modulo $p$ of $(T-a_p)\phi_i'$ in $I(J_i(r'))$ is
the same as the image of $(T-a_p)\phi_i$ in $I(J_i(r))$, which is what we
wanted.

\section{Examples}
\label{examples}

We give here some examples of reductions computed thanks to our
implementation in SAGE (\cite{sage}) of the algorithm, which can be found at \cite{algo}. 
We also give some guess as to the value of some reductions: based on the
computations, we try to give the simplest possible description of the
locus of $a_p$'s with a given reduction that is compatible with the
computed data. 
A technique to prove whether the description of the locus is correct (for a
fixed $p$, $k$, and residual representation) is explained in
\cite{rig}.


The tables give the value of $p$; $k$; $a_p$; the steps $F_i$ of
the filtration as described in Section \ref{computefromrel} that can be
non-zero, and their value; the semi-simplification of the reduction; the
largest elementary divisor $\delta$ that appeared in the computation that led to
the result (so we get a radius for the local constancy result of
Corollary \ref{locconstapbis}, and, when applicable, a radius for the
local constancy result of Corollary \ref{locconstrbis}); 
and finally the value of $n$ that allowed to determine the reduction.

\subsection{Examples in slope $1$}

The case where $v(a_p) = 1$ has been studied in \cite{BGR} for $p > 3$. In that
article, a complete description of the reduction is given for $p \geq 5$, 
except in the case where $k$ is equal to $4$ modulo $p(p-1)$ and
$v(a_p^2-\binom{k-2}{2}p^2) > 2$. We give some
results for this case, for $a_p^2$ close to
$p\binom{k-2}{2}$. We recall that in the case where
$k$ is equal to $4$ modulo $p(p-1)$ and 
$v(a_p^2-\binom{k-2}{2}p^2) = 2$ then $\bar{V}_{k,a_p}$ is irreducible,
isomorphic to $\ind\omega_2^3$, and more precisely $\bar\Theta_{k,a_p}$
is isomorphic to $\pi(p-3,0,\omega^2)$, which is the $F_0$-part of the
filtration as described in Sections \ref{computefromrel} and
\ref{filtration} (beware that the numbering of the filtration is not the
same in this article and in \cite{BGR}).

Our guess as to the general form of the reduction in this case is as
follows:
Suppose that $p > 3$, $k$ is equal to $4$ modulo $p(p-1)$, and $k > 2$. There exists an
integer $i(k) > 1$ such that:
\begin{itemize}
\item
if $v(a_p) = 1$ and $v(a_p^2-\binom{k-2}{2}p^2) < i(k)$ then
$\bar{V}_{k,a_p}$ is isomorphic to $\ind\omega_2^3$.
\item
if $v(a_p) = 1$ and $v(a_p^2-\binom{k-2}{2}p^2) = i(k)$ then
$\bar{V}_{k,a_p}^{ss}$ is isomorphic to
$\omega^2\unr(\lambda)\oplus\omega\unr(\lambda^{-1})$ for some
$\lambda\in E^\times$.
\item
if $v(a_p) = 1$ and $v(a_p^2-\binom{k-2}{2}p^2) > i(k)$ then 
$\bar{V}_{k,a_p}$ is isomorphic to $\ind\omega_2^{p+2}$.
\end{itemize}
Note that the reduction for $v(a_p) = 1$ and $k$ is equal to $4$ modulo
$(p-1)$ is necessarily one of these three possibilities, as follows from
the results of \cite{BGR}, so it remains only
to understand what is the locus corresponding to each possibility.

\noindent
\begin{tabular}{|c|c|c|c|c|c|c|}
\hline
$p$ & $k$ & $a_p$ & non-zero $F_i$s &  $\bar{V}_{k,a_p}^{ss}$ & $\delta$ & $n$ \\
\hline
$5$ & $24$ & $2\cdot 5$ & $F_0$: quotient of $I(\sigma_2(2))$ & $\ind\omega_2^3$ & 2 & 2 \\
\hline
$5$ & $24$ & $5\sqrt{11\cdot 21}$ & $F_3$: $\pi(4,0,\omega)$ & $\ind\omega_2^7$ & 3 & 3 \\
\hline
$5$ & $24$ & $5\sqrt{11\cdot 21}+5^2$ & $F_3$: $\pi(4,4,\omega)$ and $F_0$: $\pi(2,4,\omega^2)$ & $\omega\unr(-1)\oplus\omega^2\unr(-1)$ & 3 & 3 \\
\hline
$5$ & $24$ & $5\sqrt{11\cdot 21}+2\cdot 5^2$ & $F_3$: $\pi(4,3,\omega)$ and $F_0$: $\pi(2,2,\omega^2)$ & $\omega\unr(2)\oplus\omega^2\unr(3)$ & 3 & 3 \\
\hline
$5$ & $24$ & $5\sqrt{11\cdot 21}+3\cdot 5^2$ & $F_3$: $\pi(4,2,\omega)$ and $F_0$: $\pi(2,3,\omega^2)$ & $\omega\unr(3)\oplus\omega^2\unr(2)$ & 3 & 3 \\
\hline
$5$ & $24$ & $5\sqrt{11\cdot 21}+4\cdot 5^2$ & $F_3$: $\pi(4,1,\omega)$ and $F_0$: $\pi(2,1,\omega^2)$ & $\omega\oplus\omega^2$ & 3 & 3 \\
\hline
\hline
$5$ & $44$ & $5\sqrt{21\cdot 41}$ & $F_3$: $\pi(4,0,\omega)$ & $\ind\omega_2^7$ & 3 & 3 \\
\hline
$5$ & $44$ & $5\sqrt{21\cdot 41}+5^2$ & $F_3$: $\pi(4,2,\omega)$ and $F_0$: $\pi(2,3,\omega^2)$ & $\omega\unr(3)\oplus\omega^2\unr(2)$ & 3 & 3 \\
\hline
$5$ & $44$ & $5\sqrt{21\cdot 41}+2\cdot 5^2$ & $F_3$: $\pi(4,4,\omega)$ and $F_0$: $\pi(2,4,\omega^2)$ & $\omega\unr(4)\oplus\omega^2\unr(4)$ & 3 & 3 \\
\hline
$5$ & $44$ & $5\sqrt{21\cdot 41}+3\cdot 5^2$ & $F_3$: $\pi(4,1,\omega)$ and $F_2$: $\pi(0,1,\omega)$ and$F_0$: $\pi(2,1,\omega^2)$ & $\omega\oplus\omega^2$ & 3 & 3 \\
\hline
$5$ & $44$ & $5\sqrt{21\cdot 41}+4\cdot 5^2$ & $F_3$: $\pi(4,3,\omega)$ and $F_0$: $\pi(2,2,\omega^2)$ & $\omega\unr(2)\oplus\omega^2\unr(3)$ & 3 & 3 \\
\hline
\hline
$5$ & $104$ & $5\sqrt{51\cdot 101}$ & $F_3$: quotient of $I(\sigma_4(1))$ and $F_2$: quotient of $\pi(0,0,\omega)$ & $\ind\omega_2^7$ & 3 & 4 \\
\hline
$5$ & $104$ & $5\sqrt{51\cdot 101}+5^3$ & $F_3$: $\pi(4,4,\omega)$ and $F_0$: $\pi(2,4,\omega^2)$ & $\omega\unr(4)\oplus\omega^2\unr(4)$ & 4 & 4 \\
\hline
\end{tabular}

\subsection{Examples in slope $2$}

Based on the results in slope $1$, we can expect that when $v(a_p) = 2$
then in most cases the reduction is reducible, of the form
$\omega^{k-3}\unr(\lambda) \oplus \omega^{2}\unr(\lambda^{-1})$ for some
$\lambda \in E^\times$. Computations with small values of $p$ ($p \leq
7$) makes it difficult to see if this case is indeed the most frequent,
and in our examples we have found many different types of reduction.

We give one example where the form of the reduction is particularly
complicated and several different types of reduction appear for the same
$p$ and $k$. We take $p=7$ and $k=48$, then our computations are
compatible with the following description:

\begin{itemize}
\item
If $v(a_7-7^2-5\cdot 7^3)) = 2$ 
and 
$v(a_7+7^2+5\cdot 7^3) = 2$
then  
$\bar{V}_{48,a_7}^{ss}$ is isomorphic to
$\omega^4\unr(\lambda)\oplus\omega\unr(\lambda^{-1})$ for some
$\lambda\in E^\times$.

\item
If 
$2 < v(a_7-7^2-5\cdot 7^3) < 3$ 
or
$2 < v(a_7+7^2+5\cdot 7^3) < 3$ 
then  
$\bar{V}_{48,a_7}^{ss}$ is isomorphic to $\ind\omega_2^{11}$.

\item
If 
$v(a_7-7^2-5\cdot 7^3) = 3$ 
or 
$v(a_7+7^2+5\cdot 7^3) = 3$ 
then  
$\bar{V}_{48,a_7}^{ss}$ is isomorphic to
$\omega^3\unr(\lambda)\oplus\omega^2\unr(\lambda^{-1})$ for some
$\lambda\in E^\times$.

\item
If 
$v(a_7-7^2-5\cdot 7^3) > 3$ 
or
$v(a_7+7^2+5\cdot 7^3) > 3$ 
then  
$\bar{V}_{48,a_7}^{ss}$ is isomorphic to $\ind\omega_2^{17}$.
\end{itemize}

\subsection{Examples in slope $3/2$}

To describe the reduction modulo $p$ in this case, we introduce a
notation: let $I_{n,c}$ be the representation $\bar{\rho}$ of $G_{\Q_p}$ with
restriction to inertia isomorphic to $\omega^n\oplus\omega^n$, with
determinant equal to $\omega^{2n}$, and such
that the Frobenius acting on $\omega^{-n}\bar\rho$ has trace $c$.

The case where $1 < v(a_p) < 2$ was computed for $p > 2$ in \cite{BG15},
except for the special case where $k = 5 \mod{p-1}$ and 
$v(a_p^2 - (k-4)^2\frac{k-3}{2}p^3) > 3$. We give some examples of what happens in this
special situation.

\subsubsection{First case}
\label{32case1}

The first case is a situation that looks very similar to what happens in
slope $1/2$ for $k = 3 \mod{(p-1)}$, as described in \cite{BG13}. Our
guess is that when $p$ does not divise $k-5$, then the only step of the
filtration that is non-zero is $F_2$, and we have a rational number $i(k)$ such that:
\begin{itemize}
\item
if $1 < v(a_p) < 2$ and $v(a_p^2-(k-4)^2\frac{k-3}{2}p^3) < i(k)$ then
$\bar\Theta_{k,a_p}^{ss}$ is isomorphic to $\pi(p-2,0,\omega^2)$, and so
$\bar{V}_{k,a_p}^{ss}$ is isomorphic to $\ind\omega_2^{1+3p}$.
\item
if $1 < v(a_p) < 2$ and $v(a_p^2-(k-4)^2\frac{k-3}{2}p^3) = i(k)$ then
$\bar\Theta_{k,a_p}^{ss}$ is isomorphic to
$I(\sigma_{p-2}(2)/(T^2-cT+1)$ for some $c\in E^\times$,
that is, $\bar{V}_{k,a_p}^{ss}$ is isomorphic to $I_{2,c}$.
\item
if $1 < v(a_p) < 2$ and $v(a_p^2-(k-4)^2\frac{k-3}{2}p^3) > i(k)$ then 
$\bar\Theta_{k,a_p}^{ss}$ is isomorphic to $I(\sigma_{p-2}(2))/(T^2+1)$,
that is, $\bar{V}_{k,a_p}^{ss}$ is isomorphic to $I_{2,0}$.
\end{itemize}
Note that is was proved in \cite{BG15} that the reduction has one of
the listed forms, so it remains only to understand the locus where each
reduction occurs.

\smallskip

\noindent
\begin{tabular}{|c|c|c|c|c|c|c|}
\hline
$p$ & $k$ & $a_p$ & non-zero $F_i$s &  $\bar{V}_{k,a_p}^{ss}$ & $\delta$ & $n$ \\
\hline
$5$ & $17$ & $5\sqrt{5}\cdot 13\sqrt{7}$ & $F_2$: $I(\sigma_3(2))/(T^2+1)$ & $I_{2,0}$ & 5/2 & 3 \\
\hline
$5$ & $17$ & $5\sqrt{5}\cdot 13\sqrt{7}+5^2$ & $F_2$:
$I(\sigma_3(2))/(T^2-4T+1)$ & $I_{2,4}$ & 5/2 & 3 \\
\hline
$5$ & $17$ & $5\sqrt{5}\cdot 13\sqrt{7}+2\cdot 5^2$ & $F_2$: $I(\sigma_3(2))/(T^2-3T+1)$ & $I_{2,3}$ & 5/2 & 3 \\
\hline
$5$ & $17$ & $5\sqrt{5}\cdot 13\sqrt{7}+3\cdot 5^2$ & $F_2$: $I(\sigma_3(2))/(T^2-2T+1)$ & $I_{2,2}$ & 5/2 & 3 \\
\hline
$5$ & $17$ & $5\sqrt{5}\cdot 13\sqrt{7}+4\cdot 5^2$ & $F_2$: $I(\sigma_3(2))/(T^2-T+1)$ & $I_{2,1}$ & 5/2 & 3 \\
\hline
\end{tabular}

\subsubsection{Second case}
\label{32case2}

The previous situation can not happen when $p$ divides $k-5$, as in this case the
results of \cite{BG15} show that when  
$v(a_p^2 - (k-4)^2\frac{k-3}{2}p^3) = 3$ the reduction is equal to
$\ind\omega_2^4$, not $\ind\omega_2^{1+3p}$ .
So we can expect the situation to be a little different here. Our guess
is that that we have two rational numbers $i(k) < j(k)$ such that:
\begin{itemize}
\item
if $1 < v(a_p) < 2$ and $v(a_p^2-(k-4)^2\frac{k-3}{2}p^3) < i(k)$ then
$\bar{V}_{k,a_p}$ is isomorphic to $\ind\omega_2^4$.
\item
if $1 < v(a_p) < 2$ and $v(a_p^2-(k-4)^2\frac{k-3}{2}p^3) = i(k)$ then 
$\bar{V}_{k,a_p}$ is isomorphic to 
$\omega^3\unr(\lambda)\oplus\omega\unr(\lambda^{-1})$
for some $\lambda\in E^\times$.
\item
if $1 < v(a_p) < 2$ and $i(k) < v(a_p^2-(k-4)^2\frac{k-3}{2}p^3) < j(k)$ then
$\bar{V}_{k,a_p}^{ss}$ is isomorphic to
$\ind\omega_2^{1+3p} $
\item
if $1 < v(a_p) < 2$ and $v(a_p^2-(k-4)^2\frac{k-3}{2}p^3) = j(k)$ then 
$\bar{V}_{k,a_p}$ is isomorphic to $I_{2,c}$
for some $c\in E^\times$.
\item
if $1 < v(a_p) < 2$ and $v(a_p^2-(k-4)^2\frac{k-3}{2}p^3) > j(k)$ then
$\bar{V}_{k,a_p}^{ss}$ is isomorphic to
$I_{2,0}$.
\end{itemize}

\noindent
\begin{tabular}{|c|c|c|c|c|c|c|}
\hline
$p$ & $k$ & $a_p$ & non-zero $F_i$s &  $\bar{V}_{k,a_p}^{ss}$ & $\delta$ & $n$ \\
\hline
$5$ & $25$ & $5\sqrt{5}\cdot 21\sqrt{11}$ & $F_2$: $I(\sigma_3(2))/(T^2+1)$ & $I_{2,0}$ & 7/2 & 3 \\
\hline
$5$ & $25$ & $5\sqrt{5}\cdot 21\sqrt{11}+5^3$ & $F_2$: $I(\sigma_3(2))/(T^2-T+1)$ & $I_{2,1}$ & 7/2 & 3 \\
\hline
$5$ & $25$ & $5\sqrt{5}\cdot 21\sqrt{11}+5^2\sqrt{5}\sqrt{11}$ & $F_2$: $\pi(3,0,\omega^2)$ & $\ind\omega_2^{p+3}$ & 7/2 & 3 \\ \hline
$5$ & $25$ & $5\sqrt{5}\cdot 21\sqrt{11}+5^2$ & $F_3$: $\pi(1,1,\omega)$ and $F_0$: $\pi(1,1,\omega^3)$ & $\omega^3\oplus\omega$ & 7/2 & 3 \\
\hline
\hline
$7$ & $47$ & $7\sqrt{7}\cdot 43\sqrt{22}$ & $F_2$: $I(\sigma_5(2))/(T^2+1)$ & $I_{2,0}$ & 7/2 & 3 \\
\hline
$7$ & $47$ & $7\sqrt{7}\cdot 43\sqrt{22}+7^3$ & $F_2$: $I(\sigma_5(2))/(T^2-3T+1)$ & $I_{2,3}$ & 7/2 & 3 \\
\hline
$7$ & $47$ & $7\sqrt{7}\cdot 43\sqrt{22}+7^2\sqrt{7}\sqrt{22}$ & $F_2$: $\pi(5,0,\omega^2)$ & $\ind\omega_2^{22}$ & 7/2 & 3 \\
\hline
$7$ & $47$ & $7\sqrt{7}\cdot 43\sqrt{22}+7^2$ & $F_3$: $\pi(1,6,\omega)$ and $F_0$: $\pi(3,6,\omega^3)$ & $\omega^3\unr(-1)\oplus\omega\unr(-1)$ & 7/2 & 3 \\
\hline
\end{tabular}

\subsubsection{Third case}
\label{32case3}

The third case can happen only when $p = 3$ and $k$ is odd and $3^2$ divides $k-5$.  In
this case the results of \cite{BG15} show that when  $v(a_3^2 -
(k-4)^2\frac{k-3}{2}3^3) = 3$ the reduction is equal to $I_{1,0}$.

Our guess
is that that we have two rational numbers $i(k) < j(k)$ such that:
\begin{itemize}
\item
if $1 < v(a_3) < 2$ and $v(a_3^2-(k-4)^2\frac{k-3}{2}3^3) < i(k)$ then
$\bar{V}_{k,a_3}$ is isomorphic to $I_{1,0}$.
\item
if $1 < v(a_3) < 2$ and $v(a_3^2-(k-4)^2\frac{k-3}{2}3^3) = i(k)$ then 
$\bar{V}_{k,a_3}$ is isomorphic to 
$I_{1,c}$
for some $c\in E^\times$.
\item
if $1 < v(a_3) < 2$ and $i(k) < v(a_3^2-(k-4)^2\frac{k-3}{2}3^3) < j(k)$ then
$\bar{V}_{k,a_3}^{ss}$ is isomorphic to
$\ind\omega_2^{2} $
\item
if $1 < v(a_3) < 2$ and $v(a_3^2-(k-4)^2\frac{k-3}{2}3^3) = j(k)$ then 
$\bar{V}_{k,a_3}$ is isomorphic to $I_{0,c}$
for some $c\in E^\times$.
\item
if $1 < v(a_3) < 2$ and $v(a_3^2-(k-4)^2\frac{k-3}{2}3^3) > j(k)$ then
$\bar{V}_{k,a_3}^{ss}$ is isomorphic to
$I_{0,0}$.
\end{itemize}

\noindent
\begin{tabular}{|c|c|c|c|c|c|c|}
\hline
$p$ & $k$ & $a_p$ & non-zero $F_i$s &  $\bar{V}_{k,a_p}^{ss}$ & $\delta$ & $n$ \\
\hline
$3$ & $23$ & $3\sqrt{30}\cdot 19$ & $F_2$: $I(\sigma_1)/(T^2+1)$ & $I_{0,0}$ & 7/2 & 4 \\
\hline
$3$ & $23$ & $3\sqrt{30}\cdot 19+3^4$ & $F_2$: $I(\sigma_1)/(T^2-2T+1)$ & $I_{0,2}$ & 9/2 & 6 \\
\hline
$3$ & $23$ & $3\sqrt{30}\cdot 19+3^3$ & $F_2$: $\pi(1,0,1)$ & $\ind\omega_2^2$ & 7/2 & 4 \\
\hline
$3$ & $23$ & $3\sqrt{30}\cdot 19+3^2$ & $F_3$: $I(\sigma_1(1))/(T^2-2T+1)$ & $I_{1,2}$ & 7/2 & 3 \\
\hline
\end{tabular}

\subsection{Examples in slope $5/2$}

We can expect that this case behaves like the cases of slope $1/2$ and
$3/2$, that is: for some values of $k$ modulo $p-1$, the reduction
depends on the valuation of $a_p^2-f(k)p^5$ for some integer $f(k)$.

We can find examples behaving in a way similar to \ref{32case1}:

\noindent
\begin{tabular}{|c|c|c|c|c|c|c|}
\hline
$p$ & $k$ & $a_p$ & non-zero $F_i$s &  $\bar{V}_{k,a_p}^{ss}$ & $\delta$ & $n$ \\
\hline
$7$ & $31$ & $7^2\sqrt{7}.44$ & $F_4$: $I(\sigma_5(3))/(T^2+1)$ & $I_{3,0}$ & 7/2 & 3 \\
\hline
$7$ & $31$ & $7^2\sqrt{7}.44+7^3$ & $F_4$: $I(\sigma_5(3))/(T^2-4T+1)$ & $I_{3,4}$ & 7/2 & 3 \\
\hline
$7$ & $31$ & $7^2\sqrt{7}$ & $F_4$: $\pi(5,0,\omega^3)$ & $\ind\omega_2^{30}$& 3 & 2 \\
\hline
$7$ & $31$ & $7^2\sqrt[3]{7}$ & $F_4$: $\pi(5,0,\omega^3)$ & $\ind\omega_2^{30}$& 8/3 & 2 \\
\hline
\end{tabular}

\medskip
We can also find examples behaving in a way similar to \ref{32case2}:

\noindent
\begin{tabular}{|c|c|c|c|c|c|c|}
\hline
$p$ & $k$ & $a_p$ & non-zero $F_i$s &  $\bar{V}_{k,a_p}^{ss}$ & $\delta$ & $n$ \\
\hline
$5$ & $27$ & $5^2\sqrt{5}.51$ & $F_4$: $I(\sigma_3(3))/(T^2+1)$ & $I_{3,0}$ & 9/2 & 3 \\
\hline
$5$ & $27$ & $5^2\sqrt{5}.51+5^4$ & $F_4$: $I(\sigma_3(3))/(T^2-T+1)$ & $I_{3,1}$ & 9/2 & 3 \\
\hline
$5$ & $27$ & $5^2\sqrt{5}.51+5^3\sqrt{5}$ & $F_4$: $\pi(3,0,\omega^3)$ & $\ind\omega_2^{22}$ & 9/2 & 3 \\
\hline
$5$ & $27$ & $5^2\sqrt{5}.51+5^3$ & $F_5$: $\pi(1,1,\omega^2)$ and $F_2$: $\pi(1,1,1)$ & $\omega^2\oplus 1$ & 9/2 & 3 \\
\hline
$5$ & $27$ & $5^2\sqrt{5}.51+2\cdot 5^3$ & $F_5$: $\pi(1,2,\omega^2)$ and $F_2$: $\pi(1,3,1)$ & $\omega^2\unr(3)\oplus\unr(2)$ & 9/2 & 3 \\
\hline
$5$ & $27$ & $5^2\sqrt{10}$ & $F_2$: $\pi(1,0,1)$ & $\ind\omega_2^{8}$& 4 & 3 \\
\hline
\end{tabular}

\smallskip

An interesting result of \cite{BG15} is also that for some values of
$k$, we have a reducible reduction (of the form $I_{1,0}$) for all $a_p$
with $1 < v(a_p) < 2$. This phenomenon seems to happen also for $2 <
v(a_p) < 3$. For example when $p=3$ and $k=17$ all the examples we have
computed with $2 < v(a_3) < 3$ give a reduction isomorphic to $I_{1,0}$.

\end{document}